\newtheorem{thm}{Theorem}[section]
\newtheorem{prop}[thm]{Proposition}
\newtheorem{lem}[thm]{Lemma}
\newtheorem{corro}[thm]{Corollary}
\newtheorem{defi}[thm]{Definition}
\newtheorem{rem}[thm]{Remark}
\newtheorem{example}[thm]{Example}
\newtheorem{ass}{Assumption}
\def\R{\mathbb R}
\def\N{\mathbb N}
\def\C{\mathbb C}
\def\E{\mathbb E}
\def\P{\mathbb P}
\def\shb{{\cal B}}
\def\shc{{\cal C}}
\def\shf{{\cal F}}
\def\shl{{\cal L}}
\def\shp{{\cal P}}
\def\shs{{\cal S}}
\def\shu{{\cal U}}
\author{
{\sc Anthony LECAVIL}
\thanks{ENSTA-ParisTech. Unit\'e de Math\'ematiques Appliqu\'ees (UMA).
  E-mail:{ \tt anthony.lecavil@ensta-paristech.fr}} 
 {\sc,}\ {\sc Nadia OUDJANE}
\thanks{EDF R\&D,   and FiME (Laboratoire de Finance des March\'es de l'Energie
(Dauphine, CREST,  EDF R\&D) www.fime-lab.org).
E-mail:{\tt  
nadia.oudjane@edf.fr}}
\ {\sc and}\ {\sc Francesco RUSSO} 
\thanks{ENSTA-ParisTech. Unit\'e de Math\'ematiques Appliqu\'ees (UMA). 
  E-mail:{\tt  francesco.russo@ensta-paristech.fr}}.
}
\date{April 11th 2015}
\title{Probabilistic representation of a class of non conservative nonlinear Partial Differential Equations}
\newcommand{\MBFigure}[6]{
$\left. \right.$ \\
\refstepcounter{figure}
\addcontentsline{lof}{figure}{\numberline{\thefigure}{\ignorespaces #5}}
\begin{center}
\begin{minipage}{#1cm}
\centerline{\includegraphics[width=#2cm,angle=#3]{#4}}
\begin{center}
\upshape{F\textsc{ig} \normal
\end{center}
size{\thefigure}. $-$} #5
\end{center}
\label{#6}
\end{minipage}
\end{center}
$\left. \right.$ \\}
\begin{document}
\maketitle 
\begin{abstract} We introduce a new class of nonlinear Stochastic Differential Equations
in the sense of McKean, related to non conservative nonlinear Partial Differential equations (PDEs).
We discuss existence and uniqueness pathwise and in law under various assumptions. 
 We propose an original interacting particle system 
 for which we discuss the propagation of chaos. 
To this system, we associate a random function which is proved to converge
to a solution of a regularized version of PDE.
\end{abstract}
\medskip\noindent {\bf Key words and phrases:}  
 Chaos propagation; Nonlinear Partial Differential Equations; Nonlinear Stochastic Differential Equations; Particle systems; Probabilistic representation of
 PDEs;
McKean. 

\medskip\noindent  {\bf 2010  AMS-classification}:
 65C05; 65C35; 60H10;60H30; 60J60; 58J35 

\section{Introduction}
\label{SIntroduction}


Probabilistic representations of nonlinear Partial Differential Equations (PDEs) are interesting in several aspects. From a theoretical point of view, such representations allow for probabilistic tools to study the analytic properties of the equation (existence and/or uniqueness of a solution, regularity,\ldots).
They also have their own interest typically when they provide a microscopic interpretation of physical phenomena macroscopically drawn by a nonlinear PDE. 
Similarly, stochastic control problems are a way of interpreting non-linear PDEs through Hamilton-Jacobi-Bellman equation that have their own theoretical and practical interests~(see~\cite{FlemingSoner}).  
Besides, from a numerical point of view, such representations allow for new approximation schemes potentially 
less sensitive to the dimension of the state space thanks to their probabilistic nature  involving Monte Carlo 
based methods. 
 
The present paper focuses on a specific forward approach relying on nonlinear 
SDEs in the sense of McKean~\cite{McKean}. The coefficients of that SDE instead of depending only on 
the position of the solution  $Y$, also depend on the law of the process,
 in a non-anticipating way.   
One historical contribution
on the subject was performed by \cite{sznitman}, which concentrated on non-linearities  on the drift coefficients.\\
%
Let us consider $d,p \in \N^{\star}$. Let 
$\Phi:[0,T] \times \R^d \times \R \rightarrow \R^{d \times p}$,
 $g:[0,T] \times \R^d \times \R \rightarrow \R^d$,
$\Lambda:[0,T] \times \R^d \times \R \rightarrow \R$,
 be Borel bounded  functions and $\zeta_0$ be a probability  on $\R^d$. 
When it is absolutely continuous we denote by  
$v_0$ its density so that $\zeta_0(dx) = v_0(x) dx$. 
We are motivated in 
 non-linear PDEs (in the sense of the distributions) of the  form
\begin{equation}
\label{epde}
\left \{
\begin{array}{l}
\partial_t v = \frac{1}{2} \displaystyle{\sum_{i,j=1}^d} \partial_{ij}^2 \left( (\Phi \Phi^t)_{i,j}(t,x,v) v \right) - div \left( g(t,x,v) v \right) +\Lambda(t,x,v) v\ , \quad \textrm{for any}\  t\in [0,T]\ ,\\
v(0,dx) = \zeta_0(dx), 
\end{array}
\right .
\end{equation}
where $v:]0,T] \times \R^d \rightarrow \R$ is the unknown function and the second equation
means that $v(t,x) dx$ converges weakly to $\zeta_0(dx)$ when $t \rightarrow 0$.
When $\Lambda = 0$, PDEs of the type \eqref{epde} are generalizations of the Fokker-Planck equation
and they are  often denominated 
in the literature as McKean type equations.
Their solutions are  probability measures dynamics which often describe 
  the macroscopic distribution law  of  a {\it microscopic particle}
which behaves in a diffusive way. For that reason, those time evolution PDEs
are  {\it conservative} 
in the sense that their solutions  $v(t, \cdot)$ verify the property $\int_{\R^d} v(t,x)dx$ to be constant in $t$,
generally equal to $1$, which is the mass of a probability measure.
More precisely, often  the solution $v$ of \eqref{epde} is  
associated with a couple $(Y,v)$, 
where $Y$ is a stochastic process and $v$ a real valued function defined on $[0,T] \times \R^d$ such that 
\begin{equation}
\label{ENLSDE}
\left \{
\begin{array}{l}
Y_t = Y_0 + \int_0^t \Phi(s,Y_s,v(s,Y_s)) dW_s + \int_0^t g(s,Y_s,v(s,Y_s)) ds \ ,\quad \textrm{with}\ Y_0\sim \zeta_0\\
v(t,\cdot) \text{ is the density of the law of } Y_t \ ,
\end{array}
\right .
\end{equation}
and $(W_t)_{t\geq 0}$ is a $p$-dimensional Brownian motion on a filtered probability
 space $(\Omega, \mathcal{F},\mathcal{F}_t,\mathbb{P})$.
 A major technical difficulty arising when studying the existence and uniqueness for solutions of~\eqref{ENLSDE} 
is due to the point dependence of the SDE coefficients w.r.t. the probability density $v$. 
In the literature \eqref{ENLSDE} was  generally faced by analytic methods.
A lot of work was performed in the case of smooth Lipschitz coefficients with regular initial condition,
 see for instance Proposition 1.3. of \cite{JourMeleard}. The authors  also assumed 
to be in the non-degenerate case, with $\Phi \Phi^t$ being an invertible matrix
  and some parabolicity condition. 
An interesting 
 earlier work concerns  the case $\Phi(t,x,u) = u ^k \; (k \geq 1), g = 0$ see \cite{Ben_Vallois}.
In dimension $d =1$ with $g = 0$ and $\Phi$ being bounded measurable, probabilistic representations 
of \eqref{epde} via solutions of \eqref{ENLSDE} were obtained in \cite{BRR1, BRR2}.
\cite{BCR3} extends partially those results to the multidimensional case.
Finally \cite{BCR2} treated the case of fast diffusion.
All those techniques were based on the resolution of the corresponding non-linear Fokker-Planck equation,
so through an analytic tool. 
 \\
In the present article, we are however especially interested in \eqref{epde}, in the case where 
$\Lambda$ does not vanish. In that context, the natural generalization of \eqref{ENLSDE} is
given by
\begin{equation}
\label{ENLSDELambda}
\left \{
\begin{array}{l}
Y_t = Y_0 + \int_0^t \Phi(s,Y_s,v(s,Y_s)) dW_s + \int_0^t g(s,Y_s,v(s,Y_s)) ds\ ,\quad\textrm{with}\quad Y_0\sim  \zeta_0 \ , \\
v(t,\cdot):=\frac{d\nu_t}{dx} \quad \textrm{such that for any bounded continuous test function}\  \varphi \in \mathcal{C}_b(\R^d, \R)\\ 
\nu_t(\varphi):=\E\left[ \varphi(Y_t) \, \exp \left \{\int_0^t\Lambda \big (s,Y_s,v(s,Y_s)\big )ds\right \} \right]\ ,\quad\textrm{for any}\ t\in [0,T]\ .
\end{array}
\right.
\end{equation}
 The aim of the paper  is precisely to extend the McKean probabilistic representation to a
 large class of nonconservative PDEs. The first step in that direction was done by \cite{BRR3} where the 
Fokker-Planck equation is a stochastic PDE with multiplicative noise. Even though that equation is pathwise not conservative, 
the  expectation of the mass was constant and equal to $1$. Here again, these developments relied on analytic tools. \\
%
%
To avoid the technical difficulty 
due to the pointwise dependence of the SDE coefficients w.r.t. the function $v$, 
this paper focuses on the following regularized version of~\eqref{ENLSDELambda}:
\begin{equation}
\label{eq:NSDE}
\left\{
\begin{array}{l}
Y_t=Y_0+\int_0^t \Phi(s,Y_s,u(s,Y_s)) dW_s+\int_0^tg(s,Y_s, u(s,Y_s))ds\ ,\quad\textrm{with}\quad Y_0\sim  \zeta_0 \ ,\\
u(t,y):=\E[K(y-Y_t) \, \exp \left \{\int_0^t\Lambda \big (s,Y_s,u(s,Y_s)\big )ds\right \}]\ ,\quad\textrm{for any}\ t\in [0,T]\ ,
\end{array}
\right .
\end{equation}
where  $K: \R^d \rightarrow \R$ is a smooth mollifier in $\R^d$. 
When $K = \delta_0$ \eqref{eq:NSDE} reduces, at least formally to \eqref{ENLSDELambda}.
An easy application of It\^o's formula (see e.g. Proposition \ref{PIDE}) shows that if there is a solution 
$(Y,u)$ of~\eqref{eq:NSDE}, $u$ is related to the solution (in the distributional sense) of the following partial integro-differential equation  (PIDE) 
\begin{equation}
\label{epide}
\left \{
\begin{array}{l}
\partial_t \bar v = \frac{1}{2} \displaystyle{\sum_{i,j=1}^d} \partial_{ij}^2 \left( (\Phi \Phi^t)_{i,j}(t,x,K\ast \bar v) \bar v \right) - div \left( g(t,x,K\ast \bar v) \bar v \right) +\Lambda(t,x,K\ast \bar v) \bar v \\
\bar v(0,x) = v_0 \ , 
\end{array}
\right .
\end{equation}
by the relation $u=K\ast \bar v:=\int_{\R^d}K(\cdot-y)\bar v(y)dy$. 
Setting $K^\varepsilon(x) = \frac{1}{\varepsilon^d} K\left( \frac{\cdot}{\varepsilon} \right)$
the generalized sequence $K^\varepsilon$ is weakly convergent to 
the Dirac measure at zero.
Now, consider the couple $(Y^\varepsilon, u^\varepsilon)$  solving  \eqref{eq:NSDE} 
replacing $K$ with $K^\varepsilon$. Ideally, $u^\varepsilon$ should converge to a solution of 
the {\it limit partial differential equation}~\eqref{epde}. 
In the case $\Lambda\equiv 0$, with smooth $\Phi, g$ and initial condition with other technical conditions,
that convergence was established in Lemma 2.6 of \cite{JourMeleard}. 
In our extended setting, again, no mathematical argument is for the moment available  
but this limiting behavior is  explored empirically 
by numerical simulations 
 in Section \ref{S8}.
Always in the case $\Lambda = 0$  with $g=0$, but with $\Phi$ only measurable,
the qualitative behavior of the solution for large time was numerically simulated
in \cite{BCR1, BCR3} respectively for the one-dimensional and multi-dimensional case.

Besides the theoretical aspects related to the well-posedness of \eqref{epde} (and \eqref{eq:NSDE}),
our main motivation is to simulate numerically efficiently their solutions.
With this numerical objective, several types of probabilistic representations have been developed in the literature, each one having 
specific features regarding the implied approximation schemes.\\
One method which has been largely investigated for
approximating solutions of time evolutionary PDEs is the method
of forward-backward SDEs.  FBSDEs were initially developed in~\cite{pardoux}, see
also \cite{pardouxgeilo} for a survey and \cite{rascanu} for a recent monograph on the subject.  
The idea is to express the PDE solution $v(t,\cdot)$ at time $t$  as the expectation of
a functional of the 
so called forward diffusion process $X$.
  Numerically, many judicious schemes have been proposed~\cite{Zhang,BouchardTouzi,GobetWarin}. 
But they all rely on computing recursively conditional expectation functions which is known to be a difficult
 task in high  dimension. Besides, the FBSDE approach is \textit{blind} in the sense that the forward process is
 not ensured to explore the most relevant space regions to approximate efficiently the backward process of interest.
 On the theoretical side, the FBSDE representation of fully nonlinear PDEs still requires complex developments and is 
the subject of active research (see for instance~\cite{cheridito}). \\
Branching diffusion processes are another way of providing a probabilistic representation of semi-linear PDEs
 involving a specific form of non-linearity on the zero order term. We refer to~\cite{Dynkin} for the case of 
superprocesses. This type of approach has been recently extended in~\cite{labordere,LabordereTouziTan} to a
 more general class of non-linearities on the zero order term, with the so-called \textit{marked branching process}. 
One of the main advantage of this approach compared to BSDEs is that it reduces in a forward algorithm without 
any regression computation. \\
One numerical intuition motivating our interest in (possibly non-conservative) 
 PDEs representation of McKean type is the possibility to take 
advantage of the  forward feature of this representation to bypass the dimension problem by localizing the particles
 precisely in the \textit{regions of interest}, although this point will not be developed in the present paper.
 Another benefit of this approach is that it is potentially able to represent fully nonlinear PDEs. 

 In this paper, for the considered class of nonconservative PDE,  besides various theoretical results
 of existence and uniqueness, we establish the so called {\it propagation of chaos} of 
 an associated interacting particle system and we develop
  a numerical scheme based on it. 
 The convergence of this algorithm is proved by propagation of chaos  and through the
  control of the time discretization error.  Finally, some numerical simulations illustrate 
 the practical interest of this new algorithm. 

The main contributions of this paper are twofold.
\begin{enumerate}
	\item We provide a refined analysis of existence and/or uniqueness of a solution to~\eqref{eq:NSDE} under a variety of regularity assumptions on the coefficients $\Phi$, $g$ and $\Lambda$. 
	This analysis faces two main difficulties.
	In the first equation composing the 
system~\eqref{eq:NSDE} the coefficients  depend on the density $u$, itself depending 
on $Y$. 
This is the standard
 situation already appearing in the context of classical McKean type equations  
 when  $u(t,\cdot)$ is characterized by the law of $Y_t, t \ge 0$.
 This situation can be recovered formally
here when the function $\Lambda \equiv 0$ and the mollifier $K=\delta_0$.  
In the second equation characterizing 
$u$ in~\eqref{eq:NSDE}, for a given process 
$Y\in \mathcal{C}^d := \mathcal{C}([0,T],\R^d)$, 
 $u$ also appears on the right-hand-side (r.h.s)
 via the \textit{weighting function} $\Lambda$. This additional difficulty is specific to our extended
 framework since
 in the standard McKean type equation, $\Lambda\equiv 0$ implies that $u(t,\cdot)$ is explicitely defined 
by the law  density of $Y_t$.  

In Section \ref{S3}, one shows existence and uniqueness of strong
 solutions of \eqref{eq:NSDE} when
$\Phi, g, \Lambda$ are Lipschitz.  This result is stated in Theorem \ref{prop:NSDE}.
The second equation of \eqref{eq:NSDE} can be rewritten as
\begin{equation} \label{NSDE3}
u(t,y) = 
\int_{\mathcal C^d} 
 K(y- \omega_t)
\exp \left \{\int_0^t\Lambda \big (s,\omega_s,u(s,\omega_s) \big ) ds\right \} dm(\omega) \ ,
\end{equation}
where $m = m_Y$ is the law of $Y$ on the canonical space
$\mathcal{C}^d$. 
In particular, given   a law $m$ on $\mathcal{C}^d$,
  using an original fixed point argument
 on stochastic processes $Z$ of the type $Z_t = u(t,X_t)$ where $X$ is the canonical process,
in Lemma \ref{lem:u}, we first study the existence of $u = u^m $ being solution
 of \eqref{NSDE3}.
A careful analysis in Lemma \ref{lem:uu'}
 is carried on the functional $(t,x,m) \mapsto u^m(t,x)$:
this associates  to each Borel probability measure $m $ on $\shc^d$,
  the solution of \eqref{eq:u}, which is  the second
line of \eqref{eq:NSDE}. 
  In particular that lemma describes carefully the dependence on all variables.
Then we consider the first equation of \eqref{eq:NSDE} using more 
standard arguments following Sznitman~\cite{sznitman}.  
 In Section \ref{S4}, we show strong existence 
of  \eqref{eq:NSDE} when $\Phi, g$ are Lipschitz and $\Lambda$ is only continuous, see Theorem \ref{thm_semi_weak}. Indeed,  uniqueness, however, does not hold if $\Lambda$ is only continuous, see Example \ref{E41}. 
In Section \ref{S5}, Theorem \ref{thm_weak} states   existence in law in all cases  when $\Phi, g, \Lambda$ are only continuous.
\item We introduce an interacting particle system associated to~\eqref{eq:NSDE} and prove that the propagation of chaos holds, under the assumptions of Section   \ref{S3}. This is  the object of  Section \ref{SChaos}, 
 see  Theorem \ref{TPC} and subsequent remarks. That theorem  also states the convergence of the solution  $u^m$ of \eqref{NSDE3},
when $m = S^N(\xi)$ is the  empirical measure of the particles to $u^{m_0}$, where $m_0$ is the law of the solution of \eqref{eq:NSDE},  in the $L^p, p = 2, +\infty$ mean error, in term of the number $N$ of particles.
We estimate in particular,   rates of   convergence
making use of  a refined analysis of the Lipschitz properties of $m \mapsto u^m$
 w.r.t.  various metrics on probability measures. 
 This crucial theorem is an obligatory step in a complete proof of the convergence of the stochastic particle algorithm:
it distinguishes clearly the control of the perturbation error induced by 
the approximation and the control of the propagation of this error through the particle dynamical system.
By our techniques, the proof of chaos propagation 
does not rely on the exchangeability property of the particles.
In Section \eqref{SPDE} we show that $u:= u^{m_0}$ verifies
 $u := K \ast \bar{v}$, 
where $\bar{v}$  solves the PIDE~\eqref{epide}.
 In Section~\ref{S8}, we propose an Euler discretization of the particle system
 dynamics and prove (Proposition \ref{prop:DiscretTime}) the convergence of this discrete time approximation to the continuous
 time interacting particle system by following the same lines of the propagation of chaos analysis, see Theorem \ref{TPC}.  
\end{enumerate}

The paper is organized as follows. After this introduction, 
we formulate the basic assumptions valid along the paper.
 Section \ref{S3}  is devoted to the existence and
uniqueness problem  when $\Phi, g, \Lambda$ are Lipschitz.
The propagation of chaos is discussed in Section \ref{SChaos}.
Sections \ref{S4} and \ref{S5} discuss the case when 
the coefficients are non-Lipschitz. Section \ref{SPDE}
establishes the link between \eqref{eq:NSDE} and
the integro partial-differential equation \eqref{epide}.
Finally, Section \ref{S8} provides  numerical simulations  illustrating the performances of the
 interacting particle system in approximating the PDE  \eqref{epde}, in a specific case where the solution
is explicitely known.

\section{Notations and assumptions}

\label{S2}

\setcounter{equation}{0}

Let us consider $\shc^d:=\mathcal{C}([0,T],\R^d)$ metrized by the supremum norm $\Vert \cdot \Vert_{\infty}$, equipped with its Borel $\sigma-$ field $\mathcal{B}(\shc^d) = \sigma(X_t,t \geq 0)$ (and $\mathcal{B}_t(\shc^d) := \sigma(X_u,0 \leq u \leq t)$ the canonical filtration) and endowed with the topology of uniform convergence, $X$ the canonical process on $\shc^d$ and $\mathcal{P}_k(\shc^d)$ the set of Borel probability measures on $\shc^d$ admitting a moment of order $k \in \N$. For $k=0$, $\mathcal{P}(\shc^d) := \mathcal{P}_0(\shc^d)$
 is naturally the Polish space (with respect to the weak convergence topology) of Borel probability measures on $\shc^d$ naturally equipped with its Borel $\sigma$-field $\mathcal{B}(\shp(\shc^d))$.
When $d=1$, we often omit it and we simply denote 
 $\mathcal{C} :=  \mathcal{C}^1$.\\
We recall that the Wasserstein distance of order $r$ 
 and respectively the \textit{modified Wasserstein distance of order $r$}
for $r \ge 2$,
  between $m$ and  $m'$ in $\mathcal{P}_r(\mathcal{C}^d) $, denoted by $W^r_{T}(m,m')$ (and resp. $\widetilde{W}^r_{T}(m,m')$) 
are such that
\begin{eqnarray}
\label{eq:Wasserstein}
(W^r_{t}(m,m')) ^{r} & := & \inf_{\mu\in \Pi(m,m')} \left \{ \int_{\mathcal{C}^d \times \mathcal{C}^d} \sup_{0 \leq s\leq t} \vert X_{s}(\omega) - 
X_{s}(\omega')\vert ^{r}  d\mu(\omega,\omega')   \right \}, \ t \in [0,T] \ , \\
\label{eq:WassersteinTilde}
 ( \widetilde{W}^r_{t}(m,m')) ^{r} & := & \inf_{\mu\in \widetilde{\Pi}(m,m')} \left \{ \int_{\mathcal{C}^d \times \mathcal{C}^d} \sup_{0 \leq s\leq t} \vert X_{s}(\omega) - X_{s}(\omega')\vert ^{r} \wedge 1 \; d\mu(\omega,\omega')   \right \}, \ t \in [0,T] \ , 
\end{eqnarray}
where $\Pi(m,m')$ (resp. $\widetilde{\Pi}(m,m')$) denotes the set of Borel probability measures in $\shp(\shc^d \times \shc^d)$ 
with fixed marginals $m$ and $m'$ belonging to $\shp_r(\shc^d)$ (resp. 
$\shp(\shc^d)$ ).
In this paper we will use very frequently the Wasserstein distances
of order $2$. For that reason, we will simply denote
$W_t: = W^2_t$ (resp. ${\tilde W}_t: = {\tilde W}^2_t$).
 \\
Given $N \in \N^{\star}$, $l \in \shc^d$, $l^1, \cdots, l^N \in \shc^d$, a significant role in this paper will be played by the Borel measures on $\shc ^d$ given by $\delta_l$ and $\displaystyle{ \frac{1}{N} \sum_{j=1}^N \delta_{l^j}}$. 
\begin{rem}
\label{RADelta}
Given $l^1,\cdots,l^N, \tilde{l}^1,\cdots, \tilde{l}^N \in \shc^{d}$, by definition of the Wasserstein distance we have, for all $t \in [0,T]$
\begin{eqnarray}
W_t \left(\frac{1}{N} \sum_{j=1}^N \delta_{l^j},\frac{1}{N} \sum_{j=1}^N \delta_{\tilde{l}^j} \right) \leq \frac{1}{N} \sum_{j=1}^N \sup_{0 \leq s \leq t} \vert l^j_s - \tilde{l}^j_s \vert^2 \nonumber \ .
\end{eqnarray}
\end{rem}
$\shc_b(\shc^d)$ will denote the space of bounded, continuous real-valued functions on $\shc^d$, for which the supremum norm will also be denoted by $\Vert \cdot \Vert_{\infty}$.
In the whole paper, $\R^d$ will be equipped with the scalar product $\cdot $ and $\vert x\vert $  will denote the induced Euclidean norm for $x \in \R^d$. 
$\mathcal{M}_f(\R^d)$ is the space of finite, Borel measures on $\R^d$. $\mathcal{S}(\R^d)$ is the space of Schwartz fast decreasing test functions and $\mathcal{S}'(\R^d)$ is its dual. $\mathcal{C}_b(\R^d)$ is the space of bounded, continuous functions on $\R^d$, $\mathcal{C}^{\infty}_0(\R^d)$ is the space of smooth functions with compact support. $\mathcal{C}^{\infty}_b(\R^d)$ is the space of bounded and smooth functions. $\mathcal{C}_0(\R^d)$ will represent the space of continuous functions with compact support in $\R^d$. 
 $W^{r,p}(\R^d)$ is the Sobolev space of order $r \in \N$ in $(L^p(\R^d),||\cdot||_{p})$, with $1 \leq p \leq \infty$. We denote
 by $(\phi^d_{n})_{n \geq 0}$
 an usual sequence of mollifiers
 $ \phi^d_{n}(x) = \frac{1}{\epsilon_n^d}\phi^d(\frac{x}{\epsilon_n})$ where,
$\phi^d$ is a non-negative function, belonging to the 
Schwartz space whose integral is $1$ and $(\epsilon_n)_{n \geq 0}$ a sequence of strictly positive reals verifying $\epsilon_n \xrightarrow[\text{$n \longrightarrow \infty$}]{\text{}} 0$. 
When $d=1$, we will simply write $\phi_n := \phi^1_n,  \phi := \phi^1.$\\
 $\mathcal{F}(\cdot): f \in \mathcal{S}(\R^d) \mapsto \mathcal{F}(f) \in \mathcal{S}(\R^d)$ will denote the Fourier transform on the classical Schwartz space $\mathcal{S}(\R^d)$ such that for all $\xi \in \R^d$,
 $$
 \mathcal{F}(f)(\xi) = \frac{1}{\sqrt{2 \pi}} \int_{\R^d} f(x) e^{-i \xi \cdot x} dx \ .
 $$ 
 We will denote in the same manner the corresponing Fourier transform on $\mathcal{S}'(\R^d)$ .\\
\begin{def} 
\label{DUnifCont}
A function $F:[0,T] \times \R^d \times \R \rightarrow \R$ will be said 
{\bf continuous with respect to} $(y,z) \in \R^d \times \R$ {(the \textit{space variables}) {\bf uniformly with
respect to} $t \in [0,T]$} if  for every $\varepsilon > 0$,
 there is $\delta > 0$, such that $\forall (y,z),(y',z') \in \R^d\times \R $
\begin{equation} 
\label{DUC}
|y-y'| + |z-z'| \leq \delta \Longrightarrow \; \forall t \in [0,T], \; \vert F(t,y,z)-F(t,y',z')\vert \leq \varepsilon.
\end{equation}
\end{def}

\begin{def} \label{def_r.m}
For any Polish space $E$, we will denote by $\shb(E)$ its Borel $\sigma$-field. It is well-known that $\shp(E)$ is also a Polish space with respect to the weak convergence topology, whose Borel $\sigma$-field will be denoted by $\shb(\shp(E))$ (see Proposition 7.20 and Proposition 7.23, Section 7.4 Chapter 7 in \cite{BertShre}). \\
For any fixed measured space $(\Omega,\shf)$, a map 
$\eta : (\Omega,\shf) \longrightarrow (\shp(E),\shb(\shp(E)))$ will be
 called {\bf{random measure}}  (or random kernel) if it is measurable. We will denote by $\shp_2^{\Omega}(E)$ the space of square integrable random measures, i.e., the space of random measures $\eta$ such that $\E[\eta(E)^2] < \infty$.
\end{def}
\begin{rem} \label{RMeasure}
As highlighted in Remark 3.20 in \cite{crauel} (see also Proposition 7.25 in \cite{BertShre}),
 previous definition is equivalent to the two following conditions:
\begin{itemize}
\item for each $\bar{\omega} \in \Omega$, $\eta_{\bar{\omega}} \in \shp(E)$,
\item for all Borel set $A \in \shb(\shp(E))$, $\bar{\omega} \mapsto \eta_{\bar{\omega}}(A)$ is $\shf$-measurable.
\end{itemize}
\end{rem}
\begin{rem} \label{RDelta}
Given $\R^d$-valued continuous processes $Y^1,\cdots, Y^n$, the application $\displaystyle{ \frac{1}{N} \sum_{j=1}^N \delta_{Y^j} }$ is a random measure on $\shp(\shc^d)$.
In fact $\delta_{Y^j}, 1 \le j  \le N$ is a random measure by Remark \ref{RMeasure}.
\end{rem}
As mentioned in the introduction $K: \R^d \rightarrow \R_+$
will be a mollifier such that $\int_{\R^d} K(x) dx = 1$.
Given a finite signed Borel measure $\gamma$ on $\R^d$,
$K * \gamma$ will denote the convolution function
$x \mapsto \gamma(K(x - \cdot))$. In particular if 
$\gamma$ is absolutely continuous with density 
${\dot \gamma}$, then
$(K * \gamma)(x) = \int_{\R^d} K(x-y) \dot \gamma(y)dy$.
In  this article, the following assumptions will be used. 
\begin{ass}
\label{ass:main} 
\begin{enumerate}
	\item   $\Phi$ and  $g$ are Lipschitz functions defined on $[0,T] \times \R^d \times \R$
 taking values respectively in $\R^{d\times p}$ (space of $d \times p$ matrices) and $\R^d$: there exist finite positive reals $L_\Phi$ and $L_g$ such that for any $(t,y,y',z,z')\in [0,T] \times \R^d \times \R^d \times \R \times \R$, we have
$$
\vert \Phi(t,y',z')-\Phi(t,y,z)\vert \leq L_\Phi (\vert z'-z\vert + \vert y'-y\vert) \quad \textrm{and}\quad \vert g(t,y',z')-g(t,y,z)\vert \leq L_g (\vert z'-z\vert + \vert y'-y\vert)\ .
$$
	\item $\Lambda$ is a Borel real valued function defined on $[0,T]\times \R^d\times \R$  Lipschitz w.r.t. the
 space variables: 
	there exists a finite positive real, $L_{\Lambda}$ such that  for any $(t,y,y',z,z')\in [0,T] \times \R^d\times \R^d\times \R \times \R$,
we have
$$
\vert \Lambda(t,y,z)-\Lambda(t,y',z')\vert \leq L_{\Lambda} (\vert y'-y\vert +\vert z'-z\vert )\ .
$$ 

	\item $\Phi$, $g$ and $\Lambda$ are supposed to be uniformly bounded:  there exist finite positive reals $M_\Phi$, $M_g$ and $M_\Lambda$ such that, for any $(t,y,z)\in [0,T] \times \R^d \times \R$
	
	\begin{enumerate}
	\item $$
\vert \Phi(t,y,z)\vert \leq M_\Phi, \quad \vert g(t,y,z)\vert \leq M_g,$$
\item $$ \vert \Lambda(t,y,z)\vert \leq M_\Lambda \ .
$$
\end{enumerate}

	\item $K : \R^d \rightarrow \R_+$ is integrable, Lipschitz, bounded and whose integral is 1: there exist finite positive reals $M_K$ and $L_K$ such that for any $(y,y')\in\R^d\times \R^d$
$$
\vert K(y) \vert \leq M_K, \quad \vert K(y')-K(y)\vert \leq L_K \vert y'-y\vert\ \quad \textrm{and} \quad \int_{\R^d} K(x) dx = 1 \ .
$$
\item  $\zeta_0$ is a fixed Borel probability measure on $\R^d$ admitting a second order moment. 
\end{enumerate}
\end{ass}

To simplify  we introduce the following notations. 
 \begin{itemize}   
 	\item $V\,:[0,T] \times {\mathcal C}^d \times \mathcal{C}\rightarrow \R$ defined for any pair of functions $y\in \mathcal{C}^d$ and $z\in \mathcal{C}$, by 
\begin{equation}
\label{eq:V}
V_t(y,z):=\exp \left ( \int_0^t \Lambda(s,y_s,z_s) ds\right )\quad \textrm{for any} \ t\in [0,T]\ .
\end{equation}
	\item The real valued process $Z$ such that $Z_s=u(s,Y_s)$, for any $s\in [0,T]$, will often be denoted by $u(Y)$. 
\end{itemize}
With these new notations,  the second equation in~\eqref{eq:NSDE} can be rewritten as
\begin{equation}
\label{eq:mu}
 \nu_t(\varphi)=\E[(K\ast \varphi)(Y_t)V_t(Y,u(Y))]\ ,\quad \textrm{for any}\ \varphi\in \mathcal{C}_b(\R^d,\R)\ ,\\ 
\end{equation}
where  $u(t,\cdot)=\frac{d\nu_t}{dx}$.   
\begin{rem}
\label{rem:V}
Under Assumption \ref{ass:main}. 3.(b), $\Lambda$ is bounded.
Consequently
 \begin{equation}
 \label{eq:Vmajor1}
 0\leq V_t(y,z)\leq e^{tM_{\Lambda}}\ ,\quad\textrm{for any}\ (t,y,z)\in [0,T]\times \R^d\times \R\ . 
 \end{equation}
 Under Assumption \ref{ass:main}. 2.
  $\Lambda$ is Lipschitz. Then $V$ inherits in some sense this property. Indeed, observe that for any $(a,b)\in \R^2$, 
 \begin{equation}        \label{EMajor1}
e^{b}-e^{a}=(b-a)\int_0^1 e^{\alpha b+(1-\alpha ) a}d\alpha 
\leq  e^{\sup (a, b)} \vert b-a \vert \ .
\end{equation}
Then for  any continuous functions $y,\,y'\in \mathcal{C}^d =\mathcal{C}([0,T],\R^d)$, and   $z,\,z'\in\mathcal{C}([0,T],\R)$, taking $a=\int_0^t\Lambda(s,y_s,z_s)ds$ and $b=\int_0^{t}\Lambda(s,y'_s,z'_s)ds$ in the above equality yields 
\begin{eqnarray}
\label{eq:Vmajor2}
\vert V_{t}(y',z')-V_t(y,z)\vert &\leq & e^{tM_{\Lambda}}\int_0^t\left \vert \Lambda(s,y'_s,z'_s)-\Lambda(s,y_s,z_s)\right \vert ds \nonumber \\
&\leq & e^{tM_{\Lambda}} L_{\Lambda} \, \int_0^t \left (\vert y'_s-y_s\vert +\vert z'_s-z_s\vert \right )ds \ .
\end{eqnarray}
\end{rem}

In Section \ref{S4}, Assumption \ref{ass:main}. will be replaced by the following.

\begin{ass}
\label{ass:main2} 

\begin{enumerate} 
\item All the items of Assumption \ref{ass:main} are in force excepted 2. which is replaced by the following.
	\item $\Lambda$ is a real valued function defined on $[0,T]\times \R^d\times \R$   continuous w.r.t. 
the space variables uniformly 
with respect to the time variable, see e.g. \eqref{DUC}.
\end{enumerate}
\end{ass}

\begin{rem} \label{RContmain}
 The second item in Assumption \ref{ass:main2}. is fulfilled
if the function $\Lambda$ is continuous with respect to 
$(t,y,z) \in [0,T] \times \R^d \times \R$.
\end{rem}

In Section \ref{S5} we will treat the case when only weak solutions (in law)
exist. In this case we will assume the following.

\begin{ass} 
\label{ass:main3} 
All the items of Assumption \ref{ass:main}. are in force excepted 5.
 and the items 1. and 2. 
which are replaced by the following. 
 $\Phi : [0,T] \times \R^{d} \times \R 
 \longrightarrow \R^{d \times p}, \quad
g : [0,T] \times \R^d \times \R \longrightarrow \R^d$ and 
 $\Lambda: [0,T]\times \R^d\times \R  \rightarrow \R$
are   continuous with respect to
  the space variables uniformly with respect to the time variable.

\end{ass}


\begin{defi} \label{def-strong-sol}
\begin{enumerate}
\item
We say that  \eqref{eq:NSDE} admits {\bf strong existence}
if for any filtered probability space 
$(\Omega, \mathcal{F},\mathcal{F}_t,\mathbb{P})$ equipped with an
$(\mathcal{F}_t)_{t \geq 0}$-Brownian motion $W$, an $\shf_0$-random variable
$Y_0$ distributed according to $\zeta_0$, there is a couple $(Y,u)$
where  $Y$ is an  $(\mathcal{F}_t)_{t \geq 0}$-adapted process
 and   $u:[0,T] \times \R^d \rightarrow \R $, 
verifies \eqref{eq:NSDE}. 
\item We say that  \eqref{eq:NSDE} admits {\bf pathwise uniqueness}
if for any filtered probability space 
$(\Omega, \mathcal{F},\mathcal{F}_t,\mathbb{P})$ equipped with an
$(\mathcal{F}_t)_{t \geq 0}$-Brownian motion $W$, an $\shf_0$-random variable
$Y_0$ distributed according to $\zeta_0$, the following holds.
Given two  pairs $(Y^1,u^1)$ and $(Y^2,u^2)$
as in item 1., verifying \eqref{eq:NSDE} such that
$Y^1_0 = Y^2_0$ $\P$-a.s. then $u^1 = u^2$ and $Y^1$ and $Y^2$
are indistinguishable.
\end{enumerate}
 \end{defi}

\begin{defi} \label{def-weak-sol}
\begin{enumerate}
\item
We say that  \eqref{eq:NSDE} admits {\bf existence in law} 
(or {\bf weak existence}) if 
there is a filtered probability space 
$(\Omega, \mathcal{F}, \mathcal{F}_t,\mathbb{P})$ equipped with an
$(\mathcal{F}_t)_{t \geq 0}$-Brownian motion $W$, 
 a  pair $(Y,u)$,  verifying \eqref{eq:NSDE}, where 
$Y$ is an $(\mathcal{F}_t)_{t \geq 0}$-adapted process
 and   $u$ is a real valued function defined on $[0,T] \times \R^d$. 
\item We say that  \eqref{eq:NSDE} admits {\bf  uniqueness in law}
(or {\bf  weak uniqueness}), if the following holds.
Let
$(\Omega, \mathcal{F},\mathcal{F}_t,\mathbb{P})$
(resp. $(\tilde{\Omega}, \tilde{\mathcal{F}},\tilde{\mathcal{F}}_t,\tilde{\mathbb{P}})$)
be a filtered probability space.
Let $(Y^1, u^1)$ (resp. $(\tilde{Y}^2, \tilde{u}^2)$)
be a solution of  \eqref{eq:NSDE}. 
Then $u^1=\tilde{u}^2$ and $Y^1$ and $\tilde{Y}^2$ have the same law.
\end{enumerate}
\end{defi}

\section{Existence and uniqueness of the problem in the Lipschitz case}

\setcounter{equation}{0}

\label{S3}

In this section we will fix a probability space $(\Omega, {\mathcal F},{\mathcal F}_t, \P)$
equipped with an $({\mathcal F}_t)$-Brownian motion $(W_t)$.
We will proceed in two steps. We study first in the next section the second equation of \eqref{eq:NSDE} defining $u$. Then we will address the main equation defining the process $Y$. 

Later in this section, Assumption \ref{ass:main} will be in force, in particular $\zeta_0$ will be supposed to have a second order moment.

\subsection{Existence and uniqueness of a solution to the linking equation}

This subsection relies only on items $2.$, $3.(b)$ and $4.$ of Assumption \ref{ass:main}. \\ 
We remind that $X$ will denote the canonical process
$X: {\mathcal C}^d \rightarrow  {\mathcal C^d}$
defined by $X_t(\omega) =\omega(t), \ t \ge 0, \omega \in {\mathcal C^d}$.

For a given probability measure $m\in \mathcal{P}(\mathcal{C}^d)$, let us consider the equation 
\begin{equation}
\label{eq:u}
\left\{
\begin{array}{l}
u(t,y)=\int_{\mathcal{C}^d} K(y- X_t(\omega))V_t(X(\omega),u(X(\omega)))
 dm(\omega)\ ,\quad \textrm{for all}\quad t\in [0,T], y \in \R^d \ ,\quad \textrm{with}\\\\
V_t(X(\omega),u(X(\omega)))=\exp \left ( \int_0^t \Lambda \big (s,
X_s(\omega),u(s,X_s(\omega))\big ) ds\right )\ .
\end{array}
\right .
\end{equation}
This equation will be called {\bf linking equation}: it constitutes  the second line of equation \eqref{eq:NSDE}. When $\Lambda = 0$, 
i.e. in the conservative case, $u(t,\cdot) = K * m_t$, where $m_t$ is 
the marginal law of $X_t$ under $m$. Informally speaking, when $K$
is the Delta Dirac measure, then $u(t,\cdot) = m_t$.


\begin{rem} \label{RLipeq:u}
Since $\Lambda$ is bounded, and $K$ Lipschitz, it is clear that if $u:=u^m$ is a solution of \eqref{eq:u}
then $u$ is necessarily bounded by a constant, only depending on $M_\Lambda,M_K,T$ and it is Lipschitz with respect to the second
argument with Lipschitz constant only depending on $L_K,M_\Lambda,T$.
\end{rem}
%
We aim at proving by a fixed point argument the following result. 
\begin{lem}
\label{lem:u}
We assume the validity of items $2.$, $3.(b)$ and $4.$ of Assumption \ref{ass:main}. \\ 
For a given probability measure $m\in \mathcal{P}(\mathcal{C}^d)$, equation~\eqref{eq:u} admits a unique solution, $u^m$. 
\end{lem}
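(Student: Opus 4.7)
The plan is a Picard--Banach argument carried out, as suggested in the introduction, on the process $Z_t := u(t,X_t)$ defined on the canonical space $(\mathcal{C}^d,m)$ rather than on the function $u$ itself. I would introduce the Banach space $\mathcal{H}_T$ of Borel bounded processes $Z:[0,T]\times \mathcal{C}^d\to \R$, identified modulo $dt\otimes dm$-negligible sets, equipped with
\[
\|Z\|_{\mathcal{H}_T} := \sup_{t\in [0,T]} \|Z_t\|_{L^\infty(m)}.
\]
For $Z\in \mathcal{H}_T$ associate the function
\[
u_Z(t,y) := \int_{\mathcal{C}^d} K\bigl(y-X_t(\omega)\bigr)\, V_t\bigl(X(\omega),Z(\omega)\bigr)\,dm(\omega),
\]
and define $\Phi:\mathcal{H}_T\to \mathcal{H}_T$ by $\Phi(Z)_t := u_Z(t,X_t)$. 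Items 3.(b) and 4. of Assumption \ref{ass:main} yield $\|u_Z\|_\infty \le M_K\, e^{TM_\Lambda}$, so $\Phi$ is well-defined and takes values in $\mathcal{H}_T$.

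Next, I would establish a Gronwall-type bound. Inequality \eqref{eq:Vmajor2} applied pointwise in $\omega$ with the common path $X(\omega)$ gives
\[
|V_t(X(\omega),Z(\omega)) - V_t(X(\omega),Z'(\omega))| \le e^{TM_\Lambda}\,L_\Lambda \int_0^t |Z_s(\omega)-Z'_s(\omega)|\,ds,
\]
and combining with $K\le M_K$ and Fubini produces, uniformly in $y$,
\[
|u_Z(t,y)-u_{Z'}(t,y)| \le C\int_0^t \|Z_s-Z'_s\|_{L^\infty(m)}\,ds, \qquad C:=M_K\, L_\Lambda\, e^{TM_\Lambda}.
\]
Hence $\|\Phi(Z)_t-\Phi(Z')_t\|_{L^\infty(m)} \le C\int_0^t \|Z_s-Z'_s\|_{L^\infty(m)}\,ds$, and an immediate induction yields
\[
\|\Phi^n(Z)_t-\Phi^n(Z')_t\|_{L^\infty(m)} \le \frac{(Ct)^n}{n!}\,\|Z-Z'\|_{\mathcal{H}_T}.
\]
For $n$ large, $(CT)^n/n!<1$, so $\Phi^n$ is a strict contraction on $\mathcal{H}_T$, and the iterated form of the Banach fixed-point theorem delivers a unique fixed point $Z^\ast\in \mathcal{H}_T$.

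I would then set $u^m := u_{Z^\ast}$. The fixed-point identity $Z^\ast_t = u^m(t,X_t)$ rewrites exactly as the linking equation \eqref{eq:u} for $u^m$. For uniqueness, suppose $u$ is any solution of \eqref{eq:u}; Remark \ref{RLipeq:u} ensures that $u$ is bounded, hence $Z := u(\cdot,X_\cdot)$ belongs to $\mathcal{H}_T$ and is itself a fixed point of $\Phi$. By uniqueness of the fixed point in $\mathcal{H}_T$ we get $Z = Z^\ast$, and since $u_Z(t,y)$ depends only on the $m$-equivalence class of $Z$, substituting back into \eqref{eq:u} yields $u(t,y) = u_Z(t,y) = u_{Z^\ast}(t,y) = u^m(t,y)$ for every $y\in \R^d$.

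The main subtlety I anticipate is the choice of function space. Working directly on functions on $[0,T]\times \R^d$ is possible, but the natural formulation as a fixed point on the process $Z$ defined on $(\mathcal{C}^d,m)$ is tailored to the composition $\Lambda(s,X_s,u(s,X_s))$ appearing in $V$ and makes the Gronwall step immediate. The smoothing convolution by $K$ in $u_Z$ then ensures that the $m$-null-set ambiguity inherent to the process-level fixed point does not produce any indeterminacy for the recovered function $u^m$ on the whole of $\R^d$.
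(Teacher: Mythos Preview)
Your argument is correct and follows essentially the same route as the paper: a Banach fixed-point argument on the process $Z=u(\cdot,X_\cdot)$ on the canonical space $(\mathcal{C}^d,m)$, followed by recovering $u^m$ as $u_{Z^\ast}$ and reading off uniqueness from uniqueness of the fixed point. The only cosmetic differences are that the paper works in the space of continuous processes under the norm $\E^m[\sup_{t\le T}|Z_t|]$ and obtains the contraction by passing to an exponentially weighted norm $\E^m[\sup_t e^{-Mt}|Z_t|]$ rather than by iterating $\Phi^n$.
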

\begin{proof} 
Let us introduce the linear space $\mathcal{C}_1$ of real valued continuous processes $Z$ on $[0,T]$ (defined on the canonical space $\mathcal{C}^d$)
 such that 
$$
\Vert Z\Vert_{\infty , 1}:=\E^m \,\left[\,\sup_{t\leq T}
 \vert Z_t\vert\, \right] := \int_{\mathcal C^d} \; \sup_{0 \leq t \leq T} 
\vert Z_t(\omega) \vert dm(\omega)
  <\infty\ .
$$
$(\mathcal{C}_1,\Vert \cdot\Vert_{\infty ,1})$ is a Banach space.
For any $M \ge 0$, 
a well-known equivalent norm to $\Vert\cdot\Vert_{\infty ,1}$ is given by
 $\Vert \cdot \Vert^M_{\infty ,1}$, where
$\Vert Z \Vert^M_{\infty ,1}=\E^m\,[\,\sup_{t\leq T}\, e^{-Mt} \vert Z_t\vert\,]. $
%
Let us define the operator $T^m: \mathcal{C}_1\rightarrow \mathcal{C}([0,T] \times \R^d, \R)$ such that for any $Z\in \mathcal{C}_1$,
\begin{equation}
\label{eq:T}
T^m(Z)(t,y):=\int_{\mathcal{C}^d} K\big (y-X_t(\omega)\big ) 
V_t\big (X(\omega),Z(\omega)\big ) dm(\omega)\ .
\end{equation}
 Then we introduce the operator
$\tau : f \in \mathcal{C}([0,T] \times \R^d,\R) \mapsto \tau(f) \in \mathcal{C}_{1}$, where $\tau(f)_t(\omega) = f(t,\omega_t)$.
We observe that $\tau \circ T^m $ is a map
  $\mathcal{C}_1\rightarrow \mathcal{C}_1$.

Notice that equation~\eqref{eq:u} is equivalent to 
\begin{equation}
\label{eq:ubis}
u = (T^m \circ \tau)(u).
\end{equation}


We first admit the existence and uniqueness of a fixed point 
$Z\in {\mathcal C}_1$
for the map $\tau \circ T^m$. In particular we have 
 $Z=(\tau \circ T^m)(Z)$.   
We can now  deduce the existence/uniqueness for the function $u$
for  problem \eqref{eq:ubis}. \\
Concerning  existence, we choose 
 $v^m := T^m(Z)$. 
Since $Z$ is a fixed-point of the map $\tau \circ T^m$, by the definition of $v^m$ 
we have
\begin{equation}
\label{eq:Z}
Z = \tau(T^m(Z)),  
\end{equation}
so that  $v^m$ is a solution of \eqref{eq:ubis}. 
  \\
Concerning uniqueness of \eqref{eq:ubis},
 we consider two solutions of \eqref{eq:u} $\bar{v},\tilde{v}$, i.e.  such that 
$\bar{v} = (T^m \circ \tau)(\bar{v}), \tilde{v} = (T^m \circ \tau)(\tilde{v})  $.
We set $\bar{Z} := \tau(\bar{v}), \tilde{Z} := \tau(\tilde{v})$.
Since $\bar{v} = T^m(\bar{Z})$ we have
$\bar{Z} = \tau(\bar{v}) = \tau(T^m(\bar{Z}))$. Similarly
  $\tilde{Z} = \tau(\tilde{v}) = \tau(T^m(\tilde{Z}))$. Since 
$\bar{Z}$ and $\tilde{Z}$ are fixed points of $\tau \circ T$,
it follows that $\bar{Z} = \tilde{Z} \ dm$ a.e.
Finally $\bar{v} =  T^m(\bar{Z}) =  T^m(\tilde{Z}) = \tilde{v}$.

It remains finally to prove that $\tau \circ T^m $ admits a unique 
fixed point, $Z$. \\
The upper bound~\eqref{eq:Vmajor2} implies that for any pair $(Z,Z')\in\mathcal{C}_1\times \mathcal{C}_1$, for any $(t,y)\in [0,T]\times \R^d$,
\begin{eqnarray*}
\vert T^m(Z')-T^m(Z)\vert (t,y)&=&
\left \vert \int_{\mathcal{C}^d} K(y-X_t(\omega)) \left [V_t(X(\omega),Z'(\omega))-V_t(X(\omega),Z(\omega))\right ] dm(\omega) \right \vert \ \\
&\leq & 
M_Ke^{tM_{\Lambda}}L_{\Lambda}\int_{\mathcal{C}^d} \int_0^t\vert Z'_s(\omega)-Z_s(\omega)\vert ds\,dm(\omega) \\
&\leq & 
M_Ke^{TM_\Lambda}L_{\Lambda}\E \left [\int_0^t e^{Ms}e^{-Ms}\vert Z'_s-Z_s\vert ds\right ]\\
&\leq & 
M_Ke^{TM_\Lambda }L_{\Lambda}\E\left [ \int_0^t e^{Ms}\sup_{r\leq t} e^{-Mr}\vert Z'_r-Z_r\vert ds\right ]\\
&\leq & 
M_Ke^{TM_\Lambda }L_{\Lambda}\frac{e^{Mt}-1}{M}\E \left [\sup_{r\leq t}  e^{-Mr}\vert Z'_r-Z_r\vert \right ]\\
&\leq & 
M_Ke^{TM_\Lambda }L_{\Lambda}\frac{e^{Mt}-1}{M}\Vert Z'-Z\Vert_{\infty , 1}^M \ .
\end{eqnarray*}
Then considering  $(\tau \circ T^m)(Z')_t = 
  T^m((Z')(t,X_t)$ and $(\tau \circ T^m) (Z)_t=T(Z)(t,X_t)$, we obtain 
\begin{eqnarray*}
\sup_{t\leq T} e^{-Mt}\left \vert (\tau \circ T^m) (Z')_t- 
 (\tau \circ T^m)(Z)_t\right \vert 
&=&
\sup_{t\leq T} e^{-Mt}\left \vert T^m(Z')(t,X_t)-T^m(Z)(t,X_t)\right \vert\\
&\leq &
M_Ke^{TM_\Lambda }L_{\Lambda}\frac{1}{M}\Vert Z'-Z\Vert^M_{\infty , 1}\ .
\end{eqnarray*}
Taking the expectation yields $\vert  (\tau \circ T^m)(Z')_t-
    (\tau \circ T^m)(Z)_t\Vert^M_{\infty ,1}\leq M_Ke^{TM_\Lambda }L_{\Lambda}\frac{1}{M}\Vert Z'-Z\Vert^M_{\infty , 1}$. 
Hence, as soon as $M$ is sufficiently  large, $M>M_Ke^{TM_\Lambda }L_{\Lambda}$, 
 $(\tau \circ T^m)$  is a contraction on $(\mathcal{C}_1, \Vert \cdot \Vert^M_{\infty ,1})$ and the proof ends  by a simple application of the Banach fixed point theorem. 
%
%
\end{proof}
\begin{rem} \label{RtCont}
For $(y, m) \in  \R^d \times \mathcal{P}(\mathcal{C}^d)$,
$t \mapsto u^m (t,y)$ is continuous. This follows by an application of Lebesgue dominated convergence theorem 
in \eqref{eq:u}.
\end{rem}

In the sequel, we will need a  stability result on $u^m$ solution of~\eqref{eq:u}, w.r.t. the probability measure $m$.

The fundamental lemma treats this  issue, again  only supposing
 the validity of items $2.$, $3.(b)$ and $4.$ of Assumption \ref{ass:main}. 

\begin{lem}
\label{lem:uu'}
We assume the validity of items $2.$, $3.(b)$ and $4.$ of Assumption \ref{ass:main}. \\ 
Let $u$ be a solution of~\eqref{eq:u}. The following assertions hold. 
\begin{enumerate} 
\item For any measures $(m,m')\in \mathcal{P}_2(\mathcal{C}^d)\times \mathcal{P}_2(\mathcal{C}^d)$, for all $(t,y,y') \in [0,T] \times \shc^d \times \shc^d$,
we have
\begin{eqnarray}
\label{eq:uu'}
\vert u^m\big (t,y\big )-u^{m'}\big (t,y'\big )\vert^2\leq C_{K,\Lambda}(t)\left [ \vert y-y'\vert^2+\vert W_t(m,m')\vert ^2\right ] \ ,
\end{eqnarray}
where 
$C_{K,\Lambda}(t):=2C'_{K,\Lambda}(t)(t+2)(1+e^{2tC'_{K,\Lambda}(t)})$ with $C'_{K,\Lambda}(t)=2e^{2tM_{\Lambda}}(L^2_K+2M^2_KL^2_\Lambda t )$.
In particular the functions $C_{K, \Lambda}$ only depend on $M_K, L_K, M_\Lambda, L_\Lambda$ and $t$ and is increasing with $t$.
\item For any measures $(m,m') \in \shp(C^d) \times \shp(C^d)$,  for all $(t,y,y') \in [0,T] \times \shc^d \times \shc^d$, we have
\begin{eqnarray}
\label{eq:uu'2}
\vert u^m\big (t,y\big )-u^{m'}\big (t,y'\big )\vert^2\leq \mathfrak{C}_{K,\Lambda}(t)\left [ \vert y-y'\vert^2+\vert \widetilde{W}_t(m,m')\vert ^2\right ] \ ,
\end{eqnarray}
where $\mathfrak{C}_{K,\Lambda}(t) := 2e^{2tM_{\Lambda}}(\max(L_K,2M_K)^2 + 2M_{K}^2\max(L_{\Lambda},2M_{\Lambda})^2 t ) $.
 \item The function $ (m,t,x) \mapsto u^m(t,y)$ is continuous on $\shp(\shc^d) \times [0,T] \times \R^d$ where $\shp(\shc^d)$ is endowed with the topology of weak convergence.
\item Suppose 
that $K \in W^{1,2}(\R^d)$. Then for any $(m,m')\in \mathcal{P}_2(\mathcal{C}^d)\times \mathcal{P}_2(\mathcal{C}^d)$, $t \in [0,T]$  
\begin{eqnarray}
\label{uu'L2}
\Vert u^m(t,\cdot)-u^{m'}(t,\cdot)\Vert_2^2 
 \leq   \tilde{C}_{K,\Lambda}(t)(1+2tC_{K,\Lambda}(t)) \vert W_t(m,m') \vert^2 \ ,
\end{eqnarray}
where 
$C_{K,\Lambda}(t):=2C'_{K,\Lambda}(t)(t+2)(1+e^{2tC'_{K,\Lambda}(t)})$ with $C'_{K,\Lambda}(t)=2e^{2tM_{\Lambda}}(L^2_K+2M^2_KL^2_\Lambda t )$ and
$\tilde{C}_{K,\Lambda}(t) := 2 e^{2tM_{\Lambda}}(2M_K L_{\Lambda}^2 t(t+1) 
+ \Vert \nabla K \Vert_2^2 )$, $\Vert\cdot\Vert_2$ being the standard 
$L^2(\R^d)$ or $L^2(\R^d,\R^d)$-norms. \\
In particular the functions $\tilde C_{K, \Lambda}$ 
 only depend on $M_K, L_K, M_\Lambda, L_\Lambda$ and $t$ and is increasing with $t$.
\item Suppose 
that $\shf(K) \in L^1(\R^d)$. 
 Then there exists a constant $\bar{C}_{K,\Lambda}(t) > 0$ (depending only on $t,M_{\Lambda},L_{\Lambda},\Vert \shf(K) \Vert_1$) such that for any random measure $\eta : (\Omega,\shf) \longrightarrow (\shp_2(\shc^d),\shb(\shp(\shc^d)))$, for all $(t,m) \in [0,T] \times \shp_2(\shc^d)$
\begin{eqnarray}
\label{eq:uu'Linf}
  \E{ [  \Vert u^{\eta}(t,\cdot) - u^m(t,\cdot) \Vert_{\infty}^2 ] } & \leq & \bar{C}_{K,\Lambda}(t) \sup_{ \underset{\Vert \varphi \Vert_{\infty} \le 1}{\varphi \in \shc_b(\shc^d)}} 
 \E{ [ \vert \langle \eta - m , \varphi \rangle \vert^2 ]} \ ,
\end{eqnarray}
where we recall that $\shp(\shc^d)$ is endowed with the topology of weak convergence.
We remark that the expectation in both sides of \eqref{eq:uu'Linf} is taken w.r.t. the randomness of the random measure $\eta$.
\end{enumerate}
\end{lem}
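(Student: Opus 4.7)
The overall strategy is to establish the five items in sequence, using the defining fixed-point identity \eqref{eq:u} and the Lipschitz-type bound \eqref{eq:Vmajor2} on $V$. Items~1 and~2 are pointwise estimates produced by a common coupling argument followed by Gronwall; item~3 is a consequence of item~2 together with Remark~\ref{RtCont}; items~4 and~5 lift the pointwise bound to the $L^{2}$ and $L^{\infty}$ norms in $y$ via a convolution estimate and a Fourier representation, respectively.

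\textbf{Items 1--3.} Given any coupling $\mu\in\Pi(m,m')$ (or $\widetilde{\Pi}(m,m')$ in item~2), write
\[
u^{m}(t,y)-u^{m'}(t,y')=\int\bigl[K(y-X_{t}(\omega))V_{t}(X(\omega),u^{m}(X(\omega)))-K(y'-X_{t}(\omega'))V_{t}(X(\omega'),u^{m'}(X(\omega')))\bigr]d\mu(\omega,\omega'),
\]
split the integrand as $(K_{1}V_{1}-K_{2}V_{1})+(K_{2}V_{1}-K_{2}V_{2})$, and bound each summand via the Lipschitz property of $K$ (factor $L_{K}e^{tM_{\Lambda}}(|y-y'|+|X_{t}(\omega)-X_{t}(\omega')|)$) and via \eqref{eq:Vmajor2} (factor $M_{K}e^{tM_{\Lambda}}L_{\Lambda}\int_{0}^{t}[\,|X_{s}-X'_{s}|+|u^{m}(s,X_{s})-u^{m'}(s,X'_{s})|\,]ds$). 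Squaring, using $(a+b)^{2}\leq 2a^{2}+2b^{2}$ and Jensen's inequality in the time integral, integrating against $\mu$, taking the infimum over couplings and applying Gronwall in~$t$ yields \eqref{eq:uu'}. Item~2 is identical once one replaces $|K(a)-K(b)|\leq L_{K}|a-b|$ by $|K(a)-K(b)|\leq \max(L_{K},2M_{K})(|a-b|\wedge 1)$, and similarly for $\Lambda$, so that the Wasserstein distance is replaced by $\widetilde{W}_{t}$. Item~3 then follows: $\widetilde{W}_{T}$ metrizes weak convergence on $\shp(\shc^{d})$, hence $(m_{n},t_{n},y_{n})\to(m,t,y)$ implies $\widetilde{W}_{T}(m_{n},m)\to 0$, and combining \eqref{eq:uu'2} with the joint continuity of $(t,y)\mapsto u^{m}(t,y)$ (Remark~\ref{RtCont} for $t$, item~1 with $m=m'$ for $y$) gives the claimed continuity.

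\textbf{Item 4.} Repeat the splitting above with $y=y'$ and integrate in $y$ over $\R^{d}$. For the first piece, a change of variable and the standard Sobolev estimate for translations of a $W^{1,2}$-function give
\[
\int_{\R^{d}}|K(y-X_{t}(\omega))-K(y-X_{t}(\omega'))|^{2}dy\leq \|\nabla K\|_{2}^{2}\,|X_{t}(\omega)-X_{t}(\omega')|^{2}.
\]
For the second piece, use $\int K(y-X_{t}(\omega'))^{2}dy=\|K\|_{2}^{2}\leq M_{K}\|K\|_{1}=M_{K}$, together with \eqref{eq:Vmajor2} and item~1 applied pointwise to bound $|u^{m}(s,X_{s}(\omega))-u^{m'}(s,X_{s}(\omega'))|^{2}$ by $C_{K,\Lambda}(s)(|X_{s}-X'_{s}|^{2}+W_{s}^{2}(m,m'))$. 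Taking the infimum over couplings realizing $W_{t}(m,m')$ then produces \eqref{uu'L2}.

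\textbf{Item 5.} The key idea is Fourier inversion: since $\shf(K)\in L^{1}(\R^{d})$,
\[
u^{\nu}(t,y)=(2\pi)^{-d/2}\int\shf(K)(\xi)\,e^{i\xi\cdot y}\,\phi^{\nu}(\xi)\,d\xi,\qquad \phi^{\nu}(\xi):=\int e^{-i\xi\cdot X_{t}(\omega)}V_{t}(X(\omega),u^{\nu}(X(\omega)))\,d\nu(\omega).
\]
Decompose $\phi^{\eta}(\xi)-\phi^{m}(\xi)=\int e^{-i\xi\cdot X_{t}}[V_{t}(X,u^{\eta}(X))-V_{t}(X,u^{m}(X))]d\eta+\langle\eta-m,\varphi_{\xi}\rangle$ with $\varphi_{\xi}(\omega):=e^{-i\xi\cdot X_{t}(\omega)}V_{t}(X(\omega),u^{m}(X(\omega)))$, which satisfies $\|\varphi_{\xi}\|_{\infty}\leq e^{tM_{\Lambda}}$. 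The first term is bounded by $e^{tM_{\Lambda}}L_{\Lambda}\int_{0}^{t}\|u^{\eta}(s,\cdot)-u^{m}(s,\cdot)\|_{\infty}ds$ via \eqref{eq:Vmajor2}. Taking modulus, passing to $\sup_{y}$ (which removes $|e^{i\xi\cdot y}|=1$), applying Cauchy--Schwarz with the weight $|\shf(K)(\xi)|d\xi$, then taking expectation and splitting the complex function $\varphi_{\xi}/e^{tM_{\Lambda}}$ into its real and imaginary parts (each lying in the unit ball of $\shc_{b}(\shc^{d})$) produces
\[
\E[\|u^{\eta}(t,\cdot)-u^{m}(t,\cdot)\|_{\infty}^{2}]\leq C_{1}(t)\int_{0}^{t}\E[\|u^{\eta}(s,\cdot)-u^{m}(s,\cdot)\|_{\infty}^{2}]ds+C_{2}(t)\sup_{\|\varphi\|_{\infty}\leq 1}\E[|\langle\eta-m,\varphi\rangle|^{2}],
\]
with constants depending only on $t,M_{\Lambda},L_{\Lambda},\|\shf(K)\|_{1}$; Gronwall in~$t$ then yields \eqref{eq:uu'Linf}. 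The main subtleties I anticipate are the treatment of the complex-valued $\varphi_{\xi}$ (so that the right-hand side is genuinely controlled by real test functions of unit supremum norm) and the measurability in $\eta$ of the $\xi$-integrals, which follows from Fubini together with Remark~\ref{RMeasure}.
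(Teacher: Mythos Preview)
Your proof follows the paper's strategy throughout: coupling plus Gronwall for items 1--2, the $W^{1,2}$ translation bound $\|K(\cdot-a)-K(\cdot-b)\|_2\le\|\nabla K\|_2|a-b|$ (which the paper derives via Plancherel) for item~4, and Fourier inversion plus Gronwall for item~5. One small correction in item~1: the infimum over couplings must come \emph{after} Gronwall---the paper fixes $\mu$, substitutes $y=X_t(\omega)$, $y'=X_t(\omega')$, applies Gronwall to $\gamma_\mu(t):=\int|u^m(t,X_t)-u^{m'}(t,X'_t)|^2\,d\mu$, plugs the resulting bound back into the pointwise estimate, and only then takes the infimum over $\mu$; reversing the order as you wrote it would not close the loop.

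In item~5 your decomposition differs from the paper's in a way worth noting. The paper writes $\phi^\eta-\phi^m$ as $\langle\eta-m,\varphi\rangle$ with $\varphi$ built from $u^{\eta_{\bar\omega}}$ (hence \emph{random}) plus a $V$-difference integrated against $m$; you instead integrate the $V$-difference against $\eta$ and pair $\eta-m$ with the \emph{deterministic} $\varphi_\xi$ built from $u^m$. Your choice is cleaner: bounding $\E[|\langle\eta-m,\varphi_\xi\rangle|^2]$ by $\sup_{\|\psi\|_\infty\le1}\E[|\langle\eta-m,\psi\rangle|^2]$ is immediate when $\varphi_\xi$ is deterministic (after the real/imaginary split you mention), whereas with a random test function this passage from a pointwise sup to $\sup_\psi\E[\cdot]$ is more delicate.
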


\begin{rem}
\label{R25}
\begin{enumerate}[a)]
\item By Corollary 6.13, Chapter 6 in \cite{villani}, $\widetilde{W}_T$ is a metric compatible with the weak convergence on $\shp(\shc^d)$. 
\item The map $\displaystyle{ d_2^{\Omega} : (\nu,\mu) \mapsto \sqrt{\sup_{ \underset{\Vert \varphi \Vert_{\infty} \le 1}{\varphi \in \shc_b(\shc^d)}} \E[ \vert \langle \nu - \mu, \varphi \rangle \vert^2 ] } }$ defines a (homogeneous) distance on $\shp_2^{\Omega}(\shc^d)$.
\item Previous distance satisfies
\begin{equation} \label{dWTVillani}
d_2^{\Omega}(\nu,\mu) \leq \sqrt{\E[\vert W_T^1(\nu,\mu) \vert^2]} \ ,
\end{equation}
where $W^1_T$ is the $1$-Wasserstein distance. \\
Indeed, for fixed $\bar \omega \in \Omega$, taking into account that
 $(\shc^d,\nu_{\bar{\omega}})$ and $(\shc^d,\mu_{\bar{\omega}})$ are Polish probability spaces, the first equality of (i) in the  Kantorovitch duality theorem, see 
 Theorem 5.10 p.70
 in \cite{villani}, which in particular implies  the following. 
For any $\varphi  \in  \shc_b(\shc^d)$ we have 
$$\vert \langle \nu - \mu, \varphi \rangle \vert \le  W_T^1(\nu,\mu),$$ 
which implies \eqref{dWTVillani}.
\item The map $(\nu,\mu) \mapsto \sqrt{\E[\vert W_T^1(\nu,\mu) \vert^2]}$ defines a distance on $\shp_2^{\Omega}(\shc^d)$.
\item Item 1. of Lemma \ref{lem:uu'} is a consequence of item 2. 
For expository reasons, we have decided to start with the less general case.
\end{enumerate}
\end{rem}
\begin{proof}[ Proof of Lemma~\ref{lem:uu'}]
We will prove successively the inequalities \eqref{eq:uu'}, \eqref{eq:uu'2}, \eqref{uu'L2} and \eqref{eq:uu'Linf}. \\
Let us consider $(t,y,y')\in [0,T]\times \R^d\times \R^d$.
\begin{itemize}
\item {\bf {Proof of \eqref{eq:uu'} }}.
Let $(m,m')\in \mathcal{P}_2(\mathcal{C}^d)\times \mathcal{P}_2(\mathcal{C}^d)$. \\
We have
\begin{equation}
\label{eq:uu'major}
\vert u^m(t,y)-u^{m'}(t,y')\vert^2 \leq 2\vert u^m(t,y)-u^m(t,y')\vert ^2+2\vert u^m(t,y')-u^{m'}(t,y')\vert ^2\ .
\end{equation}
The first term on the r.h.s. of the above equality is bounded using the Lipschitz property of $u^m$ that derives straightforwardly from the Lipschitz property of the mollifier $K$ and the boundedness property of $V_t$~\eqref{eq:Vmajor1}: 
\begin{eqnarray}
\label{eq:uLip}
\vert u^m(t,y')-u^m(t,y)\vert 
&=&
\left \vert \int_{\mathcal{C}^d} \left [ K(y-X_t(\omega))-K(y'-X_t(\omega))\right ] V_t(X(\omega),u^m(X(\omega))) dm(\omega)\right \vert \nonumber \\
&\leq &
L_K e^{tM_{\Lambda}}\vert y-y'\vert \ .
\end{eqnarray}
Now let us consider the second term on the r.h.s of~\eqref{eq:uu'major}.
By Jensen's inequality we get 
\begin{eqnarray}
\label{eq:uy'u'y'}
\vert u^m(t,y')-u^{m'}(t,y')\vert^2 
&= &\left \vert \int_{\mathcal{C}^d} K(y'-X_t(\omega))V_t\big (X(\omega),u^m(X(\omega))\big )dm(\omega)\right . \nonumber \\
&&\left .-\int_{\mathcal{C}^d} K(y'-X_t(\omega'))V_t\big (X(\omega'),u^{m'}(X(\omega'))\big )dm'(\omega')\right \vert^2 \nonumber\\
&\leq &
\int_{\mathcal{C}^d \times \mathcal{C}^d} \Big \vert K(y'-X_t(\omega))V_t\big (X(\omega),u^m(X(\omega))\big ) \nonumber \\
&& - \; K(y'-X_t(\omega'))V_t\big (X(\omega'),u^{m'}(X(\omega'))\big )\Big \vert^2 d\mu(\omega , \omega')\ ,
\end{eqnarray}
for any $\mu \in \Pi(m,m')$. 
Let us consider four  continuous functions $x,\,x' \in \mathcal{C}([0,T],\R^d)$ and   $z,z' \in \mathcal{C}([0,T],\R)$. We have 
\begin{eqnarray}
\label{eq:KVKV'}
\left \vert K(y'-x_t)V_t(x,z)-K(y'-x'_t)V_t(x',z')\right \vert^2
&\leq &2 \left \vert K(y'-x_t)-K(y'-x'_t)\right \vert^2 \vert V_t(x,z)\vert ^2 \nonumber \\
&&+2\left \vert V_t(x,z)-V_t(x',z')\right \vert^2 \vert K(y'-x'_t)\vert ^2\ .
\end{eqnarray}
Then, using the Lipschitz property of $K$ and 
the upper bound~\eqref{eq:Vmajor2} gives
\begin{eqnarray} \label{eq:KVKV''}
\left \vert K(y'-x_t)V_t(x,z)-K(y'-x'_t)V_t(x',z')\right \vert^2
&\leq &2 L^2_Ke^{2tM_{\Lambda}} \vert x_t-x'_t\vert ^2   \nonumber \\
&+&4M^2_KL^2_\Lambda e^{2tM_{\Lambda}} t \int_0^t \left [\vert x_s- x'_s\vert^2 +\vert z_s-z'_s\vert ^2\right ]\,ds\\
&\leq & C'_{K,\Lambda}(t)\left [(1+t) \sup_{s\leq t}\vert x_s- x'_s\vert^2 +\int_0^t\vert z_s-z'_s\vert ^2\,ds\right]   \nonumber \ ,
\end{eqnarray}
where  $C'_{K,\Lambda}(t)=2e^{2tM_{\Lambda}}(L^2_K+2M^2_KL^2_\Lambda t )$. 
Injecting the latter inequality in~\eqref{eq:uy'u'y'} yields 
\begin{eqnarray*}
\vert u^m(t,y')-u^{m'}(t,y')\vert^2 
& \leq & C'_{K,\Lambda}(t) \int_{\mathcal{C}^d \times \mathcal{C}^d} \left [(1+t)\sup_{s\leq t}\vert X_s(\omega)- X_s(\omega')\vert^2 \right. \\
&& \left. +\int_0^t\vert u^m(s,X_s(\omega))-u^{m'}(s,X_s(\omega'))\vert ^2\,ds\right] d\mu(\omega , \omega')\ ,
\end{eqnarray*}
Injecting the above inequality in~\eqref{eq:uu'major} and using~\eqref{eq:uLip} yields
\begin{eqnarray}
\label{eq:uu'bis}
\vert u^m(t,y)-u^{m'}(t,y')\vert^2 &\leq & 2C'_{K,\Lambda}(t)\left 
[\vert y-y'\vert^2+ (1+t) \int_{\mathcal{C}^d \times \mathcal{C}^d}
  \sup_{s\leq t}\vert X_s(\omega)- X_s(\omega')\vert^2 d\mu(\omega , \omega')\right .\nonumber \\
&&\left . +\int_{\mathcal{C}^d \times \mathcal{C}^d} \int_0^t\vert u^m(s,X_s(\omega))-u^{m'}(s,X_s(\omega'))\vert ^2\,ds\,d\mu(\omega , \omega')\right ]\ ,
\end{eqnarray}
Replacing $y$ (resp. $y'$) with $X_t(\omega)$ (resp. $X_t(\omega')$) in~\eqref{eq:uu'bis}, we get for all $\omega \in \mathcal{C}^d$ (resp. $\omega' \in \mathcal{C}^d$), 
\begin{eqnarray}
\label{um-X}
|u^m(t,X_t(\omega)) - u^{m'}(t,X_t(\omega'))|^2 & \leq & 2C'_{K,\Lambda}(t) \Big[ |X_t(\omega)-X_t(\omega')|^2  \nonumber \\
&&  + (1+t) \int_{\mathcal{C}^d \times \mathcal{C}^d} \displaystyle{\sup_{s \leq t}} |X_s(\omega)-X_s(\omega')|^2 d\mu(\omega,\omega') \nonumber \\
& &  +\int_{\mathcal{C}^d \times \mathcal{C}^d} \int_0^t\vert u^m(s,X_s(\omega))-u^{m'}(s,X_s(\omega'))\vert ^2\,ds\,d\mu(\omega , \omega') \Big]. \nonumber \\
\end{eqnarray}
Let us introduce the following notation
$$
\gamma(s):= \int_{\mathcal{C}^d \times \mathcal{C}^d} \vert u^m(s,X_s(\omega))-u^{m'}(s,X_s(\omega'))\vert ^2\,d\mu(\omega , \omega')\ ,\quad \textrm{for any}\ s\in
 [0,T].\  $$ 
Integrating each side of  inequality~\eqref{um-X} w.r.t. 
the variables $(\omega,\omega')$ according to $\mu$, implies 
$$
\gamma(t)\leq 2 C'_{K,\Lambda}(t)\int_0^t\gamma(s)ds+ 2(t+2)
C'_{K,\Lambda}(t)\int_{\mathcal{C}^d \times \mathcal{C}^d} \sup_{s\leq t}\vert X_s(\omega)- X_s(\omega')\vert^2 d\mu(\omega , \omega')\ ,
$$
for all $t \in [0,T]$. In particular, observing that $C'_{K,\Lambda}(a)$ is increasing in $a$, we have for fixed $t \in ]0,T]$ and all $a \in [0,t]$
$$
\gamma(a)\leq 2 C'_{K,\Lambda}(t)\int_0^a\gamma(s)ds+ 2(t+2)
C'_{K,\Lambda}(t)\int_{\mathcal{C}^d \times \mathcal{C}^d} \sup_{s\leq t}\vert X_s(\omega)- X_s(\omega')\vert^2 d\mu(\omega , \omega')\ .
$$
Using Gronwall's lemma yields
\begin{eqnarray*}
\gamma(t) &:=& \int_{\mathcal{C}^d \times \mathcal{C}^d} \vert u^m(t,X_t(\omega))-u^{m'}(t,X_t(\omega'))\vert ^2\,d\mu(\omega , \omega') \nonumber \\
& \leq & 2(t+2)C'_{K,\Lambda}(t)e^{2tC'_{K,\Lambda}(t)}\int_{\mathcal{C}^d \times \mathcal{C}^d} \sup_{s\leq t}\vert X_s(\omega)- X_s(\omega')\vert^2 d\mu(\omega , \omega')\ .
\end{eqnarray*}
Injecting the above inequality in~\eqref{eq:uu'bis} implies
$$
\vert u^m(t,y)-u^{m'}(t,y')\vert^2 
\leq 
2C'_{K,\Lambda}(t)(t+2)(1 + e^{2tC'_{K,\Lambda}(t)})
\left [\vert y-y'\vert^2+ \int_{\mathcal{C}^d \times \mathcal{C}^d} \sup_{s\leq t}\vert X_s(\omega)- X_s(\omega')\vert^2 d\mu(\omega , \omega')\right ]
\ .
$$
The above inequality holds for any $\mu\in \Pi (m,m')$, hence taking the infimum over $\mu\in \Pi (m,m')$ concludes the proof of \eqref{eq:uu'}.  \\
\item {\bf{Proof of \eqref{eq:uu'2}}}.
Let $(m,m')\in \mathcal{P}(\mathcal{C}^d)\times \mathcal{P}(\mathcal{C}^d)$. 
The proof of \eqref{eq:uu'2} follows at the beginning the same lines as the one of \eqref{eq:uu'}, but the inequality  
     \eqref{eq:KVKV''} is replaced by
	\begin{eqnarray}
\left \vert K(y'-x_t)V_t(x,z)-K(y'-x'_t)V_t(x',z')\right \vert^2
&\leq & 2 \left \vert K(y'-x_t)-K(y'-x'_t)\right \vert^2 \vert V_t(x,z)\vert ^2 \nonumber \\
&&+2\left \vert V_t(x,z)-V_t(x',z')\right \vert^2 \vert K(y'-x'_t)\vert ^2 \nonumber \\
& \leq & 2 e^{2tM_{\Lambda}} \max(L_K,2M_K)^2 (\left \vert x_t-x'_t\right \vert^2 \wedge 1) \nonumber \\
&& + 4M_{K}^2 e^{2tM_{\Lambda}} \max(L_{\Lambda},2M_{\Lambda})^2 t \int_0^t \Big( \vert x_s'-x_s \vert^2 \wedge 1  \nonumber \\
& & + \; \vert z_s-z'_s  \vert^2 \Big) ds  \nonumber \\ 
& \leq & \mathfrak{C}_{K,\Lambda}(t)\left [(1+t) (\sup_{s\leq t}\vert x_s- x'_s\vert^2 \wedge 1) + \int_0^t\vert z_s-z'_s\vert ^2\,ds\right], \nonumber \\
\end{eqnarray}
which implies
\begin{eqnarray}
\vert u^m(t,y)-u^{m'}(t,y')\vert^2
& \leq & 2\mathfrak{C}_{K,\Lambda}(t)(t+2)(1 + e^{2t\mathfrak{C}_{K,\Lambda}(t)}) \Big[ \; \vert y-y'\vert^2 \nonumber \\
&& + \int_{\mathcal{C}^d \times \mathcal{C}^d} \sup_{s\leq t}\vert X_s(\omega)- X_s(\omega')\vert^2 \wedge 1 \; d\mu(\omega , \omega') \Big] \ ,
\end{eqnarray}
where $\mathfrak{C}_{K,\Lambda}(t) := 2e^{2tM_{\Lambda}}(\max(L_K,2M_K)^2 + 2M_{K}^2\max(L_{\Lambda},2M_{\Lambda})^2 t ) $.
This gives the analogue of  \eqref{eq:uu'bis}
 and we conclude in the same way as for the previous item.
\item {\bf{Proof of the continuity of $(m,t,x) \mapsto u^m(t,x)$.}} \\
$\shp(\shc^d) \times [0,T] \times \R^d$ being a separable metric space, we characterize the continuity through converging sequences. We also recall that $\widetilde{W}_T$ is a metric compatible with the weak convergence on $\shp(\shc^d)$, 
see Remark \ref{R25} a). \\
By \eqref{eq:uu'}, the application is continuous with respect to $(m,x)$ uniformly with respect to time.
Consequently it remains to show that the map $t \mapsto u^m(t,x)$ is continuous for fixed
$(m,x) \in \shp(\shc^d) \times \R^d$. \\
 Let us fix $(m,t_0,x) \in \shp(\shc^d) \times [0,T] \times \R^d$. Let $(t_n)_{n \in \N}$ be a sequence in $[0,T]$ converging to $t_0$. \\
We define $F_n$ as the real-valued sequence of measurable functions  on $\shc^d$ such that for all $\omega \in \shc^d$, 
\begin{eqnarray}
\label{def_Fn}
F_n(\omega) := K(x-X_{t_n}(\omega)) \exp \left( \int_0^{t_n} \Lambda(r,X_r(\omega),u^{m}(r,X_r(\omega)) dr \right) \ .
\end{eqnarray}
Each $\omega \in \shc^d$ being continuous, $F_n$ converges pointwise to $F : \shc^d \rightarrow \R$ defined by 
\begin{eqnarray}
\label{def_F}
F(\omega) := K(x-X_{t_0}(\omega)) \exp \left( \int_0^{t_0} \Lambda(r,X_r(\omega),u^{m}(r,X_r(\omega)) dr \right) \ .
\end{eqnarray}
Since $K$ and $\Lambda$ are uniformly bounded, $M_K e^{TM_{\Lambda}}$ is a uniform upper bound of the functions $F_n$. By Lebesgue dominated convergence theorem, we conclude that 
$$
\vert u^{m}(t_n,x) - u^{m}(t_0,x) \vert = \left \vert \int_{\shc^d} F_n(\omega) dm(\omega) - \int_{\shc^d} F(\omega) dm(\omega) \right \vert \xrightarrow[n \rightarrow + \infty]{} 0 \ .
$$
This ends the proof.

\item {\bf{Proof of \eqref{uu'L2}}}.
Let $(m,m')\in \mathcal{P}_2(\mathcal{C}^d)\times \mathcal{P}_2(\mathcal{C}^d)$. \\
Since $K \in L^2(\R^d)$, by Jensen's inequality, it follows easily  
that the functions $x  \mapsto u^m(r,x)$ and $x \mapsto u^{m'}(r,x)$ belong
to $L^2(\R^d)$,
 for every $r \in [0,T]$. 
Then, for any $\mu\in\Pi(m,m')$,
\begin{eqnarray}
\label{majorL2norm}
\Vert u^m(t,\cdot)-u^{m'}(t,\cdot)\Vert_2^2
&=&\int_{\R^d} \vert u^m(t,y)-u^{m'}(t,y)\vert^2 dy \nonumber \\
&= &
\int_{\R^d} \left \vert \int_{\mathcal{C}^d \times\mathcal{C}^d} \Big[ K(y-X_t(\omega)) \, V_t(X(\omega),u^m(X(\omega))) -  \right . \nonumber \\ 
& &   \left . K(y-X_t(\omega'))\,V_t(X(\omega'),u^{m'}(X(\omega'))) \Big] \, d\mu(\omega,\omega') \, \right \vert^2 dy 
\nonumber \\
&\leq & \int_{\R^d} \int_{\mathcal{C}^d \times\mathcal{C}^d} \Big \vert K(y-X_t(\omega)) \, V_t(X(\omega),u^m(X(\omega))) -   \nonumber \\ 
& &  K(y-X_t(\omega')) \, V_t(X(\omega'),u^{m'}(X(\omega'))) \Big \vert^2 \, d\mu(\omega,\omega') \,  dy \nonumber \\
&= & \int_{\mathcal{C}^d \times\mathcal{C}^d} \int_{\R^d} \Big \vert K(y-X_t(\omega)) \, V_t(X(\omega),u^m(X(\omega))) -   \nonumber \\ 
& & K(y-X_t(\omega')) \, V_t(X(\omega'),u^{m'}(X(\omega'))) \Big \vert^2 \, dy \,  d\mu(\omega,\omega') \ ,
\end{eqnarray}
where the third inequality follows by Jensen's and the latter inequality is justified by Fubini theorem.
\\
%
We integrate now both sides of \eqref{eq:KVKV'},  with respect to the state variable  $y$ over $\R^d$, 
for all $(x,x') \in \mathcal{C}^d \times \mathcal{C}^d, (z,z') \in \mathcal{C} \times \mathcal{C}$,
\begin{eqnarray}
\label{eq:KVKV'L2}
\int_{\R^d} \vert K(y-x_t)V_t(x,z)-K(y-x_t')V_t(x',z')\vert^2\,dy
\leq 2\int_{\R^d} \vert K(y-x_t)-K(y-x_t')\vert^2\vert V_t(x,z)\vert^2\,dy \nonumber \\
+2\int_{\R^d} \vert V_t(x,z)-V_t(x',z')\vert^2 \vert K(y-x_t')\vert^2\,dy.
\end{eqnarray}
We remark now that by classical properties of 
 Fourier transform, since $K \in L^2(\R^d)$, we have
$$
\forall \; (x,\xi) \in \R^d \times \R^d, \; \shf(K_x)(\xi) = e^{-i \xi \cdot x} \shf(K)(\xi) \ ,
$$
where in this case, the Fourier transform operator $\; \shf \;$ acts from $L^2(\R^d)$ to $L^2(\R^d)$ and $K_x : \bar{y} \in \R^d \mapsto K(\bar{y}-x)$.
Since $K \in L^2(\R^d)$, Plancherel's theorem gives, for all $(\bar{y},x,x') \in \R^d \times \shc^d \times \shc^d$,
\begin{eqnarray}
\label{KK'Fourier}
\int_{\R^d} \vert K(\bar{y}-x_t)-K(\bar{y}-x'_t)\vert^2 d\bar{y}
& = & \int_{\R^d} \vert K_{x_t}(\bar{y})-K_{x_t'}(\bar{y})\vert^2 d\bar{y}   \nonumber \\
& = & \int_{\R^d} \vert e^{-i \xi \cdot x_t} \shf(K)(\xi)-e^{-i \xi \cdot x'_t} \shf(K)(\xi)\vert^2 d \xi \nonumber \\
& = & \int_{\R^d} \vert \shf(K)(\xi) \vert^2 \; \vert e^{-i \xi \cdot x_t}-e^{-i \xi \cdot x'_t} \vert^2 d \xi \nonumber \\ 
& \leq & \int_{\R^d} \vert \shf(K)(\xi) \vert^2 \; \vert \xi \cdot (x_t-x'_t) \vert^2 d \xi \nonumber \\ 
& \leq & \vert x_t-x'_t \vert^2 \int_{\R^d} \vert \shf(K)(\xi) \vert^2 \; \vert \xi \vert^2 d \xi \nonumber \\
& = & \vert x_t-x'_t \vert^2 \int_{\R^d}  \vert \shf (K) (\xi) \xi \vert ^2 d \xi \nonumber \\
& = & \vert x_t-x'_t \vert^2 \int_{\R^d}  \vert \shf ( \nabla K) (\xi) \vert ^2 d \xi \nonumber \\
& = & \vert x_t-x'_t \vert^2 \Vert \nabla K \Vert_{2}^2\ . 
\end{eqnarray}
Injecting this bound into~\eqref{eq:KVKV'L2}, taking into account \eqref{eq:Vmajor2} yields 
\begin{eqnarray}
\label{eq:majorKVKV'}
\int_{\R^d} \vert K(y-x_t)V_t(x,z)-K(y-x'_t)V_t(x',z')\vert^2\,dy
&\leq & 
2\Vert \nabla K \Vert_{2}^2 \,\vert x_t-x'_t\vert^2 \exp(2tM_{\Lambda}) \nonumber \\
&& + \; 2 M_K \vert V_t(x,z)-V_t(x',z')\vert^2 \nonumber \\
&\leq &
2e^{2tM_{\Lambda}} \Vert \nabla K \Vert_{2}^2 \vert x_t-x'_t\vert^2 \nonumber \\
&&+
4M_K L_{\Lambda}^2e^{2t M_{\Lambda}} t \int_0^t \left [\vert x_s-x'_s\vert^2+\vert z_s-z'_s\vert^2\right ]ds \nonumber \\
&\leq & 
 2 e^{2tM_{\Lambda}}(2M_K L_{\Lambda}^2 t^2 + \Vert \nabla K \Vert_{2}^2) \sup_{0 \leq r \leq t}\vert x_r- x'_r \vert^2  \nonumber \\
 && + 4M_K L_{\Lambda}^2e^{2t M_{\Lambda}} t \int_0^t\vert z_s-z'_s\vert ^2\,ds \nonumber \\ 
 & \leq &  \tilde{C}_{K,\Lambda}(t) \left [ \sup_{0 \leq r \leq t}\vert x_r- x'_r \vert^2 +\int_0^t\vert z_s-z'_s\vert ^2\,ds  \right] \ , \nonumber \\
\end{eqnarray}
for all $(x,x') \in \shc^d \times \shc^d$ and $(z,z') \in \shc \times \shc$, with $\tilde{C}_{K,\Lambda}(t) := 2 e^{2tM_{\Lambda}}(2M_K L_{\Lambda}^2 t(t+1) + \Vert \nabla K \Vert_{2}^2 )$. \\
Inserting  \eqref{eq:majorKVKV'} into \eqref{majorL2norm}, 
after substituting $X(\omega)$ with $x$,   $X(\omega')$ with $x'$,
$z$ with $u^m(X(\omega))$ and $z'$ with $u^{m'}(X(\omega'))$,
 for any  $\mu \in \Pi(m,m')$, we obtain the inequality
\begin{eqnarray}
\label{eq:umoinsu'}
\Vert u^m(t,\cdot)-u^{m'}(t,\cdot)\Vert_2^2
&\leq &
\tilde{C}_{K,\Lambda}(t) \left\{ \int_{\mathcal{C}^d \times \mathcal{C}^d}   \sup_{0 \leq r \leq t} \vert X_r(\omega)-X_r(\omega')\vert^2  \, d\mu(\omega,\omega') \right .\nonumber \\
&&+\left .\int_{\mathcal{C}^d \times \mathcal{C}^d} \int_0^t \vert u^m(s,X_s(\omega))-u^{m'}(s,X_s(\omega'))\vert^2\,ds  \, d\mu(\omega,\omega')  \right \}  \ .
\end{eqnarray}
Since  inequality \eqref{eq:uu'} is verified for all $y \in \R^d \, ,s \in [0,T]$, we obtain for all $\omega, \omega' \in \mathcal{C}^d$ 
\begin{eqnarray*} 
\vert u^m\big (s,X_s(\omega)\big )-u^{m'}\big (s,X_s(\omega')\big )\vert^2 & \leq & C_{K,\Lambda}(s)\left [ \vert X_s(\omega)-X_s(\omega')\vert^2 \, + \vert W_s(m,m')\vert ^2\right ]  \\
& \leq & C_{K,\Lambda}(s) \left[ \sup_{0 \le r \le s}\vert X_r(\omega)-X_r(\omega')\vert^2  + \vert W_s(m,m')\vert ^2 \right]\ .
\end{eqnarray*}
Integrating each side of the above inequality with respect to the time variable $s$ and the measure $\mu\in\Pi(m,m')$ and observing that $C_{K,\Lambda}(s)$ is increasing in $s$ yields 
\begin{eqnarray}
\label{eq:32}
I  & := & \int_{\mathcal{C}^d \times \mathcal{C}^d} \int_0^t \vert u^m(s,X_s(\omega))-u^{m'}(s,X_s(\omega'))\vert^2\,ds \,d\mu(\omega,\omega') \nonumber \\
&\leq & C_{K,\Lambda}(t) t \left[ \int_{\mathcal{C}^d \times \mathcal{C}^d} \sup_{0 \le r \le t}\vert X_r(\omega)-X_r(\omega')\vert^2 \, d \mu(\omega,\omega')   +   \vert W_t(m,m')\vert ^2 \right] \ .
\end{eqnarray}
By injecting inequality \eqref{eq:32} in the right-hand side of inequality \eqref{eq:umoinsu'}, we obtain 
\begin{eqnarray}
\label{umoinsu'2}
\Vert u^m(t,\cdot)-u^{m'}(t,\cdot)\Vert_2^2 & \leq & 
\tilde{C}_{K,\Lambda}(t)(1 + tC_{K,\Lambda}(t)) 
 \int_{\mathcal{C}^d \times \mathcal{C}^d}  \sup_{0 \le r \leq t} \vert X_r(\omega)-X_r(\omega')\vert^2 d\mu(\omega,\omega')  \nonumber \\ 
&  &+ t \tilde C_{K,\Lambda}(t) C_{K,\Lambda}(t) \vert W_t(m,m')\vert ^2 \ .
\end{eqnarray}
By taking the infimum over $\mu \in \Pi(m,m')$ on the right-hand side, we obtain
\begin{eqnarray}
\label{eq:majoruu'}
\Vert u^m(t,\cdot)-u^{m'}(t,\cdot)\Vert_2^2 
& \leq &  \tilde{C}_{K,\Lambda}(t)(1+2tC_{K,\Lambda}(t)) \vert W_t(m,m') \vert^2 \ .
\end{eqnarray}

\item {\bf{Proof of \eqref{eq:uu'Linf}}}. \\
By the hypothesis $4.$ in Assumption \ref{ass:main}, $K \in L^1(\R^d)$.
Given a function $g:[0,T] \times \R^d \rightarrow \C, \ 
(s,x) \mapsto g(s,x) $ , we will 
often denote its Fourier transform in the space variable $x$
by $(s, \xi) \mapsto \shf (g) (s,\xi)$ instead of
$\shf g (s,\cdot) (\xi)$.
 Then for $(\bar{\omega},s,\xi) \in \Omega \times [0,T ] \times \R^d$, the Fourier transform of the functions $u^{\eta_{\bar{\omega}}}$ and $u^m$ are given by
\begin{eqnarray}
\label{Four_u1}
  \shf(u^{\eta_{\bar{\omega}}})(s,\xi)  & = & \shf(K)(\xi) \; \int_{\mathcal{C}^d} e^{-i \xi \cdot X_s(\omega)}\exp \left ( \int_0^s \Lambda \big (r,
  X_r(\omega),u^{\eta_{\bar{\omega}}}(r,X_r(\omega))\big ) dr\right ) d\eta_{\bar{\omega}}(\omega)   \\
  \label{Four_u2}
 \shf(u^{m})(s,\xi)  & = & \shf(K)(\xi) \; \int_{\mathcal{C}^d} e^{-i \xi \cdot X_s(\omega)}\exp \left ( \int_0^s \Lambda \big (r,
   X_r(\omega),u^{m}(r,X_r(\omega))\big ) dr\right ) dm(\omega) \ .
\end{eqnarray}
To simplify notations in the sequel, we will often use the convention
$$
V_r^{\nu}(y) := V_r(y,u^{\nu}(y)) = \exp \left ( \int_0^r \Lambda \big ( \theta,y_{\theta},u^{\nu}(\theta,y_{\theta})\big ) d \theta \right )  \ ,
$$
where $u^{\nu}$ is defined in  \eqref{eq:u}, with $m = \nu$. 
 \\
In this way, relations \eqref{Four_u1} and \eqref{Four_u2} can be re-written as
\begin{eqnarray}
\label{Four_ubis}
  \shf(u^{\eta_{\bar{\omega}}})(s,\xi)  & = & \shf(K)(\xi) \; \int_{\mathcal{C}^d} e^{-i \xi \cdot X_s(\omega)} V_s^{\eta_{\bar{\omega}}}(X(\omega)) d\eta_{\bar{\omega}}(\omega) 
\nonumber  \\
&& \\
 \shf(u^{m})(s,\xi)  & = & \shf(K)(\xi) \; \int_{\mathcal{C}^d} e^{-i \xi \cdot X_s(\omega)} V_s^m(X(\omega)) dm(\omega) \ , \nonumber
\end{eqnarray}
for $(\bar{\omega},s,\xi) \in \Omega \times [0,T] \times \R^d$. \\
For a function $f \in L^1(\R^d)$ such that $\shf(f) \in L^1(\R^d)$, the inversion formula of the Fourier transform is valid and implies 
\begin{eqnarray}
\label{E330}
f(x) = \frac{1}{\sqrt{2 \pi}} \int_{\R^d} \shf(f)(\xi) e^{i \xi \cdot x} d \xi, \ x \in \R^d \ .
\end{eqnarray} 
$f$ is obviously bounded and continuous taking into account  Lebesgue dominated convergence theorem.
Moreover 
\begin{equation} \label{F1infty}
 \Vert f \Vert_\infty \le \frac{1}{\sqrt{2 \pi}} \Vert \shf (f) \Vert_1,  
\end{equation}
where we remind that  $\Vert \cdot \Vert_1$ denotes the $L^1(\R^d)$-norm.
As $\shf(K)$ belongs to $L^1(\R^d)$, from \eqref{F1infty} applied to the function $
f = u^{\eta_{\bar{\omega}}}(s, \cdot)-u^m(s,\cdot)$ with fixed $\bar{\omega} \in \Omega, s \in [0,T]$, we get
\begin{eqnarray}
\label{majorLinf}
\E{[ \Vert u^{\eta}(s,\cdot) - u^m(s,\cdot) \Vert^2_{\infty} ] } & \leq & 
 \frac{1}{\sqrt{2 \pi}} \E{[\Vert \shf(u^{\eta})(s,\cdot) - \shf(u^m)(s,\cdot)  \Vert_1^2 ] } \nonumber \\
& \leq & \frac{1}{\sqrt{2 \pi}} \E { \left[\left( \int_{\R^d} \vert \shf(u^{\eta_{\bar{\omega}}})(s,\xi) - \shf(u^m)(s,\xi) \vert d \xi   \right)^2 \right] } \ ,
\end{eqnarray}
where we recall that $\E$ is taken w.r.t. to $d \mathbb{P}(\bar{\omega})$.

The terms intervening in the expression above are measurable.
This can be justified by Fubini-Tonelli theorem and
the fact that $(\bar{\omega},s,x) \mapsto u^{\eta_{\bar{\omega}}}(s,x)$ is measurable from $(\Omega \times [0,T] \times \R^d, \shf \otimes \shb([0,T]) \otimes \shb(\R^d))$ to $(\R,\shb(\R))$.
We prove the latter point.
By  item 3. of this Lemma, we recall that the function $(m,t,x) \mapsto u^m(t,x)$ is continuous on $\shp(\shc^d) \times [0,T] \times \R^d$ and so measurable from $(\shp(\shc^d) \times [0,T] \times \R^d, \shb(\shp(\shc^d)) \otimes \shb([0,T]) \otimes \shb(\R^d))$ to $(\R,\shb(\R))$. The application $(\bar{\omega},t,x) \mapsto (\eta_{\bar{\omega}},t,x)$ being measurable from $(\Omega \times [0,T] \times \R^d,\shf \otimes \shb([0,T] \otimes \shb(\R^d)))$ to $(\shp(\shc^d) \otimes \shb([0,T]) \otimes \shb(\R^d))$, by composition the map 
$(\bar{\omega},s,x) \mapsto u^{\eta_{\bar{\omega}}}(s,x)$ is measurable.
By Fubini-Tonelli theorem 
 $(\bar{\omega},s,\xi) \mapsto \shf(u^{\eta_{\bar{\omega}}})(s,\xi))$ is measurable from $(\Omega \times [0,T] \times \R^d, \shf \otimes \shb([0,T]) \otimes \shb(\R^d)$ to $(\C,\shb(\C))$ and
$(s,\xi) \mapsto u^m(s,\xi)$ is measurable from $([0,T]\times \R^d, \shb([0,T] \otimes \R^d) $ to  $(\R,\shb(\R)$.

We are now ready to bound the right-hand side of \eqref{majorLinf}.
 For all $(\bar{\omega},s) \in \Omega \times [0,T ]$, by \eqref{Four_ubis}
\begin{eqnarray} 
\label{E331}
\vert \shf(u^{\eta_{\bar{\omega}}})(s,\xi) - \shf(u^m)(s,\xi) \vert & \leq & \left \vert \shf(K)(\xi) \right \vert \left \vert \int_{\mathcal{C}^d} e^{-i \xi \cdot X_s(\omega)} V_s^{\eta_{\bar{\omega}}}(X(\omega)) d\eta_{\bar{\omega}}(\omega) - \int_{\mathcal{C}^d} e^{-i \xi \cdot X_s(\omega)} V_s^{\eta_{\bar{\omega}}}(X(\omega)) dm(\omega)   \right \vert \nonumber \\
&& + \left \vert \shf(K)(\xi) \right \vert  \left \vert \int_{\mathcal{C}^d} e^{-i \xi \cdot X_s(\omega)} V_s^{\eta_{\bar{\omega}}}(X(\omega)) dm(\omega) - \int_{\mathcal{C}^d} e^{-i \xi \cdot X_s(\omega)} V_s^{m}(X(\omega)) dm(\omega)   \right \vert , \nonumber \\
\end{eqnarray}
which implies
\begin{eqnarray}
\label{decomp_AB}
\left( \int_{\R^d} \vert \shf(u^{\eta_{\bar{\omega}}})(s,\xi) - \shf(u^m)(s,\xi) \vert d \xi \right)^2 & \leq & \left( \int_{\R^d} \vert \shf(K)(\xi) \vert \vert A_{s,\bar{\omega}}(\xi) \vert d\xi + \int_{\R^d} \vert \shf(K)(\xi) \vert \vert B_{s,\bar{\omega}}(\xi) \vert d\xi \right)^2 \nonumber \\
& \leq & 2(I^1_{s,\bar{\omega}} + I^2_{s,\bar{\omega}}) \ ,
\end{eqnarray}
where
\begin{equation}
\label{def_I}
\left \{
\begin{array}{l}
I^1_{s,\bar{\omega}} := \left( \int_{\R^d} \vert \shf(K)(\xi) \vert \vert A_{s,\bar{\omega}}(\xi) \vert d\xi \right)^2 \\
I^2_{s,\bar{\omega}} := \left( \int_{\R^d} \vert \shf(K)(\xi) \vert \vert B_{s,\bar{\omega}}(\xi) \vert d\xi \right)^2 \ ,
\end{array}
\right .
\end{equation}
and for all $\bar{\omega} \in \Omega$, $s \in [0,T]$ 
\begin{equation}
\label{def_AB}
\left \{
\begin{array}{lll}
A_{s,\bar{\omega}}(\xi) := \int_{\mathcal{C}^d} e^{-i \xi \cdot X_s(\omega)} V_s^{\eta_{\bar{\omega}}}(X(\omega)) d\eta_{\bar{\omega}}(\omega) - \int_{\mathcal{C}^d} e^{-i \xi \cdot X_s(\omega)} V_s^{\eta_{\bar{\omega}}}(X(\omega)) dm(\omega) \\
B_{s,\bar{\omega}}(\xi) := \int_{\mathcal{C}^d} e^{-i \xi \cdot X_s(\omega)} V_s^{\eta_{\bar{\omega}}}(X(\omega)) dm(\omega) - \int_{\mathcal{C}^d} e^{-i \xi \cdot X_s(\omega)} V_s^{m}(X(\omega)) dm(\omega) \ .
\end{array}
\right .
\end{equation} 
We observe that  $(\bar{\omega},s, \xi) \mapsto A_{s,\bar{\omega}}(\xi)$
 and $(\bar{\omega},s,\xi) \mapsto B_{s,\bar{\omega}}(\xi)$ are measurable.
Indeed, the map \\
$(\omega,\bar \omega, \xi) \mapsto 
e^{-i \xi \cdot X_s(\omega)} V_s^{\eta_{\bar{\omega}}}(X(\omega))$
is Borel. 
By Remark \ref{RMeasure} we can easily show that
$(\bar \omega, s,\xi)  \mapsto \eta_{\bar \omega}(\omega) $
is (still) a random measure when $\Omega$ is replaced by  
 $[0,T] \times \R^d \times \Omega$.
  Proposition 3.3, Chapter 3. of \cite{crauel} tell us that 
$ (\bar \omega, s,\xi) \mapsto \int_{\mathcal{C}^d} e^{-i \xi \cdot X_s(\omega)} V_s^{\eta_{\bar{\omega}}}(X(\omega)) d\eta_{\bar{\omega}}(\omega)$ is measurable.
By use of Fubini's theorem mentioned, measurability of $A, B$ follows.

Regarding $A_{s,\bar{\omega}}$, let $\varphi_{s,\xi,\bar{\omega}}$ denote the function defined by $y \in \shc^d \mapsto e^{-i \xi \cdot y_s} V_s^{\eta_{\bar{\omega}}}(y)$. Then, one can write  $ A_{s,\bar{\omega}} = \langle \eta_{\bar{\omega}} - m , \varphi_{s,\xi,\bar{\omega}} \rangle $, where $\langle \cdot , \cdot \rangle $ denotes the pairing between measures and bounded, continuous functionals.
 $\varphi_{s,\xi,\bar{\omega}}$ is clearly  bounded by $e^{sM_{\Lambda}}$; inequalities \eqref{eq:Vmajor2} and \eqref{eq:uu'} imply the continuity of $\varphi_{s,\xi,\bar{\omega}}$ on $(\shc^d,\Vert \cdot \Vert_{\infty})$, for fixed $(\bar{\omega},s,\xi) \in \Omega \times [0,T] \times \R^d$.
By Cauchy-Schwarz inequality we obtain for all $\bar{\omega} \in \Omega$, $s \in [0,T]$
\begin{eqnarray}
\label{major_I1}
I^1_{s,\bar{\omega}} 
& \leq &  \Vert \shf(K) \Vert_{1} \left(\int_{\R^d} \vert \shf(K)(\xi) \vert \vert A_{s,\bar{\omega}} \vert^2 d\xi \right) \nonumber \\
& \leq & \Vert \shf(K) \Vert_{1} \left(\int_{\R^d} \vert \shf(K)(\xi) \vert  \vert \langle \eta_{\bar{\omega}} - m , \varphi_{s,\xi,\bar{\omega}} \rangle \vert^2 d\xi \right) \ .
\end{eqnarray}
Since the right-hand side of \eqref{major_I1} is measurable, taking expectation w.r.t. $d \P(\bar{\omega})$ in both sides yields
 \begin{eqnarray}
 \label{major_I1bis}
 \E{[I^1_s]} & \leq & \Vert \shf(K) \Vert_{1} \left(\int_{\R^d} \vert \shf(K)(\xi) \vert \;  \E[ \vert \langle \eta - m , \varphi_{s,\xi,\cdot} \rangle \vert^2 ] \; d\xi \right) \nonumber \\
 & \leq & e^{2sM_{\Lambda}} \Vert \shf(K) \Vert_{1} \left(\int_{\R^d} \vert \shf(K)(\xi) \vert \; \sup_{\underset{\Vert \varphi \Vert_{\infty} \leq 1}{\varphi \in \shc_b(\shc^d)}} \E{[\vert \langle \eta - m , \varphi \rangle \vert^2]} d\xi \right) \nonumber \\
  & \leq & e^{2sM_{\Lambda}} \Vert \shf(K) \Vert_{1}^2  \sup_{\underset{\Vert \varphi \Vert_{\infty} \leq 1}{\varphi \in \shc_b(\shc^d)}} \E{[\vert \langle \eta - m , \varphi \rangle \vert^2]} \ .
 \end{eqnarray}
Concerning the second term $B_{s,\bar{\omega}}$, for all $(s,\xi) \in [0,T] \times \R^d$, 
\begin{eqnarray}
\label{BoundB}
\vert B_{s,\bar{\omega}}(\xi) \vert^2 & = & \left | \int_{\mathcal{C}^d} e^{-i \xi \cdot X_s(\omega)} \big( V_s^{\eta_{\bar{\omega}}}(X(\omega)) - V_s^{m}(X(\omega)) \big) dm(\omega) \right|^2 \nonumber \\
& \leq &  \int_{\mathcal{C}^d} \left | V_s^{\eta_{\bar{\omega}}}(X(\omega)) - V_s^{m}(X(\omega)) \right|^2  dm(\omega) \nonumber \\
& \leq & e^{2sM_{\Lambda}} L_{\Lambda}^2 \int_{\mathcal{C}^d} \left |\int_0^s u^{\eta_{\bar{\omega}}}(r,X_r(\omega)) - u^{m}(r,X_r(\omega))  dr \right|^2  dm(\omega)  \quad \textrm{, by } \eqref{eq:Vmajor2} \nonumber \\
& \leq & se^{2sM_{\Lambda}} L_{\Lambda}^2 \int_{\mathcal{C}^d} \int_0^s \left|u^{\eta_{\bar{\omega}}}(r,X_r(\omega)) - u^{m}(r,X_r(\omega))\right|^2 dr \; dm(\omega) \nonumber \\
& \leq & se^{2sM_{\Lambda}} L_{\Lambda}^2  \int_0^s  \left \Vert u^{\eta_{\bar{\omega}}}(r,\cdot) - u^{m}(r,\cdot)\right \Vert_{\infty}^2 dr \ ,
\end{eqnarray}
where we remind that functions $(r,x,\bar{\omega}) \in [0,T] \times \R^d \times \Omega \mapsto u^{\eta_{\bar{\omega}}}(r,x)$ and $(r,x) \in [0,T] \times \R^d \mapsto u^{m}(r,x) $ are uniformly bounded. \\
Taking into account \eqref{BoundB}, the measurability of the function $(\bar{\omega},r) \in \Omega \times [0,T] \mapsto \Vert u^{\eta_{\bar{\omega}}}(r,\cdot) - u^{m}(r,\cdot) \Vert_{\infty}^2$ and the Fubini theorem imply 
\begin{eqnarray}
\label{Bound_I2} 
\E{[I^2_s]} & \leq & \E \left[ \left( \int_{\R^d} \vert \shf(K)(\xi) \vert \; \sup_{\xi \in \R^d} \vert B_{s,\cdot}(\xi) \vert d\xi \right)^2 \right] \nonumber \\
& \leq & \E[ \sup_{\xi \in \R^d} \vert B_{s,\cdot}(\xi) \vert^2 \; \Vert \shf(K) \Vert_1^2 ] \nonumber \\
& \leq & se^{2sM_{\Lambda}} L_{\Lambda}^2 \Vert \shf(K) \Vert_{1}^2  \int_0^s  \E{ [ \left \Vert u^{\eta}(r,\cdot) - u^{m}(r,\cdot) \right \Vert_{\infty}^2 ] } dr \ .
\end{eqnarray}
Taking the expectation of both sides in \eqref{decomp_AB},    we inject
 \eqref{major_I1bis} and \eqref{Bound_I2} in the expectation of the right-hand side of \eqref{decomp_AB}
so that \eqref{majorLinf} gives  for all $s \in [0,T]$
\begin{eqnarray}
\label{major_I}
\E{\left[ \Vert u^{\eta}(s,\cdot) - u^m(s,\cdot) \Vert_{\infty}^2 \right]} 
& \leq & C_2(s)\int_0^s  \E{ [ \left \Vert u^{\eta}(r,\cdot) - u^{m}(r,\cdot) \right \Vert_{\infty}^2 ] } dr  \nonumber \\ 
&  & +C_1(s) \sup_{\underset{\Vert \varphi \Vert_{\infty} \leq 1}{\varphi \in \shc_b(\shc^d)}} \E{[\vert \langle \eta - m , \varphi \rangle \vert^2]}, 
\end{eqnarray}
where $C_1 (s):=  \frac{1}{\sqrt{2 \pi}} e^{sM_{\Lambda}} \Vert \shf(K) \Vert_{1}^2$ and 
$C_2(s) :=  \frac{1}{\sqrt{2 \pi}} s e^{2sM_{\Lambda}} L_{\Lambda}^2 \Vert \shf(K) \Vert_{1}^2$. On the one hand, since the functions $u^{\eta}$ and $u^{m}$ are uniformly bounded, $\E{[\Vert u^{\eta}(s,\cdot) - u^m(s,\cdot) \Vert_{\infty}}^2] $ is finite. On the other hand, observing that $a \mapsto C_1(a)$ and $a \mapsto C_2(a)$ are increasing, we have for all $s \in ]0,T], a \in [0,s]$
$$
\E{\left[ \Vert u^{\eta}(a,\cdot) - u^m(a,\cdot) \Vert_{\infty}^2 \right]} \leq C_2(s)\int_0^a  \E{ [ \left \Vert u^{\eta}(r,\cdot) - u^{m}(r,\cdot) \right \Vert_{\infty}^2 ] } dr + C_1(s) \sup_{\underset{\Vert \varphi \Vert_{\infty} \leq 1}{\varphi \in \shc_b(\shc^d)}} \E{[\vert \langle \eta - m , \varphi \rangle \vert^2]} \ .
$$
By Gronwall's lemma, we finally obtain
\begin{eqnarray}
\label{majorFinal}
\forall \; s \in [0,T], \;\E{\left[ \Vert u^{\eta}(s,\cdot) - u^m(s,\cdot) \Vert_{\infty}^2 \right]} & \leq & C_1(s) e^{sC_2(s)} \sup_{\underset{\Vert \varphi \Vert_{\infty} \leq 1}{\varphi \in \shc_b(\shc^d)}} \E{[\vert \langle \eta - m , \varphi \rangle \vert^2]} \ .
\end{eqnarray}
\end{itemize}
\end{proof}

To conclude this part, we want to highlight some properties of the function $u^{m}$, which  will be used in Section \ref{SChaos}.  
In fact, the map $(m,t,x) \in \mathcal{P}_2(\mathcal{C}^d) \times [0,T] \times \R^d \mapsto u^m(t,x)$ has an important non-anticipating property. We begin by defining the notion of induced measure.
For the rest of this section, we fix $t \in  [0,T]$, $m_t \in \mathcal{P}(\mathcal{C}_t^d)$. 

\begin{defi} \label{D38}
Given a non-negative Borel measure $m$ on $(\mathcal{C}^d,\shb(\shc^d))$. 
From now on,  $m_t$ will denote 
 the (unique) induced measure on $(\mathcal{C}_t^d,\shb(\shc^d_t))$ (with $\mathcal{C}_t^d := \mathcal{C}([0,t],\R^d)$) by
$$
\int_{\mathcal{C}_t^d} F(\phi)m_t(d\phi) = \int_{\mathcal{C}^d} F(\phi_{|_{[0,t]}})m(d\phi),
$$
where $F : \mathcal{C}_t^d \longrightarrow \R$ is bounded and continuous.
\end{defi}

\begin{rem} \label{R38}
Let $t \in  [0,T], m = \delta_{\xi} \;, \xi \in \mathcal{C}^d$. 
The induced measure, $m_t$, on $\shc_t^d$ is $\delta_{(\xi_r | 0 \leq r \leq t)} $.
\end{rem}
 The same construction as the one carried on in Lemma \ref{lem:u} allows us to define the unique solution to
\begin{equation}
\label{u_mt}
\begin{array}{l}
u^{m_t}(s,y) = \int_{\mathcal{C}_t^d} K(y-X_s(\omega)) 
\exp\left(\int_0^s \Lambda(r,X_r(\omega),u^{m_t}(r,X_r(\omega))) dr \right)
m_t(d\omega) \quad  \ \forall s\in [0,t]\ .
\end{array}
\end{equation}
\begin{prop}
\label{non-anticip}
Under the assumption of Lemma \ref{lem:u}, we have 
$$
\forall (s,y) \in [0,t] \times \R^d, \; u^{m}(s,y) = u^{m_t}(s,y).
$$
\end{prop}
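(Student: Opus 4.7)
The plan is to show that the restriction of $u^m$ to $[0,t]\times \R^d$ itself satisfies the linking equation \eqref{u_mt} on $\mathcal{C}_t^d$, and then invoke the uniqueness part of (the analogue of) Lemma \ref{lem:u} applied with horizon $t$ in place of $T$ and canonical space $\mathcal{C}_t^d$ in place of $\mathcal{C}^d$. Since the construction of Lemma \ref{lem:u} only uses items $2.$, $3.(b)$ and $4.$ of Assumption \ref{ass:main} and is insensitive to the length of the time interval, the exact same fixed-point argument shows that \eqref{u_mt} admits a unique solution $u^{m_t}$ on $[0,t]\times \R^d$.

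Set $\tilde u(s,y):=u^m(s,y)$ for $(s,y)\in [0,t]\times \R^d$. By \eqref{eq:u} we have, for every such $(s,y)$,
\[
\tilde u(s,y)=\int_{\mathcal{C}^d} K(y-X_s(\omega))\,\exp\!\left(\int_0^s \Lambda\bigl(r,X_r(\omega),\tilde u(r,X_r(\omega))\bigr)\,dr\right) dm(\omega).
\]
The key observation is that, for $s\le t$, the integrand
\[
\omega \longmapsto K(y-X_s(\omega))\,\exp\!\left(\int_0^s \Lambda\bigl(r,X_r(\omega),\tilde u(r,X_r(\omega))\bigr)\,dr\right)
\]
is a bounded continuous function on $\mathcal{C}^d$ that depends on $\omega$ only through its restriction $\omega_{|[0,t]}$; indeed, the involved quantities $X_r(\omega)$ only appear for $r\in[0,s]\subset [0,t]$, and by Remark \ref{RLipeq:u}, $\tilde u$ is continuous in the second argument, so the composition is continuous on $(\mathcal{C}^d,\|\cdot\|_\infty)$. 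Hence, by the defining property of the induced measure $m_t$ (Definition \ref{D38}), the integral over $\mathcal{C}^d$ against $m$ equals the integral over $\mathcal{C}_t^d$ against $m_t$:
\[
\tilde u(s,y)=\int_{\mathcal{C}_t^d} K(y-X_s(\phi))\,\exp\!\left(\int_0^s \Lambda\bigl(r,X_r(\phi),\tilde u(r,X_r(\phi))\bigr)\,dr\right) dm_t(\phi).
\]
Thus $\tilde u$ solves \eqref{u_mt}. By uniqueness of the solution of \eqref{u_mt}, $\tilde u\equiv u^{m_t}$ on $[0,t]\times \R^d$, which is the claim.

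The only non-routine point is the passage from integration against $m$ on $\mathcal{C}^d$ to integration against $m_t$ on $\mathcal{C}_t^d$: Definition \ref{D38} formally records this identity for bounded continuous test functions $F$ on $\mathcal{C}_t^d$, which applies here since the integrand factors through the restriction map $\omega \mapsto \omega_{|[0,t]}$ and is bounded continuous (one could alternatively invoke a monotone class argument to cover bounded Borel functionals). No genuine obstacle is expected beyond verifying this factorization, which is immediate from the structural form of \eqref{eq:u}.
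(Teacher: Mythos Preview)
Your proposal is correct and follows exactly the same approach as the paper: show that $u^m|_{[0,t]\times\R^d}$ solves \eqref{u_mt} by using the definition of the induced measure $m_t$, then invoke uniqueness. The paper's proof is simply a two-line version of what you wrote, with the verification that the integrand factors through the restriction map left implicit.
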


\begin{proof}
By definition of $m_t$, it follows that $u^m(s,y)|_{[0,t] \times \R^d}$ is a solution of \eqref{u_mt}. Invoking the uniqueness of \eqref{u_mt} ends the proof.
\end{proof}
\begin{corro} \label{Canticip}
Let $N \in \N$, $\xi^1, \cdots, \xi^i, \cdots, \xi^N$ be $(\mathcal{G}_t)$-adapted continuous processes, where $\mathcal{G}$ is a filtration 
(defined on some probability space) 
fulfilling the usual conditions. Let $m(d\omega) = \frac{1}{N} \sum_{i=1}^{N} \delta_{\xi^i}(d\omega)$.
Then, $(u^m(t,y))$ is a $(\mathcal{G}_t)$-adapted random field, i.e.
for any $(t,y) \in [0,T] \times \R^d$, the process is $(\mathcal{G}_t)$-adapted.

\end{corro}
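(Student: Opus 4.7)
The plan is to reduce the statement to two facts already established: the non-anticipating property of Proposition \ref{non-anticip}, and the measurability of $m\mapsto u^m(t,y)$ furnished by item 3 of Lemma \ref{lem:uu'}.

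First, fix $(t,y)\in[0,T]\times\R^d$. Applying Proposition \ref{non-anticip} to the empirical measure $m=\frac{1}{N}\sum_{i=1}^N\delta_{\xi^i}$, one obtains
$$
u^m(t,y)=u^{m_t}(t,y),
$$
where $m_t$ is the induced measure on $\mathcal{C}_t^d$. By linearity of Definition \ref{D38} and Remark \ref{R38}, this induced measure is simply the empirical measure of the restricted paths,
$$
m_t=\frac{1}{N}\sum_{i=1}^N \delta_{\xi^i_{|[0,t]}},
$$
viewed as an element of $\mathcal{P}(\mathcal{C}_t^d)$.

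Second, each $\xi^i$ being $(\mathcal{G}_s)$-adapted and path-continuous, the restriction map $\omega\mapsto \xi^i_{|[0,t]}(\omega)$ is $\mathcal{G}_t/\mathcal{B}(\mathcal{C}_t^d)$-measurable: continuity of trajectories implies that $\mathcal{B}(\mathcal{C}_t^d)=\sigma(X_s,0\le s\le t)$ is generated by the coordinate projections, and each $\omega\mapsto \xi^i_s(\omega)$ with $s\le t$ is $\mathcal{G}_t$-measurable. Consequently, the finite-sum-of-Dirac-masses construction $\omega\mapsto m_t(\omega)$ is $\mathcal{G}_t/\mathcal{B}(\mathcal{P}(\mathcal{C}_t^d))$-measurable (Remark \ref{RMeasure} / Remark \ref{RDelta} applied on $\mathcal{C}_t^d$).

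Third, the (straightforward) analogue of Lemma \ref{lem:uu'} item 3 on $\mathcal{C}_t^d$ ensures that $m'\mapsto u^{m'}(t,y)$ is continuous from $\mathcal{P}(\mathcal{C}_t^d)$ into $\R$ for fixed $(t,y)$, hence Borel measurable. Composing with the previous step yields that $\omega\mapsto u^{m_t}(t,y)=u^m(t,y)$ is $\mathcal{G}_t$-measurable, which is exactly the asserted adaptedness.

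The only genuine subtlety is the second step, namely ensuring that the path-valued random variables $\xi^i_{|[0,t]}$ (and hence $m_t$) are $\mathcal{G}_t$-measurable; this is where continuity of the trajectories together with the usual conditions on $(\mathcal{G}_s)$ enter, and everything else reduces to quoting Proposition \ref{non-anticip} and item 3 of Lemma \ref{lem:uu'}.
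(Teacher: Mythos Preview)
Your proof is correct and follows precisely the route the paper intends: the paper states Corollary \ref{Canticip} without an explicit proof, presenting it as an immediate consequence of Proposition \ref{non-anticip}, and your argument spells out exactly the expected reasoning, namely combining the non-anticipating identity $u^m(t,y)=u^{m_t}(t,y)$ with the $\mathcal{G}_t$-measurability of the empirical measure $m_t$ and the continuity (hence Borel measurability) of $m'\mapsto u^{m'}(t,y)$ from item 3 of Lemma \ref{lem:uu'}. The only minor remark is that, instead of invoking an analogue of Lemma \ref{lem:uu'} item 3 on $\mathcal{C}_t^d$, you could equivalently extend each restricted path $\xi^i_{|[0,t]}$ to an element of $\mathcal{C}^d$ (e.g.\ by freezing it after time $t$), obtain a $\mathcal{G}_t$-measurable empirical measure on $\mathcal{C}^d$ whose induced measure on $\mathcal{C}_t^d$ is still $m_t$, and then apply Lemma \ref{lem:uu'} item 3 and Proposition \ref{non-anticip} directly; but this is a cosmetic variation of the same idea.
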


\subsection{Existence and uniqueness of the solution to the stochastic differential equations}

\label{S32}

For a given $m\in \mathcal{P}_2(\mathcal{C}^d)$, $u^m$ is well-defined according to Lemma~\ref{lem:u}.
 Let $Y_0 \sim \zeta_0$.  Then we can consider the SDE 
\begin{equation}
 \label{eq:Ym}
 \begin{array}{l}
 Y_t=Y_0+\int_0^t\Phi(s,Y_s,u^m(s,Y_s))dW_s+\int_0^t g(s,Y_s,u^m(s,Y_s))ds\ ,\quad \textrm{for any}\ t\in [0,T],
 \end{array}
 \end{equation}
which constitutes the first equation of \eqref{eq:NSDE}.
 Thanks to Assumption~\ref{ass:main} and Lemma~\ref{lem:uu'} implying the Lipschitz property of $u^m$ w.r.t. the space variable (uniformly in time),  \eqref{eq:Ym} admits a unique strong solution $Y^m$. 
We define the application $\Theta :\mathcal{P}_2(\mathcal{C}^d)\rightarrow \mathcal{P}_2(\mathcal{C}^d)$ such that $\Theta (m):=\mathcal{L}(Y^m)$. 
The aim of the present section is to prove, following Sznitman~\cite{sznitman},
 by a fixed point argument on $\Theta $ the following result. 
\begin{thm}
\label{prop:NSDE}
Under Assumption~\ref{ass:main}, the 
McKean type SDE~\eqref{eq:NSDE} admits the following properties.
\begin{enumerate}
\item Strong existence and pathwise uniqueness;
\item existence and uniqueness in law.
\end{enumerate} 
\end{thm}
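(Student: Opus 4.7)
The plan is to follow the classical fixed-point approach of Sznitman applied to the map $\Theta : \mathcal{P}_2(\mathcal{C}^d) \to \mathcal{P}_2(\mathcal{C}^d)$, $m \mapsto \mathcal{L}(Y^m)$. First, I would check that $\Theta$ is well-defined: for any $m \in \mathcal{P}_2(\mathcal{C}^d)$, Lemma \ref{lem:u} yields the unique $u^m$ solving the linking equation, and Remark \ref{RLipeq:u} (together with \eqref{eq:uu'} applied with $m=m'$) shows that $u^m$ is bounded and Lipschitz in the space variable uniformly in time, with constants depending only on $M_\Lambda, L_K, M_K$ and $T$. Combined with items 1. and 3.(a) of Assumption \ref{ass:main}, the SDE \eqref{eq:Ym} thus has bounded Lipschitz coefficients, and classical strong existence/pathwise uniqueness for SDEs yields a unique strong solution $Y^m$. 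Boundedness of $\Phi, g$ together with $\zeta_0 \in \mathcal{P}_2(\R^d)$ ensures $\E[\sup_{t\le T}|Y^m_t|^2] < \infty$, so $\Theta(m) \in \mathcal{P}_2(\mathcal{C}^d)$.

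Next, I would derive a stability estimate for $\Theta$. For $m, m' \in \mathcal{P}_2(\mathcal{C}^d)$, couple $Y^m$ and $Y^{m'}$ on a common filtered probability space using the same initial datum $Y_0$ and Brownian motion $W$. Applying Doob's inequality and the Burkholder-Davis-Gundy inequality to the stochastic integral, and using the Lipschitz character of $\Phi$ and $g$, yields for some constant $C$ depending only on the data,
\begin{equation*}
\E\Bigl[\sup_{s\le t}|Y^m_s - Y^{m'}_s|^2\Bigr] \le C\int_0^t \E[|Y^m_s - Y^{m'}_s|^2] \, ds + C\int_0^t \E[|u^m(s,Y^m_s) - u^{m'}(s,Y^{m'}_s)|^2] \, ds.
\end{equation*}
The stability bound \eqref{eq:uu'} from Lemma \ref{lem:uu'} controls the second integrand by $C_{K,\Lambda}(T)\bigl(\E[|Y^m_s - Y^{m'}_s|^2] + W_s(m,m')^2\bigr)$; substituting and applying Gronwall's lemma to the function $t \mapsto \E[\sup_{s\le t}|Y^m_s - Y^{m'}_s|^2]$ gives
\begin{equation*}
W_t(\Theta(m),\Theta(m'))^2 \le \E\Bigl[\sup_{s\le t}|Y^m_s - Y^{m'}_s|^2\Bigr] \le C'\int_0^t W_s(m,m')^2 \, ds,
\end{equation*}
with $C'$ depending only on the data and $T$.

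Iterating the previous estimate, and using that $s \mapsto W_s(m,m')$ is non-decreasing, a straightforward induction on $n$ yields
\begin{equation*}
W_T(\Theta^n(m),\Theta^n(m'))^2 \le \frac{(C'T)^n}{n!} \, W_T(m,m')^2.
\end{equation*}
For $n$ large enough, $\Theta^n$ is therefore a strict contraction on the complete metric space $(\mathcal{P}_2(\mathcal{C}^d), W_T)$, and the standard version of Banach's fixed-point theorem for iterated contractions produces a unique fixed point $m^* \in \mathcal{P}_2(\mathcal{C}^d)$ of $\Theta$. Strong existence of \eqref{eq:NSDE} then follows by setting $Y := Y^{m^*}$ and $u := u^{m^*}$: since $\mathcal{L}(Y^{m^*}) = m^*$, the linking equation for $u^{m^*}$ coincides with the second line of \eqref{eq:NSDE}. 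For pathwise uniqueness, given two solutions $(Y^i, u^i)$ on the same probability space with the same $Y_0$ and $W$, Lemma \ref{lem:u} forces $u^i = u^{\mathcal{L}(Y^i)}$; hence $\mathcal{L}(Y^1)$ and $\mathcal{L}(Y^2)$ are both fixed points of $\Theta$, so they agree, and $Y^1, Y^2$ then solve the same Lipschitz SDE driven by the same Brownian motion, which makes them indistinguishable. Existence in law is an immediate consequence of strong existence, while weak uniqueness follows because the law of any weak solution is necessarily the unique fixed point $m^*$ of $\Theta$, after which $u$ is determined via Lemma \ref{lem:u}.

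The main technical point is that the Lipschitz constant of $u^m$ in $y$ provided by Lemma \ref{lem:uu'} grows like $\sqrt{C_{K,\Lambda}(T)} = O(\exp(\mathrm{const}\cdot T))$, so the Gronwall constant $C'$ can be arbitrarily large on a long horizon. This is precisely the reason why the scheme relies on the factorial gain $(C'T)^n/n!$ produced by iterating the integral inequality, rather than requiring $T$ to be small and chaining subintervals; with this device, the argument closes on any fixed finite horizon $[0,T]$ without any smallness assumption.
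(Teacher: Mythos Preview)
Your proposal is correct and follows essentially the same Sznitman fixed-point route as the paper, including the key integral inequality $W_t(\Theta(m),\Theta(m'))^2 \le C\int_0^t W_s(m,m')^2\,ds$ obtained from Lemma~\ref{lem:uu'} combined with a Gronwall argument (the paper packages the SDE stability step as Lemma~\ref{lem:yy'} and simply cites Sznitman rather than writing out the $(C'T)^n/n!$ iteration). The only place to tighten is your last sentence on weak uniqueness: since $\Theta$ is defined via the strong solution on a fixed filtered probability space, asserting that the law of a weak solution living on another space is a fixed point of $\Theta$ requires an appeal to Yamada--Watanabe (uniqueness in law for the frozen-$u^m$ Lipschitz SDE), which the paper invokes explicitly.
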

The proof of the theorem needs the lemma below.
Given two reals $a, b$ we will denote in the sequel
$a \wedge b := {\rm min}(a,b)$.
\begin{lem}
\label{lem:yy'}
Let $r:\,[0,T]\mapsto [0,T]$ be a non-decreasing function such that $r(s)\leq s$ for any $s\in [0,T]$. 
Let $\shu : (t,y) \in [0,T] \times \shc^d \rightarrow \R $ (respectively $\shu' : (t,y) \in [0,T] \times \shc^d \rightarrow \R$),
 be a given Borel function  
such that for all $t \in [0,T]$, there is a Borel map $\shu_t: \shc ([0,t],\R^d) \rightarrow \R$ (resp. $\shu^{'}_t: \shc ([0,t],\R^d) 
\rightarrow \R$) such that $\shu(t,\cdot) = \shu_t(\cdot)$ (resp. $\shu'(t,\cdot) = \shu'_t(\cdot)$).

Then the following two assertions hold. 
\begin{enumerate}
\item Consider $Y$ (resp. $Y'$) a solution of the following SDE for $v=\shu$ (resp. $v=\shu'$):
\begin{equation}
 \label{eq:YY'}
 \begin{array}{l}
 Y_t=Y_0+\int_0^t\Phi(r(s),Y_{r(s)},v(r(s),Y_{\cdot \wedge r(s)}))dW_s+\int_0^t g(r(s),Y_{r(s)},v(r(s),Y_{\cdot \wedge r(s)}))ds\ ,\quad \textrm{for any}\ t\in [0,T] \ ,
 \end{array}
 \end{equation}
 where, we emphasize that for all $\theta \in [0,T]$, $Z_{\cdot \wedge \theta} := \{Z_u, 0 \leq u \leq \theta \} 
\in \shc([0,\theta],\R^d)$ for any continuous process $Z$. 
For any $ a \in [0,T]$, we have
\begin{equation}
 \label{eq:YY'Stab}
\E [\sup_{t\leq a} \vert Y'_t-Y_t\vert ^2]
\leq 
C_{\Phi,g}(T)\E\left [\int_0^a \vert \shu(r(t),Y_{\cdot \wedge r(t)})-\shu'(r(t),Y'_{\cdot \wedge r(t)})
\vert^2dt \right ]\ ,
\end{equation}
where $C_{\Phi,g}(T)=12(4L^2_{\Phi}+TL^2_g)e^{12T(4L^2_{\Phi}+TL^2_g)}$.
\item Suppose moreover that $\Phi$ and $g$ are globally Lipschitz w.r.t. the time and space variables i.e. there exist some positive constants $L_{\Phi}$ and $L_g$ such that for any $(t,t',y,'y',z,z')\in [0,T]^2\times \R^{2d}\times \R^2$ 
\begin{equation}
\label{eq:PhigLipt}
\left \{
\begin{array}{l}
\vert \Phi(t,y,z)-\Phi(t',y',z')\vert \leq L_{\Phi} (\vert t-t'\vert +\vert y-y'\vert +\vert z-z'\vert)\\
\vert g(t,y,z)-g(t',y',z')\vert \leq L_{g} (\vert t-t'\vert +\vert y-y'\vert +\vert z-z'\vert)\ .
\end{array}
\right . 
\end{equation}
Let  $r_1, r_2:\,[0,T]\mapsto [0,T]$ being two non-decreasing functions verifying $r_1(s)\leq s$ and $r_2(s)\leq s$ for any $s\in [0,T]$. 
Let $Y$ (resp. $Y'$) be a solution  of~\eqref{eq:YY'} for $v=\shu$ and $r=r_1$ (resp. $v=\shu'$ and $r=r_2$). 
Then for any $ a \in [0,T]$, the following inequality holds: 
\begin{eqnarray}
 \label{eq:YY'Stabr'}
\E [\sup_{t\leq a} \vert Y'_t-Y_t\vert ^2]
&\leq &
C_{\Phi,g}(T)\left (\Vert r_1-r_2\Vert_{2}^2 +\int_0^a\E[\vert Y'_{r_1(t)}-Y'_{r_2(t)}\vert^2] dt\right .\nonumber \\
&&
\left .+\E \left[\int_0^a\vert  \shu(r_1(t),Y_{\cdot \wedge r_1(t)})-\shu'(r_2(t),Y'_{\cdot \wedge r_2(t)})\vert^2dt´\right]\right )\ ,
\end{eqnarray}
where $\Vert \cdot \Vert_2$ is the $L^2([0,T])$-norm.  
\end{enumerate}
\end{lem}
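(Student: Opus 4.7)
The plan is to write $Y_t - Y'_t$ as the sum of a stochastic integral and a Lebesgue integral of the differences of the diffusion and drift coefficients, and then estimate $\E[\sup_{t\leq a}|Y_t - Y'_t|^2]$ by applying Doob's maximal inequality (i.e.\ Burkholder-Davis-Gundy for $p=2$) to the martingale part and Cauchy-Schwarz to the drift part. Using the global Lipschitz bounds on $\Phi$ and $g$ from Assumption~\ref{ass:main}.1 together with the elementary inequality $(a+b)^2 \leq 2(a^2+b^2)$, one obtains
\[
\E\bigl[\sup_{t\leq a}|Y_t-Y'_t|^2\bigr] \leq C_1 \int_0^a \E\bigl[|Y_{r(s)}-Y'_{r(s)}|^2\bigr]\,ds + C_1\,\E\Bigl[\int_0^a |\shu(r(s),Y_{\cdot\wedge r(s)}) - \shu'(r(s),Y'_{\cdot\wedge r(s)})|^2\,ds\Bigr],
\]
with $C_1$ depending only on $L_\Phi,L_g,T$. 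Since $r$ is non-decreasing with $r(s)\leq s$, we have $|Y_{r(s)}-Y'_{r(s)}|^2 \leq \sup_{u\leq s}|Y_u - Y'_u|^2$, so Gronwall's lemma applied to $\varphi(a):=\E[\sup_{t\leq a}|Y_t-Y'_t|^2]$ yields \eqref{eq:YY'Stab}.

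\textbf{Plan for item 2.} The additional difficulty is the presence of two possibly different time-discretizations $r_1,r_2$. I would insert an intermediate term, writing, for example,
\begin{align*}
&\Phi(r_1(s),Y_{r_1(s)},\shu(r_1(s),Y_{\cdot\wedge r_1(s)})) - \Phi(r_2(s),Y'_{r_2(s)},\shu'(r_2(s),Y'_{\cdot\wedge r_2(s)})) \\
&\quad = \bigl[\Phi(r_1(s),Y_{r_1(s)},\shu(r_1(s),Y_{\cdot\wedge r_1(s)})) - \Phi(r_1(s),Y'_{r_1(s)},\shu'(r_2(s),Y'_{\cdot\wedge r_2(s)}))\bigr] \\
&\qquad + \bigl[\Phi(r_1(s),Y'_{r_1(s)},\shu'(r_2(s),Y'_{\cdot\wedge r_2(s)})) - \Phi(r_2(s),Y'_{r_2(s)},\shu'(r_2(s),Y'_{\cdot\wedge r_2(s)}))\bigr],
\end{align*}
and analogously for $g$. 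The first bracket is controlled by the spatial Lipschitz bound from Assumption~\ref{ass:main}.1, producing two summands: $|Y_{r_1(s)}-Y'_{r_1(s)}|^2 \leq \sup_{u\leq s}|Y_u-Y'_u|^2$ (to be absorbed by Gronwall) and $|\shu(r_1(s),Y_{\cdot\wedge r_1(s)}) - \shu'(r_2(s),Y'_{\cdot\wedge r_2(s)})|^2$ (matching exactly the third term on the right-hand side of \eqref{eq:YY'Stabr'}). The second bracket uses the joint time-space Lipschitz assumption~\eqref{eq:PhigLipt}, producing $|r_1(s)-r_2(s)|^2$ (whose Lebesgue integral is $\|r_1-r_2\|_2^2$) and $|Y'_{r_1(s)}-Y'_{r_2(s)}|^2$ (matching the middle term). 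Applying Doob's inequality, Cauchy-Schwarz and finally Gronwall as in item~1 then yields \eqref{eq:YY'Stabr'}.

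\textbf{Main obstacle.} The proof is essentially standard Lipschitz-type SDE stability, so the real work is combinatorial bookkeeping rather than conceptual. The main challenge is to organise the triangle-inequality splitting of the coefficient differences so that the resulting error terms group naturally into exactly the three contributions on the right-hand side of \eqref{eq:YY'Stabr'}, and to track the multiplicative constants through the successive applications of $(a+b+c)^2 \leq 3(a^2+b^2+c^2)$, the BDG constant $4$ for Doob at $p=2$, and the Cauchy-Schwarz factor for the drift, so as to recover the announced explicit constant $C_{\Phi,g}(T)=12(4L_\Phi^2+TL_g^2)e^{12T(4L_\Phi^2+TL_g^2)}$. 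The monotonicity of $r, r_1, r_2$ is used precisely at the step where $|Y_{r(s)}-Y'_{r(s)}|^2$ must be dominated by $\sup_{u\leq s}|Y_u-Y'_u|^2$ in order for Gronwall to close the estimate.
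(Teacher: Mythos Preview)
Your proposal is correct and follows essentially the same approach as the paper: decompose $Y-Y'$ into its martingale and finite-variation parts, apply BDG (with constant $4$) and Cauchy--Schwarz respectively, use the Lipschitz bounds on $\Phi,g$, bound $|Y_{r(s)}-Y'_{r(s)}|^2$ by $\sup_{u\leq s}|Y_u-Y'_u|^2$, and close with Gronwall; for item~2 the paper likewise applies the joint time--space Lipschitz property \eqref{eq:PhigLipt} to produce exactly the four terms you list (three surviving on the right-hand side of \eqref{eq:YY'Stabr'}, one absorbed by Gronwall). One small remark: the domination $|Y_{r(s)}-Y'_{r(s)}|^2\le\sup_{u\leq s}|Y_u-Y'_u|^2$ uses only the hypothesis $r(s)\leq s$, not the monotonicity of $r$.
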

\begin{proof}
\begin{enumerate}
\item Let us consider the first assertion of Lemma~\ref{lem:yy'}. 
 Let $Y$ (resp. $Y'$) is solution of~\eqref{eq:YY'} with associated function $\shu$ (resp.  $\shu'$).
	Let us fix $a \in ]0,T]$. We have
	\begin{equation}
\label{eq:ab}
Y_{\theta}-Y'_{\theta}  =  \alpha_\theta + \beta_\theta, \ \theta \in [0,a],
\end{equation}
where 
\begin{eqnarray*}
\alpha_{\theta} & := & \int_0^{\theta} \left ( \Phi(r(s),Y_{r(s)},\shu(r(s),Y_{\cdot \wedge r(s)}))-
\Phi(r(s),Y'_{r(s)},\shu'(r(s),Y'_{\cdot \wedge r(s)}))\right )dW_s \\
\beta_{\theta} & := & \int_0^{\theta} \left ( g(r(s),Y_{r(s)},\shu(r(s),Y_{\cdot \wedge r(s)}))-g(r(s),Y'_{r(s)},\shu'(r(s),Y'_{\cdot \wedge r(s)}))\right )ds\ .
\end{eqnarray*}
By Burkholder-Davis-Gundy (BDG) inequality, we obtain 
\begin{eqnarray}
\label{eq:alpha}
\E \sup_{ \theta \leq a} \vert \alpha_{\theta} \vert ^2
&\le & 4\E \left[\int_0^a\left \vert  \Phi(r(s),Y_{r(s)},\shu(r(s),Y_{\cdot \wedge r(s)}))-\Phi(r(s),Y'_{r(s)},\shu'(r(s),Y'_{\cdot \wedge r(s)}))\right  \vert ^2ds\right] 
\nonumber \\
&= & 4\int_0^a\E\left[\left \vert  \Phi(r(s),Y_{r(s)},\shu(r(s),Y_{\cdot \wedge r(s)}))
-\Phi(r(s),Y'_{r(s)},\shu'(r(s),Y'_{\cdot \wedge r(s)}))\right  \vert ^2\right]ds \nonumber\\
&\leq & 8L^2_{\Phi}\int_0^a\E\left[\left \vert  \shu(r(s),Y_{\cdot \wedge r(s)})-\shu'(r(s),Y'_{\cdot \wedge r(s)})\right  \vert ^2\right]ds+8L^2_{\Phi}\int_0^a \E\left [\vert Y_{r(s)}-Y'_{r(s)}\vert^2 \right ]\,ds \ . \nonumber \\
\end{eqnarray}
Concerning $\beta$ in \eqref{eq:ab}, 
by Cauchy-Schwarz inequality, we get 
\begin{eqnarray}
\label{eq:beta}
\E \sup_{ \theta \leq a} \vert \beta_{\theta} \vert ^2
&\leq & a \E\left [\int_0^a \vert g(r(s),Y_{r(s)},\shu(r(s),Y_{\cdot \wedge r(s)}))-g(r(s),Y'_{r(s)},\shu'(r(s),Y'_{\cdot \wedge r(s)}))\vert ^2ds \right ]\nonumber\\
&\leq & 2a L_g^2\E\left [\int_0^a \vert \shu(r(s),Y_{\cdot \wedge r(s)})-\shu'(r(s),Y'_{\cdot \wedge r(s)})\vert ^2 ds \right ]+2aL^2_{g}\int_0^a \E\left [\vert Y_{r(s)}-Y'_{r(s)}\vert^2 \right ]\,ds\ . \nonumber \\
\end{eqnarray}
Gathering~\eqref{eq:beta} together with~\eqref{eq:alpha} and using the fact that $r(s)\leq s$, implies 
\begin{eqnarray*}
\E[\sup_{ \theta \leq a}\vert Y'_{\theta}-Y_{\theta}\vert^2]
&\leq &
4(4L^2_{\Phi}+TL^2_g)\left (\E[\int_0^a\vert \shu(r(s),Y_{\cdot \wedge r(s)})-\shu'(r(s),Y'_{\cdot \wedge r(s)})\vert^2 ds] \right . \nonumber \\
& & \left . + \; \int_0^a \E [\vert Y_{r(s)}-Y'_{r(s)}\vert^2  ]\,ds\right  )\\
&\leq &
4(4L^2_{\Phi}+TL^2_g)\left (\E[\int_0^a\vert \shu(r(s),Y_{\cdot \wedge r(s)})-\shu'(r(s),Y'_{\cdot \wedge r(s)})\vert^2 ds] \right . \nonumber \\
& & \left . + \;  \int_0^a \E [\sup_{ \theta \leq s}\vert Y_{\theta}-Y'_{\theta}\vert^2  ]\,ds\right  )\ ,
\end{eqnarray*}
for any $a \in [0,t]$. \\
We conclude the proof by applying Gronwall's lemma.

\item Consider now the second assertion of Lemma~\eqref{lem:yy'}. Following the same lines as the proof of assertion 1. and using the Lipschitz property of $\Phi$ and $g$ w.r.t. to both the time and space variables~\eqref{eq:PhigLipt}, we obtain the inequality
\begin{eqnarray*}
\E[\sup_{t\leq a}\vert Y'_t-Y_t\vert^2]
&\leq &
12(4L^2_{\Phi}+TL^2_g)\left (\int_0^a\vert r_1(t)-r_2(t)\vert^2 dt+\int_0^a\E[\vert Y'_{r_1(t)}-Y'_{r_2(t)}\vert^2] dt\right .\\ 
&&+\E[\int_0^a\vert  \shu(r_1(t),Y_{\cdot \wedge r_1(t)})-\shu'(r_2(t),Y'_{\cdot \wedge r_2(t)})\vert^2dt]
\left .+  \int_0^a \E [\vert Y_{r_1(t)}-Y'_{r_1(t)}\vert^2  ]\,dt\right  )\\
&\leq &
12(4L^2_{\Phi}+TL^2_g)\left (\Vert r_1-r_2\Vert_{2}^2 +\int_0^a\E[\vert Y'_{r_1(t)}-Y'_{r_2(t)}\vert^2] dt\right .\\ 
&&+\E[\int_0^a\vert  \shu(r_1(t),Y_{\cdot \wedge r_1(t)})-\shu'(r_2(t),Y'_{\cdot \wedge r_2(t)})\vert^2dt]
\left .+  \int_0^a \E [\sup_{s\leq t}\vert Y_{s}-Y'_{s}\vert^2  ]\,dt\right  )\ .
\end{eqnarray*}
Applying again Gronwall's lemma concludes the proof.
\end{enumerate}
\end{proof}
\begin{proof}[Proof of Theorem \ref{prop:NSDE}]
Let us consider two probability measures $m$ and $m'$ in $\mathcal{P}_2(\mathcal{C}^d)$. We are interested in proving that $\Theta$ is a contraction for the Wasserstein metric. 
Let $u:= u^m, u':= u^{m'}$ be solutions of \eqref{eq:u} related to $m$ and $m'$.
Let $Y$ be the solution of \eqref{eq:Ym} and  $Y'$ be the solution of \eqref{eq:Ym}
with $m'$ replacing $m$. \\
By definition of the Wasserstein metric~\eqref{eq:Wasserstein}
\begin{equation}
\label{eq:Wasserstein2}
\vert W_T(\Theta(m),\Theta(m'))\vert ^2\leq \E [\sup_{t\leq T} \vert Y'_t-Y_t\vert ^2]\ .
\end{equation}
Hence, we control $\vert Y'_t-Y_t\vert$. To this end, we will use Lemma \ref{lem:yy'}.

By usual BDG and Cauchy-Schwarz  inequalities, as  for example Theorem 2.9, Section 5.2, Chapter 5 in \cite{karatshreve}
there exists a positive real 
  $ C_0$ depending on $(T,M_{\Phi},M_{g})$ such that 
$
\E[  \sup_{t\leq T}  \vert Y_{t}\vert ^{2} ] \leq C_0 \left (1 + \E[ |Y_{0}|^{2}] \right) \ .
$

Using Lemma~\ref{lem:yy'} and    Lemma~\ref{lem:uu'} by applying successively inequality~\eqref{eq:YY'Stab} and inequality~\eqref{eq:uu'} yields
\begin{equation}
\label{eq:Y}
\E [\sup_{t\leq a} \vert Y'_t-Y_t\vert ^2]
\leq 
C \left [\int_0^a \E[\sup_{s\leq t}\vert Y'_s-Y_s\vert^2]dt+\int_0^a\vert W_t(m,m')\vert ^2dt\right ]\ ,
\end{equation}
for any $a \in [0,T]$,
where $C=C_{\Phi , g}(T)C_{K,\Lambda}(T)$. \\
Applying Gronwall's lemma to~\eqref{eq:Y} yields 
\begin{equation}
\label{eq:uepsilon1}
\E [ \sup_{t\leq a} |Y_{t} - Y'_{t}|^{2} ]  \leq  
C e^{CT} \int_{0}^{a} \vert W_{s}(m,m')\vert ^2 ds \ .
\end{equation}
Then recalling~\eqref{eq:Wasserstein2}, this finally gives  
\begin{equation}
\label{eq:majorfinal}
\vert W_{a}(\Theta(m),\Theta(m'))\vert ^2  \leq  Ce^{CT} \int_{0}^{a} 
\vert W_{s}(m,m')\vert^2 ds, \ a \in [0,T].
\end{equation}
We end the proof of item 1. by classical fixed point argument, similarly
to the one of Chapter 1, section 1 of Sznitman~\cite{sznitman}. \\
Concerning item 2. it remains to show uniqueness in law for \eqref{eq:NSDE}.
Let $(Y^1, m^1), (Y^2, m^2)$ be two solutions of \eqref{eq:NSDE} on possibly 
different probability spaces and different Brownian motions, and different
initial conditions distributed according to $\zeta_0$. Given $m \in \shp_2(\shc^d)$, we denote by $\Theta(m)$ the law of $\bar{Y}$, where $\bar{Y}$ is the (strong) solution of 
\begin{eqnarray}
\bar{Y}_t = \bar{Y}_0^1 + \int_0^t \Phi(s,\bar{Y}_s,u^{m}(s,\bar{Y}_s)) dW_s + \int_0^t g(s,\bar{Y}_s,u^{m}(s,\bar{Y}_s)) ds \ ,
\end{eqnarray}
 on the same probability space and same Brownian motion on which $Y^1$ lives. Since $u^{m^2}$ is fixed, $\bar{Y}^2$ is solution of a classical SDE with Lipschitz coefficients for which pathwise uniqueness holds. By Yamada-Watanabe theorem, $Y^2$ and $\bar{Y}^2$ have the same distribution. Consequently, $\Theta(m^2) = \shl(\bar{Y^2}) = \shl(Y^2) = m^2$. It remains to show that $Y^1 = \bar{Y}^2$ in law, i.e. $m^1 = m^2$. By the same arguments as for the proof of $1.$, we get \eqref{eq:majorfinal}, i.e. for all $a \in [0,T]$, 
 $$
 \vert W_{a}(\shl(Y^1),\shl(\bar{Y}^2))\vert ^2 = \vert W_{a}(\Theta(m^1),\Theta(m^2))\vert ^2  \leq  Ce^{CT} \int_{0}^{a} 
 \vert W_{s}(m^1,m^2)\vert^2 ds .
 $$
 Since $\Theta(m^1) = m^1$ and $\Theta(m^2) = m^2$, by Gronwall's lemma $m^1 = m^2$ and finally $Y^1 = \bar{Y}^2$ (in law). This concludes the proof of Proposition \ref{prop:NSDE}. 
\end{proof}

\section{Strong Existence under weaker assumptions}
\label{S4}

\setcounter{equation}{0}

Let us fix a filtered probability space $(\Omega, \shf,(\shf_t)_{t \geq 0}, \P)$ equipped with a $p$ dimensional $(\shf_t)_{t \geq 0}$-Brownian motion $(W_t)_{t \geq 0}$. \\
In this section  Assumption \ref{ass:main2} will be in force.
In particular, we suppose that 
 $\zeta_0$ is a Borel probability measure having a second order moment.





Before proving the main result of this part, we remark  that in this case, uniqueness fails for \eqref{eq:NSDE}.
 To illustrate this, we consider the following counterexample, which is even 
valid for $d=1$.

\begin{example} \label{E41}
 Consider the case $\Phi = g = 0$, $X_{0} = 0$ so that $\zeta_0 =\delta_0$.
This implies that $X_t \equiv 0$ is a strong solution of the first
line of \eqref{eq:NSDE}. Since $u(0,.) = (K * \zeta_{0})(\cdot)$, we have $u(0,\cdot) = K$. \\
A solution  $u$ of the second line equation of \eqref{eq:NSDE}, will be of the
 form
\begin{equation} \label{EqCounter} 
 u(t,y) = K(y)\exp \left( \int_{0}^{t} \Lambda(r,0,u(r,0)) dr \right),
\end{equation}
for some suitable $\Lambda$ fulfilling Assumption \ref{ass:main2}. 2.
 We will in fact consider $\Lambda$ independent of the time and
 $\beta(u) := \Lambda(0,0,u)$.
Without restriction of generality we can suppose $K(0) =1$.
We will show that the second line equation of~\eqref{eq:NSDE} 
is not well-posed for some particular choice of $\beta$. \\
Now  \eqref{EqCounter} becomes
\begin{equation} \label{EqCounter1} 
 u(t,y) = K(y)\exp \left( \int_{0}^{t} \beta(u(r,0))  dr \right).
\end{equation}
By setting $y = 0$, we get $\phi(t) := u(t,0)$ and  in particular,
necessarily we have
  \begin{equation}
\label{eq:example_phi}
\phi(t) = \exp \left( \int_{0}^{t} \beta(\phi(r)) dr \right). 
\end{equation}
A solution $u$ given in  \eqref{EqCounter1} is determined by 
setting $u(t,y) = K(y) \phi(t)$. 
Now, we choose the function $\beta$ such that for given constants 
 $\alpha \in  (0,1)$ and $C > 1$, 
\begin{equation}
\label{eq:example_beta}
\beta(r) = \left \{
\begin{array}{lll}
 & |r - 1|^{\alpha} &\;  ,  \ \textrm{if}\ r \in [0,C] \\
 & |C - 1|^{\alpha} & \;  ,  \ \textrm{if}\ r \geq C  \\
     1  \; & , & \ \textrm{if}\ r \leq 0 \ .\\
\end{array}
\right.
\end{equation}

$\beta$ is clearly a bounded, uniformly continuous function verifying $\beta(1) = 0$ and
 $\beta(r) \ne 0$, for all $r \ne 1$.

We define $F: \mathbb{R} \longrightarrow \mathbb{R}$, by 
$F(u) = \int_{1}^{u} \frac{1}{r \beta(r)} dr $. $F$ is strictly positive
 on $ (1,+\infty)$, and it is a homeomorphism from $[1, +\infty)$ 
to $\R_+$,
 since $ \int_{1}^{+\infty} \frac{1}{r \beta(r)} dr = \infty$. 

On the one hand, by setting $\phi(t) := F^{-1}(t)$, for $t > 0$, we observe
that $\phi$ verifies
 $ \phi'(t)   =  \phi(t) \beta(\phi(t)), t > 0$
and so $\phi$ 
is a solution of \eqref{eq:example_phi}.

On the other hand, the function $\phi \equiv 1$ also satisfies ~\eqref{eq:example_phi}, with the same choice of $\Lambda$,
 related to $\beta$.
This shows
 the non-uniqueness for  the second equation of \eqref{eq:NSDE}. 
\end{example}

The main theorem of this section 
states however the existence (even though non-uniqueness) 
for \eqref{eq:NSDE}, when the coefficients $\Phi$ and $g$ of the SDE
are Lipschitz in $(x,u)$.

\begin{thm} 
\label{thm_semi_weak}
We suppose the validity of Assumption \ref{ass:main2}.
\eqref{eq:NSDE} admits strong existence. 
\end{thm}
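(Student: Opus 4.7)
The plan is to construct a strong solution as the limit of strong solutions to regularized problems in which the continuous weight $\Lambda$ is replaced by a sequence of Lipschitz approximations. Since by Assumption \ref{ass:main2} the function $\Lambda$ is bounded and uniformly continuous in the space variables, I first select a sequence $(\Lambda_n)_{n\ge 1}$ of real functions on $[0,T]\times\R^d\times\R$ that are Lipschitz in $(y,z)$ uniformly in $t$, bounded by $M_\Lambda$, and such that $\Lambda_n\to\Lambda$ uniformly on $[0,T]\times\R^d\times\R$ (for instance by mollifying in $(y,z)$). For each $n$, Assumption \ref{ass:main} is fulfilled by the triple $(\Phi,g,\Lambda_n)$, so Theorem \ref{prop:NSDE} supplies, on the prescribed probability space with the given Brownian motion $W$ and initial condition $Y_0$, a unique strong solution $(Y^n,u^n)$ of the regularized McKean equation.

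Next I collect uniform estimates. By Remark \ref{RLipeq:u}, the family $\{u^n\}_n$ is uniformly bounded by $M_K e^{TM_\Lambda}$ and Lipschitz in $y$ with a constant depending only on $L_K,M_\Lambda,T$; a direct estimate based on the linking equation \eqref{eq:u}, the uniform bound $|\Lambda_n|\le M_\Lambda$, and the standard moment estimate $\E[\sup_{t\le T}|Y^n_t|^2]\le C$ for SDEs with bounded coefficients yields H\"older-$1/2$ equicontinuity of $u^n$ in $t$ uniformly in $n$. By Arzel\`a--Ascoli and a diagonal extraction, a subsequence (still indexed by $n$) converges locally uniformly on $[0,T]\times\R^d$ to a bounded function $u^\infty$ that is Lipschitz in $y$ and continuous in $t$. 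Bounded\-ness of $\Phi,g$ together with $\E[|Y_0|^2]<\infty$ also makes $\{m^n:=\shl(Y^n)\}$ tight in $\shp_2(\shc^d)$, so extracting once more we may assume $m^n\to m^\infty$ in $\shp_2(\shc^d)$.

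On the given probability space, since $u^\infty$ is bounded and Lipschitz in $y$, the classical linear-in-the-law SDE with coefficients $\Phi(\cdot,\cdot,u^\infty(\cdot,\cdot))$ and $g(\cdot,\cdot,u^\infty(\cdot,\cdot))$, driven by $W$ with initial condition $Y_0$, admits a unique strong solution $Y$. The crucial intermediate point is to prove that $\E[\sup_{t\le T}|Y^n_t-Y_t|^2]\to 0$. Applying BDG, the Lipschitz property of $\Phi,g$ in $(y,z)$, the Lipschitz property of $u^\infty$ in $y$, and Gronwall's lemma reduces the task to showing $\E\bigl[\int_0^T|u^n(s,Y^n_s)-u^\infty(s,Y^n_s)|^2 ds\bigr]\to 0$. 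I split the integrand according to whether $|Y^n_s|\le R$ or not: on $\{|Y^n_s|\le R\}$ one uses local uniform convergence $u^n\to u^\infty$, and the tail is handled by Markov's inequality together with the uniform moment bound on $Y^n$. Letting $n\to\infty$ and then $R\to\infty$ yields the desired convergence, and in particular $\shl(Y)=m^\infty$.

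It remains to pass to the limit in the linking equation $u^n(t,y)=\int_{\shc^d}K(y-X_t(\omega))V^n_t(X(\omega),u^n(X(\omega)))\,dm^n(\omega)$. Writing the difference with the candidate identity $u^\infty(t,y)=\int_{\shc^d}K(y-X_t(\omega))V_t(X(\omega),u^\infty(X(\omega)))\,dm^\infty(\omega)$ as $\int(F_n-F_\infty)\,dm^n+\int F_\infty\,d(m^n-m^\infty)$, the first piece is controlled by $\|\Lambda_n-\Lambda\|_\infty\to 0$, the locally uniform convergence $u^n\to u^\infty$, and a cutoff in $\omega$ based on the uniform moment bound; the second piece vanishes by weak convergence of $m^n$ to $m^\infty$, since continuity of $\Lambda,K,u^\infty$ makes $\omega\mapsto K(y-X_t(\omega))V_t(X(\omega),u^\infty(X(\omega)))$ a bounded continuous function on $\shc^d$. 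Thus $(Y,u^\infty)$ is a strong solution of \eqref{eq:NSDE}. The main obstacle I expect is the $L^2$ convergence $Y^n\to Y$: reconciling the merely locally uniform convergence $u^n\to u^\infty$ with the pointwise evaluation $u^n(s,Y^n_s)$ along random trajectories is precisely where the tightness/cutoff argument above is indispensable.
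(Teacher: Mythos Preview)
Your proof is correct and follows the same overall strategy as the paper: regularize $\Lambda$ by a Lipschitz sequence $\Lambda_n$, invoke Theorem \ref{prop:NSDE} to produce strong solutions $(Y^n,u^n)$, extract a convergent subsequence, and pass to the limit in both the SDE and the linking equation. The differences are in execution rather than in substance.

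Where the paper proceeds through the joint tightness of $(Y^n,Z^n)$ with $Z^n=u_n(\cdot,Y^n)$ (Lemma \ref{tightness}) and then through the chain Lemma \ref{limit} $\to$ Proposition \ref{cvuoncompact} $\to$ Proposition \ref{limit_in_u}, you bypass the auxiliary process $Z^n$ entirely: the direct H\"older-$1/2$ estimate on $t\mapsto u^n(t,y)$ (available because $\Phi,g$ are bounded, so $\E[|Y^n_t-Y^n_s|^2]\le C|t-s|$) together with the uniform Lipschitz bound in $y$ already gives the equicontinuity needed for Arzel\`a--Ascoli. This is a genuine simplification for the present theorem. Likewise, your use of the \emph{global} uniform convergence $\|\Lambda_n-\Lambda\|_\infty\to 0$ (legitimate here, since Assumption \ref{ass:main2} gives uniform continuity of $\Lambda$ in $(y,z)$ over all of $\R^d\times\R$, uniformly in $t$) makes the passage to the limit in the linking equation cleaner than the paper's treatment, which only assumes convergence uniform on compacts for a.e.\ $t$. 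Your inline cutoff argument for $\E[\sup_t|Y^n_t-Y_t|^2]\to 0$ is essentially the content of the paper's Lemma \ref{approxSDE}.

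What the paper's more modular route buys is reusability: Lemmas \ref{cvu}--\ref{limit} and Propositions \ref{cvuoncompact}--\ref{limit_in_u} are stated under hypotheses weak enough to be recycled verbatim in Section \ref{S5}, where $\Phi$ and $g$ are themselves only continuous and one can no longer rely on strong $L^2$ convergence of $Y^n$. Your streamlined argument is tailored to the Lipschitz-$\Phi,g$ setting and would not transfer to that case without rebuilding part of the machinery.
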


The proof goes through several steps. We begin by recalling a lemma,
stated in Problem 4.12 and Remark 4.13 page 64 in \cite{karatshreve}.
%
\begin{lem}
\label{karatzas} 
 Let $(\mathbb{P}_n)_{n \geq 0}$ be a sequence of probability measures on $\mathcal{C}^d$ converging in law to some probability $\mathbb{P}$. Let 
 $(f_n)_{n \geq 0}$ be a uniformly bounded sequence of real-valued, continuous functions defined on $\mathcal{C}^d$, converging uniformly on every 
 compact subset to some continuous $f$. Then 
 ${\displaystyle 
 \int_{\mathcal{C}^d} f_n(\omega)d\mathbb{P}_{n}(\omega) \xrightarrow[\text{$n \rightarrow +\infty$}]{\text{}} \int_{\mathcal{C}^d} f(\omega)d\mathbb{P}(\omega)\ .
 }$
\end{lem}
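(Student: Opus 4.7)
The plan is to split the target difference as
$$\int_{\mathcal{C}^d} f_n\, d\mathbb{P}_n - \int_{\mathcal{C}^d} f\, d\mathbb{P} = \int_{\mathcal{C}^d} (f_n - f)\, d\mathbb{P}_n + \left(\int_{\mathcal{C}^d} f\, d\mathbb{P}_n - \int_{\mathcal{C}^d} f\, d\mathbb{P}\right)$$
and handle each piece separately. The parenthesized term on the right tends to $0$ as $n \to \infty$ directly from the weak convergence $\mathbb{P}_n \Rightarrow \mathbb{P}$, since $f$ is continuous and bounded: the uniform bound $M := \sup_n \|f_n\|_\infty < +\infty$ is inherited by $f$ as a pointwise limit on compacta (every point of $\mathcal{C}^d$ lying in some compact set).

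For the first piece the key tool is tightness. Since $(\mathbb{P}_n)_{n \geq 0}$ converges weakly on the Polish space $\mathcal{C}^d$, Prohorov's theorem implies that the sequence is tight: for any $\varepsilon > 0$ there exists a compact set $K_\varepsilon \subset \mathcal{C}^d$ with $\sup_n \mathbb{P}_n(K_\varepsilon^c) \leq \varepsilon$. Splitting the integral over $K_\varepsilon$ and $K_\varepsilon^c$ and using the uniform bound $2M$ on $|f_n - f|$ off the compact, I would obtain
$$\left| \int_{\mathcal{C}^d} (f_n - f)\, d\mathbb{P}_n \right| \leq \sup_{\omega \in K_\varepsilon} |f_n(\omega) - f(\omega)| + 2 M \varepsilon.$$
By the hypothesis of uniform convergence of $f_n$ to $f$ on the compact set $K_\varepsilon$, the first summand vanishes as $n \to \infty$, so that $\limsup_n \left| \int_{\mathcal{C}^d} (f_n - f)\, d\mathbb{P}_n \right| \leq 2 M \varepsilon$. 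Since $\varepsilon > 0$ was arbitrary, this limsup is $0$.

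Combining both estimates yields the claim. The only step demanding any real care is the appeal to tightness, which uses that $\mathcal{C}^d = \mathcal{C}([0,T], \R^d)$ is Polish under the uniform norm (standard). Everything else is a Portmanteau-style decomposition, and the argument uses no specific structure of the path space beyond its Polish character.
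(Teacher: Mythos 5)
The paper does not prove this lemma at all; it cites it directly from Karatzas and Shreve (Problem~4.12 and Remark~4.13, p.~64), so there is no in-paper argument to compare against. Your proof is correct and is exactly the standard argument one would expect to find behind that reference: the decomposition into $\int (f_n-f)\,d\mathbb{P}_n$ plus $\int f\,d(\mathbb{P}_n-\mathbb{P})$, Prohorov's theorem to extract a compact $K_\varepsilon$ carrying all but $\varepsilon$ of every $\mathbb{P}_n$, uniform convergence of $f_n$ to $f$ on $K_\varepsilon$, and the uniform bound $2M$ off it. The two small points worth having made explicit — that $f$ inherits the bound $M$ because singletons are compact so the uniform-on-compacta convergence implies pointwise convergence, and that tightness of a weakly convergent sequence on a Polish space follows from (the easy direction of) Prohorov — you did make explicit. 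No gaps.
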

\begin{rem} \label{Rkaratzas}
We will apply several times Lemma \ref{karatzas}.
We will verify its assumptions 
 showing that
the sequence $(f_n)$ converges uniformly on each bounded ball
of ${\mathcal C}^d$. This will be enough since
every compact of  ${\mathcal C}^d$ is bounded.
\end{rem}

We emphasize that the hypothesis of uniform convergence in Lemma \ref{karatzas} is crucial,
see Remark \ref{counter-example}.

We formulate below an useful Remark, which follows by a simple application
of Lebesgue dominated convergence theorem. It will be often used in the sequel.
\begin{rem}
\label{continuite}
Let  $\Lambda : [0,T] \times \R^{d} \times \R \longrightarrow \R$ be
a Borel bounded function such that for almost all $t \in [0,T]$
$\Lambda(t, \cdot, \cdot)$ is continuous. 
 The function $F : [0,T] \times \mathcal{C}^d \times \mathcal{C} \longrightarrow \R$, $x_{0} \in \R$,
 defined by $F(t,y,z) = K(x_{0}-y_{t}) \exp\left( \int_{0}^{t} \Lambda(r,y_r,z_r) dr\right)$ is continuous. 
\end{rem}

\begin{lem}
\label{cvu}
 Let $(\Lambda_{n})_{n\in \N}$ be a sequence of Borel uniformly bounded functions
 defined on $[0,T] \times \R^d \times \R $,
such that for every $n$, $\Lambda_n(t, \cdot, \cdot)$ 
is continuous. 
Assume the
existence of a Borel function $\Lambda: [0,T] \times \R^d \times \R 
\rightarrow \R$ such that,
 for almost all $t \in [0,T]$,
  $\big [\Lambda_{n}(t,.,.) - \Lambda(t,.,.)\big ]  \xrightarrow[\text{$n \rightarrow +\infty$}]{\text{}}0 $, uniformly on each compact of $\R^d \times \R$. 
Let $x_0 \in \R^d$, 
we denote by $F_n, F: [0,T] \times \mathcal C^d \times \mathcal C  \rightarrow \R $, the maps
$$
  F_{n}(t,y,z) := K(x_{0}-y_{t}) \exp\left( \int_{0}^{t} 
\Lambda_{n}(r,y_r,z_r)\right) \quad\textrm{and}\quad 
 F(t,y,z):= K(x_{0}-y_{t}) \exp\left( \int_{0}^{t} \Lambda(r,y_r,z_r)\right).
$$
Then for every $M > 0$, $F_n$ converges to $F$
when $n$ goes to infinity uniformly with respect to
 $(t,y,z) \in [0,T] \times B_d(O,M) \times B_1(O,M) 
$, with $B_k(O,M) := \{ y \in \mathcal{C}^k,
 ||y||_{\infty} := \sup_{u \in [0,T]} |y_u| \leq M \}$ for $k \in \N^{\star}$.
\end{lem}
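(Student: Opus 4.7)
The plan is to reduce the problem to a dominated convergence argument. Since $K$ is the same factor on both sides and is bounded by $M_K$, the essential estimate comes from comparing the two exponentials and invoking inequality \eqref{EMajor1} (already used in Remark \ref{rem:V}). With $A_n(t,y,z):=\int_0^t \Lambda_n(r,y_r,z_r)\,dr$ and analogous $A(t,y,z)$, both are bounded by $T M_\Lambda$ in absolute value, so
\[
|F_n(t,y,z)-F(t,y,z)|\ \le\ M_K\, e^{T M_\Lambda}\int_0^t \bigl|\Lambda_n(r,y_r,z_r)-\Lambda(r,y_r,z_r)\bigr|\,dr .
\]
The $(y,z)$-dependence inside the integral is then absorbed by taking a supremum: for $(y,z)\in B_d(O,M)\times B_1(O,M)$ and every $r\in[0,T]$, the pair $(y_r,z_r)$ lies in the compact set $K_M:=\overline{B}_{\R^d}(0,M)\times[-M,M]\subset \R^d\times \R$.

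Next I would introduce
\[
h_n(r)\ :=\ \sup_{(a,b)\in K_M}\bigl|\Lambda_n(r,a,b)-\Lambda(r,a,b)\bigr|,\qquad r\in[0,T].
\]
For almost every $r$, $\Lambda_n(r,\cdot,\cdot)$ is continuous and converges uniformly on compacts to $\Lambda(r,\cdot,\cdot)$, which forces $\Lambda(r,\cdot,\cdot)$ to be continuous on $K_M$ for such $r$; consequently the supremum defining $h_n(r)$ equals a supremum over a countable dense subset of $K_M$, so $h_n$ is Borel measurable. By hypothesis $h_n(r)\to 0$ for a.e.\ $r\in[0,T]$, and by the uniform boundedness of $(\Lambda_n)$ and of $\Lambda$ one has $0\le h_n(r)\le 2M_\Lambda$.

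The bound above then rewrites as
\[
\sup_{\substack{t\in[0,T]\\ (y,z)\in B_d(O,M)\times B_1(O,M)}}|F_n(t,y,z)-F(t,y,z)|\ \le\ M_K\, e^{T M_\Lambda}\int_0^T h_n(r)\,dr,
\]
and Lebesgue's dominated convergence theorem makes the right-hand side tend to $0$, which yields the claimed uniform convergence. The main obstacle is the measurability of $h_n$, which is not immediate since $\Lambda$ is merely Borel; this is handled by noting that the a.e.\ uniform-on-compacts convergence forces $\Lambda(r,\cdot,\cdot)$ to be continuous on $K_M$ for a.e.\ $r$, so the supremum over $K_M$ can be reduced to a supremum over a fixed countable dense set modulo a null set of $r$'s — everything else is a routine application of the exponential estimate \eqref{EMajor1} and dominated convergence.
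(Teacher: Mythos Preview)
Your proof is correct and follows essentially the same route as the paper: bound $|F_n-F|$ via the exponential estimate \eqref{EMajor1}, replace the pointwise integrand by the supremum over the compact $K_M$, and conclude by dominated convergence. You are in fact slightly more careful than the paper on two points: you write the correct constant $e^{TM_\Lambda}$ (the paper has $e^{M_\Lambda}$, a harmless slip), and you explicitly justify the Borel measurability of $r\mapsto h_n(r)$ via continuity of $\Lambda(r,\cdot,\cdot)$ for a.e.\ $r$ and reduction to a countable dense supremum, which the paper leaves implicit.
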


\begin{proof}
 We want to evaluate $ \displaystyle{ ||F_{n} - F||_{\infty,M} := \sup_{(t,y,z) \in [0,T] \times B_d(O,M) \times B_1(O,M)} |F_{n}(t,y,z) - F(t,y,z)| }$.

Since $(\Lambda_{n})_{n \geq 0}$ are uniformly bounded, there is a constant
$M_\Lambda$ such that
$$
\forall r \in [0,T], \sup_{(y',z') \in B_d(O,M) \times B_1(O,M) }|\Lambda_{n}(r,y'_r,z'_r) - \Lambda(r,y'_r,z'_r)| \leq 2 M_{\Lambda}.
$$

 By use of \eqref{EMajor1}, we obtain for all $(t,y,z) \in [0,T] \times B_d(O,M) \times B_1(O,M)$, 
 \begin{equation}
  |F_{n}(t,y,z) - F(t,y,z)| \leq M_{K} \exp( 
M_{\Lambda}) \int_{0}^{t} \sup_{(y',z') \in B_d(O,M) \times B_1(O,M)}|\Lambda_{n}(r,y'_r,z'_r) - \Lambda(r,y'_r,z'_r)| dr,
 \end{equation}
 which implies
 
 \begin{equation} \label{E535}
  ||F_{n} - F||_{\infty,M} \leq M_{K} \exp(M_{\Lambda}) \int_{0}^{T} \sup_{(y',z') \in B_d(O,M) \times B_1(O,M)}|\Lambda_{n}(r,y'_r,z'_r) - \Lambda(r,y_r',z'_r)| dr.
\end{equation}
 By Lebesgue's dominated convergence theorem, we have 

$$ \int_{0}^{T} \sup_{(y',z') \in B_d(O,M) \times B_1(O,M)}|\Lambda_{n}(r,y'_r,z'_r) - \Lambda(r,y'_r,z'_r)|\; dr \longrightarrow 0 \ ,$$
 which concludes the proof.
\end{proof}

\begin{lem}
\label{limit}
Let $\Lambda_n, \Lambda$ be as stated in Lemma \ref{cvu}. 
 Let $(Y^{n})_{n\in \N}$ be a sequence of continuous processes.
 We set $Z^{n} := u_{n}(.\,,Y^{n})$ where for any $(t,x)\in [0,T]\times \R^d$ 
 %
 \begin{equation} \label{E536} 
 \left\{
 \begin{array}{l}
  u_{n}(t,x) := \int_{\mathcal{C}^d} K(x-X_t(\omega)) \, 
\exp \left \{\int_{0}^{t}\Lambda_{n} \big 
(r,X_r(\omega),u_{n}(r,X_r(\omega))\big )dr\right \} dm^{n}(\omega) \, \\
m^n:=\mathcal{L}(Y^n)\ .
  \end{array}
  \right .
 \end{equation}
 Suppose moreover that $\nu^n := \mathcal{L}((Y^{n},Z^{n}))$ converges in law to some probability 
measure $\nu$ on $\mathcal{C}^d\times\mathcal{C}$. 
 Then $u_{n}$ pointwisely converges as $n$ goes to infinity to some limiting function $u : [0,T] \times \R^d \rightarrow \R$ such that for all $(t,x) \in [0,T] \times \R^d$,
 \begin{equation}
 \label{eq:ulimit}
 u(t,x) := \int_{\mathcal{C}^d  \times \mathcal{C}}
 K(x-X_t(\omega)) \, \exp \left \{\int_{0}^{t}\Lambda 
\big (r,X_r(\omega),X'_r(\omega')\big )dr\right \} d\nu(\omega,\omega') \ .
 \end{equation}
\end{lem}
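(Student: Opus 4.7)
The plan is to rewrite $u_n(t,x)$ as an integral over the product space $\mathcal{C}^d \times \mathcal{C}$ against the joint law $\nu^n$, and then pass to the limit using Lemma \ref{karatzas}.

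Fix $(t,x) \in [0,T] \times \R^d$. Since $m^n = \mathcal{L}(Y^n)$ and $Z^n_r = u_n(r, Y^n_r)$ by definition, I would rewrite \eqref{E536} as
$$u_n(t,x) = \E\left[ K(x - Y^n_t) \exp\left( \int_0^t \Lambda_n(r, Y^n_r, Z^n_r)\, dr \right) \right] = \int_{\mathcal{C}^d \times \mathcal{C}} G_n(\omega, \omega')\, d\nu^n(\omega, \omega'),$$
where, denoting by $(X, X')$ the canonical process on $\mathcal{C}^d \times \mathcal{C}$,
$$G_n(\omega, \omega') := K(x - X_t(\omega)) \exp\left( \int_0^t \Lambda_n(r, X_r(\omega), X'_r(\omega'))\, dr \right),$$
and let $G$ be defined analogously with $\Lambda$ in place of $\Lambda_n$. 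By construction, $\int G\, d\nu$ is precisely the right-hand side of \eqref{eq:ulimit}.

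Next I would apply Lemma \ref{karatzas} on $\mathcal{C}^d \times \mathcal{C}$, to the measures $\nu^n \to \nu$ and the integrands $G_n$, with limit $G$. Three hypotheses must be verified: (i) uniform boundedness $|G_n| \leq M_K e^{T M_\Lambda}$, which follows from the uniform bounds on $K$ and $\Lambda_n$; (ii) continuity of each $G_n$ on $\mathcal{C}^d \times \mathcal{C}$, which follows from Remark \ref{continuite} applied with $x_0 = x$, using that $\Lambda_n(r, \cdot, \cdot)$ is continuous; (iii) uniform convergence $G_n \to G$ on each bounded ball $B_d(O,M) \times B_1(O,M)$, which is exactly the conclusion of Lemma \ref{cvu} read at the fixed time $t$. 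By Remark \ref{Rkaratzas}, uniform convergence on bounded balls suffices, since compact subsets of $\mathcal{C}^d \times \mathcal{C}$ are bounded.

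Lemma \ref{karatzas} then yields $\int G_n\, d\nu^n \to \int G\, d\nu$, i.e. $u_n(t,x) \to u(t,x)$ for every $(t,x) \in [0,T] \times \R^d$. Rather than any technical obstacle, the main conceptual point is the initial rewriting: by representing $u_n$ on the product space, the self-referential dependence of $u_n$ on itself through $u_n(r, Y^n_r)$ is absorbed into the auxiliary second coordinate of the joint law $\nu^n$, after which the weak-convergence machinery applies in a routine way.
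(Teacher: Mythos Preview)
Your proof is correct and follows essentially the same approach as the paper: rewrite $u_n(t,x)$ as an integral of $G_n$ against the joint law $\nu^n$ on $\mathcal{C}^d\times\mathcal{C}$, then apply Lemma~\ref{karatzas} (via Remark~\ref{Rkaratzas}) after checking continuity of $G_n$ through Remark~\ref{continuite} and uniform convergence on balls through Lemma~\ref{cvu}. The paper's argument is identical up to notation (it writes $f_n,f$ for your $G_n,G$).
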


\begin{rem}
\label{rem_un}
 $(u_{n})_{n \geq 0}$ is uniformly bounded by $M_K \exp(M_\Lambda T)$.
\end{rem}

\begin{proof}
 Observe that  
$ u_{n}(t,x) = \int_{\mathcal{C}^d \times \mathcal{C}} K(x-X_t(\omega)) \,
 \exp \left \{\int_{0}^{t}\Lambda_{n} \big (r,X_r(\omega),X_r'
(\omega')\big )dr\right \} d\nu^{n}(\omega,\omega'). $ 
Let us fix  $t \in [0,T], x \in \R^d$.
 Let us introduce the sequence of real valued functions $(f_n)_{n\in \N}$ and $f$ defined on $\mathcal{C}^d \times \mathcal{C}$ such that  
 $$
 f_{n}(y,z) =  K(x-y_t) \, \exp \left \{\int_{0}^{t}\Lambda_{n} \big 
(r,y_r,z_r\big )dr\right \}  \quad \textrm{and}\quad f(y,z) = K(x-y_t) \, \exp \left \{\int_{0}^{t}\Lambda \big (r,y_r,z_r\big )dr\right \}\ .
$$
By Remark \ref{continuite}, $f_n$ and  $f$ are continuous. \\
By Lemma \ref{cvu}, it follows that $f_{n} \xrightarrow[\text{$n \longrightarrow +\infty$}]{} f$ uniformly on each closed ball 
(and therefore also for each
 compact subset) of $\mathcal{C}^{d} \times \mathcal{C}$.
Then applying Lemma \ref{karatzas} and Remark \ref{Rkaratzas},
  with $\shc^d \times \shc$,
  $\mathbb{P} = \nu$, $\mathbb{P}^n = \nu^n$ allows to conclude.
\end{proof}


 In fact, the pointwise convergence of $(u_{n})_{n \geq 0}$ can be reinforced.

\begin{prop}  \label{cvuoncompact}
Suppose that the same assumptions as in Lemma \ref{limit} hold.
 

Then the sequence $(u_n)$ introduced in Lemma \ref{limit} also converges 
 uniformly to $u: [0,T] \times \R^d \rightarrow \R$ defined in \eqref{eq:ulimit},
on each compact of $[0,T] \times \R^d$.
In particular $u$ is continuous.
\end{prop}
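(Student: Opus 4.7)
The plan is to establish uniform convergence on each compact $\mathcal{K} \subset [0,T] \times \R^d$ by a contradiction argument combined with Lemma \ref{karatzas}, after first establishing continuity of the pointwise limit $u$.

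\textbf{Continuity of $u$.} Fix $(t^{*}, x^{*}) \in [0,T] \times \R^d$ and let $(t_k, x_k) \to (t^{*}, x^{*})$. The integrand
$$F(t,x,y,z) := K(x - y_t)\exp\!\Bigl(\int_0^t \Lambda(r, y_r, z_r)\, dr\Bigr)$$
defining $u$ in \eqref{eq:ulimit} is continuous in $(t,x)$ for each fixed $(y,z) \in \mathcal{C}^d \times \mathcal{C}$ (continuity of the paths and of $\Lambda$ in the space variables, uniformly in time, combined with dominated convergence in the integral; cf.\ Remark \ref{continuite}). Since $|F| \le M_K e^{T M_\Lambda}$, Lebesgue's dominated convergence theorem applied with respect to $\nu$ yields $u(t_k, x_k) \to u(t^{*}, x^{*})$.

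\textbf{Setup of the contradiction.} Suppose $u_n \not\to u$ uniformly on some compact $\mathcal{K} \subset [0,T] \times \R^d$. Then there exist $\varepsilon_0 > 0$, a subsequence $(n_k)$ and a sequence $(t_k, x_k) \in \mathcal{K}$ with $|u_{n_k}(t_k, x_k) - u(t_k, x_k)| \ge \varepsilon_0$. By compactness, extracting a further subsequence we may assume $(t_k, x_k) \to (t^{*}, x^{*}) \in \mathcal{K}$. By the continuity of $u$ already proved, $u(t_k, x_k) \to u(t^{*}, x^{*})$, so it suffices to show $u_{n_k}(t_k, x_k) \to u(t^{*}, x^{*})$.

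\textbf{Applying Lemma \ref{karatzas}.} Set $\Psi_k(y,z) := F_{n_k}(t_k, x_k, y, z)$ and $\Psi(y,z) := F(t^{*}, x^{*}, y, z)$. Both are continuous on $\mathcal{C}^d \times \mathcal{C}$ (Remark \ref{continuite}) and uniformly bounded by $M_K e^{T M_\Lambda}$. Since $\nu^{n_k} \to \nu$ in law, it remains to prove $\Psi_k \to \Psi$ uniformly on each compact $C \subset \mathcal{C}^d \times \mathcal{C}$. I split
$$|\Psi_k - \Psi| \le |F_{n_k}(t_k, x_k; y,z) - F_{n_k}(t^{*}, x^{*}; y,z)| + |F_{n_k}(t^{*}, x^{*}; y,z) - F(t^{*}, x^{*}; y,z)|.$$
By Lemma \ref{cvu}, the second term tends to $0$ uniformly on $B_d(O, M_C) \times B_1(O, M_C) \supseteq C$, where $M_C$ bounds $\|y\|_\infty + \|z\|_\infty$ on $C$. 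For the first term, the Lipschitz property of $K$, the bound $|e^a - e^b| \le e^{\max(a,b)}|a-b|$ and the uniform bound on $\Lambda_{n_k}$ yield
$$|F_{n_k}(t_k, x_k; y,z) - F_{n_k}(t^{*}, x^{*}; y,z)| \le L_K e^{T M_\Lambda}\bigl(|x_k - x^{*}| + |y_{t_k} - y_{t^{*}}|\bigr) + M_K M_\Lambda e^{T M_\Lambda}|t_k - t^{*}|.$$

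\textbf{Main obstacle.} The delicate point is to control $|y_{t_k} - y_{t^{*}}|$ uniformly over $(y,z) \in C$; this cannot be done on bounded balls (our usual strategy in Remark \ref{Rkaratzas}), since they fail to be equicontinuous. The obstacle is overcome by Arzelà--Ascoli: compactness of $C$ in $\mathcal{C}^d \times \mathcal{C}$ is equivalent to uniform boundedness together with equicontinuity of the paths, hence $\sup_{(y,z) \in C} |y_{t_k} - y_{t^{*}}| \to 0$ as $t_k \to t^{*}$. Combined with $|x_k - x^{*}| \to 0$ and $|t_k - t^{*}| \to 0$, this delivers uniform convergence of $\Psi_k$ to $\Psi$ on every compact $C$. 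Lemma \ref{karatzas} then gives $u_{n_k}(t_k, x_k) = \int \Psi_k\, d\nu^{n_k} \to \int \Psi\, d\nu = u(t^{*}, x^{*})$, contradicting $|u_{n_k}(t_k, x_k) - u(t_k, x_k)| \ge \varepsilon_0$. The continuity of $u$ is already established above.
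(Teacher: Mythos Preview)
Your argument is correct and takes a genuinely different route from the paper. The paper proceeds by establishing \emph{equicontinuity} of the family $(u_n)$ on $[0,T]\times C$ for compact $C\subset\R^d$, then invokes Ascoli--Arzel\`a to extract uniformly convergent subsequences and identifies the limit via the pointwise convergence of Lemma~\ref{limit}. The equicontinuity step is the heavy part: to control $|u_n(t,x')-u_n(t',x')|$ uniformly in $n$, the paper must bound $\E[|K(x'-Y^n_t)-K(x'-Y^n_{t'})|]$ via a tightness argument for the random fields $(t,x')\mapsto K(x'-Y^n_t)$, appealing to an adaptation of Theorem~7.3 in Billingsley. Your approach bypasses this machinery entirely. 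You first prove continuity of $u$ directly by dominated convergence, then reduce the uniform convergence to a diagonal statement $u_{n_k}(t_k,x_k)\to u(t^*,x^*)$ via Lemma~\ref{karatzas}; the crux is your observation that although bounded balls of $\mathcal{C}^d\times\mathcal{C}$ are not equicontinuous (so Remark~\ref{Rkaratzas} fails here), genuine compacts \emph{are} equicontinuous by Arzel\`a--Ascoli, which is exactly what Lemma~\ref{karatzas} actually requires. This shifts the Arzel\`a--Ascoli argument from the target space $[0,T]\times\R^d$ to the path space $\mathcal{C}^d\times\mathcal{C}$, and in doing so eliminates the need for any tightness estimate on the laws $m^n$. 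Your proof is shorter and more elementary; the paper's route has the minor byproduct of exhibiting an explicit modulus of continuity for the $u_n$, but this is not used elsewhere.
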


\begin{proof} 
We fix a compact $C$ of $\R^d$. The restrictions of $u_n$ to $[0,T] \times C$
are uniformly bounded. Provided we prove  that
the sequence $u_{n} \vert_{[0,T] \times C}$ is equicontinuous,
 Ascoli-Arzela theorem would imply 
 that 
the set of    restrictions of $u_n$ to $[0,T] \times C$
is relatively compact with respect to uniform convergence norm topology. \\ 
To conclude, given a subsequence $(u_{n_k})$ it is enough to extract
a subsubsequence converging to $u$. 
Since the  set of 
   restrictions of $u_{n_k}$ to $C$
is relatively compact, there is a function $v:[0,T] \times C \rightarrow \R$
to which  $u_{n_k}$ converges uniformly on $[0,T] \times C$.
Since, by Lemma \ref{limit}, $u_n$ converges pointwise to $u$, obviously $v$ coincides with $u$ on $[0,T] \times C$.

It remains to show the equicontinuity of the sequence $(u_n)$
on $[0,T] \times C$. 
We do this below.



Let $ \varepsilon' > 0$. We need to prove that 
$\; \exists \delta,\eta > 0, \forall (t,x),(t',x') \in [0,T] \times C, $
\begin{equation} \label{udecomp1}
|t-t'|<\delta,\;|x-x'|<\eta \Longrightarrow \forall n \in \N,\;|u_{n}(t,x) - u_{n}(t',x')|<\varepsilon'. 
\end{equation}

We start decomposing as follows:

\begin{equation} \label{udecomp}
 |u_{n}(t,x) - u_{n}(t',x')| \le |(u_{n}(t,x) - u_{n}(t,x')) \vert +
\vert (u_{n}(t,x')- u_{n}(t',x'))|.
\end{equation}
As far as the first term 
in the right-hand side of \eqref{udecomp} is concerned, we have

\begin{equation}
 \label{eq:equicontinuite}
 \begin{array}{lll}
   |u_{n}(t,x) - u_{n}(t,x')| & \leq & \int_{\mathcal{C}^d} |K(x-X_t(\omega))-K(x'-X_t(\omega))|
 \exp(M_{\Lambda} T) dm^{n}(\omega), \\
  & \leq & \exp(M_{\Lambda}T) L_K|x-x'|,
 \end{array}
\end{equation}
where the constant $M_{\Lambda}$
 is an uniform upper bound of $(\vert \Lambda_n\vert, n \geq 0)$. \\
We choose $\eta = \frac{\varepsilon'}{3 \exp (M_\Lambda T) 
L_K}$ to obtain that 
\begin{equation} \label{Exprime}
 |(u_{n}(t,x) - u_{n}(t,x')) \vert \leq \frac{\varepsilon'}{3},
\end{equation}
for $x, x' \in C$ such that $ \vert x - x'\vert < \eta$ and $t \in [0,T]$.

Regarding the second one we have
\begin{equation} \label{EB1B2} 
  |u_{n}(t,x') - u_{n}(t',x')|   \leq B_1 + B_2,
\end{equation}
where 
\begin{equation} 
\label{defB}
\begin{array}{lll}
B_1 & := & \left|\int_{\mathcal{C}^d} \big[ K(x'-X_t(\omega))-K(x'-X_{t'}(\omega)) \big] \, \exp \left \{\int_{0}^{t}\Lambda_n \big (r,X_r(\omega),u_{n}(r,X_r(\omega))\big )dr\right \} dm^{n}((\omega)) \right| \\
B_2 & := & \left| \int_{\mathcal{C}^d} K(x'-X_{t'}(\omega)) \, \big[ \exp \left \{\int_{0}^{t}\Lambda_n \big (r,X_r(\omega),u_{n}(r,X_r(\omega))\big )dr\right \} - \right . \\
& & \left . \exp \left \{\int_{0}^{t'}\Lambda_n \big (r,X_r(\omega),u_{n}(r,X_r(\omega))\big )dr\right \} \big] dm^{n}(\omega) \right|
\end{array}
\end{equation}

%

We first estimate   $B_{1}$. We fix $\varepsilon > 0$. Let us introduce the continuous functional
 $\shc^d \longrightarrow \shc([0,T] \times C, \R)$ given by
$$
F : \eta \mapsto \Big( (t,x') \in [0,T] \times \R^d \mapsto K(x' - \eta_t)  \Big) \ ,
$$
where we denote by $\shc([0,T] \times C, \R)$ the linear space of real valued continuous functions on $[0,T] \times C$, equipped with the usual sup-norm topology. Since $(Y^n)_{n \in \N}$ converges in law, the sequence of r.v. \\
$(R^n_{t,x'} := F(Y^n)(t,x'), (t,x') \in [0,T] \times C)_{n \in \N}$ indexed on $[0,T] \times C$, also converges in law. In particular, the sequence of their corresponding laws are tight. \\
An easy adaptation of Theorem 7.3 page 82 in \cite{billingsley} (and the first part of its proof) to the case of random fields shows the existence of $\delta_{\varepsilon} > 0$ such that 
\begin{eqnarray}
\label{tight_mn}
\forall n \in \N, \; \P(\Omega^n_{\varepsilon,\delta_{\varepsilon}}) \leq \varepsilon \ ,
\end{eqnarray}
where $\displaystyle{ \Omega^n_{\varepsilon,\delta_{\varepsilon}} := \left \{ \bar{\omega} \in \Omega \; \Big \vert \; \sup_{ \underset{(x,x') \in C^2, \vert x-x' \vert \leq \delta_{\varepsilon}}{(t,t') \in [0,T]^2, \vert t-t' \vert \le \delta_{\varepsilon}} } \left \vert K(x-Y^n_t(\bar{\omega})) - K(x'-Y^n_{t'}(\bar{\omega})) \right \vert \geq \varepsilon  \right \} }$. \\
In the sequel of the proof, for simplicity we will simply write $\Omega^n_{\varepsilon} := \Omega^n_{\varepsilon,\delta_{\varepsilon}}$. Suppose that $\vert t-t' \vert \leq \delta_{\varepsilon}$. \\
Then, for all $x' \in C$
\begin{eqnarray}
\label{E416}
B_1 & = & \Big \vert \E \Big[ \Big( K(x'-Y^n_t) - K(x'-Y^n_{t'}) \Big) \exp \Big\{ \int_0^t \Lambda(r,Y^n_r,u^n(r,Y^n_r)) \Big \}  \Big] \Big \vert \nonumber \\
& \leq & \exp(M_{\Lambda}T) \E \Big[ \vert K(x'-Y^n_t) - K(x'-Y^n_{t'}) \vert \Big] \nonumber \\
& = & \exp(M_{\Lambda}T) \left( I_1(\varepsilon,n) + I_2(\varepsilon,n) \right) \ ,
\end{eqnarray}
where 
\begin{eqnarray}
\label{E417}
I_1(\varepsilon,n) & := & \E \Big[ \vert K(x'-Y^n_t) - K(x'-Y^n_{t'}) \vert \; 1_{\Omega^n_{\varepsilon}} \Big] \\
I_2(\varepsilon,n) & := & \E \Big[ \vert K(x'-Y^n_t) - K(x'-Y^n_{t'}) \vert \; 1_{(\Omega^n_{\varepsilon})^{c}} \Big] \ .
\end{eqnarray}
We have
\begin{eqnarray}
I_1(\varepsilon,n) \leq 2M_K \P(\Omega^n_{\varepsilon}) \leq 2 M_K \varepsilon \ ,
\end{eqnarray}
and
\begin{eqnarray}
I_2(\varepsilon,n) \leq \varepsilon \P((\Omega^n_{\varepsilon})^{c}) \leq \varepsilon \ .
\end{eqnarray}
At this point, we have shown that for $\vert t-t' \vert \leq \delta_{\varepsilon}$, $x' \in C$,
\begin{eqnarray}
B_1 \leq \varepsilon (2M_K + 1) \exp(M_{\Lambda}T) \ .
\end{eqnarray}
We can now choose $ \varepsilon := \frac{\varepsilon'}{3(2M_K +1)} \exp(-M_{\Lambda}T)$ so that $B_1 \leq \frac{\varepsilon'}{3}$. \\

Concerning the term $B_{2}$, using \eqref{EMajor1},
we have
\begin{equation}
\label{eq:termB1}
 \begin{array}{lll}
  B_{2} & \leq & \int_{\mathcal{C}^d} | K(x'-X_{t'}(\omega)) | \Big| \; e^{ \left \{\int_{0}^{t}\Lambda_n \big (r,X_r(\omega),u_{n}(r,X_r(\omega))\big )dr\right \} } - e^{ \left \{\int_{0}^{t'}\Lambda_n \big (r,X_r(\omega),u_{n}(r,X_r(\omega))\big )dr\right \} } \Big| dm^{n}(\omega) \\
  & \leq & M_K \exp(M_{\Lambda}) 
\int_{\shc^d} dm^n(\omega)   \left| \int_{t}^{t'} \Lambda_n \big (r,X_r(\omega),u_{n}(r,X_r(\omega))\big )dr \right| \\
  & \leq & M_K \exp(M_{\Lambda})M_{\Lambda}|t-t'| \ .
 \end{array}
\end{equation}
We choose $ \delta = \min({\delta}_{\epsilon}, \frac{\varepsilon'}{3 M_K M_{\Lambda} \exp(M_\Lambda) } )$.
For $\vert t - t'\vert < \delta$, we have $B_2 \le \frac{\varepsilon'}{3}$.
By additivity $B_1 +B_2 \le \frac{2\varepsilon'}{3}$
and finally, taking into account  \eqref{Exprime} and \eqref{EB1B2},
 \eqref{udecomp1} is verified.
This concludes the proof of Proposition \ref{cvuoncompact}.

\end{proof}



Regarding the limit in  Lemma \ref{limit}, we can be more precise by using once again Lemma \ref{karatzas}  and Remark \ref{Rkaratzas}.

\begin{prop}
\label{limit_in_u}
Let $\Lambda_n, \Lambda$ be as in Lemma \ref{cvu}.
Let $(Y^n)$ be a sequence of $\R^d-$valued continuous processes,
 whose law is denoted by 
$m^n$.
Let $u_n:[0,T] \times \R^d \rightarrow \R$ as in \eqref{E536}. 
Let $Z^n_t := u_n(t,Y^n_t), t \in [0,T]$. \\  
We suppose that  $(Y^n,Z^n)$ converges in law. \\ 
Then $(u_n)$ converges uniformly on each compact to some continuous $u: [0,T] \times \R^d \rightarrow \R$
which fulfills
\begin{equation} \label{EuFinal}
 u(t,\eta) = \int_{\mathcal{C}^d} K(\eta-X_{t}(\omega)) \exp \big(\int_{0}^{t} \Lambda(r,X_r(\omega),u(r,X_r(\omega)))dr\big) dm(\omega),
\end{equation}
where $m$ is the limit of $(m^n)_{n \geq 0}$. 
\end{prop}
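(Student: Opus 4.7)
The plan is to combine the tools already at hand to upgrade pointwise convergence to uniform convergence on compact sets, and then to identify the limiting law $\nu$ on $\mathcal{C}^d\times\mathcal{C}$ more explicitly so that \eqref{eq:ulimit} collapses to \eqref{EuFinal}.

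First I would apply Proposition \ref{cvuoncompact} directly to the setting of Lemma \ref{limit}. That proposition gives at once that $(u_n)$ converges uniformly on every compact of $[0,T]\times\R^d$ to the limit $u$ defined in \eqref{eq:ulimit}, and that $u$ is continuous. Since $(u_n)$ is uniformly bounded (Remark \ref{rem_un}), $u$ is bounded as well. This disposes of the first (topological) part of the statement; the remaining work is purely the identification of $u$ with the right-hand side of \eqref{EuFinal}.

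The core step is to prove that the limit law $\nu$ of $\big(\mathcal{L}(Y^n,Z^n)\big)_n$ is the pushforward of $m$ under the continuous mapping $\Psi:\mathcal{C}^d\to\mathcal{C}^d\times\mathcal{C}$, $\omega\mapsto\bigl(\omega,\, u(\cdot,X_\cdot(\omega))\bigr)$; note that $\Psi$ is continuous because $u$ is bounded and continuous on $[0,T]\times\R^d$. For this I would show that
\[
\sup_{t\leq T}\bigl|Z^n_t - u(t,Y^n_t)\bigr| \;=\; \sup_{t\leq T}\bigl|u_n(t,Y^n_t)-u(t,Y^n_t)\bigr| \xrightarrow[n\to\infty]{\mathbb P} 0 .
\]
To check this, observe that convergence in law of $(Y^n)$ makes the family tight in $\mathcal{C}^d$, so for every $\varepsilon>0$ there exists a compact $\mathcal{K}_\varepsilon\subset\mathcal{C}^d$ with $\sup_n\mathbb{P}(Y^n\notin\mathcal{K}_\varepsilon)<\varepsilon$; the evaluation set $\{y_t:y\in\mathcal{K}_\varepsilon,\,t\in[0,T]\}$ is a compact of $\R^d$, and on it $u_n\to u$ uniformly by Proposition \ref{cvuoncompact}. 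A standard Slutsky-type argument then yields that $(Y^n,Z^n)=\bigl(Y^n,u(\cdot,Y^n_\cdot)\bigr)+\bigl(0,Z^n-u(\cdot,Y^n_\cdot)\bigr)$ converges in law to $\Psi_* m$, i.e.\ $\nu=\Psi_* m$.

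Once this identification is available, I would substitute it into \eqref{eq:ulimit}: the integral over $\mathcal{C}^d\times\mathcal{C}$ against $\nu$ reduces to an integral over $\mathcal{C}^d$ against $m$ in which $X'_r(\omega')$ is replaced by $u(r,X_r(\omega))$, yielding exactly \eqref{EuFinal}. The main obstacle I anticipate is the step of showing $\sup_{t\leq T}|Z^n_t-u(t,Y^n_t)|\to 0$ in probability, since it requires carefully combining tightness of $(Y^n)$ on the path space with the uniform-on-compacts convergence of $u_n$ (as opposed to pointwise convergence, which would not suffice, cf.\ Remark \ref{counter-example}); all subsequent manipulations are standard applications of the continuous mapping theorem and the already established Lemma \ref{limit}.
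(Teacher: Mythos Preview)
Your proposal is correct, but it takes a genuinely different route from the paper's proof.

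The paper does \emph{not} identify the limit law $\nu$. Instead, after invoking Proposition~\ref{cvuoncompact} (as you do), it fixes $(t,\eta)$ and works directly on $\mathcal{C}^d$: setting $\tilde{\Lambda}_n(r,x):=\Lambda_n(r,x,u_n(r,x))$ and $\tilde{\Lambda}(r,x):=\Lambda(r,x,u(r,x))$, it defines $f_n(y)=K(\eta-y_t)\exp\bigl(\int_0^t\tilde{\Lambda}_n(r,y_r)\,dr\bigr)$ and the corresponding $f$, proves $f_n\to f$ uniformly on balls of $\mathcal{C}^d$ by splitting $|\tilde{\Lambda}_n-\tilde{\Lambda}|$ into a $(\Lambda_n-\Lambda)$ part and a $(u_n-u)$ part, and then applies Lemma~\ref{karatzas} with $\mathbb{P}_n=m^n$. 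This bypasses any discussion of $\nu$ altogether.

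Your argument instead upgrades the conclusion of Lemma~\ref{limit} by showing that $\nu$ is concentrated on the graph $\{(\omega,u(\cdot,X_\cdot(\omega)))\}$, via the tightness-plus-Slutsky step you outline; once $\nu=\Psi_*m$ is known, \eqref{eq:ulimit} collapses to \eqref{EuFinal} by direct substitution. This is conceptually cleaner and gives strictly more information (the explicit structure of $\nu$), at the cost of invoking the continuous mapping theorem and Slutsky on path space. The paper's approach is more computational but stays entirely within the Lemma~\ref{karatzas} framework and avoids any probabilistic limit theorems beyond weak convergence of $m^n$. Both proofs ultimately hinge on the same key input, namely the uniform-on-compacts convergence $u_n\to u$ from Proposition~\ref{cvuoncompact}.
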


\begin{proof}

Without loss of generality, the proof below is written with $d=1$. By Lemma \ref{limit}, the left-hand side of \eqref{E536} converges pointwise  to $u$, 
where $u$ is defined in \eqref{eq:ulimit}. 
By Proposition \ref{cvuoncompact} the convergence holds uniformly on each compact and
 $u$ is continuous.
 It remains to show that  $u$ fulfills \eqref{EuFinal}.
For this we will take 
 the limit of the right-hand side (r.h.s) of \eqref{E536}  and we will show that it gives 
the r.h.s of \eqref{EuFinal}.
For $n \in \N$, $(r,x) \in [0,T] \times \R$, we set 
\begin{eqnarray}
 \tilde{\Lambda}_n(r,x) & := & \Lambda_n(r,x,u_n(r,x)) \\
 \tilde{\Lambda}(r,x) & := & \Lambda(r,x,u(r,x)).
\end{eqnarray}
We fix $(t,\eta) \in [0,T] \times \R$.
In view of applying Lemma \ref{karatzas}, we define $f_n, f : \mathcal{C} \rightarrow \R$ such that
\begin{eqnarray*}
 f_{n}(y) &=&  K(\eta-y_{t}) \exp \big(\int_{0}^{t} \tilde{\Lambda}_{n}(r,y_r)dr\big) \\
 f(y) &=&  K(\eta-y_{t}) \exp \big(\int_{0}^{t} \tilde{\Lambda}(r,y_r)dr\big) \ .
\end{eqnarray*}

We also set $\mathbb{P}^n := m^{n}$. Since $(Y^n,Z^n)$ converges in law to $\nu$, $m^n$ converges weakly to $m$. 
Moreover, since $\vert \tilde{\Lambda}_n \vert$ are uniformly bounded with
 upper bound $M_{\Lambda}$,
 $(f_n)$ are also uniformly bounded.

The maps $(f_n)$ are continuous by Remark \ref{continuite}, and also the function $f$ since, 
 $u$ is continuous on $[0,T] \times \R$. 
Taking into account Remark \ref{Rkaratzas}, we will
 show that $f_n \rightarrow f$ 
uniformly on each ball of $ \mathcal{C}$. \\

Let us fix $ M > 0$ and denote $B_1(O,M) := \{ y \in \mathcal{C}, ||y||_{\infty} := \sup_{s \in [0,T]}|y_s| \leq M \}$. 
For any locally bounded function $\phi : [0,T] \times \R  \rightarrow \R$,
we set   $ ||\phi||_{\infty,M} := \sup_{s \in [0,T], \xi \in [-M,M]}|\phi(s,\xi)|$. 
Let $\varepsilon > 0$. \\
Since $u_n \rightarrow u$ uniformly on $[0,T] 
\times [-M,M]$, there exists $n_0 \in \N$ such that,
 \begin{equation}
  \label{ECU}
 n \geq n_{0} \Longrightarrow ||u_{n} - u||_{\infty,M} < \varepsilon \ .
 \end{equation}
The sequence $u_n|_{[0,T] \times [-M,M]}$ is uniformly bounded. 
Let $I_{M}$ be a compact interval including the subset \\ $\{u_n(s,x) \; | \; 
(s,x) \in [0,T] \times [-M,M] \}$. \\ 
For all $(s,x) \in [0,T] \times [-M,M]$, 
\begin{equation}
\label{EA-1}
 \begin{array}{lll}
  |\tilde{\Lambda}_n(s,x) - \tilde{\Lambda}(s,x)| & = & |\Lambda_n(s,x,u_n(s,x)) - \Lambda(s,x,u(s,x))| \\
  & \leq & |\Lambda_n(s,x,u_n(s,x)) - \Lambda(s,x,u_n(s,x))| + |\Lambda(s,x,u_n(s,x)) - \Lambda(s,x,u(s,x))| \\
  & := & I_1(n,s,x) + I_2(n,s,x) \ .
 \end{array}
\end{equation}
Concerning $I_1$, since for almost all $s \in [0,T]$, $\Lambda_n(s,\cdot,\cdot) \xrightarrow[\text{$n \rightarrow \infty$}]{\text{}} \Lambda(s,\cdot,\cdot)$ uniformly on $[-M,M] \times I_M$, we have for $x \in [-M,M]$,
$$
0 \leq I_1(n,s,x) \leq \sup_{x \in [-M,M] , \xi \in I_M} |\Lambda_n(s,x,\xi) - \Lambda(s,x,\xi)| \xrightarrow[\text{$n \rightarrow \infty$}]{\text{}} 0 \; ds \textrm{-a.e. ,}
$$
from which we deduce
\begin{equation}
 \label{I1cvg}
 \sup_{x \in [-M,M]} I_1(n,s,x) \xrightarrow[\text{$n \rightarrow \infty$}]{\text{}} 0 \; ds \textrm{-a.e.}
\end{equation}
Now, we treat the term $I_2$. Taking into account \eqref{ECU}, we get for $n \geq n_0$ ($n_0$ depending on $\varepsilon$), 
\begin{equation}
 \label{I2major}
 0 \leq \sup_{s \in [0,T],x \in [-M,M]} I_2(n,s,x) \leq \sup_{s \in [0,T],x \in [-M,M],
 |\xi_1 - \xi_2| \leq \varepsilon} |\Lambda(s,x,\xi_{1}) - \Lambda(s,x,\xi_{2})|
\end{equation}
We take the $\limsup$ on both sides of \eqref{I2major}, which gives,
\begin{equation}
 \label{I2cvg}
 \limsup_{n \longrightarrow \infty} \sup_{s \in [0,T],x \in [-M,M]} I_2(n,s,x) \leq S(\varepsilon),
\end{equation}
where $S(\varepsilon) := \sup_{s \in [0,T],x \in [-M,M] , |\xi_1 - \xi_2| \leq \varepsilon} 
|\Lambda(s,x,\xi_{1}) - \Lambda(s,x,\xi_{2})|$.
Summing up \eqref{I1cvg}, \eqref{I2cvg} and taking into account \eqref{EA-1}, we get,
\begin{equation}
 \label{cvgEA-1}
 0 \leq \limsup_{n \rightarrow \infty} \sup_{x \in [-M,M]}|\tilde{\Lambda}_n(s,x) - \tilde{\Lambda}(s,x)| \leq S(\varepsilon) \; ds\textrm{-a.e.}
\end{equation}
Since $\Lambda$ satisfies Assumption \ref{ass:main2}, 
the uniform continuity of $(x,\xi) \in \R \times \R \mapsto \Lambda(s,x,\xi)$
 (uniformly with respect to $s$) holds and $\lim_{\varepsilon \longrightarrow 0} S(\varepsilon) = 0$. \\

Finally, 
\begin{equation}
\label{Eqcvg}
\sup_{x \in [-M,M]}|\tilde{\Lambda}_n(s,x) - \tilde{\Lambda}(s,x)| \xrightarrow[\text{$n \rightarrow \infty$}]{\text{}} 0 \; ds \textrm{-a.e.}
\end{equation}
 Now, for   $n > n_{0}$ we obtain 
 \begin{equation} \label{E548}
  \begin{array}{lll}
  \sup_{y \in B_1(0,M)} |f_{n}(y) - f(y)| & \leq & M_K 
\exp(M_{\Lambda}T) \int_{0}^{T} \sup_{x \in [-M,M]} 
|\tilde  \Lambda_n(r,x) - \tilde \Lambda(r,x) | dr. 
  \end{array}
 \end{equation}
Since $(\tilde{\Lambda}_n),\Lambda$ are uniformly bounded, \eqref{Eqcvg} and Lebesgue's dominated convergence theorem, the right-hand side of \eqref{E548} goes to $0$ when $n \longrightarrow 0$.
This shows that $f_n \longrightarrow f$ uniformly on $B_1(O,M)$. \\


We can now apply Lemma \ref{karatzas} (with $\mathbb{P}_{n}$ and $f_{n}$ defined above) to obtain, for $n \longrightarrow \infty$,
$$
\int_{\mathcal{C}} K(\eta-X_{t}(\omega)) \exp \left(\int_{0}^{t} \Lambda_n(r,X_r(\omega),u_{n}(r,X_r(\omega)))dr\right) dm^{n}(\omega) 
$$
converges to
$$
\int_{\mathcal{C}} K(\eta-X_{t}(\omega)) \exp \left(\int_{0}^{t} \Lambda(r,X_r(\omega),u(r,X_r(\omega)))dr\right) dm(\omega),
$$
which finally proves \eqref{EuFinal} and concludes the proof of Proposition \ref{limit_in_u}.
\end{proof}
At this point we state simple technical lemma concerning
strong convergence of solutions of stochastic differential equations.

\begin{lem}
Let $R_0$ be a square integrable random variable on some filtered probability space,
equipped with a $p$ dimensional Brownian motion $W$.
 \label{approxSDE}
 Let $a_{n} : [0,T] \times \R^d \longrightarrow \R^{d \times p}$ and $b_{n} : [0,T] \times \R^d \longrightarrow \R^{d}$ Borel functions verifying the following.
 \begin{itemize}
  \item  $\exists L > 0$, for all $(x,y) \in \R^d \times \R^d$,  $\sup_{n \geq 0} |a_{n}(t,x) - a_{n}(t,y)| + \sup_{n \geq 0} |b_{n}(t,x) - b_{n}(t,y)| \le  L |x-y| $;
  \item  $\exists c > 0$, for all $x \in \R^d $,  $\sup_{n \geq 0} (|a_{n}(t,x) | +  |b_{n}(t,x)|) \le c(1 +|x|) $;
  \item $(a_{n}), (b_{n})$ converge pointwise respectively  to  Borel functions $a : [0,T] \times \R^d \rightarrow \R^{d \times p}$ and $b : [0,T] \times \R^d \rightarrow \R^{d}$.
\end{itemize}
Then there exists a unique strong solution of
 \begin{equation} \label{ESDEClass}
   \left \{
   \begin{array}{lll}
    dY_{t} = a(t,Y_t)dW_t + b(t,Y_t)dt \\
    Y_0 = R_0,\ .
   \end{array}
   \right .
  \end{equation}
Moreover,  let for each $n$, let
  the strong solution $X^{n}$ (which of course exists) of
 
   \begin{equation}
   \left \{
   \begin{array}{lll}
    dY^{n}_{t} = a_{n}(t,Y^{n}_t)dW_t + b_{n}(t,Y^{n}_t)dt \\
    Y^{n}_0 = R_0.
   \end{array}
   \right .
  \end{equation}
 Then, 
$$
\sup_{t \leq T} |Y^{n}_{t} - Y_{t}| \xrightarrow[\text{$n \longrightarrow + \infty$}]{\text{$L^{2}$}} 0.
$$
 
\end{lem}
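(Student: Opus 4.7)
The plan is to proceed in two stages: first establish that \eqref{ESDEClass} is well-posed, then control the $L^2$ distance $\sup_{t\le T}|Y^n_t-Y_t|$ via a standard Burkholder-Davis-Gundy plus Gronwall argument, the only non-routine ingredient being the passage to the limit of $a_n(t,Y_t)\to a(t,Y_t)$ (and similarly for $b_n$) in $L^2$ of the underlying product measure $dt\otimes d\mathbb{P}$.

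First I would observe that by taking pointwise limits in the uniform Lipschitz and linear growth bounds assumed on $(a_n)$ and $(b_n)$, the limit coefficients $a$ and $b$ inherit the same Lipschitz constant $L$ and the same linear growth constant $c$. Classical strong existence and pathwise uniqueness for \eqref{ESDEClass} then follow from the standard theory of It\^o SDEs with Lipschitz coefficients (e.g. Theorem 2.9, Section 5.2, Chapter 5 of \cite{karatshreve}), and the second moment bound $\E[\sup_{t\le T}|Y_t|^2]\le C_0(1+\E[|R_0|^2])$ holds, uniformly in $n$ for $Y^n$ too, since the linear growth constant does not depend on $n$.

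Next I would estimate $\E[\sup_{s\le t}|Y^n_s-Y_s|^2]$. Writing
\begin{equation*}
Y^n_t-Y_t=\int_0^t\bigl(a_n(s,Y^n_s)-a(s,Y_s)\bigr)dW_s+\int_0^t\bigl(b_n(s,Y^n_s)-b(s,Y_s)\bigr)ds,
\end{equation*}
and splitting each integrand as
\begin{equation*}
a_n(s,Y^n_s)-a(s,Y_s)=\bigl[a_n(s,Y^n_s)-a_n(s,Y_s)\bigr]+\bigl[a_n(s,Y_s)-a(s,Y_s)\bigr],
\end{equation*}
(and similarly for $b$), I would apply BDG to the stochastic integral and Cauchy-Schwarz to the Lebesgue integral. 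The uniform Lipschitz property of $(a_n),(b_n)$ controls the first brackets by $L^2\E[\sup_{r\le s}|Y^n_r-Y_r|^2]$, while the second brackets contribute a perturbation term
\begin{equation*}
\varepsilon_n(T):=\E\Bigl[\int_0^T|a_n(s,Y_s)-a(s,Y_s)|^2+|b_n(s,Y_s)-b(s,Y_s)|^2\,ds\Bigr],
\end{equation*}
which does not depend on $Y^n$. This yields, for a constant $C=C(L,T)$,
\begin{equation*}
\E[\sup_{s\le t}|Y^n_s-Y_s|^2]\le C\varepsilon_n(T)+C\int_0^t\E[\sup_{r\le s}|Y^n_r-Y_r|^2]\,ds,
\end{equation*}
and Gronwall's lemma gives $\E[\sup_{t\le T}|Y^n_t-Y_t|^2]\le C\varepsilon_n(T)e^{CT}$.

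The main obstacle is therefore to prove $\varepsilon_n(T)\to 0$. Pointwise convergence of $(a_n,b_n)$ to $(a,b)$ gives $a_n(s,Y_s(\omega))\to a(s,Y_s(\omega))$ for every $(s,\omega)$, and the uniform linear growth bound together with the finiteness of $\E[\sup_{t\le T}|Y_t|^2]$ provides a $dt\otimes d\mathbb{P}$-integrable dominating function of the form $C(1+|Y_s|)^2$. Lebesgue's dominated convergence theorem applied on $([0,T]\times\Omega,dt\otimes d\mathbb{P})$ then yields $\varepsilon_n(T)\to 0$ and hence the announced $L^2$-convergence of $\sup_{t\le T}|Y^n_t-Y_t|$ to zero.
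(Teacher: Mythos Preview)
Your proposal is correct and follows essentially the same approach as the paper, which only sketches the argument by naming BDG (and Jensen, where you use Cauchy--Schwarz) together with Gronwall's lemma. Your write-up is in fact more detailed than the paper's, in particular making explicit the splitting of $a_n(s,Y^n_s)-a(s,Y_s)$ and the dominated-convergence argument for $\varepsilon_n(T)\to 0$.
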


\begin{proof} The existence and uniqueness of $Y$ follows because $a, b$ are Lipschitz with linear growth. \\
 The proof of the convergence is classical: it relies on BDG and Jensen inequalities together with    Gronwall's lemma.
\end{proof}

Now, we are able to prove the main result of this section.

\begin{proof}[Proof of Theorem \ref{thm_semi_weak}]

Let $Y_0$ be a r.v. distributed according to $\zeta_0$.
We set 
\begin{equation}
 \label{def-lambda-n}
 \Lambda_{n} : (t,x,\xi) \in [0,T] \times \R^d \times \R 
\mapsto \Lambda_{n}(t,x,\xi) := \int_{\R^d \times \R}
 \phi_{n}^d(x - x_{1})\phi_{n}(\xi - \xi_{1})\Lambda(t,x_{1},\xi_{1})dx_1d\xi_1,
\end{equation}
where $(\phi_{n})_{n \geq 0}$ is a usual mollifier sequence
 converging (weakly) to the Dirac measure. 
Thanks to the classical properties of the convolution, we know that $\Lambda$ 
being bounded implies          \\
$ \forall n \in \N, ||\Lambda_{n}||_{\infty} \le
 ||\phi_{n}^d||_{L^{1}}  ||\phi_{n}||_{L^{1}}  ||\Lambda||_{\infty} = 
||\Lambda||_{\infty} $.
For fixed $n \in \N$, $\phi_n$ is Lipschitz so that   \eqref{def-lambda-n} says that $\Lambda_n$ is also Lipschitz.
Then, for fixed $n \in \N$,  according to Assumption \ref{ass:main2}, $\Phi$, $g$, $\Lambda_{n}$ are Lipschitz and uniformly bounded, we 
can apply the results of Section \ref{S3} (see Theorem \ref{prop:NSDE}) to obtain the existence of a  pair $(Y^{n},u_{n})$ such that
\begin{equation}
\label{eq:NSDEregul}
 \left \{
 \begin{array}{lll}
  dY^{n}_{t} = \Phi(t,Y^{n}_t,u_{n}(t,Y_{t}^ {n}))dW_t + g(t,Y^{n}_t,u_{n}(t,Y_{t}^ {n}))dt \\
  Y^{n}_0 = Y_0, \; \\
  u_{n}(t,x) = \E{[K(x-Y^{n}_{t}) \exp\big( \int_{0}^{t} \Lambda_{n}(r,Y^{n}_r,u_{n}(r,Y^{n}_r))dr \big)]}. 
\end{array}
\right .
\end{equation}

Since $\Lambda_{n}$ is uniformly bounded and $ \{ Y_0^n \}$ are obviously tight,
Lemma \ref{tightness} in the Appendix 
gives the existence of 
a subsequence $(n_{k})$ 
such that $(Y^{n_{k}},u_{n_{k}}(\cdot,Y^{n_{k}}_{\cdot})) $ converges
in law to some probability measure $\nu$ on $\mathcal{C}^d \times \mathcal{C}$.
By Assumption \ref{ass:main2}, for all $t \in [0,T]$, $\Lambda_n(t,\cdot,\cdot)$ converges to $\Lambda(t,\cdot,\cdot)$, uniformly on every compact subset of $\R^d \times \R$. 
 
In view of applying Proposition \ref{limit_in_u}, we set $Z^{n_{k}}_t := u_{n_k}(t,Y^{n_k}_t)$ and $m^{n_k} := \mathcal{L}(Y^{n_k})$.
We know that $(\Lambda_{n_k}),\Lambda$ satisfy the hypotheses of Lemma \ref{cvu}. 
On the other hand $(Y^{n_k},Z^{n_{k}})$ converges in law to $\nu$.
So we can apply  Proposition \ref{limit_in_u}, which says  that $ (u_{n_k}) $ converges uniformly on each compact to some $u$  
which verifies \eqref{EuFinal}, where  $m$ is the first marginal of $\nu$.
In particular we emphasize that the sequence $(Y^{n_k})$ converges in law to $m$.

We continue the proof of Theorem \ref{thm_semi_weak} concentrating on the first line of \eqref{eq:NSDE}.

We set, for all $(t,x) \in [0,T] \times \R^d$, $k \in \N$,

\begin{equation}
 \label{coeffs}
 \begin{array}{lll}
  a_{k}(t,x) & := & \Phi(t,x,u_{n_{k}}(t,x)) \\
  b_{k}(t,x) & := & g(t,x,u_{n_{k}}(t,x)) \\
  a(t,x) & := & \Phi(t,x,{u}(t,x)) \\
  b(t,x) & := & g(t,x,{u}(t,x)) \ .
 \end{array}
\end{equation}
Here, the functions $u_n$  being fixed, the first equation of \eqref{eq:NSDEregul} is
 a classical SDE, whose coefficients depend on the (deterministic) continuous function $u_n$.
By Remark \ref{RLipeq:u},  the functions $u_{n}$ appearing in \eqref{eq:NSDEregul} are Lipschitz with respect to
the second argument and bounded. This implies that the coefficients  $a_k, b_k$ 
are  Lipschitz (with constant not depending on $k$) and
uniformly bounded.


Since $(u_{n_k})$ converges pointwise to $u$, then $(a_k)$, $(b_k)$ converges 
pointwise respectively to $a, b$ where 
$ a(t,x) = \Phi(t,x,u(t,x)), 
   b(t,x) = g(t,x,u(t,x))$.

Consequently, we can apply Lemma \ref{approxSDE} with the  sequence of classical SDEs 
\begin{equation}
 \label{SequenceSDEs}
 \left \{
 \begin{array}{lll}
 dY^{n_k}_{t} = a_{k}(t,Y^{n_k}_t)dW_t + b_{k}(t,Y^{n_k}_t )dt \\
 Y^{n_k}_0 = Y_0,
 \end{array}
 \right .
\end{equation}
to obtain 
$$\sup_{t \leq T} |Y^{n_k}_{t} - Y_{t}| \xrightarrow[\text{$k \longrightarrow + \infty$}]{\text{$L^{2}(\Omega)$}} 0, $$
where $Y$ is the (strong) solution to the classical SDE 
   \begin{equation}
   \label{LimitX}
   \left \{
   \begin{array}{lll}
    dZ_{t} = a(t,Z_t)dW_t + b(t,Z_t)dt \\
    Z_0 = Y_0 \\
   a(t,x) = \Phi(t,x,{u}(t,x)) \\
   b(t,x) = g(t,x,{u}(t,x)) \ .
   \end{array}
   \right .
  \end{equation}
We remark that $Y$ verifies the first equation of \eqref{eq:NSDE} and the corresponding $u$
fulfills \eqref{EuFinal}.   To conclude the proof of Theorem \ref{thm_semi_weak} 
it remains to identify the law of $Y$ with $m$.
Since $Y^{n_k}$ converges strongly, 
then the laws $m^{n_k}$ of  $Y^{n_k}$ converge to the law of $Y$,
which by Proposition \ref{limit_in_u},
coincides  necessarily to $m$.  

\end{proof}

\section{Weak Existence when the coefficients are continuous}

\label{S5}

In this section
we consider again \eqref{eq:NSDE} i.e.
problem
 \begin{equation}
\label{eq:NSDEweak}
\left\{
\begin{array}{l}
Y_t=Y_0+\int_0^t \Phi(r,Y_r,u(r,Y_r)) dW_r+\int_0^tg(r,Y_r,u(r,Y_r))dr\ ,\quad\textrm{with}\quad Y_0\sim  \zeta_0\ ,\\
u(t,x)= \int_{\shc^d} dm(\omega) \left[ K(x-X_t(\omega)) \, 
\exp \left \{\int_{0}^{t}\Lambda \big (r,X_r(\omega),
u(r,X_r(\omega))\big )dr\right \}\right ] \ , \quad\textrm{for}\quad (t,x) \in [0,T] \times \R^d \\
\shl(Y) = m \ ,
\end{array}
\right .
\end{equation}
but  under weaker conditions on the coefficients
$\Phi, g, \Lambda$ and initial condition $\zeta_0$.
 In that case the existence or the
well-posedness will only be possible in the weak sense,
i.e., not on a fixed (a priori)
 probability space. \\
The aim of this section is to show weak existence for problem
\eqref{eq:NSDEweak}, in the sense of Definition \ref{def-weak-sol}
under Assumption \ref{ass:main3}.
The idea consists here  in regularizing  the functions $\Phi$ and $g$
and truncating the initial condition $\zeta_0$ to use 
 existence result stated in Section \ref{S4}, i.e. Theorem \ref{thm_semi_weak}.

%

\begin{thm}
\label{thm_weak}
Under Assumption \ref{ass:main3}, the problem \eqref{eq:NSDE}
admits existence in law, i.e. there is a solution
$(Y,u)$ of  \eqref{eq:NSDEweak} on a suitable probability space
equipped with a Brownian motion.
\end{thm}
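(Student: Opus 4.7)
The plan is to reduce the problem to the framework of Theorem~\ref{thm_semi_weak} by regularization, then pass to the weak limit. Let $(\phi_n^d)$ and $(\phi_n)$ be standard mollifier sequences on $\R^d$ and $\R$, and set
\[
\Phi_n(t,x,\xi) := (\Phi * \phi_n^d \otimes \phi_n)(t,x,\xi), \qquad g_n(t,x,\xi) := (g * \phi_n^d \otimes \phi_n)(t,x,\xi),
\]
where the convolution acts only in the space variables. Each $\Phi_n$ and $g_n$ is Lipschitz in $(x,\xi)$ (with constant depending on $n$) and uniformly bounded by $M_\Phi, M_g$ independently of $n$. Since $\Phi, g$ are continuous in $(x,\xi)$ uniformly in $t$, the sequences $(\Phi_n), (g_n)$ converge uniformly on compact subsets of $[0,T]\times\R^d\times\R$ to $\Phi, g$. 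I would also truncate $\zeta_0$ by setting $\zeta_0^n := \zeta_0(\cdot \cap B(0,n))/\zeta_0(B(0,n))$, which has compact support and hence a second moment, and converges weakly to $\zeta_0$. By Theorem~\ref{thm_semi_weak} applied to $(\Phi_n, g_n, \Lambda, \zeta_0^n)$, there exists a strong solution $(Y^n, u_n)$ on a fixed probability space carrying a Brownian motion $W$.

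Next I would establish tightness in $\mathcal{C}^d\times\mathcal{C}$ of the pair $(Y^n, Z^n)$ with $Z^n_t := u_n(t, Y^n_t)$. The uniform bounds $|\Phi_n|\le M_\Phi$, $|g_n|\le M_g$, together with the tightness of $(\zeta_0^n)$ and the BDG inequality, yield the standard Kolmogorov moment estimate $\E[|Y^n_t-Y^n_s|^4]\le C|t-s|^2$, so $(Y^n)$ is tight (cf.\ Lemma~\ref{tightness} in the Appendix, suitably adapted). For $(Z^n)$, Remark~\ref{RLipeq:u} gives $|u_n|\le M_K e^{TM_\Lambda}$ and a spatial Lipschitz constant depending only on $L_K, M_\Lambda, T$, while differentiating $u_n(t,y)$ in $t$ using the second line of \eqref{eq:NSDEregul} shows that $u_n$ is equi-Hölder in time; composing with $Y^n$ gives tightness of $Z^n$. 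Extract a subsequence (still denoted $n$) along which $\shl(Y^n, Z^n) \Rightarrow \nu$ for some probability $\nu$ on $\mathcal{C}^d\times\mathcal{C}$, and denote by $m$ the first marginal of $\nu$.

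I would then invoke Proposition~\ref{limit_in_u} with the trivial choice $\Lambda_n \equiv \Lambda$: its hypotheses are met since $\Lambda$ is continuous in $(x,\xi)$ uniformly in $t$. The conclusion is that $(u_n)$ converges, uniformly on compact subsets of $[0,T]\times\R^d$, to a continuous function $u$ satisfying
\[
u(t,y) = \int_{\shc^d} K(y - X_t(\omega))\exp\!\left(\int_0^t \Lambda(r,X_r(\omega),u(r,X_r(\omega)))dr\right) dm(\omega),
\]
which is the second and third lines of \eqref{eq:NSDEweak} with respect to $m$.

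The main obstacle is the passage to the limit in the first equation of \eqref{eq:NSDEregul}, because Lemma~\ref{approxSDE} is unavailable here: $\Phi_n, g_n$ are not uniformly Lipschitz in $n$. I would therefore argue via the martingale problem. For any $f\in \mathcal{C}^2_b(\R^d)$, the process
\[
M^n_t := f(Y^n_t) - f(Y^n_0) - \int_0^t \Bigl(\tfrac{1}{2}\operatorname{tr}\bigl((\Phi_n\Phi_n^t)(r,Y^n_r,u_n(r,Y^n_r))D^2f(Y^n_r)\bigr) + g_n(r,Y^n_r,u_n(r,Y^n_r))\cdot \nabla f(Y^n_r)\Bigr)dr
\]
is a martingale. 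Using the uniform-on-compacts convergence $\Phi_n\to\Phi$, $g_n\to g$, $u_n\to u$, the tightness of $(Y^n)$, and Lemma~\ref{karatzas} in the spirit of Proposition~\ref{limit_in_u}, one passes to the limit and obtains that under $m$ on the canonical space, the analogous process $M_t$ built from $(\Phi, g, u)$ is a martingale in the canonical filtration. Standard martingale-problem theory (e.g.\ Karatzas--Shreve, Proposition 5.4.6) then yields, on a suitably enlarged probability space, a Brownian motion $W$ and a process $Y$ with $\shl(Y)=m$ satisfying
\[
Y_t = Y_0 + \int_0^t \Phi(r,Y_r,u(r,Y_r))dW_r + \int_0^t g(r,Y_r,u(r,Y_r))dr,
\]
i.e.\ the pair $(Y,u)$ is a weak solution of \eqref{eq:NSDEweak}.
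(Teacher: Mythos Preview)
Your proposal is correct and follows essentially the same route as the paper: mollify $\Phi,g$ and truncate $\zeta_0$, apply Theorem~\ref{thm_semi_weak}, extract a tight subsequence of $(Y^n,Z^n)$ via Lemma~\ref{tightness}, use Proposition~\ref{limit_in_u} with $\Lambda_n\equiv\Lambda$ to identify the limiting $u$, and then pass to the limit through the Stroock--Varadhan martingale problem (rather than Lemma~\ref{approxSDE}) to obtain a weak solution. The only cosmetic differences are your choice of truncation of $\zeta_0$ and the slightly more explicit discussion of tightness for $Z^n$.
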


\begin{proof}
We consider the following mollifications (resp. truncations) of
the coefficients (resp. the initial condition).
\begin{equation}
 \label{eq:coeffsregul}
 \begin{array}{lll}
 \Phi_{n} : (t,x,\xi) \in [0,T] \times \R^d \times \R \mapsto \int_{\R^d \times \R} \phi^d_{n}(x-r')\phi_{n}(\xi-r) \Phi(t,r',r)dr'dr \\
 g_{n} : (t,x,\xi) \in [0,T] \times \R^d \times \R \mapsto \int_{\R^d \times \R}
 \phi^d_{n}(x-r')\phi_{n}(\xi-r) g(t,r',r)dr'dr  \\ 
 \forall \varphi \in \shc_b(\R^d), \; \int_{\R^d} \zeta_0^n (dx) \varphi(x) =  \int_{\R^d} \zeta_0 (dx) 
(-n \vee \varphi(x)) \wedge n \ .
 \end{array}
\end{equation} 
We fix a filtered probability space $(\Omega, \shf, \P)$
equipped with an $(\mathcal F_t)_{t \geq 0}$-Brownian motion $W$.
 First of all, we point out the fact that the function $\Lambda$ satisfies the same assumptions as in Section \ref{S4}. 
 On the one hand, by \eqref{eq:coeffsregul}, since $\phi_{n}$ belongs to $\shs(\R^d)$, $\Phi_n$ and $g_n$ are uniformly bounded and Lipschitz with respect to $(x,\xi)$ uniformly w.r.t. $t$ for each $n \in \N$.
Also $\zeta_0^n$ admits a second moment and $(\xi^n_0)$ weakly converges to $\xi_0$.
On the other hand 
For each $n$, let  $Y^n_0$ be a (square integrable) r.v. distributed
according to $\zeta^n_0$.
By Theorem \ref{thm_semi_weak}, there is a  pair $(Y^n,u^n)$
fulfilling \eqref{eq:NSDE} with $\Phi, g, \zeta_0$ replaced by 
$\Phi_n, g_n, \zeta_0^n$.
In particular we have
 \begin{equation}
\label{eq:NSDEweakregul}
\left\{
\begin{array}{l} 
Y^n_t=Y_0^n+\int_0^t \Phi_n(r,Y^n_r,u_n(r,Y^n_r)) dW_r+\int_0^tg_n(r,Y^n_r,u_n(r,Y^n_r))dr\ ,\quad\textrm{with}\quad Y_0^n\sim  \zeta_0^n\ ,\\
u_n(t,x)= \int_{\shc^d} dm^n(\omega) \left[ K(x-X_t(\omega)) \, 
\exp \left \{\int_{0}^{t}\Lambda \big (r,X_r(\omega),
u_n(r,X_r(\omega))\big )dr\right \}\right ] \ , \quad\textrm{for}\quad (t,x) \in [0,T] \times \R^d  \ ,\\
\shl(Y^n) = m^n.

\end{array}
\right .
\end{equation}

\begin{rem}
\label{un_bounded}
Similarly to Remark \ref{rem_un}
 $(\vert u_n \vert)_{n \geq 0}$ is uniformly bounded by 
 $M_{K} \exp(M_\Lambda T)$. 
\end{rem}



Setting $Z^n:= u_{n}(\cdot,Y^n)))$, in  the sequel, we will denote by $\nu^n$ the Borel probability 
defined by
$ \mathcal{L}(Y^n,Z^n)$.
The same notation will be kept after possible extraction of  subsequences.




Since $(\zeta_0^n)_{n \in \N}$ weakly converges to $\zeta_0$,  it is tight. By Remark \ref{un_bounded} and Lemma \ref{tightness},
 there is a subsequence  $\{ \nu^{n_k}\}$ which weakly converges to
some Borel probability $\nu$ on $\mathcal C^d \times \mathcal C$.
For simplicity we replace in the sequel the subsequence $(n_k)$ by $(n)$. 
Let $(Y^n)$ be the sequence of processes solving \eqref{eq:NSDEweakregul}.
We remind that $m^n$ denote their law. 
The final result will be established once we will have proved the following statements, 
\begin{enumerate}[a)]
\item $u^n$ converges to some (continuous) function $u : [0,T] \times \R^d \rightarrow \R$, uniformly on each compact of $[0,T] \times \R^d$, which verifies 
$$
\forall (t,x) \in [0,T] \times \R^d, \; u(t,x) = \int_{\mathcal{C}^d}  K(x-X_t(\omega))
 \, \exp \left \{\int_{0}^{t}\Lambda \big (r,X_r(\omega),u(r,X_r(\omega)))\big )dr\right \}
 dm(\omega)
$$
where $m$ is the limit of the laws of $Y^n$.
\item The processes $Y^n$ converge in law to $Y$, where $Y$ is a solution, in law, of 
\begin{equation}
\label{eq:limitY}
\left \{
\begin{array}{l}
Y_t = Y_0 + \int_0^t \Phi(r,Y_r,u(r,Y_r)) dW_r + \int_0^t g(r,Y_r,u(r,Y_r)) dr \\
Y_0 \sim \zeta_0 \ .
\end{array}
\right .
\end{equation}
\end{enumerate}
Step a) is a consequence of Proposition \ref{limit_in_u} with for all $n \in \N, \; \Lambda_n \equiv \Lambda$. \\
 To prove the second step b), we will  pass to the limit in the first equation of \eqref{eq:NSDEweakregul}.  To this end, let us denote by $C^{2}_0(\R^d)$, the space of $C^{2}(\R^d)$ functions  with compact support.
Without loss of generality, we suppose $d = 1$. We will prove that $m$ is a solution to the martingale problem  (in the sense of Stroock and Varadhan, see chapter 6 in \cite{stroock}) associated with the first equation of \eqref{eq:NSDEweak}. 
In fact we will show that
\begin{equation}
  \label{eq:MartPb}
  \left \{
  \begin{array}{lll}
   \forall \varphi \in C^{2}_0(\R), t \in [0,T], \; M_{t} := \varphi(X_t) - \varphi(X_0) - 
\int_0^t (\mathcal{A}_{r} \varphi)(X_r) dr, \text{ is a } \mathcal{F}^{X}_{t}\text{-martingale, where } \\
    (\mathcal{F}^X_t, t \in [0,T]) \text{ is the canonical filtration generated by } X, 
  \end{array}
  \right .
 \end{equation}
where we denote $\mathcal{A}_{r} \varphi)(x) =
 \frac{1}{2} \Phi^{2}(r,x,u(r,x))) \varphi''(x) + g(r,x,u(r,x)) \varphi'(x), 
\ r \in [0,T], x \in \R^d.$  \\
 Let $ 0 \leq s < t \leq T$ fixed,
$ F :\shc([0,s],\R) \rightarrow \R$ continuous and bounded.
  Indeed, we will show  
  \begin{equation}
  \label{eq:MartPb_bis}
  \begin{array}{lll}
   \forall \varphi \in C^{2}_0(\R), \; \E^{m}{\left[ \left( \varphi(X_t) - \varphi(X_0) - \int_0^t (\mathcal{A}_{r} \varphi)(X_r) dr \right) F(X_r, r \le s) \right]}= 0 
  \end{array}
 \end{equation}
 We remind that, for $n \in \N$, by definition, $m^n$ is the law of the  strong solution
$Y^n$  of 
$$
Y^n_t=Y_0^n+\int_0^t \Phi_n(r,Y^n_r,u_n(r,Y^n_r)) dW_r + \int_0^tg_n(r,Y^n_r,u_n(r,Y^n_r))dr \ ,
$$
on  a fixed underlying probability space $(\Omega,\mathcal{F},\mathbb{P})$ with related expectation  $\E$.  \\
  Then, by It\^o's formula, we easily deduce  that $\forall n \in \N$,  
 \begin{equation}
 \E{\left[ \left(\varphi(Y^n_t) - \varphi(Y^n_s) - \int_s^t \left(\frac{1}{2} \Phi_{n}^{2}(r,Y^n_r,u_n(r,Y^n_r)) \varphi''(Y^n_r) + g_{n}(r,Y^n_r,u_n(r,Y^n_r)) \varphi'(X^n_r) \right) dr\right) F(Y^n_r,  r \le s)  \right]}=0 \ .
 \end{equation} 
 Transferring this to the canonical space $\mathcal{C}$ and to the probability $m ^n$
gives 
 \begin{equation}
 \label{eq:MartPb2}
 \E^{m^n}{\left[ \left(\varphi(X_t) - \varphi(X_s) - \int_s^t \left(\frac{1}{2} \Phi_{n}^{2}(r,X_r,u_n(r,X_r)) \varphi''(X_r) + g_{n}(r,X_r,u_n(r,X_r)) \varphi'(X_r) \right) dr\right) F((X_u, 0 \leq u \leq s)) \right]}=0.
 \end{equation}
From now on, we are going to pass to the limit when $n \longrightarrow +\infty$ in \eqref{eq:MartPb2} to obtain \eqref{eq:MartPb}. Thanks to the weak convergence
of the sequence $m^n$, for $\varphi \in C^{2}_{0}(\R^d)$,  we have immediately
\begin{equation}
 \label{eq:MartPb3}
 \E^{m_n}{\left[ \left(\varphi(X_t) - \varphi(X_s)\right)F(X_u, 0 \leq u \leq s)\right]} - \E^{m}{\left[ \left(\varphi(X_t) -
 \varphi(X_s)\right)F(X_u, 0 \leq u \leq s)\right]} \xrightarrow[\text{$ n \longrightarrow \infty$}]{\text{}} 0.
\end{equation}
It remains to show, 

 \begin{equation}
 \label{eq:MartPb4}
 \left \{
 \begin{array}{lll}
\lim_{n \longrightarrow \infty} \E^{m_n}{\left[ H^{n}(X) F(X_u, 0 \leq u \leq s)\right]} = \E^{m}{\left[ H(X) F(X_u, 0 \leq u \leq s)\right]}, \\
 \text{with  } H^{n}(\alpha) := \int_s^t (\frac{1}{2} \Phi_{n}^{2}(r,\alpha_r,u_n(r,\alpha_r)) \varphi''(\alpha_r) + g_{n}(r,\alpha_r,u_n(r,\alpha_r)) \varphi'(\alpha_r) dr, \\
 H(\alpha) := \int_s^t (\frac{1}{2} \Phi^{2}(r,\alpha_r,u(r,\alpha_r)) \varphi''(\alpha_r) + g(r,\alpha_r,u(r,\alpha_r)) \varphi'(\alpha_r) dr \ . \\
 \end{array}
 \right .
 \end{equation}
In order to show that
 $\E^{m_n}{\left[ H^{n}(X) F\right]} - \E^{m}{\left[ H(X) F\right]}$   
goes to zero, we will apply again Lemma \ref{karatzas}. \\
As we have mentioned above, $F$ is continuous and bounded.
Similarly as for Remark \ref{continuite}, the proof of the continuity of $H$ (resp. $H_n$) makes use 
of the continuity of $\Phi$, $g$, $\varphi''$, $\varphi'$ (resp. $\Phi_n$, $g_n$,$\varphi''$, $\varphi'$) 
and Lebesgue dominated convergence theorem. 

Taking into account Remark \ref{Rkaratzas}, it is enough to prove
 the uniform convergence of 
 $H^n : \mathcal{C} \longrightarrow \R$ to 
$H : \mathcal{C} \longrightarrow \R$ on each ball of $\mathcal C$. 
This relies on the uniform convergence   of $\Phi_n(t,\cdot,\cdot)$ (resp. $g_n(t,\cdot,\cdot)$ ) to $\Phi(t,\cdot,\cdot)$ (resp. $g(t,\cdot,\cdot)$ ) on every compact subset $\R \times \R$, for fixed $t \in [0,T]$.
Since the sequence $(m^n)$ converges weakly, finally  Lemma \ref{karatzas} 
allows to conclude \eqref{eq:MartPb4}.

 \end{proof}

\section{Link with nonlinear Partial Differential Equation}
\label{SPDE}

\setcounter{equation}{0}

From now on, in all the sequel, to simplify notations, we will often use the notation $f_t(\cdot) = f(t,\cdot)$ for functions $f : [0,T] \times E \longrightarrow \R$, $E$ being some metric space.
\medskip

In the following, we suppose again the validity of Assumption \ref{ass:main3}.
\\
 Here  $\mathcal{F}(\cdot): f \in \mathcal{S}(\R^d) \mapsto \mathcal{F}(f) \in \mathcal{S}(\R^d)$ denotes the Fourier transform on the classical Schwartz space $\mathcal{S}(\R^d)$. We will indicate in the same manner the Fourier transform on $\mathcal{S}'(\R^d)$.
In this section, we want to  link  the nonlinear SDE \eqref{eq:NSDE} 
to a  partial integro-differential equation (PIDE)  that we have 
to determine. 
We start by considering  problem \eqref{eq:NSDE} written under the following form:
\begin{equation}
\label{Y_edp}
\left \{
\begin{array}{l}
Y_t=Y_0+\int_0^t\Phi(s,Y_s,u^{m}_s(Y_s))dW_s+\int_0^tg(s,Y_s,u^{m}_s(Y_s))ds, \quad Y_0 \sim \zeta_0\\
u^m_t(x) = \int_{\mathcal{C}^d}  K(x-X_t(\omega))
 \, \exp \left \{\int_{0}^{t}\Lambda \big (r,X_r(\omega),u^m_r(X_r(\omega)))\big )dr\right \}
 dm(\omega) \\
 \shl(Y) = m \ ,
\end{array}
\right .
\end{equation}
recalling that $V_t\big (Y,u^{m}(Y)\big 
)=\exp \big( \int_0^t\Lambda_s(Y_s,u^{m}_s(Y_s))ds \big)$. \\
Suppose that $K$ is formally the Dirac measure at zero. In this case,
the solution of \eqref{Y_edp} is also a solution of \eqref{ENLSDELambda}.
Let $\varphi \in \shs(\R^d)$. Applying It\^o formula to
$\varphi(Y_t)$ we can easily show that the function $v$, density of the measure $\nu$ defined in \eqref{ENLSDELambda}, is a solution in the distributional sense of the PDE \eqref{epde}.
For  $K$ being a mollifier of the Dirac measure, 
applying the same strategy, we cannot easily identify 
the deterministic problem solved by $u^m$, e.g. PDE or PIDE.

For that reason we begin by establishing a 
 correspondence between \eqref{Y_edp} and another 
McKean type stochastic differential equation, i.e. 
\begin{equation}
\label{Y_edp2}
\left \{
\begin{array}{l}
Y_t=Y_0+\int_0^t\Phi(s,Y_s,(K*\gamma^{m})(s,Y_s))dW_s+\int_0^tg(s,Y_s,(K*\gamma^{m})(s,Y_s))ds, \quad Y_0 \sim \zeta_0\\
\gamma^m_t \text{ is the measure defined by, for all } \varphi \in 
 \mathcal{C}_{b}(\R^d) \\
\gamma^m_t(\varphi) := \langle \gamma^m_t, \varphi \rangle :=  \int_{\shc^d} \varphi(X_t(\omega)) V_t(X,(K * \gamma^m)(X)) dm(\omega) \\
\shl(Y) = m \ ,
\end{array}
\right .
\end{equation}
where  we recall the notations $(K * \gamma)(s,\cdot) := (K * \gamma_s)(\cdot)$ and $ \int_{\R^d} \varphi(x) \gamma_t^m(dx) := \gamma^m_t(\varphi) $ .

\begin{thm}
\label{thm-PIDE}
We suppose the validity of Assumption \ref{ass:main3}.
The existence of the McKean type stochastic differential equation
 \eqref{Y_edp} is equivalent to the one of \eqref{Y_edp2}.
More precisely, given a solution  $(Y, \gamma^m)$ of \eqref{Y_edp2},  
$(Y,u^m),$ with $u^m = K * \gamma^m$,
  is a solution of \eqref{Y_edp}  and if $(Y,u^m)$ is a solution \eqref{Y_edp}, there exists a Borel measure $\gamma^m$ such that $(Y,\gamma^m)$ is solution of $\eqref{Y_edp2}$.  \\
In addition, if the measurable set $\{ \xi \in \R^d \vert \shf(K)(\xi) = 0  \}$ is Lebesgue negligible, \eqref{Y_edp} and \eqref{Y_edp2} are equivalent, i.e., the measure $\gamma^m$ is uniquely determined by $u^m$ and conversely.
\end{thm}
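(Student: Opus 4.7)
The strategy is to construct explicit correspondences between solutions of \eqref{Y_edp} and solutions of \eqref{Y_edp2}, by exploiting the duality between the pairing formula defining $\gamma^m$ and the convolution $K * \gamma^m$.

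First, for the implication \eqref{Y_edp2} $\Rightarrow$ \eqref{Y_edp}, given a solution $(Y, \gamma^m)$ of \eqref{Y_edp2}, the plan is to set $u^m := K * \gamma^m$ and verify the two lines of \eqref{Y_edp}. The SDE part is immediate by substitution, since $\Phi(\cdot, Y_\cdot, (K * \gamma^m)(Y_\cdot)) = \Phi(\cdot, Y_\cdot, u^m(Y_\cdot))$ (and likewise for $g$). The crucial check is the linking equation: testing $\gamma^m_t$ against the bounded continuous function $y \mapsto K(x - y)$ (for each fixed $x \in \R^d$), the defining identity of $\gamma^m_t$ in \eqref{Y_edp2} gives
\[
u^m(t,x) \;=\; (K * \gamma^m_t)(x) \;=\; \int_{\shc^d} K(x - X_t(\omega))\, V_t(X, u^m(X))\, dm(\omega),
\]
which is precisely the linking equation of \eqref{Y_edp}.

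For the converse \eqref{Y_edp} $\Rightarrow$ \eqref{Y_edp2}, given $(Y, u^m)$ solving \eqref{Y_edp} with $m := \shl(Y)$, I would define $(\gamma^m_t)_{t \in [0,T]}$ by the pairing
\[
\gamma^m_t(\varphi) \;:=\; \int_{\shc^d} \varphi(X_t(\omega))\, V_t(X, u^m(X))\, dm(\omega), \qquad \varphi \in \mathcal{C}_b(\R^d).
\]
By item 3(b) of Assumption \ref{ass:main}, $V_t \le e^{TM_\Lambda}$, so each $\gamma^m_t$ is a non-negative finite Borel measure of mass at most $e^{TM_\Lambda}$. Applying the definition to $\varphi = K(x - \cdot)$ and comparing with the linking equation of \eqref{Y_edp} yields $(K * \gamma^m)(t, x) = u^m(t, x)$. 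Hence the SDE in \eqref{Y_edp2} reproduces the one in \eqref{Y_edp}, and the defining equation for $\gamma^m$ is satisfied tautologically.

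Finally, for the equivalence statement under the Fourier condition: the direction $\gamma^m \mapsto u^m = K * \gamma^m$ is tautological. For the converse, suppose two finite Borel measures $\gamma^1, \gamma^2$ satisfy $K * \gamma^1 = K * \gamma^2$. Taking Fourier transforms (legitimate since $K \in L^1(\R^d)$ and finite measures have bounded continuous characteristic functions) yields $\shf(K)(\xi)\, \shf(\gamma^1)(\xi) = \shf(K)(\xi)\, \shf(\gamma^2)(\xi)$ for every $\xi \in \R^d$. Dividing on the complement of $\{\shf(K) = 0\}$, which has full Lebesgue measure by hypothesis, gives $\shf(\gamma^1) = \shf(\gamma^2)$ Lebesgue-almost everywhere; since both sides are continuous this extends to every $\xi \in \R^d$, whence $\gamma^1 = \gamma^2$ by Fourier injectivity on finite measures. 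This proof is essentially a bookkeeping exercise; the one genuinely analytic step is the Fourier argument, whose main subtlety is the passage from almost-everywhere to pointwise equality via continuity of the characteristic functions.
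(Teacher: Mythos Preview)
Your proof is correct and, for the direction \eqref{Y_edp} $\Rightarrow$ \eqref{Y_edp2}, is more direct than the paper's. The paper constructs $\gamma^m_t$ via Bochner's theorem: it computes $\shf(u^m)(t,\cdot)$, factors out $\shf(K)$, checks that the remaining factor $f^m(\xi) = \int_{\shc^d} e^{-i\xi\cdot X_t(\omega)} V_t(X,u^m(X))\,dm(\omega)$ is continuous, bounded, and non-negative definite, and then invokes Bochner to obtain a finite non-negative measure $\mu^m_t$ with $\shf(\mu^m_t) = f^m$; a Fourier-inversion computation then verifies that $\mu^m_t$ satisfies the pairing formula in \eqref{Y_edp2}. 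You instead simply \emph{define} $\gamma^m_t$ by that pairing formula, i.e.\ as the pushforward under $\omega \mapsto \omega_t$ of the weighted measure $V_t(X,u^m(X))\,dm$, which is manifestly a finite non-negative Borel measure, and read off $K * \gamma^m_t = u^m_t$ directly from the linking equation of \eqref{Y_edp}. This is more elementary and avoids Fourier analysis entirely in the existence step. The paper's route does have the minor advantage of making the identity $\shf(u^m_t) = \shf(K)\,\shf(\gamma^m_t)$ explicit from the start, which it then reuses for the uniqueness statement; in your version the Fourier identity appears only in the final injectivity step, where your argument (including the continuity observation to upgrade a.e.\ equality of characteristic functions to pointwise equality) essentially matches the paper's.
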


\begin{proof}
Let $((Y_t, t \geq 0),u^m)$ be a solution of \eqref{Y_edp}. Let us fix $t \in [0,T]$. \\
Since $K \in L^1(\R^d)$, the Fourier transform applied to the function $u^m(t,\cdot)$ gives
\begin{eqnarray}
\label{E61}
\shf(u^m)(t,\xi) = \shf(K)(\xi) \int_{\shc^d} e^{-i \xi \cdot X_t(\omega)} \exp \left( \int_0^t \Lambda(r,X_r(\omega),u^m_r(X_r(\omega))) \right) dm(\omega) \ .
\end{eqnarray} 
By Lebesgue dominated convergence theorem, the function
$$f^m : \xi \in \R^d \mapsto f(\xi) := \int_{\shc^d} e^{-i \xi \cdot X_t(\omega)} \exp \left( \int_0^t \Lambda(r,X_r(\omega),u^m_r(X_r(\omega))) \right) dm(\omega) \ ,
$$
 is clearly continuous on $\R^d$. Since $\Lambda$ is bounded, $f^m$ is also bounded. Let $(a_k)_{k=1,\cdots,d}$  be a sequence of complex numbers and $(x_k)_{k=1\cdots,d} \in (\R^d)^d$. Remarking that for all $\xi \in \R^d$
$$
\sum_{k=1}^d \sum_{p=1}^d a_k \bar{a}_p e^{-i \xi \cdot (x_k-x_p)} = \left( \sum_{k=1}^d a_k e^{-i \xi \cdot x_k} \right) \overline{\left( \sum_{p=1}^d a_p e^{-i \xi \cdot x_p} \right )} = \left \vert \sum_{k=1}^d a_k e^{-i \xi \cdot x_k} \right \vert^2 \ , 
$$
it is clear that $f^m$ is non-negative definite. Then, by Bochner's theorem (see Theorem 24.9 Chapter I.24 in \cite{rogers_v1}), there exists a finite non-negative Borel measure $\mu_t$ on $\R^d$ such that for all $\xi \in \R^d$
\begin{eqnarray} \label{E928}
f^m(\xi) = \frac{1}{\sqrt{2 \pi}}
 \int_{\R^d} e^{-i \xi \cdot \theta} \mu_t^m( d\theta) \ .
\end{eqnarray}
We want to show that $\gamma^m_t := \mu_t^m$ fulfills the third line equation of \eqref{Y_edp2}. \\
Since $\mu_t^m$ is a finite (non-negative) Borel measure, it is a Schwartz (tempered) distribution such that 
$$
\shf^{-1}(f^m) = \mu_t^m \quad \textrm{ and } \quad \forall \psi \in \shs(\R^d), \; \left \vert \int_{\R^d} \psi(x) \mu_t^m(dx) \right \vert \leq \Vert \psi \Vert_{\infty} \mu_t^m(\R^d) < \infty \ .
$$
On the one hand,  equalities \eqref{E61} and \eqref{E928} give
\begin{eqnarray} 
\label{E929}
\shf(u^m)(t,\cdot) = \shf(K) \shf(\mu_t^m) \Longrightarrow u^m(t,\cdot) = K * \mu_t^m \ .
\end{eqnarray}
On the other hand, for all $\psi \in \shs(\R^d)$, 
\begin{eqnarray}
\langle \mu_t^m,\psi \rangle & = & \langle \shf^{-1}(f^m),\psi \rangle \nonumber \\
& = & \langle f^m,\shf^{-1}(\psi) \rangle \nonumber \\
& = & \int_{\R^d} \shf^{-1}(\psi)(\xi)  \left( \int_{\shc^d} e^{-i \xi \cdot X_t(\omega)} \exp ( \int_0^t \Lambda(r,X_r(\omega),u^m_r(X_r(\omega))) ) dm(\omega) \right) d \xi \nonumber \\
& = & \int_{\shc^d} \left( \int_{\R^d} \shf^{-1}(\psi)(\xi) e^{-i \xi \cdot X_t(\omega)} d \xi \right)   \exp \left( \int_0^t \Lambda(r,X_r(\omega),u^m_r(X_r(\omega))) \right) dm(\omega) \nonumber  \\ 
& = & \int_{\shc^d} \left( \int_{\R^d} \shf^{-1}(\psi)(\xi) e^{-i \xi \cdot X_t(\omega)} d \xi \right)   \exp \left( \int_0^t \Lambda(r,X_r(\omega),  (K* \mu_r^m)(X_r(\omega))) \right) dm(\omega) \nonumber \\
& = & \int_{\shc^d} \psi(X_t(\omega)) \exp \left( \int_0^t \Lambda(r,X_r(\omega),  (K* \mu_r^m)(X_r(\omega))) \right) dm(\omega) \ ,
\end{eqnarray}
where the third equality is justified by Fubini theorem
and the fourth equality follows by \eqref{E929}.
 This allows to conclude the necessary part of the first statement of the lemma. \\  

Regarding the converse, let $(Y,\gamma^m)$ be a solution of
 \eqref{Y_edp2}. We set directly ${u}_t^m(x) := (K*\gamma^m_t)(x)$. 
Obviously the first equation  in \eqref{Y_edp} is satisfied for $(Y,u^m)$.
Since $\mu_t^m$ is finite, the second equation follows directly by \eqref{Y_edp2} 
to $\varphi = K(x-\cdot)$. \\
To establish the second statement of the theorem, it is enough to observe that from the r.h.s of \eqref{E929} we have
$$
\textrm{Leb}(\{ \xi \in \R^d \vert \shf(K)(\xi) = 0\}) = 0 \Longrightarrow \shf(\mu_t^m) = \frac{\shf(u^m)(t,\cdot)}{\shf(K)} \; \textrm{a.e. }\ , t \in [0,T],
$$
where Leb denotes the Lebesgue measure on $\R^d$. This shows effectively that $\gamma^m$ (resp. $u^m$) is uniquely determined by $u^m$ (resp. $\gamma^m$) and ends the proof.
\end{proof}
Now, by applying It\^o's formula 
, we can show that the associated measure $\gamma^m$ (second equation in \eqref{Y_edp2}) satisfies a PIDE.
\begin{thm} \label{T64}
The measure $\gamma^m_t$, defined in the second equation of \eqref{Y_edp2}, satisfies the  PIDE
\begin{equation}
\label{PIDE}
\left \{
\begin{array}{lll}
\partial_t \gamma^m_t & = & \frac{1}{2} \displaystyle{\sum_{i,j=1}^d} \partial^2_{ij}\left( (\Phi \Phi^t)_{i,j}(t,x,(K*\gamma^m_t)) \gamma^m_t \right) - div \left( g(t,x,K*\gamma^m_t) \gamma^m_t \right) + \gamma^m_t \Lambda(t,x,(K*\gamma^m_t)) \\
\gamma^m_0(dx) & = & \zeta_0(dx) \; , 
\end{array}\right .
\end{equation}
in the sense of distributions.
\end{thm}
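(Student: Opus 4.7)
The plan is to test $\gamma^m_t$ against smooth compactly supported functions and derive a weak formulation via It\^o's formula applied to the associated process $Y$ solving \eqref{Y_edp2}.

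First I would fix $\varphi \in \mathcal{C}^{\infty}_0(\R^d)$ and compute $\langle \gamma^m_t, \varphi \rangle = \E[\varphi(Y_t) V_t(Y, K\ast \gamma^m(Y))]$, using that $Y$ has law $m$. Writing $V_t := V_t(Y, K\ast\gamma^m(Y))$, the process $t \mapsto V_t$ is absolutely continuous with $dV_t = V_t \Lambda(t, Y_t, (K\ast\gamma^m_t)(Y_t))\, dt$, while It\^o's formula applied to $\varphi(Y_t)$ yields
\begin{equation*}
d\varphi(Y_t) = \nabla\varphi(Y_t)\cdot \Phi(t,Y_t,(K\ast\gamma^m_t)(Y_t))\, dW_t + (\mathcal{L}_t\varphi)(Y_t)\, dt,
\end{equation*}
where $(\mathcal{L}_t\varphi)(x) := \frac12 \sum_{i,j}(\Phi\Phi^t)_{i,j}(t,x,(K\ast\gamma^m_t)(x))\,\partial^2_{ij}\varphi(x) + g(t,x,(K\ast\gamma^m_t)(x))\cdot \nabla\varphi(x)$. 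Since $V$ is of bounded variation, the integration-by-parts formula gives $d(\varphi(Y_t)V_t) = V_t\, d\varphi(Y_t) + \varphi(Y_t)\, dV_t$, with no cross variation term.

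Next I would take expectations. The stochastic integral is a true martingale because $\varphi$ has compact support and $\Phi$ is bounded, so its expectation vanishes. Combining with the definition of $\gamma^m_t$ and applying Fubini's theorem to exchange the time integral with the integral against $m$, I get, for every $t \in [0,T]$,
\begin{equation*}
\langle \gamma^m_t, \varphi\rangle = \langle \zeta_0, \varphi\rangle + \int_0^t \langle \gamma^m_s, \mathcal{L}_s\varphi\rangle\, ds + \int_0^t \langle \gamma^m_s, \Lambda(s,\cdot,(K\ast\gamma^m_s)(\cdot))\,\varphi\rangle\, ds.
\end{equation*}
This is precisely the weak (distributional) formulation of \eqref{PIDE}: a formal integration by parts in the second-order and first-order terms transfers the derivatives onto $(\Phi\Phi^t)_{i,j}(s,\cdot,K\ast\gamma^m_s)\gamma^m_s$ and $g(s,\cdot,K\ast\gamma^m_s)\gamma^m_s$ respectively, and the initial condition $\gamma^m_0 = \zeta_0$ follows from $V_0 = 1$ together with $Y_0 \sim \zeta_0$.

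The main routine obstacle is verifying the admissibility of the It\^o step: one must know that $x \mapsto (K\ast\gamma^m_s)(x)$ is jointly Borel and locally bounded in $(s,x)$, which follows from Assumption~\ref{ass:main}.4 (boundedness and Lipschitz continuity of $K$) together with the uniform bound $\gamma^m_s(\R^d) \le e^{sM_\Lambda}$ obtained by plugging $\varphi \equiv 1$ (approximated by compactly supported cut-offs) into the defining equation of $\gamma^m$. Once this measurability/boundedness is in hand, the coefficients $\Phi(s,\cdot,(K\ast\gamma^m_s)(\cdot))$ and $g(s,\cdot,(K\ast\gamma^m_s)(\cdot))$ along $Y$ are progressively measurable and bounded, so the standard form of It\^o's formula and the martingale property of the stochastic integral apply directly; no truncation argument beyond the compact support of $\varphi$ is needed.
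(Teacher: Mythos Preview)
Your proposal is correct and follows essentially the same route as the paper: apply It\^o's formula to $\varphi(Y_t)V_t(Y,(K\ast\gamma^m)(Y))$ for $\varphi\in\mathcal{C}^\infty_0(\R^d)$, take expectations so the martingale term vanishes, and rewrite the remaining terms via the definition of $\gamma^m_s$. Your write-up is in fact more explicit than the paper's about the martingale property, Fubini, and the measurability of $K\ast\gamma^m_s$; note only that since $\gamma^m_t$ is defined in \eqref{Y_edp2} already for all $\varphi\in\mathcal{C}_b(\R^d)$, you may plug in $\varphi\equiv 1$ directly to get $\gamma^m_s(\R^d)\le e^{sM_\Lambda}$ without a cut-off approximation.
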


\begin{proof}
It is enough to use the definition of $\gamma^m_t$ and, as mentioned above,
 apply It\^o's formula to the process \\
$\varphi(Y_t)V_t(Y,(K * \gamma^m)(Y))$, for $\varphi \in \mathcal{C}_0^{\infty}(\R^d)$ and $Y$ (defined in the first equation of \eqref{Y_edp2}). Indeed, for $\varphi \in \mathcal{C}_0^{\infty}(\R^d)$, It\^o's formula gives,
\begin{eqnarray}
\label{ito3}
\E[ \varphi(Y_t) V_t(Y,(K * \gamma^m)(Y))] & = & \E[\varphi(Y_0) ] \nonumber \\
& & + \; \int_0^t \E \left[\varphi(Y_s) \Lambda(s,Y_s,(K*\gamma^m)(s,Y_s) ) V_s(Y,(K * \gamma^m)(Y)) \right ] ds \nonumber \\
& &  + \; \int_0^t \sum_{i=1}^d \E \left[ \partial_i \varphi(Y_s) g_i(s,Y_s,(K*\gamma^{m})(s,Y_s)) V_s(Y,(K * \gamma^m)(Y)) \right] ds \nonumber \\
& & + \; \frac{1}{2}\int_0^t \sum_{i,j=1}^d \E[ \partial^2_{ij} \varphi(Y_s) (\Phi \Phi^t)_{i,j}(s,Y_s,(K*\gamma^{m})(s,Y_s)) V_s(Y,(K * \gamma^m)(Y))  ] ds  \nonumber \ . \\ 
\end{eqnarray}
By  the definition of the measure $\gamma^m$ in \eqref{Y_edp2}, we have 
\begin{eqnarray}
\int_{\R^d} \varphi(x) \gamma^m_t(dx) & = & \int_{\R^d} \varphi(x) \zeta_0(dx) \nonumber \\
& & + \; \int_0^t \int_{\R^d} \varphi(x) \Lambda(s,x,(K*\gamma^m)(s,x) ) \gamma^m_s(dx) ds \nonumber \\
& & + \; \int_0^t \int_{\R^d}  
 \nabla \varphi(x) \cdot  g(s,x,(K*\gamma^{m})(s,x)) \gamma^m_s(dx)  ds
 \nonumber \\
& & + \; \frac{1}{2} \sum_{i,j=1}^d \int_0^t \int_{\R^d} \partial_{ij}^2  \varphi(x) (\Phi \Phi^t)_{i,j}(s,x,(K*\gamma^{m})(s,x)) \gamma^m_s(dx) ds \ . 
\end{eqnarray}
This concludes the proof of Theorem \ref{T64}.
\end{proof}

\section{Particle systems approximation and propagation of chaos  }
\label{SChaos}

\setcounter{equation}{0}

In this section, we focus on the propagation of chaos for an interacting particle system
$\xi = (\xi^{i,N})_{i=1,\cdots, N}$ 
associated with the McKean type equation \eqref{eq:NSDE} 
when the coefficients $\Phi, g, \Lambda$ are bounded and Lipschitz. 
We remind that the propagation of chaos consists in the property of 
asymptotic independence of the components of $\xi$ when the size $N$ of the particle system 
goes to $\infty$. That property  was introduced in
\cite{McKean} and further developed and popularized by
 \cite{sznitman}. Moreover, we propose a particle approximation of $u$, solution of~\eqref{eq:NSDE}.

 We suppose here the validity of Assumption \ref{ass:main}. For the simplicity 
of formulation we suppose that $\Phi$ and $g$  only depend on the last variable $z$.
Let $(\Omega, \mathcal F, \P)$ be a fixed probability space,
and $(W^i)_{i=1,\cdots ,N}$ be a sequence of independent $\R^p$-valued Brownian motions. Let $(Y_0^i)_{i=1,\cdots,N}$ be i.i.d. r.v. according to $\zeta_0$. We consider $\mathbf{Y}:=(Y^i)_{i=1,\cdots ,N}$ the
 sequence of processes such that  $(Y^i, u^{m^i})$ are solutions to
\begin{equation}
\label{eq:Yi}
\left \{
\begin{array}{l}
Y^i_t=Y_0^i+\int_0^t\Phi(u^{m^i}_s(Y^i_s))dW^i_s+\int_0^tg(u^{m^i}_s(Y^i_s))ds \\
u^{m^i}_t(y)={\displaystyle \E\left [ K(y-Y^i_t)V_t\big (Y^i,u^{m^i}(Y^i)\big ) \right ]}\ ,\quad \textrm{with}\ m^i:=\mathcal{L}(Y^i)\ ,
\end{array}
\right .
\end{equation}
recalling that $V_t\big (Y^i,u^{m^i}(Y^i)\big )=\exp \big( \int_0^t\Lambda_s(Y^i_s,u^{m^i}_s(Y^i_s))ds \big)$. 
The existence and uniqueness of the solution of each equation is ensured by Proposition~\ref{prop:NSDE}. 
We remind that  the map                            $(m,t,y) \mapsto u^m(t,y)$
 fulfills the regularity properties given at the second and third item of Lemma \ref{lem:uu'} .

Obviously the processes  $(Y^i)_{i=1,\cdots ,N}$ are independent.
They are also identically distributed since Proposition \ref{prop:NSDE} also states uniqueness
in law.
\\
So we can define
 $m^0:=m^i$ the common distribution of the processes $(Y^i)_{i=1,\cdots ,N}$. \\
From now on, ${\mathcal C}^{dN}$  will denote $( {\mathcal C}^{d})^N$, which is obviously isomorphic to $\shc([0,T], \R^{dN})$.
We start observing that, for every $\bar{\xi} \in {\mathcal C}^{dN}$ 
the function $(t,x) \mapsto u^{S^N(\mathbf{\bar{\xi}})}_t(x)$
is obtained by composition of $m \mapsto u^m_t(x)$ 
 with $ m = S^N(\mathbf{\bar{\xi}})$.

Now let us introduce  the system of equations

\begin{equation}
\label{eq:XIi}
\left \{
\begin{array}{l}
\xi^{i,N}_t= \xi^{i,N}_0 +\int_0^t\Phi(u^{S^N(\mathbf{\xi})}_s(\xi^{i,N}_s))dW^i_s+\int_0^tg(u^{S^N(\mathbf{\xi})}_s(\xi^{i,N}_s))ds  \\
\xi^{i,N}_0 = Y^i_0 \\
u^{S^N(\mathbf{\xi})}_t(y)={\displaystyle \frac{1}{N}\sum_{j=1}^N  K(y-\xi^{j,N}_t) V_t\big (\xi^{j,N},u^{S^N(\mathbf{\xi})}(\xi^{j,N})\big ) }\ ,
\end{array}
\right .
\end{equation}
with $S^N(\mathbf{\xi})$ standing for the empirical measure associated to $\mathbf{\xi}:=(\xi^{i,N})_{i=1,\cdots ,N}$ i.e. 
\begin{equation}
\label{eq:SampleXi}
S^N(\mathbf{\xi}):=\frac{1}{N} \sum_{i=1}^N \delta_{\xi^{i,N}}\ .
\end{equation}
As for  \eqref{eq:SampleXi}, we set $\displaystyle{ S^N(\mathbf{Y}):=\frac{1}{N} \sum_{i=1}^N \delta_{Y^{i}} }$ is the empirical measure for 
${\bf Y} := (Y^i)_{i=1,\cdots,N},$
where we remind that for each $i \in \{1,\cdots,N\}$, $Y^i$ is solution of \eqref{eq:Yi}.
 We observe that by Remark \ref{RDelta}, $S^N(\mathbf{\xi})$ and $S^N(\mathbf{Y})$ are measurable maps from $(\Omega,\shf)$ to $(\shp(\shc^d),\shb(\shp(\shc^d)))$, and they are such that $S^N(\xi),S^N({\bf{Y}}) \in \shp_2(\shc^d)$ $\P$-a.s.
A solution ${\bf{\xi}}:=(\xi^{i,N})_{i=1,\cdots ,N}$  of \eqref{eq:XIi} is called
{\bf{weakly interacting particle system}}. \\
The first line equation of \eqref{eq:XIi} is in fact a path-dependent stochastic differential equation.
We claim that its coefficients 
 are measurable. Indeed, the map $(t,\bar{\xi}) \mapsto (S^{N}(\bar{\xi}),t,\bar{\xi}_t^i,)$ being continuous from $([0,T] \times \shc^{dN},\shb([0,T]) \otimes \shb(\shc^{dN}))$ to $( \shp(\shc^{d}) \times [0,T] \times \R^d , \shb(\shp(\shc^d)) \otimes \shb([0,T]) \otimes \shb(\R^d)) $ for all $i \in \{1,\cdots,N\}$, by composition with the continuous map $(m,t,y) \mapsto u^m(t,y)$ (see Lemma \ref{lem:uu'} $(3.)$) we deduce the continuity of $(t,\bar{\xi}) \mapsto (u_t^{S^N(\bar{\xi})}(\bar{\xi}^i_t))_{i=1,\cdots,N}$, and so the measurability from $([0,T] \times \shc^{dN},\shb([0,T]) \otimes \shb(\shc^{dN}))$ to $(\R,\shb(\R))$.
\\
In the sequel, for simplicity we denote $\bar{\xi}_{r \le s}:= (\bar{\xi}^i_{r \le s})_{1 \le i \le N}$.  
We remark that,  by Proposition \ref{non-anticip} and Remark \ref{R38}, we have
\begin{eqnarray}
\label{NonAntip}
 \left (u^{S^N(\bar{\xi})}_s(\bar{\xi}^{i}_s)\right )_{i=1,\cdots N} = \left (u^{S^N(\mathbf{\bar{\xi}_{r \le s} })}_s(\bar{\xi}^{i}_s)\right )_{i=1,\cdots N},
\end{eqnarray}
for any $s \in [0,T], \; \bar{\xi} \in {\mathcal C}^{dN}$ and so stochastic integrands of \eqref{eq:XIi} 
are adapted (so progressively measurable being continuous in time) and so the corresponding
 It\^o integral makes sense. We discuss below its well-posedness. \\
The fact that \eqref{eq:XIi} has a unique strong solution $(\xi^{i,N})_{i=1,\cdots N}$ holds true because of the following arguments.
\begin{enumerate}
 \item  $\Phi$ and $g$ are Lipschitz. Moreover 
 the map $\mathbf{\bar{\xi}}_{r\leq s}
\mapsto \left (u^{S^N(\mathbf{\bar{\xi}_{r \le s} })}_s(\bar{\xi}^{i}_s) \right)_{i=1,\cdots,N}$
is Lipschitz. \\
Indeed, for given $(\xi_{r \leq s},\eta_{r \leq s}) \in \shc^{dN} \times \shc^{dN} $, $s \in [0,T]$, by using successively inequality \eqref{eq:uu'} of Lemma \ref{lem:uu'} and Remark \ref{RADelta}, for all $i \in \{1,\cdots,N\}$ we have 
\begin{eqnarray}
\label{Lip_u_PD}
\vert u_s^{S^N(\xi_{r \leq s})}(\xi^i_{t}) - u_s^{S^N(\eta_{r \leq s})}(\eta^i_{t})  \vert & \leq & \sqrt{C_{K,\Lambda}(T)} \left(  \vert \xi^i_s - \eta_s^i \vert + \frac{1}{N} \sum_{j=1}^N \sup_{0 \leq r \leq s}\vert \xi^j_r- \eta^j_r \vert \right) \nonumber \\
& \leq & 2\sqrt{C_{K,\Lambda}(T)} \max_{j=1,\cdots,N} \sup_{0 \leq r \leq s}\vert \xi^j_r-\eta^j_r \vert \ .
\end{eqnarray}
Finally  the functions 
\begin{eqnarray*}
 {\mathbf{\bar{\xi}}}_{r\leq s}  &\mapsto& \left (\Phi(u_s^{S^N(\mathbf{\bar{\xi}_r, r \le s})}
(\bar{\xi}^{i}_s))\right )_{i=1,\cdots N} \\
 \mathbf{\bar{\xi}}_{r\leq s}  &\mapsto& \left (g(u_s^{S^N
(\mathbf{\bar{\xi}_r, r \le s})}
(\bar{\xi}^{i}_s))\right )_{i=1,\cdots N}
\end{eqnarray*}
are uniformly Lipschitz and bounded.
\item 
A classical argument of well-posedness for systems
of path-dependent stochastic differential equations
with Lipschitz dependence on the sup-norm of the path (see Chapter~V, Section~2.11, Theorem~11.2 page 128 in~\cite{rogers_v2}).
\end{enumerate}

\begin{thm} \label{TPC}
Let us suppose the validity of Assumption \ref{ass:main}. Let $N$ be a fixed positive integer.
Let $(Y^i)_{i=1, \cdots,N}$ (resp. ($(\xi^{i,N})_{i=1, \cdots,N}$) be the solution of \eqref{eq:Yi} (resp. \eqref{eq:XIi}),
$m^0$ is defined after \eqref{eq:Yi}. The following assertions hold.
\begin{enumerate}
\item If $\shf(K)$ is in $L^1(\R^d)$, there is a constant $C= C(\Phi, g, \Lambda, K, T)$ such that, for all $i=1,\cdots,N$ and $t \in [0,T]$,  
\begin{eqnarray}
\label{eq:xiYuFinalb1}
 \E[ \Vert u_t^{S^N(\xi)}-u^{m_0}_t\Vert_{\infty} ^2] 
 & \leq & \frac{C}{N} \\
  \label{eq:xiYuFinalb2}
\E[\sup_{0 \leq s\leq t} \vert \xi^{i,N}_s-Y^i_s\vert ^2] & \leq & \frac{C}{N}\ ,
\end{eqnarray}
where $C$ is a finite positive constant only depending on $M_K,M_{\Lambda},L_K,L_{\Lambda},T$.
\item  If $K$ belongs to $W^{1,2}(\R^d)$, there is a constant $C= C(\Phi, g, \Lambda, K, T)$ such that, for all $t \in [0,T]$,
 \begin{eqnarray}
 \label{eq:xiYuFinal3}
 \E[ \Vert u_t^{S^N(\xi)}-u^{m_0}_t\Vert_{2} ^2] & \leq & \frac{C}{N} \ ,
 \end{eqnarray}
where $C$ is a finite positive constant only depending on $M_K,M_{\Lambda},L_K,L_{\Lambda},T$ and $\Vert \nabla K\Vert_2$.
\end{enumerate} 
\end{thm}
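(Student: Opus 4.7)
The plan is to run a Gronwall bootstrap that simultaneously controls the trajectory error $g_i(a):=\E[\sup_{s\le a}\vert\xi^{i,N}_s-Y^i_s\vert^2]$ and the functional error $\E[\Vert u_t^{S^N(\xi)}-u_t^{m_0}\Vert_\infty^2]$, and then to mimic the same pattern with $\Vert\cdot\Vert_2$ in place of $\Vert\cdot\Vert_\infty$ to obtain \eqref{eq:xiYuFinal3}. Since $\xi^{i,N}$ and $Y^i$ share the driving Brownian motion $W^i$ and the initial condition $Y^i_0$, a direct BDG plus Cauchy--Schwarz computation, completely analogous to the derivation of \eqref{eq:YY'Stab}, yields for every $a\in[0,T]$,
\begin{equation*}
g_i(a)\le C_{\Phi,g}(T)\int_0^a \E\bigl[\vert u^{S^N(\xi)}_t(\xi^{i,N}_t)-u^{m_0}_t(Y^i_t)\vert^2\bigr]\,dt.
\end{equation*}

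Next I would intercalate $u^{S^N(\mathbf{Y})}_t$ between $u^{S^N(\xi)}_t$ and $u^{m_0}_t$ and split the integrand into an \emph{interaction} part $(u^{S^N(\xi)}_t-u^{S^N(\mathbf{Y})}_t)(\xi^{i,N}_t)$, an \emph{i.i.d.\ functional} part $(u^{S^N(\mathbf{Y})}_t-u^{m_0}_t)(\xi^{i,N}_t)$, and a \emph{spatial Lipschitz} part $u^{m_0}_t(\xi^{i,N}_t)-u^{m_0}_t(Y^i_t)$. The last is absorbed by the uniform-in-time spatial Lipschitz constant of $u^{m_0}$ (Remark \ref{RLipeq:u}) times $\vert\xi^{i,N}_t-Y^i_t\vert$. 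The interaction part is uniform in $y\in\R^d$ and, by \eqref{eq:uu'} combined with Remark \ref{RADelta},
\begin{equation*}
\Vert u^{S^N(\xi)}_t-u^{S^N(\mathbf{Y})}_t\Vert_\infty^2\le C_{K,\Lambda}(T)\,\frac{1}{N}\sum_{j=1}^N\sup_{s\le t}\vert\xi^{j,N}_s-Y^j_s\vert^2.
\end{equation*}
For the i.i.d.\ functional part I would invoke \eqref{eq:uu'Linf} with $\eta=S^N(\mathbf{Y})$ and deterministic $m=m_0$: since the $Y^j$ are i.i.d.\ with common law $m_0$, a direct variance computation gives $\E[\vert\langle S^N(\mathbf{Y})-m_0,\varphi\rangle\vert^2]=\mathrm{Var}(\varphi(Y^1))/N\le 1/N$ uniformly in $\varphi\in\shc_b(\shc^d)$ with $\Vert\varphi\Vert_\infty\le 1$, hence $\E[\Vert u^{S^N(\mathbf{Y})}_t-u^{m_0}_t\Vert_\infty^2]\le \bar C_{K,\Lambda}(T)/N$.

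Assembling these three estimates and \emph{averaging over $i$}, the average $\bar g(a):=N^{-1}\sum_{i=1}^N g_i(a)$ satisfies an integral inequality of the form $\bar g(a)\le C_1\int_0^a \bar g(t)\,dt+C_2/N$, so Gronwall's lemma gives $\bar g(T)\le C/N$. Re-injecting this $1/N$-control of $\bar g$ into the single-index inequality leaves a scalar Gronwall inequality for $g_i$ whose conclusion is \eqref{eq:xiYuFinalb2}; substituting back into the decomposition of $\E[\Vert u_t^{S^N(\xi)}-u_t^{m_0}\Vert_\infty^2]$ then yields \eqref{eq:xiYuFinalb1}. This two-step averaging argument replaces the customary exchangeability-based reduction to $g_1$, as anticipated in the introduction.

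For \eqref{eq:xiYuFinal3}, the scheme is identical with $\Vert\cdot\Vert_2$ in place of $\Vert\cdot\Vert_\infty$: the interaction piece is bounded by $\tilde C_{K,\Lambda}(t)(1+2tC_{K,\Lambda}(t))W_t(S^N(\xi),S^N(\mathbf{Y}))^2$ via \eqref{uu'L2}, and hence by $N^{-1}\sum_j\sup_{s\le t}\vert\xi^{j,N}_s-Y^j_s\vert^2$ through Remark \ref{RADelta}. For the i.i.d.\ functional piece $\E[\Vert u^{S^N(\mathbf{Y})}_t-u^{m_0}_t\Vert_2^2]$ I would emulate the Fourier strategy in the proof of \eqref{eq:uu'Linf}, but with Plancherel's identity: since $K\in W^{1,2}(\R^d)$ gives $\mathcal{F}(K)\in L^2(\R^d)$, the representation \eqref{Four_ubis} reduces the bound to $\int\vert\mathcal{F}(K)(\xi)\vert^2\E[\vert G^{S^N(\mathbf{Y})}_t(\xi)-G^{m_0}_t(\xi)\vert^2]\,d\xi$, where $G^\nu_t(\xi)=\int e^{-i\xi\cdot X_t(\omega)}V^\nu_t(X(\omega))\,d\nu(\omega)$. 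Splitting $G^{S^N(\mathbf{Y})}_t-G^{m_0}_t$ into a ``same exponential weight'' term, handled by the i.i.d.\ variance bound at rate $1/N$, and a residual bounded via \eqref{eq:Vmajor2} by $\int_0^t\E[\Vert u^{S^N(\mathbf{Y})}_r-u^{m_0}_r\Vert_2^2]\,dr$, then closes the estimate by a self-referencing Gronwall. The hard part is precisely this i.i.d.\ functional estimate: a naive Wasserstein bound would yield only the dimension-dependent empirical-measure rate $N^{-1/d}$, and the essential observation — already built into the bounded-dual supremum in \eqref{eq:uu'Linf} and, on the $L^2$ side, into the $W^{1,2}$ assumption on $K$ — is that testing the empirical measure against bounded functionals (or, equivalently, exploiting the convolution smoothing by $K$ through Plancherel) recovers the dimension-free Monte-Carlo rate $1/N$.
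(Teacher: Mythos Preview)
Your argument for part 1 is correct and coincides with the paper's: the paper packages the same BDG/Gronwall bootstrap into an auxiliary proposition, running Gronwall directly on $G(t)=\E[\Vert u_t^{S^N(\xi)}-u_t^{m_0}\Vert_\infty^2]+\sup_i g_i(t)$ rather than averaging first and re-injecting, but the ingredients (Lemma~\ref{lem:yy'}, \eqref{eq:uu'} with Remark~\ref{RADelta}, \eqref{eq:uu'Linf}, and the i.i.d.\ variance bound) are identical.

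For part 2 your Plancherel route has a genuine gap. After splitting $G^{S^N(\mathbf{Y})}_t(\xi)-G^{m_0}_t(\xi)$ as you describe, the residual term is bounded, via \eqref{eq:Vmajor2} and Jensen, by
\[
\|K\|_2^2\cdot \frac{C}{N}\sum_{j=1}^N\int_0^t\big|u_r^{S^N(\mathbf{Y})}(Y^j_r)-u_r^{m_0}(Y^j_r)\big|^2\,dr,
\]
which is an expectation of \emph{pointwise evaluations at the random points $Y^j_r$}, not $\int_0^t\E[\Vert u_r^{S^N(\mathbf{Y})}-u_r^{m_0}\Vert_2^2]\,dr$. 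There is no general inequality relating $\E[|f(Y^1_r)|^2]$ to $\E[\Vert f\Vert_2^2]$, so the self-referencing $L^2$ Gronwall does not close as written; the natural closure is $|f(Y^j_r)|\le\Vert f\Vert_\infty$, which brings you back to part~1.

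The paper's treatment of part~2 is different and does not use Fourier at all for the i.i.d.\ functional piece. It writes, directly in $x$-space,
\[
u_t^{S^N(\mathbf{Y})}(x)-u_t^{m_0}(x)=A_t(x)+B_t(x),
\]
where $B_t(x)=N^{-1}\sum_j K(x-Y^j_t)V_t^{m_0}(Y^j)-\E[K(x-Y^1_t)V_t^{m_0}(Y^1)]$ is a centred i.i.d.\ average whose second moment is $O(1/N)$ pointwise in $x$, and then integrates over $x$ using $\int K(x-Y^1_t)\,dx=1$ (no Plancherel, just $K\in L^1$ with $\|K\|_1=1$). The residual $A_t(x)=N^{-1}\sum_j K(x-Y^j_t)[V_t^{S^N(\mathbf{Y})}(Y^j)-V_t^{m_0}(Y^j)]$ is bounded via \eqref{eq:Vmajor2} by $\int_0^t\Vert u_r^{S^N(\mathbf{Y})}-u_r^{m_0}\Vert_\infty^2\,dr$ and then closed using the $L^\infty$ estimate \eqref{eq:uu'Linf} already established in part~1, rather than by an $L^2$ Gronwall. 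So the paper's part~2 deliberately piggybacks on part~1 for the residual; your attempt to make part~2 self-contained in $L^2$ is where the argument breaks.
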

The validity of \eqref{eq:xiYuFinalb1} and \eqref{eq:xiYuFinalb2}
 will be the consequence of the significant more general proposition below.
\begin{prop}\label{PMoreGeneral}
Let us suppose the validity of Assumption \ref{ass:main}. Let $N$ be a fixed positive integer. Let $(W^{i,N})_{i=1,\cdots,N}$ be a family of $p$-dimensional standard Brownian motions (not necessarily independent).
We consider the processes $(\bar{Y}^{i,N})_{i=1,\cdots,N}$, such that 
for each $i \in \{1,\cdots,N\}$, $\bar Y^{i,N}$ is the unique strong solution of
\begin{equation}
\label{eq:Yi-general}
\left \{
\begin{array}{l}
\bar{Y}^{i,N}_t=Y_0^i+\int_0^t\Phi(u^{m^{i,N}}_s(\bar{Y}^{i,N}_s))dW^{i,N}_s+\int_0^tg(u^{m^{i,N}}_s(\bar{Y}^{i,N}_s))ds, \quad \textrm{ for all } t \in [0,T] \\
u^{m^{i,N}}_t(y)={\displaystyle \E\left [ K(y-\bar{Y}^{i,N}_t)V_t\big (\bar{Y}^{i,N},u^{m^{i,N}}(\bar{Y}^{i,N})\big ) \right ]}\ ,\quad \textrm{with}\ m^{i,N}:=\mathcal{L}(\bar{Y}^{i,N})\ ,
\end{array}
\right .
\end{equation}
recalling that $V_t\big (Y^{i,N},u^{m^{i,N}}(Y^{i,N})\big )=\exp \big( \int_0^t\Lambda_s(Y^{i,N}_s,u^{m^{i,N}}_s(Y^{i,N}_s))ds \big)$, $({Y}^i_0)_{i=1,\cdots,N}$ being the family of i.i.d. r.v. initializing the system~\eqref{eq:Yi}.
Below, we consider the system of equations
\eqref{eq:XIi},
where the processes  $W^i$ are replaced  by
 $W^{i,N}$, i.e.
\begin{equation}
\label{eq:XIiN}
\left \{
\begin{array}{l}
\xi^{i,N}_t= \xi^{i,N}_0 +\int_0^t\Phi(u^{S^N(\mathbf{\xi})}_s(\xi^{i,N}_s))dW^{i,N}_s+\int_0^tg(u^{S^N(\mathbf{\xi})}_s(\xi^{i,N}_s))ds  \\
\xi^{i,N}_0 = \bar{Y}^{i,N}_0 \\
u^{S^N(\mathbf{\xi})}_t(y)={\displaystyle \frac{1}{N}\sum_{j=1}^N  K(y-\xi^{j,N}_t) V_t\big (\xi^{j,N},u^{S^N(\mathbf{\xi})}(\xi^{j,N})\big ) }\ .
\end{array}
\right .
\end{equation}
Then the following assertions hold.
\begin{enumerate}
\item For any $i=1,\cdots N$, $(\bar{Y}^{i,N}_t)_{t\in [0,T]}$ have the same law $m^{i,N}=m^0$, where $m^0$ is the common law of processes $(Y^i)_{i=1,\cdots,N}$ defined by the system \eqref{eq:Yi}.
\item Equation~\eqref{eq:XIiN} admits a unique strong solution.
	\item Suppose moreover that $\shf(K)$ is in $L^1(\R^d)$. Then, there is a constant $C= C(K,\Phi, g, \Lambda, T)$ such that, for all $t \in [0,T]$,
\begin{eqnarray}
\label{eq:xiYuFinalGeneral}
\sup_{i=1,\dots,N} \E[\sup_{0 \leq s\leq t} \vert \xi^{i,N}_s-\bar{Y}^{i,N}_s\vert ^2] + \E[ \Vert u_t^{S^N(\xi)}-u^{m^0}_t\Vert_{\infty} ^2] 
 & \leq & C\sup_{ \underset{\Vert \varphi \Vert_{\infty} \le 1}{\varphi \in \shc_b(\shc^d)}} \E[ \vert \langle S^N(\mathbf{\bar{Y}})  - m^0 , \varphi \rangle \vert^2 ] \ , \nonumber \\ 
\end{eqnarray}
with $\displaystyle{S^{N}({\bf{\bar{Y}}}) := \frac{1}{N} \sum_{j=1}^N \delta_{\bar{Y}^{j,N}}}$. 
\end{enumerate}
\end{prop}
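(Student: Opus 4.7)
\textbf{Proof plan for Proposition \ref{PMoreGeneral}.}

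For \emph{Part 1}, each $\bar Y^{i,N}$ is a strong solution of the McKean-type SDE \eqref{eq:Yi-general} driven by the Brownian motion $W^{i,N}$ with initial law $\zeta_0$; the uniqueness-in-law statement of Theorem \ref{prop:NSDE} then forces $m^{i,N}=m^0$. For \emph{Part 2}, invoking Proposition \ref{non-anticip} and Corollary \ref{Canticip} (which ensure that the dependence through $u^{S^N(\xi)}$ is non-anticipating) together with the Lipschitz estimate \eqref{Lip_u_PD} derived from Lemma \ref{lem:uu'}.1 and Remark \ref{RADelta}, the drift and diffusion coefficients of the path-dependent system \eqref{eq:XIiN} are bounded and globally Lipschitz on $(\shc^{dN},\|\cdot\|_\infty)$, so the classical well-posedness theorem recalled around \eqref{eq:XIi} applies verbatim.

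The core is \emph{Part 3}. Since $\Phi,g$ depend only on $z$ and are Lipschitz, writing $\xi^{i,N}_t - \bar Y^{i,N}_t$ as a sum of a stochastic and a Lebesgue integral and applying Burkholder--Davis--Gundy together with Cauchy--Schwarz yields, for some constant $C_1$ depending only on $T,L_\Phi,L_g$,
\begin{equation*}
\E\bigl[\sup_{s\le t}|\xi^{i,N}_s - \bar Y^{i,N}_s|^2\bigr] \le C_1 \int_0^t \E\bigl[\bigl|u^{S^N(\xi)}_s(\xi^{i,N}_s) - u^{m^0}_s(\bar Y^{i,N}_s)\bigr|^2\bigr]\,ds.
\end{equation*}
I would then split the integrand through the intermediate term $u^{S^N(\bar Y)}_s(\bar Y^{i,N}_s)$. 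The first piece $|u^{S^N(\xi)}_s(\xi^{i,N}_s) - u^{S^N(\bar Y)}_s(\bar Y^{i,N}_s)|^2$ is handled by Lemma \ref{lem:uu'}.1 combined with Remark \ref{RADelta}, giving an upper bound of $C_{K,\Lambda}(T)\bigl(|\xi^{i,N}_s - \bar Y^{i,N}_s|^2 + \tfrac1N\sum_j \sup_{r\le s}|\xi^{j,N}_r - \bar Y^{j,N}_r|^2\bigr)$. The second piece is pointwise dominated by $\|u^{S^N(\bar Y)}_s - u^{m^0}_s\|_\infty^2$. Taking $\sup_i$ on both sides absorbs the explicit particle discrepancies into a linear term in $\sup_i \E[\sup_{r\le s}|\xi^{i,N}_r - \bar Y^{i,N}_r|^2]$, and Gronwall's lemma produces
\begin{equation*}
\sup_{i}\E\bigl[\sup_{s\le t}|\xi^{i,N}_s - \bar Y^{i,N}_s|^2\bigr] \le C_2 \int_0^t \E\bigl[\|u^{S^N(\bar Y)}_s - u^{m^0}_s\|_\infty^2\bigr]\,ds.
\end{equation*}

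To close the estimate I would invoke Lemma \ref{lem:uu'}.5, which is exactly where the hypothesis $\shf(K)\in L^1(\R^d)$ enters, with the random measure $\eta = S^N(\bar Y)$ and the deterministic measure $m = m^0$, giving $\E[\|u^{S^N(\bar Y)}_s - u^{m^0}_s\|_\infty^2] \le \bar C_{K,\Lambda}(T)\,\mathfrak{D}_N$, where $\mathfrak{D}_N$ denotes the supremum appearing on the right-hand side of \eqref{eq:xiYuFinalGeneral}. For the remaining term $\E[\|u^{S^N(\xi)}_t - u^{m^0}_t\|_\infty^2]$ a final triangle inequality around $u^{S^N(\bar Y)}_t$ suffices: the part $\|u^{S^N(\xi)}_t - u^{S^N(\bar Y)}_t\|_\infty^2$ is dominated via Lemma \ref{lem:uu'}.1 and Remark \ref{RADelta} by the particle discrepancy just controlled, while $\|u^{S^N(\bar Y)}_t - u^{m^0}_t\|_\infty^2$ is handled by Lemma \ref{lem:uu'}.5 once more. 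The delicate point is that item 5 of Lemma \ref{lem:uu'} only compares $u^\eta$ with $u^m$ when $m$ is \emph{deterministic}; this is the obstacle that forces us to route the estimate through $S^N(\bar Y)$, whose components are i.i.d.\ with common law $m^0$, and explains why the right-hand side of \eqref{eq:xiYuFinalGeneral} is naturally expressed with $S^N(\bar Y)$ rather than $S^N(\xi)$.
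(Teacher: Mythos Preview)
Your proposal is correct and uses the same tool-kit as the paper (Theorem \ref{prop:NSDE} for Part 1, the path-dependent Lipschitz argument around \eqref{eq:XIi} for Part 2, and for Part 3 the combination of BDG/Lemma \ref{lem:yy'}, Lemma \ref{lem:uu'}.1 with Remark \ref{RADelta}, Lemma \ref{lem:uu'}.5, and Gronwall). The one genuine difference is the choice of pivot in the triangle inequality for the integrand $|u^{S^N(\xi)}_s(\xi^{i,N}_s) - u^{m^0}_s(\bar Y^{i,N}_s)|$. The paper inserts $u^{m^0}_s(\xi^{i,N}_s)$, which produces a term $\|u^{S^N(\xi)}_s - u^{m^0}_s\|_\infty^2$ inside the time integral; this couples the particle discrepancy and the $u$-error, so the paper forms $G(t) := \sup_i \E[\sup_{s\le t}|\xi^{i,N}_s-\bar Y^{i,N}_s|^2] + \E[\|u^{S^N(\xi)}_t-u^{m^0}_t\|_\infty^2]$ and runs a single Gronwall on $G$. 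You instead insert $u^{S^N(\bar Y)}_s(\bar Y^{i,N}_s)$, which decouples the argument: Gronwall applied to the particle discrepancy alone already yields a bound in terms of $\int_0^t \E[\|u^{S^N(\bar Y)}_s - u^{m^0}_s\|_\infty^2]\,ds$, and Lemma \ref{lem:uu'}.5 closes this directly by $\mathfrak{D}_N$; the $u$-error is then handled a posteriori by a second triangle inequality. Your route is slightly more streamlined (one Gronwall on a scalar quantity rather than on a sum), while the paper's coupled formulation makes the simultaneous control of both terms in \eqref{eq:xiYuFinalGeneral} appear in one stroke. Both are valid and yield the same constant structure.
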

\begin{rem} \label{RMoreGeneral}
\begin{enumerate}
\item 
The r.h.s. of \eqref{eq:xiYuFinalGeneral} can be easily  bounded if the processes $(\bar{Y}^{i,N})_{i=1,\cdots,N}$ are i.i.d.  
Indeed, as in the proof of the Strong Law of Large Numbers,
\begin{eqnarray}
\label{majorNormFaible}
  \sup_{ \underset{\Vert \varphi \Vert_{\infty} \le 1}{\varphi \in \shc_b(\shc^d)}} \E[ \vert \langle S^N(\mathbf{\bar{Y}})  - m^0 , \varphi \rangle \vert^2 ] 
  & = & \sup_{ \underset{\Vert \varphi \Vert_{\infty} \le 1}{\varphi \in \shc_b(\shc^d)}} \E \left[ \left( \frac{1}{N} \sum_{j=1}^{N} \varphi(\bar{Y}^{j,N}) - \E[ \varphi(\bar{Y}^{j,N})] \right)^2 \right] \nonumber \\
  & = & \sup_{ \underset{\Vert \varphi \Vert_{\infty} \le 1}{\varphi \in \shc_b(\shc^d)}}  Var(\frac{1}{N} \sum_{j=1}^N \varphi(\bar{Y}^{j,N}) ) \nonumber \\
  & = & \sup_{ \underset{\Vert \varphi \Vert_{\infty} \le 1}{\varphi \in \shc_b(\shc^d)}}  Var( \frac{1}{N} \varphi(\bar{Y}^{1,N}) ) \nonumber \\
  & \leq & \frac{1}{N} \ .
\end{eqnarray}
\item 
In fact Proposition \ref{PMoreGeneral} does not require the independence of $(\bar Y^{i,N})_{i=1,\cdots N}$.
Indeed, the convergence of the numerical approximation $u_t^{S^N(\xi)}$ to $u_t^{m_0}$ only requires the convergence of
 $ d_2^\Omega(S^N(\mathbf{\bar Y}), m^0)$ to $0$, where we remind that the distance $ d_2^\Omega$ has been defined at Remark \ref{R25} b). 
 This gives the opportunity to define new numerical schemes for which the convergence of the empirical measure $ S^N(\mathbf{\bar{Y}})$ is verified without i.i.d. particles.
\item Let us come back to the case of independent driving Brownian motions $W^i, i \ge 1$.  Observe that Theorem \ref{TPC} implies the propagation of chaos.
Indeed, for all $k \in \N^{\star}$, \eqref{eq:xiYuFinalb2} implies 
$$
(\xi^{1,N}-Y^1,\xi^{2,N}-Y^2,\cdots,\xi^{k,N}-Y^k) \xrightarrow[\text{$N \longrightarrow + \infty$}]{\text{$L^{2}(\Omega,\shf,\P)$}} 0 \ ,
$$
which gives the convergence in law of the vector $(\xi^{1,N},\xi^{2,N},\cdots,\xi^{k,N})$ to $(Y^1,Y^2,\cdots,Y^k)$. Consequently,
 since $({Y}^{i})_{i=1,\cdots,k}$ are i.i.d. according to $m^0$
\begin{eqnarray}
\label{eq:CVChaos}
(\xi^{1,N},\xi^{2,N},\cdots,\xi^{k,N}) \; \textrm{ converges in law to } (m^0)^{\otimes k} \; \textrm{ when } N \rightarrow +\infty \ .
\end{eqnarray}
\item Proposition~\ref{PMoreGeneral} can be used to provide propagation of chaos results for non exchangeable particle systems. Let us consider $(\bar{Y}^{i,N})_{i=1,\cdots N}$ (resp. $(\xi^{i,N})_{i=1,\cdots N}$) solutions of~\eqref{eq:Yi-general} (resp.~\eqref{eq:XIiN}) where
$$
W^{1,N} := {\displaystyle \frac{\sqrt{N^2-1}}{N}W^1+\frac{1}{N}W^2} \quad \textrm{and} \quad \; \textrm{for } i>1, \; W^{i,N} := W^i \ ,
$$
where we recall that $(W^i)_{i=1,\cdots, N}$ is a sequence of independent $p$ dimensional Brownian motions. In that situation, the particle system $(\xi^{i,N})$ is clearly not exchangeable. However, a simple application of Proposition~\ref{PMoreGeneral} allows us to prove the propagation of chaos. Indeed, let us introduce the sequence of i.i.d processes $(Y^{i})$ solutions of~\eqref{eq:Yi}, Proposition~\ref{PMoreGeneral} yields 
\begin{eqnarray*}
\E[\sup_{s\leq t} \vert \xi^{i,N}_s - Y^i_s\vert^2]&\leq & 
2\E[\sup_{s\leq t} \vert \xi^{i,N}_s - \bar{Y}_s^{i,N}\vert^2]+2\E[\sup_{s\leq t} \vert \bar{Y}^{i,N}_s- Y^i_s\vert^2]\\
&\leq &
C\sup_{ \underset{\Vert \varphi \Vert_{\infty} \le 1}{\varphi \in \shc_b(\shc^d)}} \E[ \vert \langle S^N(\mathbf{\bar{Y}})  - m^0 , \varphi \rangle \vert^2 ]
+\E[\sup_{s\leq t} \vert \bar{Y}^{i,N}_s-Y_s^i\vert^2]\ .
\end{eqnarray*}
To bound the second term on the r.h.s. of the above inequality, observe that $\bar{Y}^{i,N}=Y^i$ for $i>1$ and for $i=1$, notice that simple computations, involving BDG inequality, imply $\E [\sup_{s\leq t} \vert Y^{1,N}_s-Y^1_s\vert^2]\leq \frac{C}{N^2}$. \\
Concerning the first term on the r.h.s. of the above inequality, we first observe that the  decomposition holds 
\begin{eqnarray*}
\langle S^N(\mathbf{\bar{Y}}) - m^0,\varphi \rangle & = & \frac{1}{N}\sum_{i=1}^N \varphi(\bar Y^{i,N}) - \langle m^0,\varphi \rangle \\
& = & \frac{1}{N} \Big( \varphi(\bar Y^{1,N}) - \E[ \varphi(\bar Y^{1,N}) ]\Big)+\frac{N-1}{N} \Big( \frac{1}{N-1}\sum_{i=2}^N \varphi(\bar Y^{i,N}) - \langle m^0,\varphi \rangle \Big) \ ,
\end{eqnarray*}
for all $\varphi \in \shc_b(\shc^d)$. 
We remind that $\bar Y^{1,N},\cdots, \bar Y^{N,N}$ have the same law $m^0$ 
taking into account item 1. of Proposition \ref{PMoreGeneral}. It follows that
\begin{eqnarray} \label{ExRem}
\sup_{ \underset{\Vert \varphi \Vert_{\infty} \le 1}{\varphi \in \shc_b(\shc^d)}} \E \Big[ \vert \langle S^N(\mathbf{\bar{Y}})  - m^0 , \varphi \rangle \vert^2 \Big] \leq \frac{6}{N^2} + \frac{3(N-1)^2}{N^2} \sup_{ \underset{\Vert \varphi \Vert_{\infty} \le 1}{\varphi \in \shc_b(\shc^d)}} \E \Big[ \vert \langle \frac{1}{N-1} \sum_{j=2}^N \delta_{\bar{Y}^{j,N}} - m^0, \varphi \rangle \vert^2  \Big] .
\end{eqnarray}
Since the r.v. $(\bar Y^{2,N},\cdots, \bar Y^{N,N})$ are i.i.d. according to $m^0$, \eqref{ExRem} and item 1. of Remark \ref{RMoreGeneral} give us
$$
\sup_{ \underset{\Vert \varphi \Vert_{\infty} \le 1}{\varphi \in \shc_b(\shc^d)}} \E[ \vert \langle S^N(\mathbf{\bar{Y}})  - m^0 , \varphi \rangle \vert^2 ] \leq \frac{C}{N} \ ,
$$
which leads to a similar inequality as \eqref{eq:xiYuFinalb2} in Theorem \ref{TPC}. The same reasoning as in item 3. above implies propagation of chaos. 
 \end{enumerate} 
\end{rem}
\begin{proof}[Proof of Proposition \ref{PMoreGeneral}]  Let us fix $t \in [0,T]$. 
In this proof, $C$ is a real positive constant ($C= C(\Phi, g, \Lambda, K, T)$)
 which may change from line to line. \\
  Equation~\eqref{eq:Yi-general} has $N$ blocks, numbered by $1 \le i \le N$.
Item 2. of Proposition~\ref{prop:NSDE} gives 
uniqueness in law for each block equation,
  which implies that for any $i=1,\cdots N$, $m^{i,N}=m^0$ and proves the
 first item. \\
 Concerning the strong existence and pathwise uniqueness of \eqref{eq:XIiN}, 
the same argument as for the well-statement of \eqref{eq:XIi} operates. 
 The only difference consists 
in the fact that the Brownian motions may be correlated.
A very close proof as the one of
 Theorem 11.2 page 128 in
 \cite{rogers_v2} works: the main argument is 
  the multidimensional BDG inequality, see e.g. Problem 3.29 of  
\cite{karatshreve}.
From now on let us focus on the proof of inequality \eqref{eq:xiYuFinalGeneral}. On the one hand, since the map $(t,\bar{\xi}) \in [0,T] \times \shc^{dN} \mapsto~(u_t^{S^N(\bar{\xi})}(\bar{\xi}^i_t))_{i=1,\cdots,N}$ is measurable and satisfies the non-anticipative property \eqref{NonAntip}, the first assertion of Lemma \ref{lem:yy'} gives for all $i \in \{1,\cdots,N\}$
\begin{eqnarray}
\label{PropaChaos}
\E[\sup_{0 \leq s\leq t} \vert \xi^{i,N}_s-\bar{Y}^{i,N}_s\vert ^2]
& \leq & C
\E[\int_0^t
\vert u_s^{S^N(\mathbf{\xi})}(\xi^{i,N}_s)-u^{m^0}_s(\bar{Y}^{i,N}_s)\vert ^2
ds ] \nonumber \\
& \leq & C
\int_0^t \E[
\vert u_s^{S^N(\mathbf{\xi})}(\xi^{i,N}_s)-u^{m^0}_s(\xi^{i,N}_s)\vert ^2]
ds + \int_0^t
\E[ \vert u^{m^0}_s(\xi^{i,N}_s)-u^{m^0}_s(\bar{Y}^{i,N}_s)\vert ^2]
ds \nonumber \\
& \leq & C \int_0^t \left( \E[ \Vert u_s^{S^N(\mathbf{\xi})}-u^{m^0}_s \Vert_{\infty}^2 ] + 
\E[ \sup_{0 \leq r \leq s} \vert \xi^{i,N}_r - \bar{Y}^{i,N}_r \vert^2 ] \right) ds,\quad \textrm{ by \eqref{eq:uu'} ,} \nonumber \\
\end{eqnarray}
which implies
\begin{eqnarray}
\label{um0um0}
\sup_{i=1,\cdots,N} \E[\sup_{0 \leq s\leq t} \vert \xi^{i,N}_s-\bar{Y}^{i,N}_s\vert ^2] \leq C \int_0^t \left( \E[ \Vert u_s^{S^N(\mathbf{\xi})}-u^{m^0}_s \Vert_{\infty}^2 ] + \sup_{i=1,\cdots,N} \E[ \sup_{0 \leq r \leq s} \vert \xi^{i,N}_r - \bar{Y}^{i,N}_r \vert^2 ] \right) ds
\end{eqnarray}
On the other hand, using inequalities \eqref{eq:uu'} (applied pathwise with $m = S^N(\xi)(\bar{\omega})$ and $m' = S^N({\bf{\bar{Y}}})(\bar{\omega})$) and \eqref{eq:uu'Linf} (with the random measure $ \eta = S^N({\bf{\bar{Y}}})$ and $m = m^0$) in Lemma \ref{lem:uu'}, yields
\begin{eqnarray}
\label{uSNxium0}
\E[ \Vert u_t^{S^N(\mathbf{\xi})}-u^{m^0}_t \Vert_{\infty}^2 ] & \leq & 2 
\E\left[ \Vert u_t^{S^N(\mathbf{\xi})}-u^{S^N(\mathbf{\bar{Y}})}_t \Vert_{\infty}^2 \right]
+ 2  \E[ \Vert u^{S^N(\mathbf{\bar{Y}})}_t-u_t^{m^0} \Vert_{\infty}^2 ] \nonumber \\
& \leq & 2 C \E[ \vert W_t(S^N(\mathbf{\xi}),S^N(\mathbf{\bar{Y}})) \vert^2] + 2 C \sup_{ \underset{\Vert \varphi \Vert_{\infty} \le 1}{\varphi \in \shc_b(\shc^d)}} \E{ [ \vert \langle S^N(\mathbf{\bar{Y}})  - m^0 , \varphi \rangle \vert^2 ]} \nonumber \\
& \leq & \frac{2 C}{N} \sum_{i=1}^N \E[\sup_{0 \leq s\leq t} \vert \xi^{i,N}_s-\bar{Y}^{i,N}_s \vert ^2] + C \sup_{ \underset{\Vert \varphi \Vert_{\infty} \le 1}{\varphi \in \shc_b(\shc^d)}} \E[ \vert \langle S^N(\mathbf{\bar{Y}})  - m^0 , \varphi \rangle \vert^2 ] \nonumber \\
& \leq & 2 C \sup_{i=1,\cdots,N} \E[\sup_{0 \leq s\leq t} \vert \xi^{i,N}_s-\bar{Y}^{i,N}_s \vert ^2] + C \sup_{ \underset{\Vert \varphi \Vert_{\infty} \le 1}{\varphi \in \shc_b(\shc^d)}} \E[ \vert \langle S^N(\mathbf{\bar{Y}})  - m^0 , \varphi \rangle \vert^2 ],
\end{eqnarray}
where the third inequality follows from Remark \ref{RADelta}. \\
Let us introduce the non-negative function $G$ defined on $[0,T]$ by 
$$
G(t):= \E[ \Vert u_t^{S^N(\mathbf{\xi})}-u^{m^0}_t \Vert_{\infty}^2 ] + \sup_{i=1,\cdots,N} \E[\sup_{0 \leq s\leq t} \vert \xi^{i,N}_s-\bar{Y}^{i,N}_s \vert ^2] \ .
$$
From  inequalities \eqref{um0um0} and \eqref{uSNxium0} that are valid for all $t \in [0,T]$, we obtain
\begin{eqnarray}
G(t) & \leq & 
(2C + 1) \sup_{i=1,\cdots,N} \E[\sup_{0 \leq s\leq t} \vert \xi^{i,N}_s-\bar{Y}^{i,N}_s \vert ^2] + C \sup_{ \underset{\Vert \varphi \Vert_{\infty} \le 1}{\varphi \in \shc_b(\shc^d)}} \E[ \vert \langle S^N(\mathbf{\bar{Y}})  - m^0 , \varphi \rangle \vert^2 ] \nonumber \\
& \leq & C \int_0^t \left( \E[ \Vert u_s^{S^N(\mathbf{\xi})}-u^{m^0}_s \Vert_{\infty}^2 ] + \sup_{i=1,\cdots,N} \E[ \sup_{0 \leq r \leq s} \vert \xi^{i,N}_r - \bar{Y}^{i,N}_r \vert^2 ] \right) ds \nonumber \\
& & + \; C \sup_{ \underset{\Vert \varphi \Vert_{\infty} \le 1}{\varphi \in \shc_b(\shc^d)}} \E[ \vert \langle S^N(\mathbf{\bar{Y}})  - m^0 , \varphi \rangle \vert^2 ] \nonumber \\
& \leq & C \int_0^t G(s) ds + C \sup_{ \underset{\Vert \varphi \Vert_{\infty} \le 1}{\varphi \in \shc_b(\shc^d)}} \E[ \vert \langle S^N(\mathbf{\bar{Y}})  - m^0 , \varphi \rangle \vert^2 ] \ .
\end{eqnarray}
By Gronwall's lemma, for all $t \in [0,T]$, we obtain
\begin{eqnarray}
\label{PCh1}
\E[ \Vert u_t^{S^N(\mathbf{\xi})}-u^{m^0}_t \Vert_{\infty}^2 ] + \sup_{i=1,\cdots,N} \E[\sup_{0 \leq s\leq t} \vert \xi^{i,N}_{s}-\bar{Y}^{i,N}_{s} \vert ^2] \leq Ce^{Ct} \sup_{ \underset{\Vert \varphi \Vert_{\infty} \le 1}{\varphi \in \shc_b(\shc^d)}} \E[ \vert \langle S^N(\mathbf{\bar{Y}})  - m^0 , \varphi \rangle \vert^2 ] \ .
\end{eqnarray}
\end{proof}
\begin{proof}[Proof of Theorem \ref{TPC}]
To prove inequalities \eqref{eq:xiYuFinalb1} and \eqref{eq:xiYuFinalb2}, we can deduce them from 
Proposition \ref{PMoreGeneral}. Indeed, we have to bound the quantity $\displaystyle{\sup_{ \underset{\Vert \varphi \Vert_{\infty} \le 1}{\varphi \in \shc_b(\shc^d)}} \E[ \vert \langle S^N(\mathbf{Y})  - m^0 , \varphi \rangle \vert^2 ] } $ . 
To this end, it is enough to apply Proposition \ref{PMoreGeneral},
in particular \eqref{eq:xiYuFinalGeneral},
 by setting for all $i \in \{1,\cdots,N\}$, $W^{i,N} := W^i$. Pathwise uniqueness of systems \eqref{eq:Yi} and \eqref{eq:Yi-general} implies $\bar{Y}^{i,N} = Y^i$ for all $i \in \{1,\cdots,N\}$.
Since $(Y^i)_{i=1,\cdots,N}$ are i.i.d. according to $m^0$, inequalities \eqref{eq:xiYuFinalb1} and \eqref{eq:xiYuFinalb2} follow from item 1. of Remark \ref{RMoreGeneral}. \\
It remains now to prove \eqref{eq:xiYuFinal3}. First, the  inequality  
\begin{equation}
\label{SNxim0L2}
\E[ \Vert u_t^{S^N(\xi)}-u^{m_0}_t\Vert_{2} ^2] \leq 2\E[ \Vert u_t^{S^N(\xi)}-u^{S^N(\mathbf{Y})}_t\Vert_{2} ^2] + 2\E[ \Vert u_t^{S^N(\mathbf{Y})}-u^{m_0}_t\Vert_{2} ^2],
\end{equation}
holds for all $t \in [0,T]$.
Using  inequality \eqref{uu'L2} of Lemma \ref{lem:uu'}, for all $t \in [0,T]$,
we get
\begin{eqnarray}
\label{SNxiSNYL2}
\E[ \Vert u_t^{S^N(\xi)}-u^{S^N(\mathbf{Y})}_t\Vert_{2} ^2] & \leq & C \E{[ W_t(S^N(\xi),S^N(\mathbf{Y}))^2]} \nonumber \\
& \leq & C \frac{1}{N} \sum_{j=1}^N \E{[ \sup_{0 \leq r \leq t} \vert \xi^{j,N}_r-Y^j_r \vert^2 ]} \nonumber \\
& \leq & \frac{C}{N} \ ,
\end{eqnarray}
where the latter inequality is obtained through \eqref{eq:xiYuFinalb2}. The second term of the r.h.s. in \eqref{SNxim0L2} needs more computations. Let us fix $i \in  \{1,\cdots,N\}$.
First, 
\begin{equation}
\label{major-uu'L2}
\E[ \Vert u_t^{S^N(\mathbf{Y})}-u^{m_0}_t\Vert_{2} ^2] \leq 2 \left( \E{[ \Vert A_t \Vert_2^2 ]} + \E{[ \Vert B_t \Vert_2^2 ]}  \right) \ ,
\end{equation}
where,
for all $t \in [0,T]$
\begin{equation}
\label{def-AB}
\left \{
\begin{array}{l}
A_t(x):= {\displaystyle \frac{1}{N}\sum_{j=1}^N K(x-Y^j_t)\Big [ V_t\big (Y^j,u^{S^N(\mathbf{Y})}(Y^j)\big ) - V_t\big (Y^j,u^{m^0}(Y^j)\big )\Big ]} \\
B_t(x) := \displaystyle{ \frac{1}{N} \sum_{j=1}^N  K(x-Y^j_t) V_t\big (Y^j,u^{m^0}(Y^j)\big )-\E\Big [K(x-Y^1_t) V_t\big (Y^1,u^{m^0}(Y^1)\big ) \Big]} \ ,
\end{array}
\right .
\end{equation}
where we remind that $m^0$ is the common law of all the processes $Y^i, 1 \le i \le N$. \\
To simplify notations, we set $P_j(t,x) := K(x-Y^j_t) V_t\big (Y^j,u^{m^0}(Y^j)\big )-\E\Big [K(x-Y^1_t) V_t\big (Y^1,u^{m^0}(Y^1)\big ) \Big]$ for all $j \in \{1,\cdots,N\}$, $x \in \R^d$ and $t \in [0,T]$. \\
We observe that for all $x \in \R^d, t \in [0,T]$, $(P_j(t,x))_{j=1,\cdots,N}$ are i.i.d. centered r.v. . Hence, 
$$
\E{[B_t(x)^2]} = \frac{1}{N} \E{[P_1^2(t,x)]} \leq \frac{4}{N} \E{[K(x-Y^1_t)^2 V_t\big (Y^1,u^{m^0}(Y^1)\big )^2]} \leq \frac{4M_K e^{2tM_{\Lambda}}}{N} \E{[ K(x-Y^1_t) ]}
$$ 
and by integrating each side of the  inequality above w.r.t. $x \in \R^d$, we obtain
\begin{equation}
\label{majorA}
 \E{\left[ \int_{\R^d} \vert B_t(x) \vert^2 dx\right]} = \int_{\R^d} \E{[\vert B_t(x) \vert^2]} dx \leq \frac{4M_K e^{2tM_{\Lambda}}}{N} \ ,
\end{equation}
where we have used that $\Vert K \Vert_1 = 1$. \\
Concerning $A_t(x)$, 
\begin{eqnarray}
\label{majorB}
\vert A_t(x) \vert^2 & \leq & \frac{1}{N}\sum_{j=1}^N K(x-Y^j_t)^2\Big [ V_t\big (Y^j,u^{S^N(\mathbf{Y})}(Y^j)\big )
- V_t\big (Y^j,u^{m^0}(Y^j)\big )\Big ]^2 \nonumber \\
& = & \frac{1}{N}\sum_{j=1}^N K(x-Y^j_t) K(x-Y^j_t)\Big [ V_t\big (Y^j,u^{S^N(\mathbf{Y})}(Y^j)\big )
- V_t\big (Y^j,u^{m^0}(Y^j)\big )\Big ]^2 \nonumber \\
& \leq & \frac{M_K T}{N} e^{2tM_{\Lambda}}L_{\Lambda}^2 \sum_{j=1}^N K(x-Y^j_t) \int_0^t \vert u_r^{S^N(\mathbf{Y})}(Y^j_r)-u_r^{m^0}(Y^j_r) \vert^2 dr \nonumber \\
& \le & \frac{M_K T}{N} e^{2tM_{\Lambda}}L_{\Lambda}^2 \sum_{j=1}^N K(x-Y^j_t) \int_0^t \Vert u_r^{S^N(\mathbf{Y})}-u_r^{m^0} \Vert_{\infty}^2 dr,  \nonumber \\
\end{eqnarray}
where the third inequality comes from \eqref{eq:Vmajor2}. Integrating w.r.t. $x \in \R^d$ and taking expectation on each side of the above inequality gives us,
 for all $t \in [0,T]$
\begin{eqnarray}
\label{majorBFinal}
\E{[ \int_{\R^d} \vert A_t(x) \vert^2 dx]} & \leq & M_K T e^{2tM_{\Lambda}}L_{\Lambda}^2 \int_0^t \E{[ \Vert u_r^{S^N(\mathbf{Y})}-u_r^{m^0} \Vert_{\infty}^2 ]} dr \nonumber \\
& \leq & M_K T^2 e^{2tM_{\Lambda}}L_{\Lambda}^2 C \sup_{ \underset{\Vert \varphi \Vert_{\infty} \le 1}{\varphi \in \shc_b(\shc^d)}} 
  \E{ [ \vert \langle S^N(\mathbf{Y})  - m^0 , \varphi \rangle \vert^2 ]} \nonumber \\
  & \leq & \frac{M_K T^2 e^{2tM_{\Lambda}}L_{\Lambda}^2 C}{N} \ ,
\end{eqnarray}
where we have used \eqref{eq:uu'Linf} of Lemma \ref{lem:uu'} for the second inequality above and \eqref{majorNormFaible} for the last one. To conclude, it is enough to replace \eqref{majorA}, \eqref{majorBFinal} in \eqref{major-uu'L2}, and inject \eqref{SNxiSNYL2}, \eqref{major-uu'L2} in \eqref{SNxim0L2}.
\end{proof}

\section{Particle algorithm }
\label{S8}

\setcounter{equation}{0}

\subsection{Time discretization of the particle system}
\label{time-discretization} 

In this Section Assumption \ref{ass:main}. will be in force again.
Let $(Y_0^i)_{i=1,\cdots,N}$ be i.i.d. r.v. distributed according to $\zeta_0$.
In this section we are interested in discretizing the interacting particle system
\eqref{eq:XIi} solved by the processes $\xi^{i,N}, 1 \le i \le N$.
Let us consider a regular time grid $0=t_0\leq \cdots\leq t_k=k\delta t\leq \cdots \leq t_n=T$, with $\delta t=T/n$.

We introduce the  continuous $\R^{dN}$-valued process $(\tilde \xi_t)_{t\in [0,T]}$ and the family of nonnegative functions $(\tilde v_t)_{t\in [0,T]}$ defined on $\R^d$ such that 
\begin{equation}
\label{eq:tildeYu}
\left \{\begin{array}{l}
\tilde \xi^{i,N}_{t}=\tilde \xi^{i,N}_{0}+\int_0^t \Phi(\tilde v_{r(s)}(\tilde \xi^{i,N}_{r(s)}))dW^i_{s}+\int_0^t g(\tilde v_{r(s)}(\tilde \xi^{i,N}_{r(s)}))ds \\ 
\tilde{\xi}^{i,N}_0 = Y^i_0 \\
\tilde v_{t}(y)=\frac{1}{N}\sum_{j=1}^N K(y-\tilde \xi^{j,N}_{t})\exp\big \{\int_0^t \Lambda(r(s),\tilde \xi^{j,N}_{r(s)},\tilde v_{r(s)}(\tilde \xi^{j,N}_{r(s)}))\,ds\big \} \ ,\ \textrm{for any}\ t\in [0,T],
\end{array}
\right .
\end{equation}
where $r:\,s\in [0,T]\,\mapsto r(s)\in \{t_0,\cdots t_n\}$ is the piecewise constant function such that $r(s)=t_k$ when $s\in [t_k,t_{k+1}[$. We can observe that $(\tilde{\xi}^{i,N})_{i=1,\cdots,N}$ is an adapted and continuous process. 
The interacting particle system $(\tilde \xi^{i,N})_{i=1,\cdots N}$ can be simulated perfectly at the discrete instants $(t_k)_{k=0,\cdots,n}$ from independent standard and centered Gaussian random variables. We will prove that this interacting particle system provides an approximation to $(\xi^{i,N})_{i=1,\cdots N}$, solution of the system~\eqref{eq:XIi} which converges at a rate of order $\sqrt{\delta t}$.
\begin{prop}
\label{prop:DiscretTime}
Suppose that  Assumption~\ref{ass:main} holds excepted 2. which is replaced by the following:
there exists a positive real $L_{\Lambda}$ such that for any $(t,t',y,y',z,z')\in [0,T]^2\times (\R^d)^2\times (\R^+)^2$, 
$$
\vert \Lambda (t,y,z)-\Lambda (t',y',z')\vert \leq L_{\Lambda}\,( \vert t-t'\vert+\vert y-y'\vert +\vert z-z'\vert )\ .
$$
Then, the time discretized particle system~\eqref{eq:tildeYu} converges to the original particle system~\eqref{eq:XIi}. More precisely, we have the  estimates 
\begin{equation}
\label{eq:tildeYMajor}
\E[\Vert \tilde v_t-u_t^{S^N(\xi)}\Vert_{\infty}^2]+\sup_{i=1,\cdots N} \E\left [\sup_{s\leq t}\vert \tilde  \xi^{i,N}_{s}-  \xi^{i,N}_{s}\vert^2\right ]\leq C\delta t\ ,
\end{equation}
where $C$ is a finite positive constant only depending on $M_K,M_{\Lambda},L_K,L_{\Lambda},T$.  \\
If we assume moreover that $K\in W^{1,2}(\R^d)$, then the following Mean Integrated Squared Error (MISE) estimate holds: 
\begin{equation}
\label{eq:tildeYMajorBis}
\E[\Vert \tilde v_t-u_t^{S^N(\xi)}\Vert_{2}^2]\leq C\delta t\ ,
\end{equation}
where $C$ is a finite positive constant only depending on $M_K,M_{\Lambda},L_K,L_{\Lambda},T$ and $\Vert \nabla K\Vert_2$. 
\end{prop}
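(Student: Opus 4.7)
The plan is to derive coupled integral inequalities for
$E_1(t) := \mathbb{E}[\Vert \tilde v_t - u_t^{S^N(\xi)}\Vert_\infty^2]$
and $E_2(t) := \sup_{i} \mathbb{E}[\sup_{s \leq t}|\tilde\xi^{i,N}_s - \xi^{i,N}_s|^2]$, and then close them by Gr\"onwall, mirroring structurally the argument of Proposition \ref{PMoreGeneral} but with ``time-discretization error'' replacing ``sampling error''. Two preparatory estimates will be used throughout: the boundedness of $\Phi, g$ yields via BDG the path-modulus estimate $\mathbb{E}[|\tilde\xi^{i,N}_s - \tilde\xi^{i,N}_{r(s)}|^2] \leq C\delta t$ uniformly in $i$ and $s$; and the Lipschitz property of $K$ together with the uniform bound \eqref{eq:Vmajor1} on $V$ makes both $y \mapsto \tilde v_t(y)$ and $y \mapsto u_t^{S^N(\xi)}(y)$ Lipschitz in $y$, uniformly in $t$ and $\omega$, with constant $L_K e^{T M_\Lambda}$.

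For $E_2(t)$ I would apply the second assertion of Lemma~\ref{lem:yy'} with $r_1 \equiv \mathrm{id}$ and $r_2 = r$. A crucial simplification from our standing hypothesis that $\Phi, g$ depend only on the last variable $z$ is that the $\Vert r_1 - r_2\Vert_2^2$ penalty is absent, so the whole contribution reduces to $\int_0^t D(s)\,ds$ with $D(s) := \mathbb{E}[|\tilde v_{r(s)}(\tilde\xi^{i,N}_{r(s)}) - u_s^{S^N(\xi)}(\xi^{i,N}_s)|^2]$. I would split $D(s)$ by inserting $u^{S^N(\xi)}_{r(s)}(\tilde\xi^{i,N}_{r(s)})$ and $u^{S^N(\xi)}_{r(s)}(\xi^{i,N}_s)$, bounding the three resulting pieces by $E_1(r(s))$, by $C(E_2(s) + \delta t)$ (via the spatial Lipschitz property of $u^{S^N(\xi)}_{r(s)}$ and the path-modulus), and by $C\delta t$ (the time regularity of $u^{S^N(\xi)}$ itself, inherited cheaply from the path-modulus of each $\xi^{j,N}$ and \eqref{EMajor1} applied to the weight $V$).

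For $E_1(t)$, the key observation is that $\tilde v_t$ and $u^{S^N(\xi)}_t$ are empirical averages over the \emph{same} $N$ particle indices, pairwise coupled through common Brownian motions $W^i$ and initial data $Y^i_0$. Term-by-term pairing then gives $\Vert \tilde v_t - u^{S^N(\xi)}_t\Vert_\infty \leq L_K e^{TM_\Lambda} N^{-1}\sum_{j=1}^{N}|\tilde\xi^{j,N}_t - \xi^{j,N}_t| + M_K N^{-1}\sum_{j=1}^{N}|V_t^{\mathrm{disc},j} - V_t^{\mathrm{cont},j}|$, bypassing entirely the Fourier-based stability estimate \eqref{eq:uu'Linf}. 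Squaring, taking expectation, and using exchangeability, $E_1(t) \leq C E_2(t) + C\mathbb{E}[|V^{\mathrm{disc},1}_t - V^{\mathrm{cont},1}_t|^2]$; by \eqref{EMajor1} and the joint Lipschitz assumption on $\Lambda$ (including time), the last term is controlled by $C\int_0^t(\delta t + E_2(s) + D(s))\,ds$. Setting $F(t) := E_2(t) + \sup_{s\leq t}E_1(s)$ and combining the two inequalities would then yield $F(t) \leq C\delta t + C\int_0^t F(s)\,ds$, and Gr\"onwall gives \eqref{eq:tildeYMajor}.

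For the MISE estimate \eqref{eq:tildeYMajorBis} under $K \in W^{1,2}(\mathbb{R}^d)$, the same term-by-term pairing would be applied, but in $L^2(\mathbb{R}^d, dy)$ in place of $\sup_y$; the Lipschitz-in-$y$ bound on $K$ is replaced by the Plancherel estimate $\int_{\mathbb{R}^d}|K(y-x)-K(y-x')|^2\,dy \leq \Vert \nabla K\Vert_2^2\, |x-x'|^2$ already established in \eqref{KK'Fourier}, and plugging in the already-proved $E_2(t) \leq C\delta t$ and $\mathbb{E}[|V^{\mathrm{disc},1}_t - V^{\mathrm{cont},1}_t|^2] \leq C\delta t$ would conclude. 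The main technical obstacle is the circular dependence in the bound for $E_1$: $D(s)$ itself contains $E_1(r(s))$, so closing the system requires working with the nondecreasing envelope $\sup_{s\leq t}E_1(s)$ and, critically, avoiding any instantaneous $E_1(t)$ on the right-hand side -- which is precisely what the pairwise coupling of particles makes possible, since it lets us bypass the Wasserstein/Fourier stability of $u^m$ in $m$ that would otherwise reintroduce such a term.
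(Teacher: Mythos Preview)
Your proof is correct and takes a genuinely different route from the paper's. The paper inserts the intermediate object $u^{S^N(\tilde\xi)}$ and splits $\Vert\tilde v_t-u_t^{S^N(\xi)}\Vert_\infty$ through it, bounding $\Vert u_t^{S^N(\tilde\xi)}-u_t^{S^N(\xi)}\Vert_\infty$ via the Wasserstein stability estimate \eqref{eq:uu'} of Lemma~\ref{lem:uu'} (and \eqref{uu'L2} for the $L^2$ part), while the piece $\Vert\tilde v_t-u_t^{S^N(\tilde\xi)}\Vert_\infty$ is handled through two auxiliary results, Lemmas~\ref{lem:Discrete} and~\ref{lem:ViVi'}, which compare $\tilde v$ to $u^{S^N(\tilde\xi)}$ along the \emph{same} trajectories $\tilde\xi$. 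You instead couple $\tilde v_t$ and $u_t^{S^N(\xi)}$ directly, term by term across the shared particle index $j$, exploiting that $\tilde\xi^{j,N}$ and $\xi^{j,N}$ share Brownian motion and initial datum; this bypasses the $m$-stability of $u^m$ entirely and absorbs the role of the two auxiliary lemmas into your single estimate on $\vert V^{\mathrm{disc},j}-V^{\mathrm{cont},j}\vert$. What each buys: the paper's route is modular, reusing the stability machinery of Section~\ref{S3} and cleanly separating the pure discretization error (Lemma~\ref{lem:Discrete}) from the measure-comparison error; your route is more elementary and self-contained, needing only the spatial Lipschitz constant of $K$ (and the Plancherel identity \eqref{KK'Fourier} for the MISE), not the full Wasserstein dependence. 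Your observation that the $z$-only dependence of $\Phi,g$ kills both the $\Vert r_1-r_2\Vert_2^2$ and the $\int\E[\vert Y'_{r_1}-Y'_{r_2}\vert^2]$ terms in Lemma~\ref{lem:yy'}(2) is correct and slightly streamlines the $E_2$ bound relative to the paper's \eqref{E310}, which retains those (here vanishing) contributions. The Gr\"onwall closure via $F(t)=E_2(t)+\sup_{s\le t}E_1(s)$ is sound.
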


\begin{rem} \label{RPIDE}
We keep in mind the probability measure $m_0$ defined at Section \ref{SChaos},
which is the law of processes $Y^i$, solutions of \eqref{eq:Yi}.
 We claim that $\tilde v$ can be used as a numerical approximation to the function $u^{m_0}$;
we remind that, by Theorem \ref{thm-PIDE}  $u^{m_0}$ is associated with a solution $\gamma^{m_0}$
of the PIDE~\eqref{PIDE} via the relation $u^m = K * \gamma^m$.

The committed expected squared error 
$\E[ \Vert u^{m_0}_t - \tilde v_t \Vert_{\infty} ^2]$ 
is lower than $C(T)(\delta t + 1/N)$, for a given finite constant $C(T)$. Indeed, it is bounded as follows:
$$
 \E[ \Vert u^{m_0}_t - \tilde v_t \Vert_{\infty} ^2] \leq 2 \E[ \Vert u^{m_0}_t - u_t^{S^N(\xi)}\Vert_{\infty} ^2] +
 2 \E[\Vert u_t^{S^N(\xi)} -  \tilde v_t\Vert_{\infty}^2].
$$
The first term  in the r.h.s. of the above inequality comes from the (strong) convergence of the particle system $(\xi^{i,N})_{i=1,\cdots,N}$ to $(Y^i)_{i=1,\cdots,N}$ 
whose  convergence is of order $\frac{1}{N}$, see Theorem \ref{TPC}, inequality \eqref{eq:xiYuFinalb1}. The second term comes from the time discretization whose 
expected  squared error 
is of order $ \delta t$, see Proposition \ref{prop:DiscretTime}, inequality \eqref{eq:tildeYMajor}.
\end{rem}

The proof of Proposition  \ref{prop:DiscretTime} is close to the one of  Theorem \ref{TPC}.
 The idea is first to estimate through Lemma \ref{lem:Discrete}
 the perturbation error due to the time discretization scheme of the SDE in 
  system~\eqref{eq:tildeYu}, and in  the integral
 appearing in the linking equation of~\eqref{eq:tildeYu}. 
Later the propagation of this error  through the dynamical system~\eqref{eq:XIi}
will be controlled via  Gronwall's lemma. 
Lemma \ref{lem:Discrete} below will be proved in the Appendix.

\begin{lem}
\label{lem:Discrete}
Under the same assumptions of Proposition~\ref{prop:DiscretTime}, there exists a finite constant $C>0$ only depending on $T,M_K,L_K,M_\Phi,L_\Phi,M_g,L_g$ and $M_{\Lambda},L_{\Lambda}$ such that for any $t\in [0,T]$, 
\begin{eqnarray}
\label{eq:vtilde}
\E[\vert \tilde \xi ^{i,N}_{r(t)}-\tilde \xi^{i,N}_{t}\vert ^2]\leq C\delta t  \\
\label{E82}
\E[\vert \tilde v_{r(t)}-\tilde v_t\Vert^2_{\infty}\leq C\delta t\\
\label{E83}
\E[\Vert \tilde v_{r(t)}-u^{S^N(\tilde \xi)}_t\Vert^2_{\infty}]\leq C\delta t\ .
\end{eqnarray}
\end{lem}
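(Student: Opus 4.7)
The plan is to establish the three estimates in order, each one feeding into the next, with the final bound closed by Gronwall's lemma.

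For \eqref{eq:vtilde}, I would simply use that on $[r(t),t]$, which has length at most $\delta t$, the diffusion coefficient $\Phi$ and the drift $g$ are uniformly bounded by $M_\Phi$ and $M_g$. Writing
\begin{equation*}
\tilde\xi^{i,N}_t - \tilde\xi^{i,N}_{r(t)} = \int_{r(t)}^t \Phi(\tilde v_{r(s)}(\tilde\xi^{i,N}_{r(s)}))dW^i_s + \int_{r(t)}^t g(\tilde v_{r(s)}(\tilde\xi^{i,N}_{r(s)}))ds,
\end{equation*}
a standard application of It\^o's isometry (for the martingale part) and Cauchy--Schwarz (for the drift) yields $\E[|\tilde\xi^{i,N}_t-\tilde\xi^{i,N}_{r(t)}|^2]\le (M_\Phi^2 + T M_g^2)\delta t$.

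For \eqref{E82}, I would split, for each fixed $y\in\R^d$,
\begin{equation*}
\tilde v_t(y)-\tilde v_{r(t)}(y) = \tfrac{1}{N}\sum_{j=1}^N \bigl[K(y-\tilde\xi^{j,N}_t)-K(y-\tilde\xi^{j,N}_{r(t)})\bigr]e^{A^j_t} + \tfrac{1}{N}\sum_{j=1}^N K(y-\tilde\xi^{j,N}_{r(t)})\bigl[e^{A^j_t}-e^{A^j_{r(t)}}\bigr],
\end{equation*}
with $A^j_t := \int_0^t \Lambda(r(s),\tilde\xi^{j,N}_{r(s)},\tilde v_{r(s)}(\tilde\xi^{j,N}_{r(s)}))\,ds$. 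The Lipschitz property of $K$ combined with the uniform bound $e^{TM_\Lambda}$ on the exponential and \eqref{eq:vtilde} handles the first sum; for the second sum, \eqref{EMajor1} with $|A^j_t-A^j_{r(t)}|\le M_\Lambda\, \delta t$ and the uniform bound $M_K$ on $K$ give a contribution of order $\delta t$. Taking the supremum in $y$, squaring and averaging in $j$ via Cauchy--Schwarz before taking expectation yields the bound.

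The main obstacle is \eqref{E83}, because $u^{S^N(\tilde\xi)}$ is defined implicitly through the fixed-point equation \eqref{eq:u} with $m=S^N(\tilde\xi)$, so the unknown appears on both sides. I would instead first control $\E[\Vert \tilde v_t - u^{S^N(\tilde\xi)}_t\Vert_\infty^2]$ and then derive \eqref{E83} by triangle inequality combined with \eqref{E82}. Writing
\begin{equation*}
\tilde v_t(y) - u^{S^N(\tilde\xi)}_t(y) = \tfrac{1}{N}\sum_{j=1}^N K(y-\tilde\xi^{j,N}_t)\bigl[e^{A^j_t}-e^{B^j_t}\bigr], \quad B^j_t := \int_0^t \Lambda(s,\tilde\xi^{j,N}_s,u^{S^N(\tilde\xi)}_s(\tilde\xi^{j,N}_s))\,ds,
\end{equation*}
I apply \eqref{EMajor1} and the (joint) Lipschitz hypothesis on $\Lambda$ to bound
\begin{equation*}
|A^j_t-B^j_t| \le L_\Lambda \int_0^t \Bigl[(s-r(s)) + |\tilde\xi^{j,N}_s-\tilde\xi^{j,N}_{r(s)}| + |\tilde v_{r(s)}(\tilde\xi^{j,N}_{r(s)}) - u^{S^N(\tilde\xi)}_s(\tilde\xi^{j,N}_s)|\Bigr]\,ds.
\end{equation*}
The delicate last summand is split by triangle inequality into $|\tilde v_{r(s)}(\tilde\xi^{j,N}_{r(s)}) - \tilde v_s(\tilde\xi^{j,N}_s)|$ and $|\tilde v_s(\tilde\xi^{j,N}_s) - u^{S^N(\tilde\xi)}_s(\tilde\xi^{j,N}_s)|$; the former is further split using the pathwise Lipschitz property of $\tilde v_s$ in its spatial argument (with constant $L_K e^{TM_\Lambda}$, derived directly from the definition of $\tilde v$) together with \eqref{eq:vtilde} and \eqref{E82}, each contributing $O(\sqrt{\delta t})$ in $L^2$. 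The latter is bounded by $\Vert \tilde v_s - u^{S^N(\tilde\xi)}_s\Vert_\infty$. Squaring, taking supremum in $y$, and taking expectation leads to an inequality of the form
\begin{equation*}
\E\bigl[\Vert \tilde v_t - u^{S^N(\tilde\xi)}_t\Vert_\infty^2\bigr] \le C\,\delta t + C\int_0^t \E\bigl[\Vert \tilde v_s - u^{S^N(\tilde\xi)}_s\Vert_\infty^2\bigr]\,ds,
\end{equation*}
and Gronwall's lemma closes the bootstrap. Inequality \eqref{E83} then follows from triangle inequality together with \eqref{E82}.
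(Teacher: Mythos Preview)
Your proposal is correct and follows essentially the same route as the paper: the same boundedness argument for \eqref{eq:vtilde}, the same two-term splitting for \eqref{E82}, and for \eqref{E83} the same strategy of controlling $\E[\Vert \tilde v_t - u^{S^N(\tilde\xi)}_t\Vert_\infty^2]$ via a Gronwall loop and then recovering \eqref{E83} by triangle inequality with \eqref{E82}. The only cosmetic difference is the pivot in the cross term: the paper inserts $u^{S^N(\tilde\xi)}_s(\tilde\xi^{j,N}_{r(s)})$ and invokes the spatial Lipschitz property of $u^{S^N(\tilde\xi)}$ from Lemma~\ref{lem:uu'}, whereas you insert $\tilde v_s(\tilde\xi^{j,N}_s)$ and use the (equally immediate) spatial Lipschitz property of $\tilde v_s$; both yield the same Gronwall inequality.
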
 

\begin{proof}[\bf Proof of Proposition~\ref{prop:DiscretTime}.]
All along this proof, we denote by $C$ a positive constant that only  depends on \\
 $T ,M_K, L_K, M_\Phi,L_\Phi, M_g,L_g$ and $M_{\Lambda}$,$L_{\Lambda}$ and that can change from line to line. Let us fix $t \in [0,T]$.
\begin{itemize}
\item We begin by considering inequality~\eqref{eq:tildeYMajor}.
We first fix $1 \le i \le N$. 
By    \eqref{E82} and \eqref{E83} in Lemma~\ref{lem:Discrete} and Lemma \ref{lem:uu'}, we obtain
\begin{eqnarray}
\label{eq:tildevMajor}
\E[\Vert \tilde v_t-u^{S^N(\xi)}_t\Vert_\infty^2]
&\leq &
2\E[\Vert \tilde v_t-u_t^{S^N( \tilde \xi)}\Vert_{\infty}^2] + 2\E[\Vert u_t^{S^N(\tilde \xi)}-u_t^{S^N(\xi)}\Vert_\infty^2]\nonumber \\
&\leq & 4(\E[\Vert \tilde v_t-\tilde v_{r(t)} \Vert_{\infty}^2] + \E[\Vert \tilde v_{r(t)}-u_t^{S^N( \tilde \xi)}\Vert_{\infty}^2] ) + 2\E[\Vert u_t^{S^N(\tilde \xi)}-u_t^{S^N(\xi)}\Vert_\infty^2]\nonumber \\
& \leq & C\delta t +C\E [\vert W_t\big (S^N(\tilde \xi),S^N(\xi)\big )\vert ^2]\nonumber \\
&\leq & 
C\delta t +C\sup_{i=1,\cdots N} \E[\sup_{s\leq t} \vert \tilde \xi^{i,N}_s-\xi^{i,N}_s\vert ^2]\ ,
\end{eqnarray}
where  the function $u^{S^N(\tilde{\xi})}$ makes sense  since $ \tilde{\xi}$ has almost surely continuous trajectories
 and so $S^N(\tilde{\xi})$ is a random measure which is a.s. 
in $\shp(\shc^d)$. \\
Besides, by the second assertion of Lemma~~\ref{lem:yy'}, we get
\begin{eqnarray}
\label{E310}
\E[\sup_{s\leq t} \vert \tilde \xi^{i,N}_s-\xi^{i,N}_s\vert ^2]
&\leq & C \E\left [ \int_0^t \vert \tilde v_{r(s)}(\tilde \xi^{i,N}_{r(s)})-u_s^{S^N(\xi)}(\xi^{i,N}_s)\vert^2\,ds\right ] + C \int_0^t \E \left[\tilde{\xi}^{i,N}_{r(s)} - \tilde{\xi}^{i,N}_{s} \right] ds + C \delta t^2 \ . \nonumber \\ 
\end{eqnarray}
Concerning the first term in the r.h.s. of \eqref{E310}, we have for all $s \in [0,T]$
\begin{eqnarray}
\label{E311}
\vert \tilde v_{r(s)}(\tilde \xi^{i,N}_{r(s)})-u_s^{S^N(\xi)}(\xi^{i,N}_s)\vert^2 & \leq & 2 \vert \tilde v_{r(s)}(\tilde \xi^{i,N}_{r(s)})-u_s^{S^N(\xi)}(\tilde{\xi}^{i,N}_{r(s)})\vert^2 + 2 \vert u_s^{S^N(\xi)}(\tilde \xi^{i,N}_{r(s)})-u_s^{S^N(\xi)}(\xi^{i,N}_s)\vert^2 \nonumber \\
& \leq & 2 \Vert \tilde{v}_{r(s)} - u^{S^N(\xi)}_s \Vert_{\infty}^2 + 2 C \vert \tilde{\xi}^{i,N}_{r(s)} - \xi^{i,N}_s \vert^2 \ ,
\end{eqnarray}
where the second inequality above follows by Lemma \ref{lem:uu'}, see \eqref{eq:uu'} (Lipschitz property of the function $u^{S^N(\xi)}$).
 Consequently, by \eqref{E310}
\begin{eqnarray}
\E[\sup_{s\leq t} \vert \tilde \xi^{i,N}_s-\xi^{i,N}_s\vert ^2]
&\leq & 
C \left \{ \E\left [ \int_0^t \Vert \tilde v_{r(s)}-u_s^{S^N(\xi)}\Vert^2_{\infty} \,ds \right ]  + \int_0^t \E\left [ \vert \tilde{\xi}^{i,N}_{r(s)}-\xi_{s}^{i,N} \vert^2 \right] \,ds  + \delta t^2 \right \} \nonumber \\  
& \leq & C \left \{ \E\left [ \int_0^t \Vert \tilde v_{r(s)}- \tilde v_s \Vert^2_{\infty}\,ds\right ] + \E\left [ \int_0^t \Vert \tilde v_s-u_s^{S^N(\xi)}\Vert^2_{\infty}\,ds\right ]  \right . \nonumber \\ 
& & \left . + \E\left [\int_0^t\vert \tilde \xi^{i,N}_{r(s)}- \tilde \xi^{i,N}_s\vert ^2\,ds\right ] + \E\left [\int_0^t\vert \tilde \xi^{i,N}_{s}- \xi^{i,N}_s\vert ^2\,ds\right ] + \delta t^2 \right \} \ .
\end{eqnarray}
Using  inequalities \eqref{eq:vtilde} and \eqref{E82} in Lemma~\ref{lem:Discrete}, for all $t \in [0,T]$
we obtain
\begin{equation}
\label{eq:xitildeMajor}
\sup_{i=1,\cdots N}\E[\sup_{s\leq t} \vert \tilde \xi^{i,N}_s-\xi^{i,N}_s\vert ^2]
\leq C\delta t^2 +C \int_0^t \left [\E[\Vert \tilde v_{s}-u_s^{S^N(\xi)}\Vert^2_{\infty}]+ \sup_{i=1,\cdots N}\E[\sup_{\theta \leq s} \vert \tilde \xi^{i,N}_\theta-\xi^{i,N}_\theta\vert ^2]\right ]\,ds.
\end{equation}
Gathering the latter inequality together with~\eqref{eq:tildevMajor} yields 
\begin{eqnarray}
\E[\Vert \tilde v_t-u^{S^N(\xi)}_t\Vert_\infty^2] + \sup_{i=1,\cdots N}\E[\sup_{s\leq t} \vert \tilde \xi^{i,N}_s-\xi^{i,N}_s\vert ^2] & \leq & C\delta t + 2C\sup_{i=1,\cdots N} \E[\sup_{s\leq t} \vert \tilde \xi^{i,N}_s-\xi^{i,N}_s\vert ^2] \nonumber \\
& \leq & C\delta t \nonumber \\ 
& & + \; C \int_0^t \Big [\E[\Vert \tilde v_{s}-u_s^{S^N(\xi)}\Vert^2_{\infty}] \nonumber \\
&& + \sup_{i=1,\cdots N}\E[\sup_{\theta \leq s} \vert \tilde \xi^{i,N}_\theta-\xi^{i,N}_\theta\vert ^2]\Big ] ds \ .
\end{eqnarray}
Applying Gronwall's lemma to the function 
$$
t\mapsto \sup_{i=1,\cdots N}\E[\sup_{s\leq t} \vert \tilde \xi^{i,N}_s-\xi^{i,N}_s\vert ^2]+\E[\Vert \tilde v_{t}-u_t^{S^N(\xi)}\Vert^2_{\infty}]
$$ 
ends the proof  \eqref{eq:tildeYMajor}. 

\item We  focus now on~\eqref{eq:tildeYMajorBis}.  First we observe that 
\begin{equation}
\label{eq:vTildeL2}
\E [\Vert \tilde v_t-u^{S^N(\xi)}_t\Vert_2^2]\leq 2\E [\Vert \tilde v_t-u_t^{S^N(\tilde \xi)}\Vert_2^2]+2\E [\Vert u_t^{S^N(\tilde \xi)}-u_t^{S^N(\xi)}\Vert_2^2]\ .
\end{equation}
Using successively item 4. of Lemma~\ref{lem:uu'}, Remark \ref{RADelta} and 
inequality~\eqref{eq:tildeYMajor},
we can bound the second term on the r.h.s. of \eqref{eq:vTildeL2}  as follows:
\begin{eqnarray}
\label{E815}
\E [\Vert u_t^{S^N(\tilde \xi)}-u_t^{S^N(\xi)}\Vert_2^2]
&\leq &
C\E [\vert W_t\big (S^N(\tilde \xi),S^N(\xi)\big )\vert^2] \nonumber \\
&\leq &
C\sup_{i=1,\cdots N}\E[\sup_{s\leq t}\vert \tilde \xi^{i,N}_s-\xi^{i,N}_s\vert^2] \nonumber \\
&\leq &
C\delta t\ .
\end{eqnarray} 
 
To simplify the notations, we introduce the real valued random variables 
\begin{equation}
\label{eq:VVtildeDef}
V_t^i := e^{\int_0^t\Lambda\big (s,\tilde \xi^{i,N}_{s},u^{S^N(\tilde{\xi})}_{s}(\tilde \xi^{i,N}_{s})\big )ds}\quad \textrm{and}\quad \tilde V_t^i := e^{\int_0^t\Lambda\big (r(s),\tilde \xi^{i,N}_{r(s)},\tilde v_{r(s)}(\tilde \xi^{i,N}_{r(s)})\big )ds}\ ,
\end{equation}
defined for any $i=1,\cdots N$ and $t\in [0,T]$. \\
Concerning the first term on the r.h.s. of~\eqref{eq:vTildeL2}, inequality \eqref{E942} of Lemma \ref{lem:ViVi'} gives for all $y \in \R^d$
\begin{eqnarray}
\label{E816}
\vert \tilde v_{t}(y)-u_t^{S^N(\tilde \xi)}(y)\vert^2 \leq  \frac{M_K}{N}\sum_{i=1}^N K(y-\tilde \xi^{i,N}_t)  \vert \tilde V_t^i-V_t^i \vert^2 \ .
\end{eqnarray}
Integrating the inequality \eqref{E816} with respect to $y$,  yields
$$
\Vert \tilde{v}_t - u_t^{S^N(\tilde \xi)} \Vert_2^2 = \int_{y\in\R^d} \vert \tilde v_{t}(y)-u_t^{S^N(\tilde \xi)}(y)\vert^2\,dy
\leq 
\frac{M_K}{N}\sum_{i=1}^N   \vert \tilde V_t^i
-
V_t^i \vert^2\ ,
$$
which, in turn, implies
\begin{eqnarray}
\label{E817}
\E \left[ \Vert \tilde{v}_t - u^{S^N(\tilde{\xi})}_t \Vert_2^2 \right] \leq \frac{M_K}{N}\sum_{i=1}^N   \E \left[ \vert \tilde V_t^i - V_t^i \vert^2 \right] \ .
\end{eqnarray}
Using successively item $1.$ of Lemma \ref{lem:ViVi'} and inequality \eqref{eq:vtilde} of~Lemma~\ref{lem:Discrete} we obtain, for all $i \in \{1,\cdots,N\}$
\begin{eqnarray}
\label{E818}
 \E[\vert \tilde V_t^i-V_t^i \vert^2]
 & \leq &  C(\delta t)^2 + C\E \left[ \int_0^t \vert \tilde{\xi}^{i,N}_{r(s)} - \tilde{\xi}^{i,N}_{s}  \vert^2   \ ds \right] + C \E \left[ \int_0^t \vert \tilde v_{r(s)}(\tilde \xi^{i,N}_{r(s)})-u^{S^N(\tilde \xi)}_s(\tilde \xi^{i,N}_s)\vert^2 ds \right] \nonumber \\
&\leq & C\delta t + C \E \left[ \int_0^t \vert \tilde v_{r(s)}(\tilde \xi^{i,N}_{r(s)})-u^{S^N(\tilde \xi)}_s(\tilde \xi^{i,N}_s)\vert^2 ds \right] \nonumber\\
& \leq & C\delta t + C \E \left[ \int_0^t \vert \tilde v_{r(s)}(\tilde \xi^{i,N}_{r(s)})-u^{S^N(\tilde \xi)}_s(\tilde \xi^{i,N}_{r(s)})\vert^2 ds \right] \nonumber \\
&& + \; C \E \left[ \int_0^t \vert u_{s}^{S^N(\tilde{\xi})}(\tilde \xi^{i,N}_{r(s)})-u^{S^N(\tilde \xi)}_s(\tilde \xi^{i,N}_{s})\vert^2 ds \right] \nonumber \\
&\leq & C\delta t +C\int_0^t \left [ \E [\Vert \tilde v_{r(s)}-u^{S^N(\tilde \xi)}_s\Vert_\infty^2] + \E[\vert \tilde \xi^{i,N}_{r(s)}-\tilde \xi^{i,N}_s\vert^2] \right ]\, ds \nonumber\\
&\leq &
C\delta t +C\int_0^t \E [\Vert \tilde v_{r(s)}-u^{S^N(\tilde \xi)}_s\Vert_\infty^2] \, ds\ ,
\end{eqnarray}
where the fourth inequality above follows from  Lemma \ref{lem:uu'}, see \eqref{eq:uu'}. Consequently using~\eqref{E818} and inequality \eqref{E83} of Lemma~\ref{lem:Discrete},  \eqref{E817} becomes
\begin{eqnarray}
\label{EFinal_Discret}
\E [\Vert \tilde v_t-u_t^{S^N(\tilde \xi)}\Vert_2^2]
\leq \frac{C}{N}\sum_{i=1}^N \E [\vert \tilde V_t^i - V_t^i \vert^2] \underbrace{\leq}_{\eqref{E818}} C\delta t +C\int_0^t \E [\Vert \tilde v_{r(s)}-u^{S^N(\tilde \xi)}_s\Vert_\infty^2] \underbrace{\leq}_{\eqref{E83}}  C\delta t\ ,
\end{eqnarray}
Finally, injecting \eqref{EFinal_Discret} and \eqref{E815} in \eqref{eq:vTildeL2} yields
$$
\E [\Vert \tilde v_t-u^{S^N(\xi)}_t\Vert_2^2] \leq C \delta t \ ,
$$
\end{itemize}
which ends the proof of Proposition \ref{prop:DiscretTime}.
\end{proof}

\subsection{Numerical results}
\label{SNum}

\subsubsection{Preliminary considerations}

One motivating issue of the section is how the interacting particle system $\xi := \xi^{N,\varepsilon}$ defined in \eqref{eq:XIi} with
 $K = K^{\varepsilon}$, $K^{\varepsilon}(x) := \frac{1}{\varepsilon^d}\phi^d(\frac{x}{\varepsilon})$ for some mollifier $\phi^d$, can be used to approach 
the solution $v$ of the PDE \eqref{epde}. Two significant parameters, i.e.  $\varepsilon \rightarrow 0$, $N \rightarrow + \infty$ intervene.
We expect to approximate $v$ by $u^{\varepsilon,N}$, which is the solution of the linking equation \eqref{eq:u}, associated with the empirical measure $m = S^N(\xi)$. For this purpose, we want to control empirically the so-called Mean Integrated Squared Error (MISE) between the solution $v$ of \eqref{epde} and the particle approximation $u^{\varepsilon,N}$, i.e. for $t \in [0,T]$,
\begin{eqnarray} 
\label{eq:MISE}
\E[ \Vert u_t^{\varepsilon,N} -  v_t \Vert_2^2] \leq 2 \E[ \Vert u_t^{\varepsilon,N} -  u_t^{\varepsilon} \Vert_2^2] + 
2 \E[ \Vert u_t^{\varepsilon} -  v_t \Vert_2^2],
\end{eqnarray}
where $u^{\varepsilon} = u^{m_0}$ with $K = K^{\varepsilon}$, $m_0$ being the common law of processes $Y^i, \; 1 \leq i \leq N$ in \eqref{eq:Yi}. Even though the second expectation in the r.h.s. of \eqref{eq:MISE} does not explicitely involve the number of particles $N$, the first expectation crucially depends on both parameters $\varepsilon, N$. The behavior of the first expectation relies on the propagation of chaos. This phenomenon has been investigated under Assumption \ref{ass:main} for a fixed $\varepsilon > 0$, when $N \rightarrow + \infty$, see Theorem \ref{TPC}.
 According to Theorem \ref{TPC}, the first error term on the r.h.s. of the above inequality can be bounded by $\frac{C(\varepsilon)}{N}$. \\
Concerning the second error term, no result is available but we expect that it converges to zero when $\varepsilon \rightarrow 0$. To control the MISE, it remains to determine a relation $N \mapsto \varepsilon(N)$ such that
$$
\varepsilon(N) \xrightarrow[N \rightarrow + \infty]{} 0 \quad \textrm{ and } \quad \frac{C(\varepsilon(N))}{N} \xrightarrow[N \rightarrow + \infty]{} 0 \ .
$$
When the coefficients $\Phi$, $g$ and the initial condition are smooth with $\Phi$ non-degenerate and $\Lambda \equiv 0$ (i.e. in conservative case), Theorem 2.7 of \cite{JourMeleard} gives a description of such a relation. \\
In our empirical analysis, we have concentrated on a test case, for which we have an explicit solution.
\\
 We first illustrate the chaos propagation for fixed $\varepsilon > 0$, i.e. the result of Theorem \ref{TPC}. On the other hand, we give an empirical insight concerning the following:
\begin{itemize}
\item the asymptotic behavior of the second error term in inequality \eqref{eq:MISE} for $\varepsilon \rightarrow 0$;
\item the tradeoff $N \mapsto \varepsilon(N)$.
\end{itemize}
Moreover, the simulations reveal two behaviors regarding the chaos propagation intensity.

\subsubsection{The target PDE}

We describe now the test case.
For a given triple $(m,\mu , A)\in ]1,\infty[\times \R^d\times \R^{d \times d}$
we consider the following nonlinear PDE of the form~\eqref{epde}:
\begin{equation}
\label{eq:pdev}
\left \{
\begin{array}{lll}
\partial_t v&=&{\displaystyle \frac{1}{2}\sum_{i,j=1}^d \partial_{i,j}^2 \big (v(\Phi \Phi^t)_{i,j}(t,x,v)\big )}-div\big (vg(t,x,v)\big )+v\Lambda(t,x,v) \ ,\\
v(0,x)&=&B_m(2,x)f_{\mu,A}(x)\quad\textrm{for all}\ x\in \R^d \ ,
\end{array}
\right .
\end{equation}
where the functions $\Phi\,,\,g\,,\, \Lambda$ defined on $[0,T]\times \R^d\times \R$ are such that 
\begin{equation}
\label{eq:Phi}
\Phi(t,x,z)=f_{\mu,A}^{\frac{1-m}{2}}(x)z^{\frac{m-1}{2}}I_d\ ,
\end{equation} 
$I_d$ denoting the identity matrix in $\R ^{d \times d}$, 
\begin{equation}
\label{eq:gL}
g(t,x,z)=f_{\mu,A}^{1-m}(x)z^{m-1}\frac{A+A^t}{2}(x-\mu )\ ,
\quad
\textrm{and} 
\quad
\Lambda(t,x,z)=f_{\mu,A}^{1-m}(x)z^{m-1}Tr\left(\frac{A+A^t}{2}\right).
\end{equation}
Here $f_{\mu,A}:\R^d \rightarrow \R $ is given by 
\begin{equation}
\label{eq:f}
f_{\mu ,A}(x)= C e^{-\frac{1}{2}\langle x-\mu,A(x-\mu)\rangle}\ ,
\quad\textrm{normalized by}\quad  
{\displaystyle C=\left [ \int_{x\in \R^d} B_m(2,x)e^{-\frac{1}{2} (x-\mu) \cdot A(x-\mu)}\right ]^{-1}}
\end{equation}
 and    $B_m$ is the $d$-dimensional Barenblatt-Pattle density associated to $m>1$, i.e. 
\begin{equation}
\label{eq:Barenblatt} 
B_m(t,x)= \frac{1}{2}(D -\kappa  t^{-2\beta }\vert x\vert)_+^{\frac{1}{m-1}}t^{-\alpha},
\end{equation}
with
$\alpha =\frac{d}{(m-1)d+2}\ ,$   $\beta =\frac{\alpha }{d} \ ,$  $\kappa =\frac{m-1}{m}\beta$  and 
 $D =[2\kappa ^{-\frac{d}{2}}\frac{\pi^{\frac{d}{2}}\Gamma (\frac{m}{m-1})}{\Gamma (\frac{d}{2}+\frac{m}{m-1})} ]^{\frac{2(1-m)}{2+d(m-1)}}\ .
$

In the specific case where $A$ is the zero matrix of $\R^{d \times d}$, then $f_{\mu,A}\equiv 1$; $g\equiv 0$ and $\Lambda \equiv 0$. Hence, we recover the conservative porous media equation, whose explicit solution is
$$
v(t,x)=B_m(t+2,x)\ ,\quad \textrm{for all} \ (t,x)\in [0,T]\times \R^d,
$$
see~\cite{Barenb}.
For general values of $A\in \R^{d \times d}$, extended calculations produce
 the following explicit solution
\begin{equation}
\label{eq:sol}
v(t,x)=B_m(t+2,x)f_{\mu,A}(x)\ ,\quad \textrm{for all} \ (t,x)\in [0,T]\times \R^d\ ,
\end{equation}
 of~\eqref{eq:pdev}, which is non conservative.

\subsubsection{Details of  the implementation}
Once fixed the number $N$ of particles, we have run $M=100$ i.i.d. particle systems producing $M$ i.i.d. estimates $(u^{\varepsilon,N,i})_{i=1,\cdots M}$. The MISE is then approximated by the  Monte Carlo approximation 
\begin{equation}
\label{eq:MISEApprox}
\E[ \Vert u_t^{\varepsilon,N}-v_t\Vert_2^2 ] \approx \frac{1}{MQ}\sum_{i=1}^M \sum_{j=1}^Q \vert u_t^{\varepsilon,N,i}(X^j)-v_t(X^j)\vert ^2 v^{-1}(0,X^j)\ ,\quad\textrm{for all}\ t\in [0,T]\ ,
\end{equation}
where $(X^j)_{j=1,\cdots, Q=1000}$ are i.i.d $\R^d$-valued random variables with common density $v(0,\cdot)$. 
In our simulation, we have chosen $T=1$, $m=3/2$, $\mu=0$ and $A=\frac{2}{3} I_d$.
 $K^{\varepsilon}=\frac{1}{\varepsilon^d}\phi^d(\frac{\cdot}{\varepsilon})$ with $\phi^d$ being the standard and centered Gaussian density. 
We have run a discretized version of the interacting particle system with Euler scheme mesh  $kT/n$ with $n=10$. Notice that this discretization error is neglected in the present analysis. 
The initial condition $v(0,\cdot)$ is perfectly simulated using a rejection algorithm with a Gaussian instrumental distribution.

\subsubsection{Simulations analysis}
\label{Simul}

Our simulations show that the approximation error presents two types of behavior depending on the number $N$ of particles  
with respect  to the regularization parameter $\varepsilon$.
\begin{enumerate}
\item For large values of $N$, we visualize  a \textit{chaos propagation behavior} for which the  error estimates 
are similar to the ones
provided by  the  density estimation theory~\cite{SilvBook}  corresponding to the classical framework of independent samples.
\item For small values of $N$ appears  a \textit{transient behavior} for which the bias and variance errors
cannot be  easily described.
\end{enumerate} 

Observe that the  Mean Integrated Squared Error ${\rm MISE}_t(\varepsilon,N):=\E[ \Vert u_t^{\varepsilon,N}-v_t\Vert_2^2 ]$ can be decomposed as the sum of the variance $ V_t(\varepsilon,N)$ and squared bias $B_t^2(\varepsilon,N)$ as follows:
\begin{eqnarray}
\label{eq:VarBias}
{\rm MISE}_t(\varepsilon,N)&=&V_t(\varepsilon,N)+B_t^2(\varepsilon,N) \nonumber \\
&=& \E\left[ \Vert u_t^{\varepsilon,N}-\E [u_t^{\varepsilon,N}]\Vert_2^2 \right ]+\E\left [ \Vert\E[ u_t^{\varepsilon,N}]-v_t\Vert_2^2 \right ] \ .
\end{eqnarray}
For $N$ large enough, according to Remark \ref{RMoreGeneral}, one expects that the propagation of chaos holds.
 Then the particle system $(\tilde \xi^{i,N})_{i=1,\cdots, N}$ 
(solution of \eqref{eq:tildeYu}) is close to an i.i.d. system with common law $m^0$.
 We observe that, in the specific case where the weighting function $\Lambda$ does not depend on the density $u$, 
 for $t \in [0,T]$, we have
 \begin{eqnarray}
\label{eq:BiasApprox}
\E[u_t^{\varepsilon , N}] & = & \frac{1}{N} \E \left[ \sum_{j=1}^N K^{\varepsilon}( \cdot-\tilde \xi^{j,N}_{t})\exp\big \{\int_0^t \Lambda(r(s),\tilde \xi^{j,N}_{r(s)})\,ds\big \} \right] \ ,  \nonumber\\
& = & \E \left[ K^{\varepsilon}( \cdot-Y^{1}_t) V_t\big (Y^{1}\big ) \right] \nonumber \\
& = & u_t^{\varepsilon} \ . 
\end{eqnarray} 
 We remind that the relation $u^{\varepsilon} = K^{\varepsilon} \ast v^{\varepsilon}$ comes from Theorem \ref{thm-PIDE}.
 Therefore, under the  chaos propagation behavior,
 the approximations below  hold for
 the variance and the squared bias: 
\begin{equation}
\label{eq:aproxVarBias}
V_t(\varepsilon,N)
\approx 
\E\left[ \Vert u_t^{\varepsilon,N}-u_t^{\varepsilon}\Vert_2^2 \right ]
\quad\textrm{and}\quad 
B_t^2(\varepsilon,N) 
\approx \E \left[ \Vert u_t^{\varepsilon }-v_t\Vert_2^2 \right ]\ .
\end{equation}
On Figure~\ref{fig:Variance5d}, we have reported the estimated variance error $V_t(\varepsilon,N)$  as a function of 
the particle number $N$, (on the left graph) and as a function of the regularization parameter $\varepsilon$, (on the right graph),
 for $t = T=1$ and $d=5$.\\
That figure shows that, when the number of particles is \textit{large enough}, the variance error behaves precisely as in
 the classical case of density estimation encountered in~\cite{SilvBook}, i.e., vanishing at a rate $\frac{1}{N\varepsilon ^d}$, see relation (4.10), Chapter 4., Section 4.3.1.
 This is in particular illustrated by the log-log graphs, showing almost linear curve, when $N$ is sufficiently large. In particular
we observe the following. 
\begin{itemize}
\item On the left graph,  $\log(V_t(\varepsilon,N)) \approx a-\alpha\log N$ with slope $\alpha=1$;
\item On the right graph, $\log V_t(\varepsilon,N) \approx b-\beta \log \varepsilon $  with slope $\beta =5=d$. 
\end{itemize}
It seems that the threshold $N$ after which appears the linear behavior (compatible with the  propagation of chaos situation
corresponding to  asymptotic-i.i.d. particles) decreases
 when $\varepsilon$ grows. In other words, when $\varepsilon$ is large, less particles $N$ are needed
 to give evidence to the chaotic behavior. 
  This phenomenon could be explained by analyzing the particle system dynamics. 
  Indeed, at each time step,
  the interaction between the particles is due to  the empirical estimation of $K^\varepsilon \ast v^\varepsilon $ based on the particle system.
 Intuitively, the more accurate the estimation is, the less strong the interaction between particles will be. Now observe that at time step $0$, the particle system $(\tilde{\xi}^{i,N}_0)$ is i.i.d.  according to $v(0,\cdot)$, so that the estimation of $(K^\varepsilon \ast v^\varepsilon)(0,\cdot)$ provided by \eqref{eq:tildeYu} reduces to the classical density estimation approach. 
  In that classical framework, it is well-known that for larger values of $\varepsilon$ the number of particles, needed to achieve a given density estimation accuracy, is smaller. Hence, one can imagine that for larger values $\varepsilon$ less particles will be needed to obtain a quasi-i.i.d particle system at time step $1$, $(\tilde{\xi}^{i,N}_1)$. Then one can think that this initial error propagates along the time steps.\\
On Figure~\ref{fig:Bias5d}, we have reported the estimated squared bias error, $B^2_t(\varepsilon,N)$,  as a function of the  regularization parameter, $\varepsilon$, for different values of the particle number $N$, for $t = T=1$ and $d=5$.\\
One can observe that, similarly to the classical i.i.d. case,  (see relation (4.9) in Chapter 4., Section 4.3.1 in \cite{SilvBook}),
 for $N$ large enough, the bias error  does not depend on $N$ and 
 can be  approximated by $a \varepsilon^4$, for some constant $a>0$.
 This is in fact coherent with the bias approximation~\eqref{eq:aproxVarBias}, developed in the specific case where
 the weighting function $\Lambda$ does not depend on the density. 
Assuming the validity of approximation~\eqref{eq:aproxVarBias} and  of the previous empirical observation implies that one can bound the error between the solution, $v^\varepsilon$,
 of the regularized PDE of the form \eqref{PIDE} (with $K = K^{\varepsilon}$) associated to \eqref{eq:pdev}, and the solution, $v$, of the limit
(non regularized) PDE~\eqref{eq:pdev} as follows 
\begin{eqnarray}
\label{eq:vepsilonv}
\E \Big[ \Vert v_t^\varepsilon-v_t\Vert_2^2 \Big]& \leq & 2 \E \Big[ \Vert v_t^\varepsilon-u_t^{\varepsilon}\Vert_2^2 \Big] + 2 \E \Big[ \Vert u_t^\varepsilon-v_t\Vert_2^2 \Big] \nonumber \\
  &\leq & 2 \E \Big[ \Vert v_t^\varepsilon-K^\varepsilon\ast v_t^\varepsilon\Vert_2^2 \Big] + 2 \E \Big[ \Vert u_t^\varepsilon-v_t\Vert_2^2 \Big] \nonumber \\
&\leq & 2(a'+a)\varepsilon^4.
\end{eqnarray}
Indeed, at least, the first  term in  the second line can be easily bounded, supposing that  $v_t^\varepsilon$ has
a bounded second derivative.
This constitutes an empirical proof of the fact  that $v^\varepsilon$ converges to $v$.\\
As observed in the variance error graphs, the threshold $N$, above which the propagation of chaos behavior is observed decreases with $\varepsilon$. 
Indeed, for $\varepsilon>0.6$ we observe a chaotic behavior of the bias error, starting from $N\geq 500$, whereas for
 $\varepsilon\in [0.4,0.6]$, this chaotic behavior appears only for $N\geq 5000$. Finally, 
 for small values of $\varepsilon\leq 0.6$, the bias highly depends on $N$ for any $N\leq 10^4$; moreover that dependence
becomes less relevant when  $N$ increases. 


Taking into account both  the bias and the variance error in the MISE \eqref{eq:VarBias}, the choice of $\varepsilon$ has to
 be carefully optimized w.r.t. the number of particles:  $\varepsilon$ going to zero together with $N$ going to infinity at a 
judicious relative rate seem to ensure the convergence of the estimated MISE to zero. 
This kind of tradeoff is standard in density estimation theory and was already investigated theoretically in the context of
 forward interacting particle systems related to conservative regularized nonlinear PDE in
 \cite{JourMeleard}. Extending this type of theoretical analysis to our non conservative framework is beyond the scope
 of the present paper.

\begin{figure}[!h]
\begin{center}
\subfigure[Variance as a function of $N$]
{\includegraphics[width=0.49\linewidth]{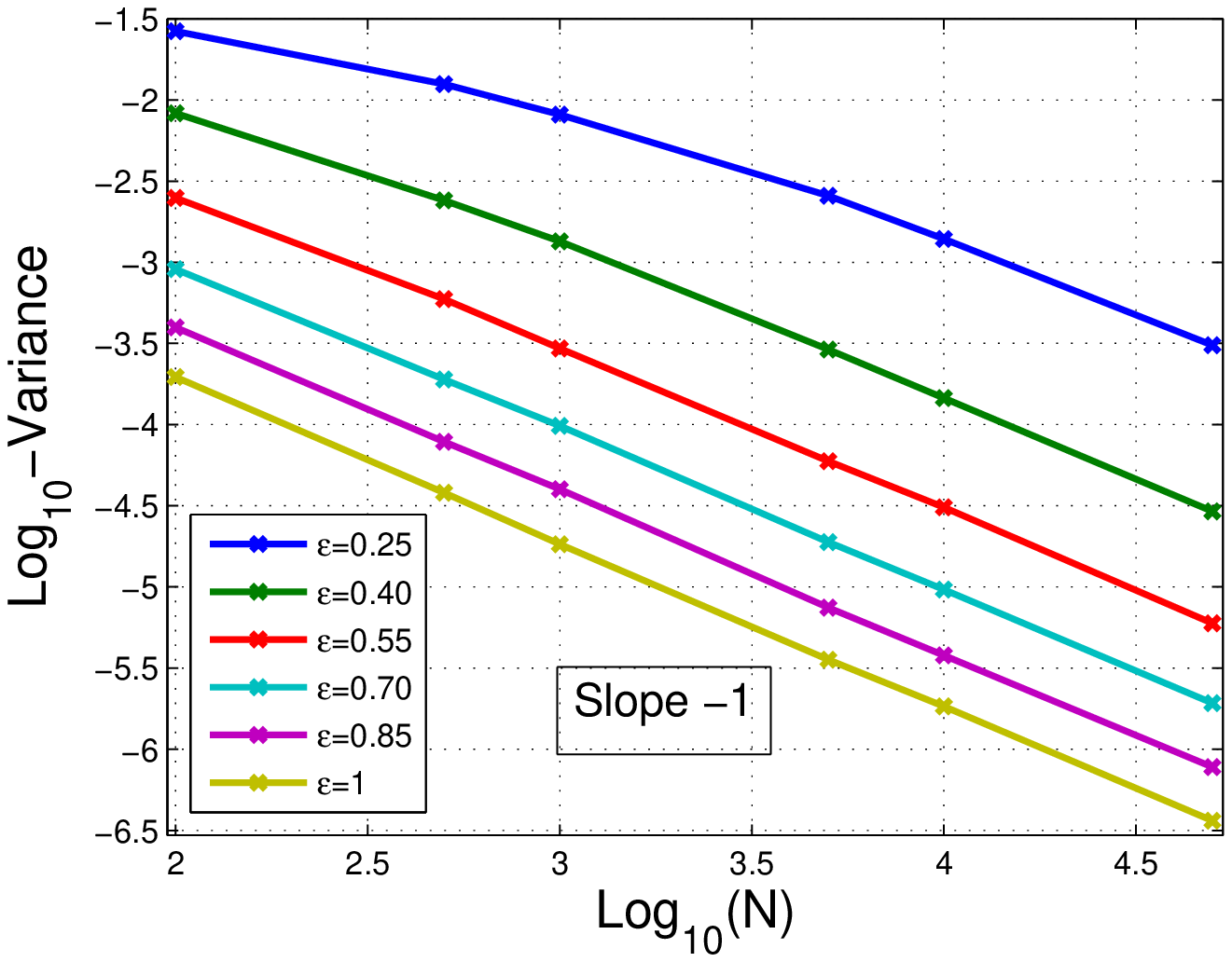}}
\subfigure[Variance as a function of $\varepsilon$]{\includegraphics[width=0.49\linewidth]{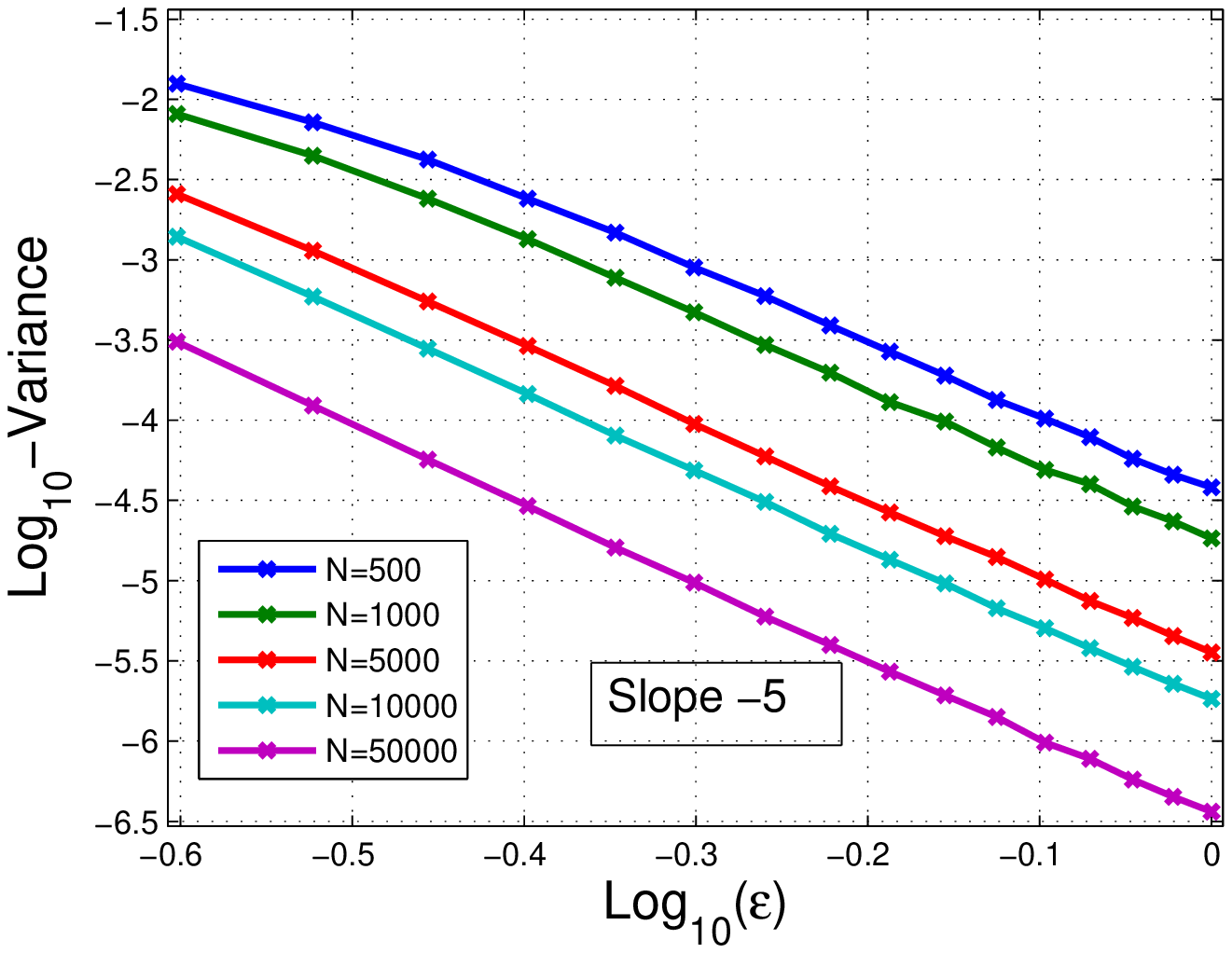}}
\end{center}
\caption{{\small  Variance error as a function of the number of particles, $N$, and the mollifier window width, $\varepsilon$, for dimension $d=5$ at the final time step $T=1$. } }
\label{fig:Variance5d}
\end{figure}

\begin{figure}[!h]
\begin{center}
{\includegraphics[width=0.8\linewidth,height=7cm]{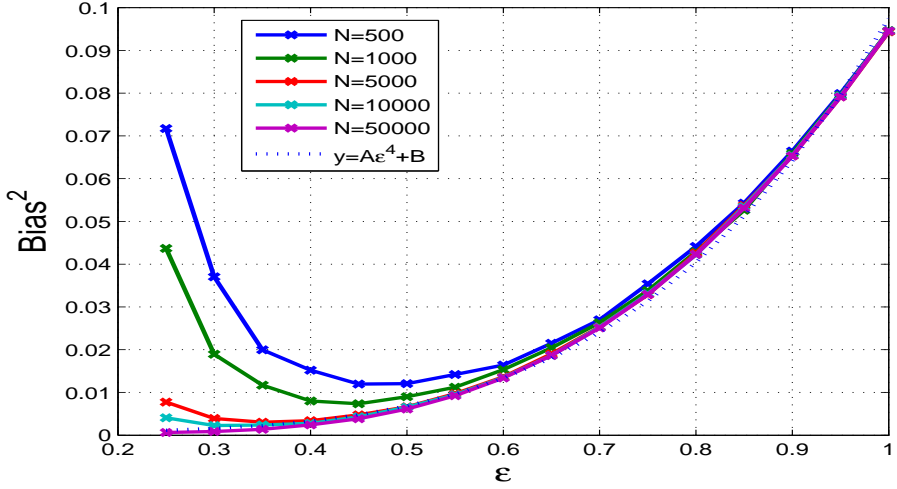}}
\end{center}
\caption{{\small  Bias error as a function of the mollifier window width, $\varepsilon$, for dimension $d=5$ at the final time step $T=1$. } }
\label{fig:Bias5d}
\end{figure}

\newpage

\section{Appendix}

\setcounter{equation}{0}
\label{SAppendix}

In this appendix, we present the proof of some technical results.

\begin{rem}  \label{counter-example}
We start with an observation which concerns a possible relaxation of the hypotheses of Lemma \ref{karatzas};
the  uniform convergence assumption for the integrands is crucial and it cannot be replaced
by a pointwise convergence. \\

 Let define $\Omega = [0,1]$ equipped with the Borel $\sigma$-field, $(Z_n)_{n \geq 0}$ a sequence of continuous, real-valued functions s.th. 
\begin{equation}
 \left \{
 \begin{array}{lll}
0 & , & x \geq \frac{2}{n} \\  
nx & , & x \in [0,\frac{1}{n}] \\
-nx + 2 & , & x \in [\frac{1}{n},\frac{2}{n}].
 \end{array}
 \right .
\end{equation}
We consider a sequence of probability measures $(m_n)_{n \geq 0}$ s.th. $m_n(dx) = \delta_{\frac{1}{n}}(dx)$ and $m_0(dx) = \delta_{0}(dx)$.

On the one hand, we can observe the following. 
\begin{itemize}
 \item $Z_n \xrightarrow[\text{$n \rightarrow +\infty$}]{\text{}} 0$, pointwise.
 \item for all $n \geq 0$, $|Z_n| \leq 1$, surely.
 \item $m_n \xrightarrow[\text{$n \rightarrow +\infty$}]{\text{}} m$,  weakly.
\end{itemize}

On the other hand,$ \int_{0}^{1} Z_n dm_n = Z_n(\frac{1}{n}) = 1 \nrightarrow 0$.
\end{rem}

Before stating a tightness criterion for our family of approximating 
sequences we need to express  the classical Theorem of Kolmogorov-Centsov,
 stated in Theorem 4.10, Chapter 2 in \cite{karatshreve},
taking into account Remark 4.13.

\begin{prop}
\label{PKC}
Let $r \in \N^{\star}$. A sequence $(\P_n)_{n \geq 0}$ of Borel probability measures on $\shc^r$ is tight if and only if
\begin{itemize}
\item
\begin{equation}
\label{E46bis}
\lim_{\lambda \longrightarrow + \infty} \; \sup_{n \in \N} \; \P_n \left( \left\{ \omega \in \shc^r \; \vert \; \vert \omega_0 \vert > \lambda  \right\} \right) = 0 \ ,
\end{equation}
\item $\forall(\varepsilon,s,t) \in \R_+^{\star} \times [0,T] \times [0,T]$,
\begin{equation}
\label{E47bis}
 \lim_{\delta \downarrow 0} \; \sup_{n \in \N} \; \P_n ( \{ \omega \in \shc^r \; \vert \; \max_{ \underset{\vert t-s \vert \le \delta}{(s,t) \in [0,T]^2}} \vert \omega_t - \omega_s \vert > \varepsilon  \} ) = 0 \ .
\end{equation}
\end{itemize}
\end{prop}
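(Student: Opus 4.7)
The plan is to derive the statement from Prokhorov's theorem together with the Arzelà-Ascoli characterization of compact subsets of $\shc^r$. Recall that a subset $K\subset\shc^r$ is relatively compact if and only if it is pointwise bounded (equivalently, bounded at a single point, e.g.~at $0$) and uniformly equicontinuous, i.e.~the modulus of continuity
\[
w_K(\delta):=\sup_{\omega\in K}\;\sup_{\underset{|s-t|\le\delta}{s,t\in[0,T]}}|\omega_t-\omega_s|
\]
satisfies $w_K(\delta)\to 0$ as $\delta\downarrow 0$. Since $\shc^r$ is a Polish space, tightness of a sequence of Borel probabilities is equivalent to the existence, for every $\varepsilon>0$, of a relatively compact set $K_\varepsilon\subset\shc^r$ with $\sup_n\P_n(K_\varepsilon^c)\le\varepsilon$.

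For the direct implication, I would fix $\varepsilon>0$ and take $K_\varepsilon$ as above. Since $K_\varepsilon$ is bounded at $0$, there is $\lambda_\varepsilon>0$ such that $K_\varepsilon\subset\{\omega:|\omega_0|\le\lambda_\varepsilon\}$, which immediately yields \eqref{E46bis} by letting $\lambda\to\infty$ through such a family. Likewise, the uniform equicontinuity of $K_\varepsilon$ gives \eqref{E47bis}: for fixed $\eta>0$, choose $\delta_0$ with $w_{K_\varepsilon}(\delta_0)\le\eta$ and observe that $\{\omega: \max_{|t-s|\le\delta}|\omega_t-\omega_s|>\eta\}\subset K_\varepsilon^c$ whenever $\delta\le\delta_0$, so its $\P_n$-mass is $\le\varepsilon$ uniformly in~$n$.

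For the converse, the idea is to build the compact set by intersecting the \emph{sub-level sets} suggested by the two hypotheses. Given $\varepsilon>0$, use \eqref{E46bis} to select $\lambda>0$ with $\sup_n\P_n(A_\lambda^c)\le\varepsilon/2$, where $A_\lambda:=\{\omega:|\omega_0|\le\lambda\}$. Next, for each integer $k\ge 1$, use \eqref{E47bis} (applied with $\eta=1/k$) to select $\delta_k>0$ with
\[
\sup_{n\in\N}\P_n\Big(\big\{\omega\in\shc^r:\max_{\underset{|t-s|\le\delta_k}{(s,t)\in[0,T]^2}}|\omega_t-\omega_s|>1/k\big\}\Big)\le\varepsilon\,2^{-(k+1)}.
\]
Denote the complement of the event above by $B_k$ and set $K_\varepsilon:=A_\lambda\cap\bigcap_{k\ge 1}B_k$. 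By construction $K_\varepsilon$ is closed, bounded at $0$, and its modulus of continuity satisfies $w_{K_\varepsilon}(\delta_k)\le 1/k$, hence $w_{K_\varepsilon}(\delta)\to 0$; by Arzelà-Ascoli $K_\varepsilon$ is relatively compact (in fact compact since closed). A union bound gives $\sup_n\P_n(K_\varepsilon^c)\le\varepsilon/2+\sum_k\varepsilon\,2^{-(k+1)}\le\varepsilon$, proving tightness.

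The only delicate point is to make sure, in the converse direction, that the intersected set $K_\varepsilon$ is genuinely compact: this requires the choice of the sequence $\delta_k$ to force the modulus of continuity to vanish, and the boundedness at a single point (here $t=0$) together with the equicontinuity to upgrade to uniform boundedness via $\sup_t|\omega_t|\le|\omega_0|+w_{K_\varepsilon}(T)\cdot\lceil T/\delta_1\rceil$. Once this is in place, the rest of the argument is a routine union bound.
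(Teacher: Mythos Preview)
Your proof is correct and follows the standard Prokhorov/Arzel\`a--Ascoli route. Note however that the paper does not actually prove this proposition: it is stated without proof as a classical result, with a reference to Theorem~4.10 and Remark~4.13 in Chapter~2 of Karatzas--Shreve. Your argument is essentially the one found in that reference, so there is no substantive difference in approach; you have simply filled in what the paper leaves to citation. One small slip: in your final displayed bound for uniform boundedness you wrote $w_{K_\varepsilon}(T)\cdot\lceil T/\delta_1\rceil$, but you mean $w_{K_\varepsilon}(\delta_1)\cdot\lceil T/\delta_1\rceil$ (chaining over subintervals of length $\le\delta_1$, each contributing oscillation at most $1$); the idea is clear and the conclusion is unaffected.
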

\medskip
\begin{lem} \label{tightness} 
Let $K: \R^d \rightarrow \R$
be bounded and Lipschitz.
 For each $n \in \N$, we consider Borel functions   
$\Phi_{n} : [0,T] \times \R^d \times \R \longrightarrow \R^{d \times p}$,
 $g_{n} : [0,T] \times \R^d \times \R \longrightarrow \R^d$, and 
$\Lambda_{n}: [0,T] \times \R^d \times \R \longrightarrow \R$ 
 uniformly bounded in $n$.
We also consider a tight sequence $(\zeta_0^n)$ of
probability measures on $\R^d$.
Let $(Y^{n},u_n)$ be solutions of
   \begin{equation}
   \left \{
   \begin{array}{lll}
    dY^{n}_{t} = \Phi_{n}(t,Y^{n}_t,u_{n}(t,Y_{t}^{n}))dW_t + g_{n}(t,Y^{n}_t,u_{n}(t,Y_{t}^{n}))dt \\
 u_{n}(t,x) := \int_{\mathcal{C}^d} K(x-X_t(\omega)) \, 
\exp \left \{\int_{0}^{t}\Lambda_{n} \big 
(r,X_r(\omega),u_{n}(r,X_r(\omega))\big )dr\right \} dm^{n}(\omega) \, \\
m_n  = \shl(Y_n),
   \end{array}
   \right .
  \end{equation}
where for all $n \in \N$, $Y^n_0$ 
is a r.v.  distributed according to $\zeta_0^n$.\\
Then, the family $\big(\nu^{n} = \mathcal{L}(Y^{n}_{\cdot},u_{n}(\cdot,Y^{n}_{\cdot})), \; n \geq 0 \big)$ is tight.
\end{lem}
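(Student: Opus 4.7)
Since $\mathcal{C}^d \times \mathcal{C}$ is a product of Polish spaces, the tightness of $\{\nu^n\}$ will follow from the tightness of both marginal families: $\{\mathcal{L}(Y^n)\}$ on $\mathcal{C}^d$ and $\{\mathcal{L}(Z^n)\}$ on $\mathcal{C}$, where $Z^n_t := u_n(t, Y^n_t)$. For each marginal I will invoke the criterion of Proposition \ref{PKC}, so the plan reduces to verifying its two conditions \eqref{E46bis} and \eqref{E47bis} in each case.

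For the $Y^n$-marginal, condition \eqref{E46bis} is immediate: $\mathcal{L}(Y^n_0) = \zeta_0^n$ and $(\zeta_0^n)$ is tight by assumption. For \eqref{E47bis} I will use Markov's inequality combined with a fourth-moment increment estimate. Since $\Phi_n$ and $g_n$ are uniformly bounded in $n$, the Burkholder--Davis--Gundy inequality applied to the martingale part and Cauchy--Schwarz applied to the drift part yield a constant $C$ (depending only on $T$ and the uniform bounds on $\Phi_n, g_n$) such that
\begin{equation*}
\mathbb{E}[|Y^n_t - Y^n_s|^4] \leq C |t-s|^2, \qquad 0 \leq s \leq t \leq T.
\end{equation*}
This estimate, combined with the classical chaining argument of Kolmogorov--Centsov, gives \eqref{E47bis} uniformly in $n$; in particular $\mathbb{E}|Y^n_t - Y^n_s| \leq C'\sqrt{|t-s|}$ which will be reused below.

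For the $Z^n$-marginal, condition \eqref{E46bis} is trivial since, as noted in Remark \ref{rem_un}, $\|u_n\|_\infty \leq M_K e^{M_\Lambda T}$ uniformly in $n$, so $Z^n$ is uniformly bounded. The main step is condition \eqref{E47bis}. I split
\begin{equation*}
|Z^n_t - Z^n_s| \leq |u_n(t, Y^n_t) - u_n(t, Y^n_s)| + |u_n(t, Y^n_s) - u_n(s, Y^n_s)|.
\end{equation*}
The first term is handled by the Lipschitz property of $u_n(t, \cdot)$ (uniform in $n$ and $t$): since $K$ is Lipschitz and $|V_t^n| \leq e^{M_\Lambda T}$, one obtains $|u_n(t, y) - u_n(t, y')| \leq L_K e^{M_\Lambda T} |y - y'|$. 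For the second term, I decompose the defining integral for $u_n$ and use $|K(y-X_t) - K(y-X_s)| \leq L_K |X_t - X_s|$, the bound $\|K\|_\infty \leq M_K$, and the inequality $|V_t^n - V_s^n| \leq M_\Lambda e^{M_\Lambda T} |t-s|$ (from $|\Lambda_n| \leq M_\Lambda$ and \eqref{EMajor1}), yielding, uniformly in $y$ and $n$,
\begin{equation*}
|u_n(t, y) - u_n(s, y)| \leq L_K e^{M_\Lambda T}\, \mathbb{E}|Y^n_t - Y^n_s| + M_K M_\Lambda e^{M_\Lambda T} |t-s| \leq \tilde C \sqrt{|t-s|}.
\end{equation*}
Combining the two parts gives $|Z^n_t - Z^n_s|^4 \leq C_1 |Y^n_t - Y^n_s|^4 + C_2 |t-s|^2$, hence $\mathbb{E}|Z^n_t - Z^n_s|^4 \leq C |t-s|^2$ using the moment bound above. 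Kolmogorov--Centsov then delivers \eqref{E47bis} for $(Z^n)$, and tightness of $\{\nu^n\}$ follows.

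The step I expect to require the most care is the uniform (in $n$ and in the spatial variable $y$) time-modulus estimate for $u_n$, because $u_n$ is only implicitly defined through the fixed-point relation \eqref{eq:u}; the key observation making it tractable is that one can estimate $u_n(t,y) - u_n(s,y)$ directly from the integral representation without differentiating through the fixed point, using only the uniform bounds on $K$, $\nabla K$ and $\Lambda_n$ together with the already-established $\mathbb{E}|Y^n_t - Y^n_s| \leq C\sqrt{|t-s|}$.
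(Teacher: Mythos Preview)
Your proof is correct and follows essentially the same strategy as the paper: both establish the fourth-moment increment bound $\E[|Y^n_t - Y^n_s|^4] + \E[|u_n(t,Y^n_t) - u_n(s,Y^n_s)|^4] \le C|t-s|^2$ via the same two-term decomposition of the $u_n$ increment (Lipschitz in space plus a deterministic time-modulus controlled by $\E|Y^n_t - Y^n_s|$), and then feed this into a Kolmogorov-type tightness criterion. The only cosmetic differences are that the paper works directly with the joint process in $\mathcal{C}^{d+1}$ rather than reducing to marginals, splits the increment as $|u_n(t,X_t)-u_n(s,X_t)|+|u_n(s,X_t)-u_n(s,X_s)|$ instead of your order, and invokes Garsia--Rodemich--Rumsey explicitly to pass from the moment bound to \eqref{E47bis}.
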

\begin{proof}
  If we denote by $\P_n$ the law of $(Y_n,u^n(\cdot,Y^n))$ we bound the l.h.s of \eqref{E46bis} as follows:
\begin{eqnarray}
\label{E910}
\P_n(\{ \omega \in \shc^{d+1} \vert \; \vert \omega_0 \vert > \lambda \}) & = & \P(\{ \vert (Y_0^n,u^n(0,Y_0^n)) \vert > \lambda \}) \nonumber \\
& \leq & \P(\{ \vert Y_0^n \vert + \vert u^n(0,Y_0^n) \vert > \lambda \}) \nonumber \\
& \leq & \P(\{ \vert Y_0^n \vert > \frac{\lambda}{2} \}) + 
 \P( \{ \vert u^n(0,Y_0^n) \vert > \frac{\lambda}{2} \}) \nonumber \\
 & \leq & \zeta^n_0(\{ x \in \R^d \vert \; \vert x \vert > \frac{\lambda}{2} \}) + \P( \{ \vert u^n(0,Y_0^n) \vert > \frac{\lambda}{2} \}).
\end{eqnarray}
Let us fix $\varepsilon > 0$.
On the one hand, $(\zeta^n_0)$ being tight there exists a compact set $\mathfrak{K}_{\varepsilon}$ of $\R^d$ such that $\displaystyle{ \sup_{n \in \N} \zeta^n_0(\mathfrak{K}_{\varepsilon}^c) \leq \varepsilon }$. Then, there exists $\lambda_{\varepsilon} > 0$ 
such that $  \{ x \in \R^d \vert \; \vert x \vert > \frac{\lambda_{\varepsilon}}{2} \} \subset \mathfrak{K}_{\varepsilon}^c$ which implies 
$$\displaystyle{ \sup_{n \in \N} \zeta^n_0(\{ x \in \R^d \vert \; \vert x \vert > \frac{\lambda_\varepsilon}{2} \}) \leq \sup_{n \in \N} \zeta^n_0(\mathfrak{K}_{\varepsilon}^c) \leq \varepsilon }.$$
 On the other hand, since $u^n$ is uniformly bounded,
for all $\lambda > 0$,
 Chebyshev's inequality implies 
\begin{eqnarray} 
\label{ELambda}
 \P( \{ \vert u^n(0,Y_0^n) \vert > \frac{\lambda}{2} \}) \leq 4 \frac{\E[ \vert u^n(0,Y^n_0) \vert^2]}{\lambda^2} \leq 4 \frac{(M_K e^{T M_{\Lambda}})^2}{\lambda^2} \ . 
\end{eqnarray}
Consequently for $\lambda \ge \lambda_\varepsilon$, 
we get
\begin{eqnarray}
\label{E912}
 \sup_{n \in \N} \P_n(\{ \omega \in \shc^{d+1} \vert \; \vert \omega_0 \vert > \lambda \}) \leq 
 4 \frac{(M_K e^{T M_{\Lambda}})^2}{\lambda^2} + \varepsilon \ .
 \end{eqnarray}
Taking the limit when $\lambda$ goes to infinity,
we finally get inequality \eqref{E46bis} since $\varepsilon > 0$ is arbitrary.

It remains to prove \eqref{E47bis}. \\
We will make use of Garsia-Rodemich-Rumsey Theorem, see e.g. Theorem 2.1.3, Chapter 2 in \cite{stroock} or \cite{BarYo}.
We will show that, for all $0 \le s < t \le T$, there exists a positive real constant $C \ge 0$ 
\begin{equation}
\label{EBY}
\E{[|Y^{n}_{t} - Y^{n}_{s}|^{4} + |u_{n}(t,Y^{n}_{t})-u_{n}(s,Y^{n}_{s})|^{4}]} \le C \vert t-s \vert^2 \ ,
\end{equation}
where $C$ does not depend on $n$.
Suppose for a moment that \eqref{EBY} holds true. \\
Let $\varepsilon > 0$ fixed. Let $\delta > 0$. If $\P_n$ denotes again the law of $(Y^n,u^n(\cdot,Y^n))$,  the quantity
\begin{eqnarray} \label{E945}
\P_n ( \{ \omega \in \shc^{d +1}\; \vert \; \sup_{ \underset{\vert t-s \vert \le \delta}{(s,t) \in [0,T]^2}} \vert \omega_t - \omega_s \vert > \varepsilon  \} )
\end{eqnarray}
intervening in \eqref{E47bis} is bounded, up to a constant, by
\begin{eqnarray}
\label{BB1}
\P ( \max_{ \underset{\vert t-s \vert \le \delta}{(s,t) \in [0,T]^2}} \left\{ \vert Y^n_t - Y^n_s \vert + \vert u^n(t,Y^n_t) - u^n(s,Y^n_s) \vert \right\} > \varepsilon  ) \ .
\end{eqnarray}  
Let us fix $\gamma \in  ]0,\frac{1}{4}[$.
By Garsia-Rodemich-Rumsey theorem, there is a sequence of non-negative r.v. $\Gamma^n$ such that, a.s.
$$
\sup_{n \in \N} \E{[ (\Gamma^n)^4 ]} < \infty 
$$
\begin{eqnarray}
\label{GRR}
\forall (s,t) \in [0,T]^2, \; \vert Y^n_t - Y^n_s\vert + \vert u^n(t,Y^n_t) - u^n(s,Y^n_s) \vert \leq \Gamma^n \vert t-s|^{\gamma}.
\end{eqnarray}
 If $\vert t-s \vert \le \delta$ \eqref{GRR} gives 
\begin{eqnarray}
\label{BB3}
\displaystyle{ \max_{ \underset{\vert t-s \vert \le \delta}{(s,t) \in [0,T]^2}} \left\{ \vert Y^n_t - Y^n_s \vert + \vert u^n(t,Y^n_t) - u^n(s,Y^n_s) \vert \right\} \le \Gamma^n \delta^{\gamma} } \ .
\end{eqnarray}
By    \eqref{BB3} and   Chebyshev's inequality, for any $n \in \N$,
 the quantity \eqref{E945} is bounded by
\begin{eqnarray}
\P( \Gamma^n \delta^{\gamma} > \varepsilon) & = & \P( \Gamma^n > \varepsilon \delta^{-\gamma}) \nonumber \\
& \le & \frac{\delta^{4 \gamma}}{\varepsilon^4}, \nonumber
\end{eqnarray}
for any $n \in \N$. Since $\delta >0$ is arbitrary, 
 \eqref{E47bis} follows. To conclude the proof of the lemma, it remains to show \eqref{EBY}. \\
We recall that $M_{\Phi}$, $M_{g}$, $M_{\Lambda}$, $M_{K}$ denote the uniform upper bound of the
 sequences $(\vert \Phi_{n} \vert)$, $(\vert g_{n} \vert)$, $(\vert \Lambda_{n} \vert)$ and of the function $ K$.
Let $0 \leq s < t \leq T$.
To show \eqref{EBY},
 we have to evaluate 
\begin{eqnarray}
\label{tigth0}
 \E{[|Y^{n}_{t} - Y^{n}_{s}|^{4}]} + \E{[|u_{n}(t,Y^{n}_{t})-u_{n}(s,Y^{n}_{s})|^{4}]} \ .
\end{eqnarray}

By classical computations (e.g. It\^o's isometry, Cauchy-Schwarz inequality), 
we easily obtain 
\begin{equation}
 \label{eq:tight1}
 \forall k \in \N^{\star}, \; \forall T>0, \; \exists C' := C'_{(k,T,M_{\Phi},M_{g},M_{\Lambda})} > 0, \;  \E{[|Y^{n}_{t} - Y^{n}_{s}|^{2k}]} \leq C' |t-s|^{k},
\end{equation}
where the constant $C'$ does not depend on $n$ because $\Phi_n, g_n$ are uniformly bounded, in particular w.r.t. $n$.

 Regarding the second expectation in \eqref{tigth0}, we get 
 \begin{eqnarray}
  \label{eq:tight2}
  \E{[|u_{n}(t,Y^{n}_{t})-u_{n}(s,Y^{n}_{s})|^{4}]} & = & \int_{\mathcal{C}^d} \big( u_{n}(t,X_{t}(\omega))-u_{n}(s,X_{s}(\omega)
) \big)^{4} dm^{n}(\omega) \nonumber \\
&& \\
  & \leq & 8(I_{1} + I_{2}) \ , \nonumber
 \end{eqnarray}
 where 
 \begin{equation}
 \begin{array}{lll}
 \displaystyle{
 I_{1} := \int_{\mathcal{C}^d} \big( u_{n}(t,X_{t}(\omega))-u_{n}(s,X_{t}(\omega)) \big)^{4} dm^{n}(\omega) } \\
 \displaystyle{I_{2} := \int_{\mathcal{C}^d} \big( u_{n}(s,X_{t}(\omega))-u_{n}(s,X_{s}(\omega)) \big)^{4} dm^{n}(\omega) \ . }
 \end{array}
 \end{equation}
 On the one hand, for all $x \in \R^d$, 
 \begin{eqnarray*}
   |u_{n}(t,x) - u_{n}(s,x)| & = & \Big| \E{ \Big[K(x-Y^{n}_{t}) e^{\int_{0}^{t} \Lambda_{n}(r,Y^{n}_r,u_{n}(r,Y^{n}_r))dr} \Big]} -
 \E{ \Big[K(x-Y^{n}_{s}) e^{ \int_{0}^{s} \Lambda_{n}(r,Y^{n}_r,u_{n}(r,Y^{n}_r))dr } \Big]} \Big| \nonumber \\    
   & \leq &  \int_{\mathcal{C}^d} |K(x-X_{t}(\omega)) - K(x-X_{s}(\omega))| \exp\left( \int_{0}^{t} \Lambda_{n}(r,X_r,u_{n}(r,X_r))dr \right) dm^{n}(\omega) \nonumber  \\
   & & \left . + \; \int_{\mathcal{C}^d} K(x-X_{s}(\omega))\; \Big| \exp\left(\int_{0}^{t} \Lambda_{n}(r,X_r(\omega),u_{n}(r,X_r(\omega))\right)dr
   \right . \nonumber  \\
   & & \left . - \; \exp\left( \int_{0}^{s} \Lambda_{n}(r,X_r(\omega),u_{n}(r,X_r(\omega)))dr  \right) \Big| \; dm^{n}(\omega) \right . \nonumber  
\end{eqnarray*}
By \eqref{EMajor1} and \eqref{eq:tight1} (with $k=1$) together with Cauchy-Schwarz inequality, this is lower than 
\begin{eqnarray*}
   &  & L_K \exp(M_{\Lambda}T) 
\int_{\mathcal{C}^d} |X_{t}(\omega) - X_{s}(\omega)|dm^{n}(\omega) \\
   &+ &  M_{K}\exp(M_{\Lambda}) \int_{\mathcal{C}^d} \Big| \int_{s}^{t} \Lambda_{n}(r,X_r(\omega),u_{n}(r,X_r(\omega)))dr \Big|
 dm^n(\omega) \\ 
  & \leq & (L_K \exp(M_{\Lambda}T)\sqrt{C'} + M_{K}\exp(M_{\Lambda})M_{\Lambda} \sqrt{T}) \sqrt{|t-s|},
 \end{eqnarray*}
 which implies
  \begin{equation}
  \label{eq:majorI2}
  \begin{array}{lll}
   I_1 = \int_{\mathcal{C}^d}|u_{n}(t,X_t(\omega)) - u_{n}(s,X_t(\omega))|^{4}dm^{n}(\omega) & \leq & (L_K \exp(M_{\Lambda}T) \sqrt{C'} 
 + M_K \exp(M_{\Lambda})M_{\Lambda} \sqrt{T})^{4}|t-s|^{2} \ .
  \end{array}
 \end{equation}
 On the other hand, for all $(x,y) \in \R^d \times \R^d$
  \begin{equation}
  \label{eq:majorI3}
  \begin{array}{lll}
   |u_{n}(s,x) - u_{n}(s,y)| & \leq &  \E{[\big| K(x-Y^n_{s})-K(y-Y^n_{s})\big| \exp\big( \int_{0}^{s} \Lambda_{n}(r,Y^n_r,u_{n}(r,Y^n_r))dr \big)]}  \\  
   & \leq & L_K \exp(M_{\Lambda}T)|x-y|, \ ,
  \end{array}
 \end{equation}
 which implies
   \begin{equation}
  \label{eq:majorI4}
  \begin{array}{lll}
   I_2 = \int_{\mathcal{C}^d} |u_{n}(s,X_t(\omega)) - u_{n}(s,X_s(\omega))|^{4}dm^{n}(\omega) & \leq & L_K \exp(M_{\Lambda}T) \int_{\mathcal{C}^d} |X_t(\omega)-X_s(\omega)|^{4}dm^{n}(\omega)  \\  
   & \leq & L_K \exp(M_{\Lambda}T)C'|t-s|^{2} \ ,
  \end{array}
 \end{equation}
 where the second inequality comes from \eqref{eq:tight1} with $k = 2$. \\
 Coming back to \eqref{eq:tight2}, we have $|I_{1} + I_{2}| \leq C'' |t-s|^{2}$ with $C''$ a constant value depending only on 
$T,M_{\Phi}, M_g,M_{\Lambda}, M_K,L_K,T $.
 This enable us to conclude the proof of \eqref{EBY} and finally the one of
Lemma \ref{tightness}. \\
\end{proof}
We proceed now with the proof  of  Lemma \ref{lem:Discrete}, that will make use of the following intermediary result.
\begin{lem}
\label{lem:ViVi'}
Let $N \in \N^{\star}$. Let $(\xi^{i,N})_{i=1,\cdots,N}$ be a solution of the interacting particle system \eqref{eq:XIi};
 let $(\tilde{\xi}^{i,N})_{i=1,\cdots,N}$ and $\tilde v$  as defined as in  the discretized interacting particle system \eqref{eq:tildeYu}. \\
Under the same assumptions as in Proposition \ref{prop:DiscretTime}, the random variables $V_t^i := e^{\int_0^t\Lambda\big (s,\tilde \xi^{i,N}_{s},u^{S^N(\tilde{\xi})}_{s}(\tilde \xi^{i,N}_{s})\big )ds}$ and $ \tilde V_t^i := e^{\int_0^t\Lambda\big (r(s),\tilde \xi^{i,N}_{r(s)},\tilde v_{r(s)}(\tilde \xi^{i,N}_{r(s)})\big )ds}$,
 for all  $t \in [0,T]$, $i \in \{ 1,\cdots,N\}$ fulfill the following.
\begin{enumerate}
\item For all $t \in [0,T]$, $i \in \{1,\cdots,N\}$
\begin{eqnarray}
\label{E941}
 \E[\vert \tilde V_t^i-V_t^i \vert^2]
\leq C(\delta t)^2 + C\E \left[ \int_0^t \vert \tilde{\xi}^{i,N}_{r(s)} - \tilde{\xi}^{i,N}_{s}  \vert^2   \ ds \right] + C \E \left[ \int_0^t \vert \tilde v_{r(s)}(\tilde \xi^{i,N}_{r(s)})-u^{S^N(\tilde \xi)}_s(\tilde \xi^{i,N}_s)\vert^2 ds \right],
\end{eqnarray}
where $C$ is a real positive constant depending only on $M_{\Lambda}$, $L_{\Lambda}$ and $T$.
\item For all $(t,y) \in [0,T] \times \R^d$, $i \in \{1,\cdots,N\}$
\begin{eqnarray}
\label{E942}
\vert \tilde v_{t}(y)-u_t^{S^N(\tilde \xi)}(y)\vert^2 \leq  \frac{M_K}{N}\sum_{i=1}^N K(y-\tilde \xi^{i,N}_t) \;  \vert \tilde V_t^i-V_t^i \vert^2 \ .
\end{eqnarray}
\end{enumerate}
\end{lem}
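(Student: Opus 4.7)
My plan is to treat the two items separately, both being essentially direct computations once the correct algebraic identities are spotted.

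For item 1, the starting point is the inequality \eqref{EMajor1}: $|e^b-e^a| \leq e^{\sup(a,b)}|b-a|$. Setting $a = \int_0^t \Lambda(s,\tilde\xi^{i,N}_s,u^{S^N(\tilde\xi)}_s(\tilde\xi^{i,N}_s))\,ds$ and $b = \int_0^t \Lambda(r(s),\tilde\xi^{i,N}_{r(s)},\tilde v_{r(s)}(\tilde\xi^{i,N}_{r(s)}))\,ds$, both are bounded by $tM_\Lambda$, so that
\[
|\tilde V_t^i - V_t^i| \leq e^{TM_\Lambda}\int_0^t \left|\Lambda(r(s),\tilde\xi^{i,N}_{r(s)},\tilde v_{r(s)}(\tilde\xi^{i,N}_{r(s)})) - \Lambda(s,\tilde\xi^{i,N}_s,u^{S^N(\tilde\xi)}_s(\tilde\xi^{i,N}_s))\right|ds.
\]
The key point is that the strengthened assumption of Proposition \ref{prop:DiscretTime} makes $\Lambda$ Lipschitz also in the time variable, so the integrand is bounded by $L_\Lambda(|s-r(s)| + |\tilde\xi^{i,N}_{r(s)}-\tilde\xi^{i,N}_s| + |\tilde v_{r(s)}(\tilde\xi^{i,N}_{r(s)}) - u^{S^N(\tilde\xi)}_s(\tilde\xi^{i,N}_s)|)$. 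Using $|s - r(s)| \leq \delta t$ and Cauchy-Schwarz on the time integral, squaring and taking expectations yields \eqref{E941}.

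For item 2, the crucial observation is the algebraic identity
\[
\tilde v_t(y) - u_t^{S^N(\tilde\xi)}(y) = \frac{1}{N}\sum_{i=1}^N K(y-\tilde\xi^{i,N}_t)\bigl(\tilde V_t^i - V_t^i\bigr),
\]
which follows directly from the definition of $\tilde v_t$ in \eqref{eq:tildeYu} and from writing \eqref{eq:u} with $m = S^N(\tilde\xi)$ as $u_t^{S^N(\tilde\xi)}(y) = \frac{1}{N}\sum_i K(y-\tilde\xi^{i,N}_t) V_t^i$. Applying Cauchy-Schwarz in the form $\bigl(\frac{1}{N}\sum a_i b_i\bigr)^2 \leq \bigl(\frac{1}{N}\sum a_i\bigr)\bigl(\frac{1}{N}\sum a_i b_i^2\bigr)$ with $a_i = K(y-\tilde\xi^{i,N}_t) \geq 0$ and $b_i = \tilde V_t^i - V_t^i$, then bounding $\frac{1}{N}\sum_i K(y-\tilde\xi^{i,N}_t) \leq M_K$ using the uniform bound on $K$, produces \eqref{E942}.

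I do not expect any serious obstacle; both items are consequences of elementary inequalities combined with the boundedness of $K$, the bound on $V_t^i, \tilde V_t^i$ via $e^{TM_\Lambda}$, and the time-space Lipschitz assumption on $\Lambda$. The only point that requires slight care is making sure the Cauchy-Schwarz rearrangement in item 2 uses the non-negativity of $K$ (so that $K$ plays the role of a positive weight), which is guaranteed by Assumption \ref{ass:main}.4.
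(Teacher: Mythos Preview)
Your proposal is correct and follows essentially the same approach as the paper. For item 1 the paper likewise applies \eqref{EMajor1} together with the time--space Lipschitz property of $\Lambda$ and Cauchy--Schwarz in the time variable; for item 2 the paper uses the same algebraic identity and then Jensen's inequality $(\frac{1}{N}\sum x_i)^2\le \frac{1}{N}\sum x_i^2$ followed by $K^2\le M_K\,K$, which is just a variant of your weighted Cauchy--Schwarz argument and relies on the same non-negativity of $K$.
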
 
\begin{proof}[Proof of Lemma \ref{lem:ViVi'}]
Let us fix $t \in [0,T]$, $i \in \{1,\cdots,N\}$. To prove \eqref{E941}, it is enough to recall that $\Lambda$ being uniformly Lipschitz w.r.t. the time and space variables, the inequality~\eqref{EMajor1} yields
\begin{equation}
\label{eq:VtildeV2}
 \vert \tilde V_t^i
-
V_t^i \vert^2
\leq 
3e^{2tM_{\Lambda}}L^2_{\Lambda} \int_0^t \left [\vert r(s)-s\vert^2 +\vert \tilde \xi^{i,N}_{r(s)}-\tilde \xi^{i,N}_s\vert^2+\vert \tilde v_{r(s)}(\tilde \xi^{i,N}_{r(s)})-u_s^{S^N(\tilde \xi)}(\tilde \xi^{i,N}_s)\vert^2 \right ]\,ds\ ,
\end{equation}
and taking the expectation in both sides of \eqref{eq:VtildeV2} implies \eqref{E941} with $C := 3e^{2TM_{\Lambda}}L^2_{\Lambda}$. \\
Let us fix $y \in \R^d$. Concerning \eqref{E942}, by recalling the third line equation of \eqref{eq:tildeYu} and the linking equation \eqref{eq:u} (with $m = S^N(\tilde{\xi})$), we have 
\begin{eqnarray}
\vert \tilde v_{t}(y)-u_t^{S^N(\tilde \xi)}(y)\vert^2 & = & \left \vert \frac{1}{N} \sum_{i=1}^N K(y- \tilde{\xi}^{i,N}) \tilde{V}^i_t - \frac{1}{N} \sum_{i=1}^N K(y- \tilde{\xi}^{i,N}) V^i_t   \right \vert^2 \nonumber \\
& = & \left \vert \frac{1}{N} \sum_{i=1}^N K(y- \tilde{\xi}^{i,N}) \left( \tilde{V}^i_t - V^i_t \right)  \right \vert^2 \nonumber \\
&\leq &
\label{E944}
\frac{1}{N}\sum_{i=1}^N K^2(y-\tilde \xi^{i,N}_t)  \vert \tilde V_t^i
-
V_t^i \vert^2 \nonumber\\
& \leq & \frac{M_K}{N}\sum_{i=1}^N  K(y-\tilde \xi^{i,N}_t) \;  \vert \tilde V_t^i-V_t^i \vert^2 \ ,
\end{eqnarray}
which concludes the proof of \eqref{E942} and therefore of  Lemma \ref{lem:ViVi'}.
\end{proof}

\begin{proof}[\bf Proof of Lemma \ref{lem:Discrete}.]
All along this proof, $C$ will denote a positive constant that only  depends \\
 $T,M_K,L_K,M_\Phi,L_\Phi,M_g,L_g$ and $M_{\Lambda},L_{\Lambda}$ and that can change from line to line. Let us fix $t \in [0,T]$.

\begin{itemize}
	\item Inequality \eqref{eq:vtilde} of Lemma~\ref{lem:Discrete} is simply a consequence of the fact that the coefficients $\Phi$ and $g$ are uniformly bounded. Indeed,
\begin{eqnarray*}
\E[\vert \tilde \xi ^{i,N}_{r(t)}-\tilde \xi^{i,N}_{t}\vert^2]&=&\E\left [\Big \vert \int_{r(t)}^{t}\Phi(\tilde{v}_{r(s)}(\tilde \xi ^{i,N}_{r(s)}))\,dW_s+\int_{r(t)}^{t}g(\tilde{v}_{r(s)}(\tilde \xi ^{i,N}_{r(s)}))\,ds\Big \vert^2 \right ]\\
&\leq &
2\E\left [ \int_{r(t)}^{t}\vert \Phi(\tilde{v}_{r(s)}(\tilde \xi ^{i,N}_{r(s)}))\vert ^2\,ds\right ]+2(t-r(t))\E \left [\int_{r(t)}^{t}\vert g(\tilde{v}_{r(s)}(\tilde \xi ^{i,N}_{r(s)}))\vert ^2\,ds \right ]\\
&\leq &
2M^2_{\Phi}\delta t +2M^2_g(\delta t)^2 \\
&\leq &
C\delta t \ ,\quad \textrm{as soon as }\quad \delta t \in\, ]0, 1[ \ .
\end{eqnarray*}

\item Now, let us focus on the second inequality \eqref{E82} of Lemma~\ref{lem:Discrete}. Note that for any $y\in \R^d$, the following inequality holds:
\begin{eqnarray}
\label{eq:r(t)t}
\vert \tilde v_{r(t)}(y)-\tilde v_t(y)\vert & \leq & \frac{1}{N} \sum_{i=1}^N \left[ \left \vert K(y-\tilde \xi^{i,N}_{r(t)}) -K(y-\tilde \xi^{i,N}_t)\right \vert \, e^{\int_0^{r(t)}\Lambda\big (r(s),\tilde \xi^{i,N}_{r(s)},\tilde v_{r(s)}(\tilde \xi^{i,N}_{r(s)})\big )ds} \right . \nonumber \\
& &  +
 \left .  K(y-\tilde \xi^{i,N}_t) \,\left \vert e^{\int_0^{r(t)}\Lambda\big (r(s),\tilde \xi^{i,N}_{r(s)},\tilde v_{r(s)}(\tilde \xi^{i,N}_{r(s)})\big )ds}- e^{\int_0^t\Lambda\big (r(s),\tilde \xi^{i,N}_{r(s)},\tilde v_{r(s)}(\tilde \xi^{i,N}_{r(s)})\big )ds} \right \vert \right ] \ . \nonumber \\ 
\end{eqnarray}
Using the Lipschitz property of $\Lambda$ and the fact that  $K$ and $\Lambda$ are bounded, 
one can apply~\eqref{EMajor1} to bound the second term of the sum on the r.h.s. of the above inequality as follows:
\begin{eqnarray}
\label{eq:K_exp}
K(y-\tilde \xi^{i,N}_t) \,\left \vert e^{\int_0^{r(t)}\Lambda\big (r(s),\tilde \xi^{i,N}_{r(s)},\tilde v_{r(s)}(\tilde \xi^{i,N}_{r(s)})\big )ds}- e^{\int_0^t\Lambda\big (r(s),\tilde \xi^{i,N}_{r(s)},\tilde v_{r(s)}(\tilde \xi^{i,N}_{r(s)})\big )ds}\right \vert 
& \leq & M_Ke^{(t-r(t))M_{\Lambda}}(t-r(t))M_{\Lambda} \nonumber \\
& \leq & C\delta t \ .
\end{eqnarray}
The first term of the sum on the r.h.s. of~\eqref{eq:r(t)t} is bounded using the Lipschitz property of $K$ and the fact that $\Lambda$ is bounded. 
\begin{eqnarray}
\label{eq:KKtilde}
\left \vert K(y-\tilde \xi^{i,N}_{r(t)}) -K(y-\tilde \xi^{i,N}_t)\right \vert\, e^{\int_0^{r(t)}\Lambda\big (r(s),\tilde \xi^{i,N}_{r(s)},\tilde v_{r(s)}(\tilde \xi^{i,N}_{r(s)})\big )ds} \leq  
L_Ke^{tM_{\Lambda}} \vert \tilde \xi^{i,N}_{r(t)}-\tilde \xi^{i,N}_t\vert \ .
\end{eqnarray}
Injecting \eqref{eq:K_exp}  and \eqref{eq:KKtilde} in \eqref{eq:r(t)t} we obtain for all $y \in \R^d$
$$
\vert \tilde{v}_{r(t)}(y) - \tilde{v}_{t}(y) \vert \leq C \delta t + \frac{L_Ke^{tM_{\Lambda}}}{N} \sum_{i=1}^N \vert \tilde{\xi}^{i,N}_{r(t)}-\tilde{\xi}^{i,N}_{t} \vert,
$$
which finally implies that 
$$
\Vert \tilde v_{r(t)}-\tilde v_t \Vert_{\infty}^2\leq C\delta t ^2+\frac{C}{N}\sum_{i=1}^N \vert \tilde \xi^{i,N}_{r(t)}-\tilde \xi^{i,N}_t\vert^2 \ .
$$
We conclude by using  inequality \eqref{eq:vtilde} of Lemma~\ref{lem:Discrete} after taking the expectation of the r.h.s. of the above inequality. 

\item Finally, we  deal with  inequality \eqref{E83} of Lemma~\ref{lem:Discrete}. Observe that the 
error on the left-hand side can be decomposed as  
\begin{eqnarray}
\label{eq:tildev1}
\E[\Vert \tilde v_{r(t)}-u_t^{S^N(\tilde \xi)}\Vert ^2_{\infty}]
&\leq &2\E[\Vert \tilde v_{r(t)}-\tilde v_t\Vert^2_{\infty}]+2\E[\Vert \tilde v_{t}-u_t^{S^N(\tilde \xi)}\Vert^2_{\infty}] \nonumber \\
&\leq & C\delta t +2\E[\Vert \tilde v_{t}-u_t^{S^N(\tilde \xi)}\Vert^2_{\infty}]\ ,
\end{eqnarray}
where we have used  inequality \eqref{E82} of Lemma \ref{lem:Discrete}. \\
Let us consider the second term on the r.h.s. of the above inequality. 
To simplify the notations, we introduce the real valued random variables 
\begin{equation}
\label{eq:VVtildeDef2}
V_t^i := e^{\int_0^t\Lambda\big (s,\tilde \xi^{i,N}_{s},u^{S^N(\tilde{\xi})}_{s}(\tilde \xi^{i,N}_{s})\big )ds}\quad \textrm{and}\quad \tilde V_t^i := e^{\int_0^t\Lambda\big (r(s),\tilde \xi^{i,N}_{r(s)},\tilde v_{r(s)}(\tilde \xi^{i,N}_{r(s)})\big )ds}\ ,
\end{equation}
defined for any $i=1,\cdots N$ and $t\in [0,T]$. \\ 
Using successively inequalities \eqref{E941} of Lemma \ref{lem:ViVi'}, \eqref{eq:vtilde} of~Lemma~\ref{lem:Discrete} and \eqref{eq:uu'} of Lemma \ref{lem:uu'}, we have for all $i \in \{1,\cdots,N\}$,   
\begin{eqnarray}
\label{eq:Vnew}
 \E[\vert \tilde V_t^i-V_t^i \vert^2]
&\leq & C\delta t + C \E \left[ \int_0^t \vert \tilde v_{r(s)}(\tilde \xi^{i,N}_{r(s)})-u^{S^N(\tilde \xi)}_s(\tilde \xi^{i,N}_s)\vert^2 ds \right] \nonumber\\
& \leq & C\delta t + C \E \left[ \int_0^t \vert \tilde v_{r(s)}(\tilde \xi^{i,N}_{r(s)})-u^{S^N(\tilde \xi)}_s(\tilde \xi^{i,N}_{r(s)})\vert^2 ds \right] \nonumber \\
&& + \; C \E \left[ \int_0^t \vert u_{s}^{S^N(\tilde{\xi})}(\tilde \xi^{i,N}_{r(s)})-u^{S^N(\tilde \xi)}_s(\tilde \xi^{i,N}_{s})\vert^2 ds \right] \nonumber \\
&\leq & C\delta t +C\int_0^t \left [ \E [\Vert \tilde v_{r(s)}-u^{S^N(\tilde \xi)}_s\Vert_\infty^2] + \E[\vert \tilde \xi^{i,N}_{r(s)}-\tilde \xi^{i,N}_s\vert^2] \right ]\, ds \nonumber\\
&\leq &
C\delta t +C\int_0^t \E [\Vert \tilde v_{r(s)}-u^{S^N(\tilde \xi)}_s\Vert_\infty^2] \, ds \ .
\end{eqnarray}
On the other hand, inequality \eqref{E942} of Lemma \ref{lem:ViVi'} implies
\begin{eqnarray}
\label{major_E86}
\Vert  \tilde v_{t}-u_t^{S^N(\tilde \xi)} \Vert_{\infty}^2 \leq \frac{M_K^2}{N}\sum_{i=1}^N   \vert \tilde V_t^i-V_t^i \vert^2 \ .
\end{eqnarray}
Taking the expectation in both sides of \eqref{major_E86} and using \eqref{eq:Vnew} give
\begin{equation}
\label{eq:tildevxi}
\E[\Vert \tilde v_{t}-u_t^{S^N(\tilde \xi)}\Vert^2_{\infty}]
\leq \frac{M_K^2}{N}\sum_{i=1}^N   \E \left[ \vert \tilde V_t^i-V_t^i \vert^2 \right] \leq  C\delta t +C\int_0^t \E [\Vert \tilde v_{r(s)}-u^{S^N(\tilde \xi)}_s\Vert_\infty^2] \, ds\ .
\end{equation}
We end the proof by injecting this last inequality in~\eqref{eq:tildev1} and by applying Gronwall's lemma.
\end{itemize}
\end{proof}

\noindent\textbf{ACKNOWLEDGEMENTS.}
The third named   author has benefited partially from the
support of the ``FMJH Program Gaspard Monge in optimization and operation
research'' (Project 2014-1607H).

\newpage
\bibliographystyle{plain}
\bibliography{NonConservativePDE}

\end{document}